\pgfplotsset{soldot/.style={color=black,only marks,mark=*}} \pgfplotsset{holdot/.style={color=black,fill=white,only marks,mark=*}}
\newtheorem{thm}{Theorem}[section]
\newtheorem{lem}[thm]{Lemma}
\newtheorem{cor}[thm]{Corollary}
\newtheorem{prob}[thm]{Problem}
\theoremstyle{definition}
\newtheorem{defn}[thm]{Definition}
\newtheorem{defns}[thm]{Definitions}
\theoremstyle{remark}
\newtheorem{remark}[thm]{Remark}
\newtheorem{remarks}[thm]{Remarks}
\newtheorem{example}[thm]{Example}
\newtheorem{examples}[thm]{Examples}
\numberwithin{equation}{section}
\newcommand{\K}{{\mathbb K}} \newcommand{\N}{{\mathbb N}}
\newcommand{\Z}{{\mathbb Z}} \newcommand{\R}{{\mathbb R}}
 \newcommand{\C}{{\mathbb C}}
 \newcommand{\J}{{\mathcal J}}
 \newcommand{\I}{{\mathcal I}}
\newcommand{\ceros}{{\mathcal Z}}
\newcommand{\gtp}{{\mathfrak p}} 
\newcommand{\gtm}{{\mathfrak m}} \newcommand{\gtn}{{\mathfrak n}}
\newcommand{\gta}{{\mathfrak a}} \newcommand{\gtb}{{\mathfrak b}}
\newcommand{\an}{{\EuScript O}}
\newcommand{\Ff}{{\EuScript F}}
\newcommand{\Ss}{{\EuScript S}}
\newcommand{\Jj}{\J}%{{\EuScript J}}
\newcommand{\Ii}{\I}%{{\EuScript I}}
\newcommand{\Qq}{{\EuScript Q}}
\newcommand{\Mm}{{\EuScript M}}
\newcommand{\Nn}{{\EuScript N}}
\newcommand{\Gg}{{\EuScript G}}
\newcommand{\ZZ}{\mathcal{Z}}
\newcommand{\Cc}{{\EuScript C}}
\newcommand{\Bb}{{\EuScript B}}
\newcommand{\im}{\operatorname{im}}
\newcommand{\Reg}{\operatorname{Reg}}
\newcommand{\Sing}{\operatorname{Sing}}
\newcommand{\cl}{\operatorname{Cl}}
\newcommand{\supp}{\operatorname{supp}}
\newcommand{\x}{{\tt x}} \newcommand{\y}{{\tt y}} 
\newcommand{\z}{{\tt z}} \renewcommand{\t}{{\tt t}}
\newcommand{\veps}{\varepsilon}
\newcommand{\ol}{\overline}
\numberwithin{equation}{section}
\begin{document}
\title[A converse to Cartan's Theorem B]{A converse to Cartan's Theorem B: The extension property for real analytic and Nash sets}

\author{Jos\'e F.\ Fernando}
\address{Departamento de \'Algebra, Geometr\'\i a y Topolog\'\i a, Facultad de Ciencias Matem\'aticas, Universidad Complutense de Madrid, Plaza de Ciencias, 3, 28040 MADRID (SPAIN)}
\email{josefer@mat.ucm.es}

\author{Riccardo Ghiloni}
\address{Dipartimento di Matematica, Via Sommarive, 14, Universit\`a di Trento, 38123 Povo (ITALY)}
\email{riccardo.ghiloni@unitn.it}

\begin{abstract}
In 1957 Cartan proved his celebrated Theorem B and deduced that if $\Omega\subset\R^n$ is an open set and $X$ is a coherent real analytic subset of $\Omega$, then $X$ has the analytic extension property, that is, each real analytic function on $X$ extends to a real analytic function on $\Omega$. So far, the converse implication in its full generality remains unproven. As a matter of fact, in the literature only special cases of non-coherent real analytic sets $X\subset\Omega$ without the extension property appear, some of them due to Cartan himself: mainly real analytic sets $X\subset\Omega$ that have a visible `tail', that is, $X$ has a non-pure dimensional irreducible analytic component $Y$ such that the set of points of lower dimension of $Y$ is visible inside $X$ (this means that there exists a point $x\in Y$ such that $\dim(X_x)<\dim(Y)$). These examples generated the feeling that non-coherent real analytic sets without the analytic extension property may have visible `tails'.

In this article we prove the converse implication: If a subset $X$ of $\Omega$ has the analytic extension property, then it is a coherent real analytic subset of $\Omega$. Thus, the class of sets with the analytic extension property coincides with that of coherent real analytic sets. In fact, we prove that if $X\subset\Omega$ is a non-coherent global analytic set, there exist `many' meromorphic functions on $\Omega$ that are analytic on $X$, but have no analytic extension to $\Omega$, yielding an almost complete description of the possible non-extendability sets. To that end we analyze the first cohomology group of the sheaf of zero ideals of $X$.

We extend the previous characterization to the Nash case, which is somehow more demanding, because of its finiteness properties and its disappointing behavior with respect to cohomology of sheaves of Nash function germs. Let $\Omega\subset\R^n$ be an open semialgebraic set and let $X\subset\Omega$ be a set. We prove that each Nash function on $X$ extends to a Nash function on $\Omega$ if and only if $X\subset\Omega$ is a coherent Nash set. The `if' implication goes back to some celebrated results of Coste, Ruiz and Shiota, whereas the `only if' implication has been treated only for Nash sets $X$ that have visible `tails'. We prove that if $X\subset\Omega$ is a non-coherent Nash set, there exist `many' Nash meromorphic functions on $\Omega$ that are local Nash on $X$, but have no Nash extension to $\Omega$, providing once more an almost complete description of the possible non-extendability sets. 

If $M\subset\R^n$ is a Nash manifold, $\Cc^\infty$ semialgebraic functions on $M$ coincide with Nash functions on $M$. As an application of our previous strategies, we confront the coherence of a Nash set $X\subset\Omega$ with the fact that each $\Cc^\infty$ semialgebraic function on $X$ is a Nash function on $X$. Namely, we prove that these two properties are equivalent for Nash sets. More generally, we provide a full characterization of the semialgebraic sets $S\subset\Omega$ for which $\Cc^\infty$ semialgebraic functions on $S$ coincide with Nash functions on $S$.
\end{abstract}

\keywords{$C$-analytic sets, Coherent analytic sets, Local and global analytic functions, Analytic extension property, Nash sets, Coherent Nash sets, Local and global Nash functions, Nash extension property}
\subjclass[2020]{Primary 14P15, 14P20, 58A07; Secondary 26B05, 26E10, 32C05, 32C07, 32E10}
\date{24/07/2025}

\maketitle 

%%%
%\setcounter{tocdepth}{1}
%\tableofcontents

\section{Introduction}\label{s1}

Let $\Omega$ be an open subset of $\R^n$ and let $X$ be a subset of $\Omega$. We denote $\N:=\{0,1,2,\ldots\}$ the set of natural numbers including $0$. A function $f:X\to\R$ is called \emph{(real) analytic} if for each point $a\in X$ there exists an open neighborhood $U$ of $a$ in $\Omega$ such that $f|_{X\cap U}$ extends to an analytic function defined on $U$ or, equivalently, if there exist an open neighborhood $U$ of $a$ in $\Omega$ and a (real) power series $\sum_{\alpha\in\N^n}a_\alpha (x-a)^\alpha$ that converges at the points of $U$ and such that the value $f(x)$ equals the sum of the convergent series $\sum_{\alpha\in\N^n}a_\alpha (x-a)^\alpha$ in $\R$ for each $x\in X\cap U$. We say that the set $X\subset\Omega$ has the \emph{analytic extension property} if each analytic function $f:X\to\R$ extends to an analytic function defined on $\Omega$. A main initial difficulty here is the absence of analytic partitions of unity.

In this article we solve the following extension problem.

\begin{prob}\label{prob:analytic}
How can one decide whether a set $X\subset\Omega$ has the analytic extension property?
\end{prob}

This problem goes back to the 1957 article \cite{c2} where Cartan proved several fundamental results of the theory of coherent sheaves, including the celebrated Theorem B. Recall that a set $X\subset\Omega$ is \emph{analytic} if it is closed in $\Omega$ and for each $a\in X$ there exist an open neighborhood $U$ of $a$ in $\Omega$ and finitely many analytic functions $f_1,\ldots,f_r$ defined on $U$ whose common zero set is $X\cap U$. The set $X\subset\Omega$ is \emph{global analytic} if there exist finitely many analytic functions $f_1,\ldots,f_r$ defined on the whole $\Omega$ whose common zero set is $X$. There exist analytic sets which are not global analytic \cite[\S11]{c2}. Global analytic sets were called \emph{$C$-analytic sets} by Whitney and Bruhat in \cite{wb}, probably as an abbreviation for `Cartan real analytic sets'. 

The global analyticity is an immediate necessary condition for the set $X\subset\Omega$ to have the analytic extension property. We borrow the suitable strategy from the proof of \cite[Thm.1]{nt}. Namely, suppose the set $X\subset\Omega$ has the analytic extension property, but it is not $C$-analytic. By \cite[\S6.Prop.7 \& Cor.2]{wb} or \cite[Cor.I.8]{fr}, the family of $C$-analytic subsets of $\Omega$ is stable under arbitrary intersections. Thus, there exists the smallest $C$-analytic subset $Y$ of $\Omega$ that contains $X$. Suppose there exists a point $y\in Y\setminus X$. Consider the analytic function $f:X\to\R,\ x\mapsto\frac{1}{\|x-y\|^2}$ and an analytic function $F:\Omega\to\R$ that extends $f$. Define the analytic function $G:\Omega\to\R$, $x\mapsto1-F(x)\|x-y\|^2$. As $G$ is identically zero on $X$, it vanishes identically on $\{y\}$, but this is a contradiction, because $y\in Y$ and $G(y)=1$.

Global analyticity is still not enough to assure the analytic extension property. Consider Whitney's umbrella $W:=\{(x,y,z)\in\R^3:y^2-zx^2=0\}$ and the function 
$$
f:W\to\R,\ (x,y,z)\mapsto\begin{cases}
\frac{x}{z+1}&\text{if $(x,y,z)\neq (0,0,-1)$},\\
0&\text{otherwise.}
\end{cases}
$$
Observe that $W\subset\R^3$ is a $C$-analytic set (even an algebraic set) and $f$ is an analytic function on $W$. The latter assertion can be proven as follows. In the following we denote ${\tt i}:=\sqrt{-1}$. Whitney's umbrella $W$ decomposes as the union of the singular ruled surface $C:=\{x\neq0,z=\frac{y^2}{x^2}\}\sqcup\{x=y=0,z\geq0\}$ (the cloth of the umbrella) and the $z$-semiaxis $H:=\{x=y=0,z\leq0\}$ (the handle of the umbrella). Observe that $C$ meets $H$ only at the origin $O$. As $p\in H\setminus\{O\}$ and $f$ vanishes identically around $p$, the function $f$ is analytic on $W$, because $\{z+1=0\}\cap W=\{p\}$. Suppose now that $f$ has an analytic extension $F$ to $\R^3$. Observe that $(z+1)F(x,y,z)=x$ on $W$. We can complexify the present geometric configuration using \cite[\S7]{wb} and obtain an open neighborhood $V$ of $\R^3$ in $\C^3$ and a holomorphic function $F_\C:V\to\C$ such that $F_\C$ extends $F$ and $(z+1)F_\C(x,y,z)=x$ on $W_\C:=\{(x,y,z)\in V:y^2-zx^2=0\}$. Thus, if $\epsilon>0$ is small enough, the point $q:=(\epsilon,\epsilon{\tt i},-1)$ belongs to $W_\C$ and $0=0F_\C(q)=\epsilon$, which is a contradiction. Consequently, the analytic extension $F$ of $f$ does not exist. 

The reader observes that this proof of the non-existence of $F$ is based on the following fact: if we move on $W$ from the cloth to the handle through the origin, the local (real) dimension of $W$ decreases from $2$ to $1$, but the local (complex) dimension of the complexification $W_\C$ of $W$ does not: it is constantly equal to $2$. This phenomenon of dimensional decrease allows to `hide in the handle of $W$' the poles of the meromorphic function $(x,y,z)\mapsto\frac{1}{z+1}$ after multiplying it by the analytic function $(x,y,z)\to x$, which vanishes identically on the handle but not on $W$. Thus, we obtain `by restriction to the real part $W$ of $W_\C$ an analytic function $f$ on $W$ with a unique `\emph{hidden pole}' at $p$. On the contrary, the meromorphic function $(x,y,z)\mapsto\frac{x}{z+1}$ has poles on $W_\C$ exactly at the points $\{(x,y,z)\in V:\ (x+{\tt i}y)(x-{\tt i}y)=0,z=-1\}$, which is the intersection with $W$ of the union of two (complex) lines.

Cartan's Theorem B provides a sufficient condition for a $C$-analytic set to have the analytic extension property. Such a sufficient condition is a local condition, the so-called \emph{coherence}. Let $X\subset\Omega$ be an analytic set and let $a\in X$. Consider an open neighborhood $U$ of $a$ in $\Omega$ and analytic functions $f_1,\ldots,f_r$ defined on $U$ such that their common zero set is $X\cap U$. The functions $f_1,\ldots,f_r$ are \emph{good equations for $X$ locally at $a$} if each local analytic equation of $X$ at $a$ is a local analytic combination of $f_1,\ldots,f_r$. More precisely, if $f$ is an analytic function defined on an open neighborhood $U_1$ of $a$ in $\Omega$ that vanishes on $X\cap U_1$, there exist an open neighborhood $U_2$ of $a$ in $U\cap U_1$ and analytic functions $g_1,\ldots,g_r$ defined on $U_2$ such that $f|_{U_2}=g_1f_1|_{U_2}+\ldots+g_rf_r|_{U_2}$ on $U_2$. The analytic set $X\subset\Omega$ is said to be \emph{coherent at $a\in X$} if there exist an open neighborhood $U$ of $a$ in $\Omega$ and analytic functions $f_1,\ldots,f_r$ defined on $U$ such that $f_1,\ldots,f_r$ are good equations for $X$ locally at each point $b\in X\cap U$. The analytic set $X\subset\Omega$ is \emph{coherent} if it is coherent at each of its points. Non-singular analytic subsets of $\Omega$ and singular analytic curves of $\Omega$ are examples of coherent analytic set. Cartan proved that each coherent analytic set is $C$-analytic and that the above phenomenon of dimensional decrease cannot occur locally at any coherent point, see \cite[\S9\&10]{c2} for further details. In particular, Whitney's umbrella is not coherent at the origin. Actually, Cartan's Theorem B \cite[\S6, Thm.3(B)]{c2} implies the following.

\begin{thm}[Cartan, {\cite[\S7(2)]{c2}}]\label{c-ext}
Each coherent analytic set $X\subset\Omega$ has the analytic extension property.
\end{thm}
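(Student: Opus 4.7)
The plan is to reinterpret an analytic function $f:X\to\R$ as a global section of the quotient sheaf $\an_\Omega/\ideal_X$, where $\ideal_X$ denotes the sheaf of germs of analytic functions on $\Omega$ that vanish on $X$, and then to lift this section back to $\an_\Omega$ by applying Cartan's Theorem~B to the coherent sheaf $\ideal_X$ on the real Stein manifold $\Omega$.

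First I would verify that the ``good equations'' coherence hypothesis on $X$ is exactly the coherence of $\ideal_X$ as an $\an_\Omega$-module. Around each $a\in X$ the given $f_1,\ldots,f_r$ defined on a neighborhood $U$ generate $\ideal_{X,b}$ for every $b\in X\cap U$ by the very definition of good equations, whereas at $b\in U\setminus X$ one has $\ideal_{X,b}=\an_{\Omega,b}$ and, after shrinking $U$ slightly around $b$, one of the $f_j$ is a unit at $b$, so the $f_j$ still generate the stalk there. Hence $\ideal_X$ is a locally finitely generated subsheaf of the coherent sheaf $\an_\Omega$, and is therefore coherent. Next I would encode $f$ as a section $\sigma\in H^0(\Omega,\an_\Omega/\ideal_X)$: by the pointwise definition of analyticity on $X$, cover $X$ by open sets $U_a\subset\Omega$ carrying analytic extensions $F_a\in\an_\Omega(U_a)$ of $f|_{X\cap U_a}$, and complete the cover of $\Omega$ by open sets of $\Omega\setminus X$ on which we put the zero function. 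On any overlap $U_a\cap U_b$ the difference $F_a-F_b$ is analytic and vanishes on $X$, hence it is a section of $\ideal_X$, so the local data glue to a well-defined $\sigma$ whose pointwise evaluation on $X$ recovers $f$.

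The cohomological punchline is then immediate: since $\Omega\subset\R^n$ is a real Stein open set, Theorem~B (\cite[\S6, Thm.3(B)]{c2}) gives $H^1(\Omega,\ideal_X)=0$, and the long exact sequence in cohomology attached to
$$
0\to\ideal_X\to\an_\Omega\to\an_\Omega/\ideal_X\to0
$$
shows that $H^0(\Omega,\an_\Omega)\to H^0(\Omega,\an_\Omega/\ideal_X)$ is surjective. Therefore $\sigma$ lifts to a global analytic $F\in\an_\Omega(\Omega)$ with $F|_X=f$, proving the extension property. The only point I expect to require some care is the first one, namely translating the local ``good equations'' definition of coherence used in the excerpt into coherence of $\ideal_X$ as an $\an_\Omega$-module; once that translation is in place, the rest is a formal consequence of Theorem~B.
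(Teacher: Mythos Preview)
Your proposal is correct and follows exactly the approach the paper indicates: the theorem is stated with attribution to Cartan and justified by the remark that it follows from Theorem~B, and in Section~\ref{s2} the paper spells out precisely your short exact sequence argument (for the sheaf $\Ii_X$, which coincides with $\Jj_X$ when $X$ is coherent by Corollary~\ref{ij}). Your translation of the ``good equations'' definition into coherence of $\Jj_X$ as an $\an_\Omega$-module is also the one given in the paper's sheaf-theoretic reformulation at the end of the introduction.
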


Coherence of analytic sets is thus a sufficient local condition that guarantees the analytic extension property. In general, the problem of finding local obstructions to have the analytic extension property has been treated in the literature mainly for $C$-analytic sets for which the previously described phenomenon of dimensional decrease appears. As a matter of fact, in the mind of many researchers non-coherence of $C$-analytic sets concerns the existence of `\emph{tails}', as in the case of Whitney's umbrella. However, this is not the general situation. In Section \ref{s3} we present several examples of pure dimensional $C$-analytic sets (algebraic sets indeed) that are not coherent. The following theorem is our first main result: it asserts that coherence is also a necessary condition to have the analytic extension property.

\begin{thm}\label{thm1}
A set $X\subset\Omega$ has the analytic extension property if and only if it is a coherent analytic set.
\end{thm}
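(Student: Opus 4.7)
The ``if'' direction is Cartan's Theorem B (Theorem \ref{c-ext}), so the substantive content of the statement is the ``only if'' implication, which we attack by contrapositive. Assume that $X\subset\Omega$ enjoys the analytic extension property. The argument with $f(x)=1/\|x-y\|^2$ recalled in the discussion after Problem \ref{prob:analytic} already shows that $X$ must coincide with the smallest $C$-analytic subset of $\Omega$ containing it; in particular $X$ is $C$-analytic. We are thus reduced to proving the following: a non-coherent $C$-analytic set cannot have the analytic extension property.

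Let $\Ii_X\subset\an_\Omega$ denote the full sheaf of germs of analytic functions on $\Omega$ vanishing on $X$, so that $\an_\Omega/\Ii_X$ is the sheaf of analytic function germs on $X$ and the analytic extension property amounts exactly to surjectivity of $\Gamma(\Omega,\an_\Omega)\to\Gamma(\Omega,\an_\Omega/\Ii_X)$. Let $\Gg_X\subset\Ii_X$ be the coherent subsheaf generated at each point by a fixed finite family of global analytic equations of $X$. The short exact sequence
$$
0\to\Gg_X\to\an_\Omega\to\an_\Omega/\Gg_X\to 0,
$$
together with $H^1(\Omega,\Gg_X)=0$ (Cartan's Theorem B applied to the coherent sheaf $\Gg_X$), shows that every section of $\an_\Omega/\Gg_X$ already lifts to a global analytic function. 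Hence the entire obstruction to extension is concentrated in the quotient $\Ii_X/\Gg_X$, supported on the non-coherence locus of $X$, and in the connecting homomorphism
$$
\delta\colon\Gamma(\Omega,\Ii_X/\Gg_X)\to H^1(\Omega,\Gg_X)
$$
attached to $0\to\Gg_X\to\Ii_X\to\Ii_X/\Gg_X\to 0$. Our task is therefore to exhibit, under the assumption that $X$ is non-coherent, a non-trivial class in the image of $\delta$ which represents an analytic function on $X$ admitting no analytic extension to $\Omega$.

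The local input is the ``hidden poles'' mechanism already seen for Whitney's umbrella: at a non-coherent point $a\in X$ the complexification of the germ $X_a$ has strictly larger complex dimension than the real dimension of $X_a$, so one can choose a germ $g_a\in\Ii_{X,a}\setminus\Gg_{X,a}$ together with a global analytic function $h$ on $\Omega$ whose zero set meets $X$ only along strata that are invisible from the ambient $\Omega$ but whose complexification vanishes on a positive dimensional complex subset of $(X_a)_\C$ through $a$. The ratio then restricts to an analytic function on the germ $X_a$, while any putative global analytic extension would, upon complexification, be forced to vanish on that complex subset and contradict the definition of $g_a$. Globalizing this construction uses the $C$-analyticity of $X$ (to build a global complexification via Whitney--Bruhat), Cartan's Theorem B applied to $\an_\Omega$ and $\Gg_X$ (to realize a local representative as a global meromorphic function on $\Omega$), and the identification between the obstruction just produced and a non-trivial class of $\delta$.

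The principal obstacle is to push this through in the generality needed for Theorem \ref{thm1}: in the examples of Section \ref{s3} the non-coherence of $X$ carries no visible ``tail'', so the dimensional drop is entirely hidden inside the complexification. The technical heart of the argument is a precise description of the stalks of $\Ii_X/\Gg_X$ at non-coherence points, combined with a control of the image of $\delta$ large enough to guarantee that genuine meromorphic functions, not merely formal cohomology classes, witness the failure of extension. This control is moreover what produces the ``many'' non-extendable meromorphic functions and the ``almost complete description of the non-extendability sets'' announced in the abstract.
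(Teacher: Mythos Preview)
Your cohomological framework has a genuine error. You correctly note that $H^1(\Omega,\Gg_X)=0$ by Cartan's Theorem B, since $\Gg_X$ is coherent. But then you place the obstruction in the connecting map
\[
\delta\colon \Gamma(\Omega,\Ii_X/\Gg_X)\longrightarrow H^1(\Omega,\Gg_X)
\]
coming from $0\to\Gg_X\to\Ii_X\to\Ii_X/\Gg_X\to 0$. Since the target of $\delta$ is the zero group you just computed, $\delta$ is identically zero and can never produce a non-trivial class. The correct obstruction lives elsewhere: from $0\to\Ii_X\to\an_\Omega\to\an_\Omega/\Ii_X\to 0$ and $H^1(\Omega,\an_\Omega)=0$ one gets that the extension map is surjective iff $H^1(\Omega,\Ii_X)=0$, where $\Ii_X$ is your \emph{full} (in general non-coherent) ideal sheaf. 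This is precisely the group the paper attacks, exhibiting a two-set covering $\{U_0,U_1\}$ and an explicit \v{C}ech $1$-cocycle $h/P_{\lambda_0}\in H^0(U_0\cap U_1,\Ii_X)$ that is not a coboundary.

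Your description of the local input is also imprecise. Non-coherence at $a$ does \emph{not} mean that the complexification of $X_a$ has complex dimension exceeding $\dim_\R X_a$; that dimensional drop characterizes points of the ``tail'' set $T(X)=\{x:\Gg_{X,x}\subsetneq\Ii_{X,x}\}$ (paper's $\Ii_X\subsetneq\Jj_X$), via Corollary~\ref{txca}. The set $N(X)$ of non-coherence points is a priori different, and the paper needs the non-trivial Theorem~\ref{xy} ($N(X)\subset\cl(T(X)\setminus N(X))$) plus Corollary~\ref{txnx} to know that non-coherence forces $T(X)\neq\varnothing$. Only then can one pick $h$ with $h_y\in\Jj_{X,y}\setminus\Ii_{X,y}$ at some $y\in T(X)$ and run the hidden-pole construction. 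Your outline skips this link and conflates $N(X)$ with $T(X)$; without it the argument does not start.
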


To that end we prove (in view of Theorem \ref{c-ext}) the following result (see also Theorem \ref{main11}). Recall that the ring $\Mm(\Omega)$ of meromorphic functions on an open subset $\Omega\subset\R^n$ is the total ring of fractions of the ring $\an(\Omega)$ of analytic functions on $\Omega$. The meaning of the word `many' in the following result will be explained in Remark \ref{many}(ii).

\begin{thm}\label{main1}
Let $X\subset\Omega$ be a non-coherent $C$-analytic set. Then there exist `many' meromorphic functions $\xi:\Omega\dashrightarrow\R$ whose restrictions $f:=\xi|_X$ are analytic on $X$, but have no analytic extensions to $\Omega$.
\end{thm}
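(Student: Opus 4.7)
My plan is to model the proof on the Whitney umbrella discussion in the introduction: produce meromorphic $\xi=p/q\in\Mm(\Omega)$ whose restriction to $X$ is forced to be analytic by a dimensional-decrease phenomenon at a non-coherent point, while the same complexification obstructs any analytic extension of $\xi|_X$ to $\Omega$. The cohomological handle is the first cohomology of the ideal sheaf $\J_X$, and the construction has to be done systematically enough to yield many such $\xi$.

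First I would pin down a point $x_0\in X$ at which $X$ fails to be coherent and pass, via the global complexification of $C$-analytic sets \cite[\S7]{wb}, to a complex neighborhood $\Omega_\C\subset\C^n$ carrying the complexified set $X_\C$. Non-coherence of $X$ at $x_0$ manifests itself as the strict inclusion $\J_{X,x_0}\cdot\an_{x_0,\C}\subsetneq\J_{X_\C,x_0}$: there is an irreducible complex component $Z$ of the germ $X_{\C,x_0}$ whose real trace $Z\cap\R^n$ has strictly smaller real dimension than $\dim_\C Z$. This hidden component $Z$ is precisely what plays, in an abstract form, the role of the bad point $(0,0,-1)$ for Whitney's umbrella.

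Next I would produce $q\in\an(\Omega)$ that does not vanish identically on any global irreducible component of $X$ but whose complexification vanishes on $Z$, together with $p\in\an(\Omega)$ vanishing on $X\cap\{q=0\}$ with enough multiplicity (dictated by the local Nullstellensatz inside each analytic branch of $X$) to force $\xi=p/q$ to be locally analytic along $X$. Then $f:=\xi|_X$ is analytic on $X$; if it extended to some $F\in\an(\Omega)$, complexifying the identity $qF=p$ on $X$ would give $qF_\C=p$ on $X_\C$, which evaluated at a suitable point of $Z$ close to $x_0$ produces the contradiction $0=p\neq 0$. Varying $p$, $q$ and $x_0$, and using Cartan's Theorem B applied to the coherent sub-ideal sheaf $\Ii\subset\J_X$ generated by a fixed system of global equations of $X$, should produce an infinite-dimensional family of such $\xi$'s modulo $\an(\Omega)$, which is the quantitative sense of the word ``many'' to be formalized in Remark \ref{many}(ii).

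The principal obstacle is producing the pair $(p,q)$ and locating $Z$ in the absence of a dimensional drop of $X$ itself, as in the pure-dimensional examples of Section \ref{s3}: there $Z$ is invisible to the real geometry of $X$ and must be detected purely from the complex structure of $X_\C$ above the non-coherent locus. I expect the exact sequence $0\to\Ii\to\J_X\to\J_X/\Ii\to 0$, combined with $H^1(\Omega,\Ii)=0$ from Cartan's Theorem B applied to $\Ii$, to reduce this to exhibiting enough global sections of $\J_X/\Ii$ whose support isolates the hidden complex geometry above the non-coherent locus; carrying out this reduction precisely, and matching it to the sharp description of the non-extendability sets promised in the statement, is the technical heart of the argument.
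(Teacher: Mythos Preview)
Your overall framework (complexification, the hidden branch $Z$ with $\dim_\R(Z\cap\R^n)<\dim_\C Z$, the cohomological role of $\Jj_X$) is sound, but the construction of the pair $(p,q)$ is backwards and cannot work as stated. You ask for a global $q\in\an(\Omega)$ whose complexification vanishes on $Z$ yet which does not vanish on any global irreducible component of $X$. This is impossible: $Z$ is a local branch of some global irreducible component $\widetilde X_i$ of the complexification $\widetilde X$, so by the identity principle $q_\C|_Z\equiv 0$ forces $q_\C|_{\widetilde X_i}\equiv 0$, hence $q|_{X_i}\equiv 0$. Your contradiction mechanism ($qF_\C=p$ on $X_\C$, then $0=p_\C$ at a point of $Z$) therefore has no input.

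The paper inverts the roles. The denominator $P_{\lambda_0}$ is a generic positive-definite combination of squared equations of a chosen $C$-analytic $Y\subset X$ meeting $T(X)$ (in the simplest case $Y=\{y\}$ is a single point of $T(X)$); its complex zero set meets $\widetilde X$ properly, not along a whole branch. The crucial object sits in the \emph{numerator}: one first picks $h$ in a neighbourhood $U_0$ of $Y$ with $h_y\in\Jj_{X,y}\setminus\Ii_{X,y}$ for $y\in Y\cap T(X)$---this is exactly a local equation of $X$ whose complexification $H$ fails to vanish on the hidden branch. Cartan's Theorem~B (applied to the coherent sheaf $P_{\lambda_0}\an_{\R^n}$, not to $\Ii_X$) produces a global $h_{\lambda_0}$ with $h_{\lambda_0}\equiv h\pmod{P_{\lambda_0}}$ near $Y$, and one sets $\xi=h_{\lambda_0}/P_{\lambda_0}$. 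Analyticity of $\xi|_X$ comes for free because $h|_{X\cap U_0}=0$; non-extendability comes from a curve $\beta$ in $\widetilde X\cap\{P_{\lambda_0}=0\}$ along which $H\circ\beta\neq 0$, obtained via curve selection after checking $\dim_\C(\widetilde X_y\cap\{P_{\lambda_0}=0\})>\dim_\C(\widetilde X_y\cap\{P_{\lambda_0}=0\}\cap\{H=0\})$. Your final paragraph does point toward the right object---sections of $\Jj_X/\Ii_X$ supported on $T(X)$---but the missing link is that such a section is what feeds the numerator, not the denominator, and the ``multiplicity on $X\cap\{q=0\}$'' condition you propose is too weak: what is actually needed is $h_{\lambda_0}\in P_{\lambda_0}\an_{\R^n,y}+\Jj_{X,y}$ at each $y$, which the construction above delivers automatically.
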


Problem \ref{prob:analytic} has a natural counterpart in Nash geometry. Recall that a subset of $\R^n$ is \emph{semialgebraic} if it is a Boolean combination of sets defined by polynomial equalities and inequalities. Let $\Omega\subset\R^n$ be an open semialgebraic set. A function $f:\Omega\to\R$ is said to be \emph{Nash} if it is smooth and its graph is semi-algebraic, or equivalently, by \cite[Prop.8.1.8]{bcr} if it is analytic and its graph is semialgebraic. Let $X$ be a subset of $\Omega$. A function $f:X\to\R$ is called \emph{local Nash} if, for each point $a\in X$, there exists an open semialgebraic neighborhood $U$ of $a$ in $\Omega$ such that $f|_{X\cap U}$ extends to a Nash function defined on $U$. We say that the set $X\subset\Omega$ has the \emph{Nash extension property} if each local Nash function $f:X\to\R$ extends to a Nash function defined on $\Omega$. The set $X\subset\Omega$ is a \emph{Nash set} if it is the common zero set of finitely many Nash functions defined on $\Omega$. An adapted argument to the one exposed above \cite[Thm.1]{nt} for the analytic setting shows that the Nash extension property is reserved for Nash sets. Let $\Omega\subset\R^n$ be an open semialgebraic set and let $X\subset\Omega$. Suppose that $X$ has the Nash extension property, but it is not a Nash set. As the ring $\Nn(\Omega)$ is noetherian \cite[Thm.8.7.18]{bcr}, there exists the smallest Nash subset $Y$ of $\R^n$ that contains $X$. Observe that $Y$ is a $C$-analytic set. Suppose there exists a point $y\in Y\setminus X$ and consider the local Nash function $f:X\to\R,\ x\mapsto\frac{1}{\|x-y\|^2}$. As $f$ has no analytic extension to $\R^n$, it has no Nash extension to $\R^n$, as well.

The notion of coherent Nash set can be defined as in the analytic case: it is enough to replace `open neighborhood' with `open semialgebraic neighborhood' and `analytic' with `Nash', respectively. As we recall in Lemma \ref{ijn}, a Nash set $X\subset\Omega$ is coherent (as a Nash set) if and only if it is coherent as an analytic set (se also \cite[\S2.B]{bfr}). 

Similarly to what we have formulated in the analytic case, we arrive to the following Nash extension problem. 

\begin{prob}\label{prob:n}
Given an open semialgebraic set $\Omega$ and a Nash set $X\subset\Omega$, is it true that $X\subset\Omega$ has the Nash extension property if and only if $X\subset\Omega$ is coherent?
\end{prob}

The `if' implication was proven in the remarkable papers \cite{crs2,cs} by Coste, Ruiz and Shiota. So far, as in the analytical case, the `only if' implication has been treated mainly for Nash sets $X\subset\R^n$ with visible `tails'.

Problem \ref{prob:n} is solved affirmatively by our second main result.

\begin{thm}\label{thm2}
A subset $X$ of an open semialgebraic set $\Omega$ has the Nash extension property if and only if $X\subset\Omega$ is a coherent Nash set.
\end{thm}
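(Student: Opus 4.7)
The plan is as follows. The ``if'' direction of Theorem \ref{thm2} is the content of the papers \cite{crs2,cs} of Coste, Ruiz and Shiota, so the whole work concentrates on the ``only if'' direction, and I would mirror the route used for Theorem \ref{thm1}.

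First I would verify that the Nash extension property forces $X$ itself to be a Nash set. This is precisely the argument already recorded in the introduction: noetherianity of $\Nn(\Omega)$ produces a smallest Nash set $Y\supset X$, and any $y\in Y\setminus X$ makes $x\mapsto 1/\|x-y\|^2$ a local Nash function on $X$ whose Nash (hence even analytic) extension to $\Omega$ is impossible. So one may assume that $X$ itself is a Nash set, and it remains to prove that $X$ is coherent.

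Second, and this is the heart of the matter, I would establish a Nash analog of Theorem \ref{main1}: if $X\subset\Omega$ is a non-coherent Nash set, there exist Nash meromorphic functions $\xi=g/h$ on $\Omega$ whose restrictions to $X$ are local Nash but which admit no Nash extension to $\Omega$. Fixing a non-coherent point $a\in X$ and invoking Lemma \ref{ijn} (coherence of a Nash set as a Nash set coincides with coherence as an analytic set), Theorem \ref{main1} already furnishes an analytic meromorphic obstruction on $\Omega$. What I would then construct is a Nash realisation of such an obstruction, choosing $g,h\in\Nn(\Omega)$ so that the polar locus of $g/h$ lies in the ``invisible'' part of the complexification $X_\C$ along which the dimension drops between $X$ and $X_\C$, as in the Whitney umbrella example. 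Then $g/h$ restricts to a local Nash function on $X$, while the same complexification/dimensional-decrease argument that obstructs analytic extension also obstructs Nash extension, since every Nash extension is in particular analytic. Observe that a naive appeal to Theorem \ref{thm1} is not enough: non-coherence of $X$ alone produces only an analytic non-extendable function on $X$, which need not be local Nash.

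The principal obstacle I anticipate is precisely the cohomological pathology of Nash sheaves stressed in the abstract: $H^1$ of the Nash ideal sheaf $\Jj_X$ does not behave as well as its analytic counterpart, so the sheaf-theoretic analysis underlying Theorem \ref{main1} cannot be transplanted verbatim. To overcome this, I would transfer the analytic obstruction class from the analytic ideal sheaf to the Nash one by the Efroymson--Coste--Ruiz--Shiota style of Nash approximation, being careful that the placement of the polar locus with respect to the non-coherent locus survives the approximation, and then appeal to the finiteness and global representability properties of Nash sets on open semialgebraic $\Omega$ to produce genuinely global $g,h\in\Nn(\Omega)$. The ``many'' part of the conclusion should follow afterwards by multiplying the basic obstruction by nowhere-zero Nash units and by varying the non-coherent point $a$, yielding the almost complete description of non-extendability sets announced alongside Theorem \ref{main1}.
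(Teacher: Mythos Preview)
Your overall structure is right: the ``if'' direction is Coste--Ruiz--Shiota, the reduction to $X$ being a Nash set is exactly the argument in the introduction, and the core is a Nash analogue of Theorem~\ref{main1}. You also correctly identify the main obstacle, namely that the cohomological machinery behind Theorem~\ref{main1} fails for $\Nn$-sheaves.

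Where your proposal is genuinely incomplete is the mechanism you propose for crossing this obstacle. You suggest producing an \emph{analytic} obstruction via Theorem~\ref{main1} and then ``transferring'' it to the Nash category by Efroymson/Coste--Ruiz--Shiota approximation, ``being careful that the placement of the polar locus \ldots\ survives the approximation''. This last phrase hides the whole difficulty: the non-extendability of $\xi=h_{\lambda_0}/P_{\lambda_0}$ is detected by the ideal-theoretic condition $h_{\lambda_0,y}\notin P_{\lambda_0,y}\an_{\R^n,y}+\Ii_{X,y}$ at points $y\in Y\cap T(X)$, and generic Nash approximation of the numerator gives no control on such membership. You would need to explain precisely which objects you approximate and why the obstruction class (not just the functions) persists; as written this is a gap, not a routine step.

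The paper avoids this entirely by never leaving the Nash category. All the structural data are shown to be Nash from the outset: $T(X)$ and $N(X)$ are semialgebraic (Lemma~\ref{dot} and \S\ref{dspnc}), the ``winning equation'' $h\in H^0(U_0,\Jj_X^\bullet)$ with $h_y\in\Jj^\bullet_{X,y}\setminus\Ii^\bullet_{X,y}$ is constructed directly as a Nash function (Theorem~\ref{h}, Nash case), and the denominators $P_\lambda$ are built from a finite system of Nash generators of $\Ii^\bullet(Y)$. The one place where Cartan's Theorem~B was used in the analytic proof---producing $h_{\lambda_0}\in\an(\R^n)$ with $h_{\lambda_0}-h$ divisible by $P_{\lambda_0}$ near $Y$---is replaced in the Nash case by Theorem~\ref{factfinitesheaf}(B) applied to the \emph{finite} sheaf $P_{\lambda_0}\Nn_{\R^n}$, yielding $h_{\lambda_0}\in\Nn(\R^n)$ with the same divisibility. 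The verification that $\xi_{\lambda_0}=h_{\lambda_0}/P_{\lambda_0}$ is local Nash on $X$ but admits no analytic (hence no Nash) extension then proceeds via complexification and curve selection exactly as in the analytic case. So rather than approximate an analytic obstruction, the paper reruns the entire construction with Nash ingredients and substitutes the finite-sheaf Cartan-type theorem for the cohomological step.
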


To that end we prove the following result (see also Theorem \ref{main21}). Recall that the ring $\Mm^\bullet(\Omega)$ of meromorphic Nash functions on an open semialgebraic subset $\Omega\subset\R^n$ is the total ring of fractions of the ring $\Nn(\Omega)$ of Nash functions on $\Omega$. The meaning of the word `many' in the following result will be explained in Remark \ref{many}(ii).

\begin{thm}\label{main2}
Let $X$ be a non-coherent Nash subset of an open semialgebraic set $\Omega$. Then there exist `many' meromorphic Nash functions $\xi:\Omega\dashrightarrow\R$ on $\Omega$ whose restrictions $f:=\xi|_X$ to $X$ are local Nash functions, but have no Nash extensions to $\Omega$. 
\end{thm}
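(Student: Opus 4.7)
The plan is to reduce Theorem \ref{main2} to its analytic counterpart, Theorem \ref{main1}, and then upgrade the resulting meromorphic witnesses to the Nash category. Since $X\subset\Omega$ is a non-coherent Nash set, by Lemma \ref{ijn} it is equally non-coherent when regarded as an analytic set. It is also a $C$-analytic subset of $\Omega$, because its defining Nash equations are in particular analytic. Therefore Theorem \ref{main1} already supplies `many' analytic meromorphic functions $\xi_0=P_0/Q_0$ with $P_0,Q_0\in\an(\Omega)$ whose restrictions $\xi_0|_X$ are analytic on $X$ but admit no analytic extension to $\Omega$; in particular they admit no Nash extension either.

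The core technical step is to realize these obstructions by Nash meromorphic functions rather than merely analytic ones. I would begin by localizing the analysis at a point $a\in X$ where Nash coherence fails. There, the complexification of $X$ acquires a local irreducible component $Y$ whose real trace has strictly smaller dimension in $X$ than the complex dimension of $Y$; this is the `hidden' component responsible for non-coherence, and because $X$ itself is Nash, the component $Y$ admits a description by Nash-analytic equations. Around $a$, I would construct local Nash functions $p_{\mr{loc}},q_{\mr{loc}}$ such that $q_{\mr{loc}}$ vanishes on $Y$ while $p_{\mr{loc}}$ vanishes on the real trace of $Y$ with exactly the order required for $p_{\mr{loc}}/q_{\mr{loc}}$ to be Nash on $X$ near $a$, mimicking the Whitney umbrella construction inside the Nash category.

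Globalizing this local data to $\Omega$ is the main obstacle. In the analytic case one works with $H^1(\Omega,\ideal_X)$ together with Cartan's Theorem B, but the sheaf of Nash function germs is \emph{not} coherent in Cartan's sense, so the analogous cohomological machinery is unavailable. To circumvent this, I would combine two tools: the noetherianity of $\Nn(\Omega)$, which guarantees that a local Nash vanishing condition is captured by finitely many global Nash generators; and the approximation theorems of Efroymson and Shiota, which allow the analytic witnesses $P_0,Q_0$ of Theorem \ref{main1} to be approximated by Nash functions $P,Q\in\Nn(\Omega)$ preserving a prescribed jet along $X$ and along the relevant hidden Nash-analytic component. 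If the approximation is fine enough to preserve both the vanishing of $q$ on $Y$ and the cancellation in $p/q$ along $X$, the resulting $\xi=P/Q\in\Mm^\bullet(\Omega)$ is local Nash on $X$ by construction; its non-extendability as a Nash function follows because a Nash extension would in particular be analytic, contradicting the obstruction inherited from Theorem \ref{main1}.

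For the `many' assertion, I would parametrize the construction by multiplying the witness by elements of a suitable module of Nash functions vanishing on the hidden locus but not identically on $\Omega$, producing an infinite-dimensional family of cosets modulo the extendable ones. The truly delicate point throughout is controlling the Nash approximation finely enough to preserve the intricate cancellation pattern along $Y\cap X$ without inadvertently producing a function that extends globally; this forces the approximation to be performed at the level of the sheaf of Nash ideals of $X$ rather than at the level of individual functions, and it is precisely where the finiteness and non-coherence features of the Nash sheaf become genuinely restrictive.
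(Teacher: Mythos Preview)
Your proposal has a genuine gap at the approximation step, and the paper's route is fundamentally different.

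The difficulty you yourself flag is decisive: Efroymson--Shiota type approximation lets you replace an analytic function by a Nash one matching a \emph{finite} jet along a Nash set, but the property ``$\xi|_X$ is local Nash'' is not a finite-jet condition. Concretely, at a point $a$ where coherence fails you need $P_a - a_a Q_a \in \Jj^\bullet_{X,a}$ for some Nash germ $a_a$, and the whole point of non-coherence is that $\Jj^\bullet_{X,a} \supsetneq \Ii^\bullet_{X,a}$: membership in $\Jj^\bullet_{X,a}$ is a delicate local condition not captured by any global Nash ideal you can feed into an approximation theorem. Perturbing $P_0,Q_0$ to Nash $P,Q$ while preserving this membership at every non-coherent point is exactly what is not available; and if you only preserve it at coherent points you lose the obstruction. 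So the sentence ``if the approximation is fine enough to preserve \ldots the cancellation in $p/q$ along $X$'' hides the entire content of the theorem.

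The paper avoids approximation altogether and builds the Nash witness directly. One first produces (Theorem~\ref{h}, Nash case) a Nash function $h$ on $\R^n\setminus N(X)$ with $h_x\in\Jj^\bullet_{X,x}\setminus\Ii^\bullet_{X,x}$ for every $x\in T(X)\setminus N(X)$; this uses that $X\setminus N(X)$ is a coherent Nash set, so the ideal $\Ii^\bullet(X\setminus N(X))$ is finitely generated and a generic positive combination of squares of its generators works. For the denominator one takes Nash generators $p_1,\ldots,p_m$ of $\Ii^\bullet(Y)$ for a chosen Nash subset $Y\subset X$ meeting $T(X)$, and sets $P_{\lambda_0}=\sum\lambda_{0,i}p_i^2$ for a generic $\lambda_0$. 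The crucial globalization step---producing a Nash $h_{\lambda_0}\in\Nn(\R^n)$ with $h_{\lambda_0}-h$ divisible by $P_{\lambda_0}$ near $Y$---is \emph{not} done by cohomology (indeed $H^1(\R,\Nn_\R)\neq0$) but by the Coste--Ruiz--Shiota extension theorem for finite sheaves of Nash ideals (Theorem~\ref{factfinitesheaf}(B)) applied to the principal ideal sheaf $P_{\lambda_0}\Nn_{\R^n}$. The resulting $\xi_{\lambda_0}=h_{\lambda_0}/P_{\lambda_0}$ is Nash meromorphic and local Nash on $X$ by construction; its non-extendability is checked directly via complexification and the curve selection lemma, showing ${\tt O}(\xi_{\lambda_0})=Y\cap T(X)$. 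The ``many'' then comes from varying $Y$ among Nash subsets of $X$ meeting $T(X)$, yielding distinct obstruction sets.
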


The proof of Theorem \ref{main2} is similar to the one of Theorem \ref{main1}, but requires to take into account the finiteness conditions imposed by the semialgebricity of Nash functions. In the Nash setting there exist difficulties to pass from local to global results, due to the unexpected cohomological behavior of Nash functions even in the simplest cases. Namely, Hubbard proved that $H^1(\R,\Nn_\R)\neq0$, see \cite{hb}. We explain the precise technicalities in \S\ref{nasht}.

The theory of sheaves is the appropriate setting where one can contextualize the concept of coherence and consequently the theorems stated above in the analytic case. Let $\Omega$ be an open subset of $\R^n$ and let $X\subset\Omega$ be an analytic set. The idea is to associate to each point $x\in\Omega$ the ring $\an_{\Omega,x}$ of germs $f_x$ at $x$ of the analytic functions $f$ on a neighborhood of $x$ in $\Omega$ and the ideal $\Jj_{X,x}$ of $\an_{\Omega,x}$ constituted by the germs $h_x$ that vanish identically on the germ $X_x$ of $X$ at $x$. Define the set $\an_\Omega:=\bigcup_{x\in\Omega}\an_{\Omega,x}$, the projection map $\pi:\an_\Omega\to\Omega,\ f_x\mapsto x$ and the subset $\Jj_X:=\bigcup_{x\in\Omega}\Jj_{X,x}$ of $\an_\Omega$. The set $\an_\Omega$ admits a natural topology such that each analytic functions $f:\Omega\to\R$ can be interpreted as a continuous section $\sigma:\Omega\to\an_\Omega$ of the projection map $\pi$ (that is, $\pi\circ\sigma=\mathrm{id}_\Omega$). Namely, if $f:\Omega\to\R$ is an analytic function, define the section $\sigma_f:\Omega\to\an_\Omega,\ x\mapsto f_x$ of $\pi$ induced by $f$, whereas if $\sigma:\Omega\to\an_\Omega$ is a continuous section of $\pi$, define the analytic function $f_\sigma:\Omega\to\R,\ x\mapsto(\sigma(x))(x)$ associated to $\sigma$. It holds that $f_{\sigma_f}=f$ and $\sigma_{f_\sigma}=\sigma$. If we endow $\an_\Omega$ with such a topology, we obtain the so-called \emph{sheaf $\an_\Omega$ of analytic functions on $\Omega$} and $\Jj_X$ endowed with the induced topology is the \emph{subsheaf of $\an_\Omega$ of ideals of germs of analytic functions vanishing on $X$}. The subsheaf $\Jj_X$ of $\an_\Omega$ is coherent at a point $a\in\Omega$ if there exist an open neighborhood $U$ of $a$ in $\Omega$ and analytic functions $f_1,\ldots,f_r$ defined on $U$ such that the germs $f_{1,b},\ldots,f_{r,b}$ generate the ideal $\Jj_{X,b}$ of the ring $\an_{\Omega,b}$ for each $b\in U$. The analytic set $X\subset\Omega$ is \emph{coherent} if $\Jj_X$ is coherent at each point of $\Omega$. This `sheaf version' of coherence of $X\subset\Omega$ coincides with the one given above.

\subsection{Whitney's extension problem and related results}\label{wet}
Close to Problems \ref{prob:analytic} and \ref{prob:n} appears another classic extension problem often called \emph{Whitney's extension problem}. Although the `differentiable nature' of the latter problem is essentially different from the analytical nature of Problems \ref{prob:analytic} and \ref{prob:n}, there are some subtle and deep connections between them that reveals the central role of sheaves and other similar tools.

In his pioneering 1934 articles \cite{w1,w2,w3} Whitney raised the question (known as {\em Whitney's extension problem}) of determining whether a function $f$ defined on a closed subset $X$ of $\R^n$ admits an extension to $\R^n$ of class $\Cc^p$ for some $p\in\N$. In \cite{w1} it is proved what is known as classical \emph{Whitney's extension theorem}. Namely, Whitney characterized the families $\{f_\alpha\}_{|\alpha|\leq p}$ of continuous functions on $X$ such that $f_0=f$ admits an extension $F:\R^n\to\R$ of class $\Cc^p$ whose Taylor polynomial at each point $a\in X$ is $\sum_{|\alpha|\leq p}f_\alpha(a)(x-a)^\alpha$. In addition, he proved that $F$ can be chosen analytic on $\R^n\setminus X$. 

The deeper question of finding an extension of $f$ to $\R^n$ of class $\Cc^p$ using only the values of $f$ was treated in \cite{w2}: here Whitney gave a solution in the case $n=1$ using limits of $p^{\mathrm{th}}$ divided differences of $f$. In \cite{g} Glaeser introduced a `paratangent bundle' of $X$ using limits of secants and solved the question in the case $p=1$. Bierstone, Milman and Paw{\l}ucki \cite{bmp2} generalized Glaeser's construction and defined the \emph{paratangent bundle $\tau^p(X)$ of $X$ of any order $p\in\N$}. The idea is similar (dual in some sense) to the one used to construct the subsheaf $\Jj_X$ of $\an_\Omega$. Namely, repeating a limit procedure (Glaeser operation) a finite number of times, one associates to each point $x\in X$ a linear subspace $\tau^p_x(X)$ of the dual space $\EuScript{P}_p(\R^n)^*$ of the real vector space $\EuScript{P}_p(\R^n)$ of polynomial functions on $\R^n$ of degree $\leq p$. Then, one defines $\tau^p(X):=\bigcup_{x\in X}\tau^p_x(X)$. Applying this construction to the graph of any function $f:X\to\R$, one obtains a bundle $\nabla^pf$ contained in $\tau^p(X)\times\R$. In the same article \cite{bmp2} the authors conjectured that \emph{$f$ has an extension to $\R^n$ of class $\Cc^p$ if and only if $\nabla^pf$ is a `function'} (more precisely, the graph of a function $\tau^p(X)\to\R$), and they proved a version of their conjecture: \emph{if $X\subset\R^n$ is a compact subanalytic set and $\nabla^qf$ is a `function' for some $q\geq p$ sufficiently large, then $f$ has an extension to $\R^n$ of class $\Cc^p$}. The proof of this remarkable result is based on a $\Cc^p$ composite function property of (the uniformizations of) compact subanalytic sets established in \cite{bmp1}.

The $\Cc^p$ composite function property is a version of the $\Cc^\infty$ composite function property of closed subanalytic sets previously introduced in \cite{bm1}. Suppose $X\subset\R^n$ is a closed subanalytic set. Let $\Cc^k(X)$ be the set of restrictions to $X$ of functions defined on $\R^n$ of class $\Cc^k$ for each $k\in\N\cup\{\infty\}$ and define $\Cc^{(\infty)}(X):=\bigcap_{k\in\N}\Cc^k(X)$. Evidently, $\Cc^\infty(X)\subset\Cc^{(\infty)}(X)$. An important result asserts that the $\Cc^\infty$ composite function property for $X$ is equivalent both to the equality $\Cc^\infty(X)=\Cc^{(\infty)}(X)$ and to a `stratified coherence' property of $X$, the so-called \emph{semicoherence} of $X$, widely studied and developed in \cite{bm3,bm4}. Thus, \emph{$\Cc^\infty(X)=\Cc^{(\infty)}(X)$ if and only if $X\subset\R^n$ is a semicoherent subanalytic set} \cite{bmp1}. Our Theorems \ref{thm1} and \ref{thm2} are the analytic and Nash counterparts of the latter differential subanalytic result. Examples of semicoherent subanalytic sets are the closed Nash subanalytic sets, which include closed semianalytic sets \cite{bm1}. In \cite{p1} Paw{\l}ucki provided an example of $3$-dimensional compact subanalytic set $P\subset\R^5$ that is not semicoherent, so $\Cc^\infty(P)\subsetneq\Cc^{(\infty)}(P)$ (see also \cite{p2}). In his 2006 article \cite{f3} Fefferman provided a complete solution to Whitney's extension problem by proving the above Bierstone-Milman-Paw{\l}ucki conjecture via the use of a natural variant of the paratangent bundle of order $p$, see also \cite{f1,f2,bmp3}. We refer the reader to Fefferman's expository paper \cite{f4} for further details and references.
 
Whitney's extension theorem has been generalized to the o-minimal setting by Kurdyka, Paw{\l}ucki and Thamrongthanyalak in \cite{kp1,kp2,th}. The statements are natural o-minimal reformulations of the classical theorem. The proofs however are quite different, because o-minimality involves always finiteness restrictions that prevent the use of certain strong techniques from the differential case.

Let $S\subset\R^n$ be a closed semialgebraic set and let $f:S\to\R$ be a semialgebraic function. Suppose $f$ extends to a function $F:\R^n\to\R$ of class $\Cc^p$ for some $p\in\N$. A question proposed by Bierstone and Milman \cite{z} asks whether it is possible to take $F$ semialgebraic. In the continuous case, the affirmative solution is contained in the article \cite{dk} by Delfs and Knebusch. In \cite{at}, Aschenbrenner and Thamrongthanyalak solved affirmatively this question in the case $p=1$. Recently, Fefferman and Luli \cite{fl} settled the case $n=2$ for an arbitrary $p\geq1$. In \cite{bcm} Bierstone-Campesato-Milman provided a weak solution to the previous problem with a {\em loss of differentiability} via a function $t:\N\to\N$ such that $t(p)\geq p$ for each $p\geq1$. Namely, if the semialgebraic function $f:S\to\R$ extends to a function $F:\R^n\to\R$ of class $\Cc^{t(p)}$ for some $p\in\N$, then $f$ extend to a $\Cc^p$ differentiable semialgebraic function $F':\R^n\to\R$.

In Section \ref{s7} and Appendix \ref{B} we relate smooth semialgebraic functions on a semialgebraic set $S\subset\R^n$ (see \S\ref{ssf}) with semialgebraic functions on $S$ that are local Nash at the points of $S$. We characterize in Theorem \ref{as} the semialgebraic sets $S$ such that the smooth semialgebraic functions on $S$ coincide with the Nash functions on $S$. In case $S=X$ is a Nash subset of an open semialgebraic subset $U$ of $\R^n$, the previous coincidence is provided exactly when $X$ is coherent (see Theorem \ref{main2} and Lemma \ref{ens}).

\subsection*{Structure of the article}
The article is organized as follows. In Section \ref{s2} we recall and analyze some results concerning coherence in both the real analytic and the Nash cases. In Section \ref{s3} we study the set $T(X)$ of `tails' of a $C$-analytic set $X\subset\R^n$, we recall a description of its set $N(X)$ of points of non-coherence provided in \cite{abf2} and provide a good understanding of both sets. We show as well that both $T(X)$ and $N(X)$ are semialgebraic subsets of $X$ in case $X$ is a Nash set. We also discuss in Section \ref{s3} some enlightening examples that the reader can have in mind to illustrate the strategy followed to prove in Section \ref{s6} (stronger versions of) the main results of this article: Theorems \ref{main1} and \ref{main2}. In Sections \ref{s4} and \ref{s5} we present the key Theorems \ref{xy} and \ref{h}: (1) $\cl(T(X)\setminus N(X))=T(X)\cup N(X)$ and (2) there exists special analytic equations (resp. Nash equations) of $X\setminus N(X)$ in $\R^n\setminus N(X)$ that encode the `tails' of $X$ outside $N(X)$. Finally, in Section \ref{s7} we prove that the ring of $\Cc^\infty$ semialgebraic functions on a Nash set $X\subset\R^n$ coincide with the ring of Nash functions on $X$ if and only if $X$ is coherent. More generally, we characterize the semialgebraic sets $S\subset\R^n$ for which smooth semialgebraic functions on $S$ coincide with Nash functions on $S$. The article finishes with Appendix \ref{A}, where we prove the semialgebraicity of certain constructions (complexification and its normalization) in the Nash setting, and Appendix \ref{B}, where we prove the equivalence between the smooth semialgebraic functions on $S$ and the semialgebraic functions on $S$ that are local Nash on $S$.

\subsubsection*{Suggestion to read the proof of Theorem {\em\ref{main1}}}
If the reader is only interested in finding a meromorphic function on $\Omega$ that has a single pole, it is analytic on $X$ and has no analytic extension to $\Omega$, the proof provided in this article can be substantially shortened (see also Remark \ref{summ}). The reader can skip Sections \ref{s3}, \ref{s4} and \ref{s5}, go directly to Section \ref{s6} (Theorem \ref{main11}) and consider the case of a singleton $Y:=\{y\}$ following the construction contained in {\sc Steps 1} and {\sc 2} and skipping the arguments that involve a $Y$ of dimension $\geq1$. The explicit construction of such function is done in {\sc Step 3}. Sections \ref{s3}, \ref{s4} and \ref{s5} are included to show that if $\dim(X)\geq3$ and the set of (maybe hidden) `tails' of $X$ has dimension $\geq2$, one can produce many different meromorphic function on $\Omega$, that are analytic functions on $X$, have no analytic extension to $\Omega$ (and are of different nature to the ones already known for the case of visible `tails'). Alternatively, as a first approach to Theorem \ref{main1}, we refer the reader to the lecture of Kollar \cite{ko}, which provides a slightly different proof of the case of a single pole. Its main result (Theorem 2) is somehow the union of our Corollary \ref{ij}, Lemma \ref{dot} and the suggested simplified version of Theorem \ref{main11}. 

\subsubsection*{Suggestion to read the proof of Theorem {\em\ref{main2}}}
As it happens in the analytic case, if the reader is interested in a local Nash function on $X$ that is meromorphic with a single pole and does not have any Nash extension on $\Omega$, the proof of Theorem \ref{main2} can be substantially shortened and Sections \ref{s3}, \ref{s4} and \ref{s5} can be skipped again. In this case cohomological arguments do not work properly and {\sc Steps 3} and 4 of the proof of Theorem \ref{main2} (Theorem \ref{main21}) are needed to get around this inconvenience. 

%%%
\section{Coherence of analytic and Nash sets}\label{s2}

The content of this section is scattered in the literature. We include it (with suitable references to the literature when available) in order to provide a full vision of the tools we have at our disposal to prove the main results of this article. Given a function $f$ or a set ${\mathfrak F}$ of functions on a set $X$, we denote $\ZZ(f)=\{x\in X:\ f(x)=0\}$ and $\ZZ({\mathfrak F})=\bigcap_{f\in {\mathfrak F}}\ZZ(f)$ their respective zero sets. If $X$ is a topological space and $f_{1,x},\ldots,f_{r,x}$ are function germs at a point $x\in X$ represented by functions $f_1,\ldots,f_r$ defined on a common open neighborhood $V^x\subset X$ of $x$, we define $\ZZ(f_{1,x},\ldots,f_{r,x})$ as the set germ $\{y\in V^x:\ f_1(y)=0,\ldots,f_r(y)=0\}_x$ at $x$. Let $\K:=\R$ or $\C$ and let us endow $\K^n$ with the Euclidean topology.

\subsection{Coherence in the analytic setting}
We compare the concept of coherence for complex analytic sets and real analytic sets and analyzing its main consequences. Let $\Omega\subset\K^n$ be an open set and let $X\subset\Omega$ be a closed set. Recall that $X$ is an \emph{analytic subset of $\Omega$} if for each $x\in X$ there exists analytic function germs $f_{1,x},\ldots,f_{r,x}\in\an_{\K^n,x}$ such that $X_x=\ZZ(f_{1,x},\ldots,f_{r,x})$.

\subsubsection{Coherence of complex analytic sets.}\label{complex}
Suppose first that we are in the complex case $\K=\C$ and let $X\subset\Omega$ be a complex analytic subset of and open set $\Omega\subset\C^n$. Pick a point $x\in\Omega$ and denote $\Jj_{X,x}:=\{f_x\in\an_{\C^n,x}:\ X_x\subset\ZZ(f_x)\}\}$ the zero ideal of the complex analytic germ $X_x$. Let $f_{1,x},\ldots,f_{r,x}\in\Jj_{X,x}$ be a system of generators of the ideal $\Jj_{X,x}$ of $\an_{\C^n,x}$ and let $U^x\subset\Omega$ be an open neighborhood of $x$ such that there exist analytic functions $f_i\in\an(U^x)$ that are representatives of the analytic germs $f_{i,x}$. Oka's coherence theorems \cite[Ch.IV]{n1} imply that if we shrink $U^x$, then $\{f_{1,y},\ldots,f_{r,y}\}$ is a system of generators of $\Jj_{X,y}$ for each $y\in U^x$. This means that every complex analytic set $X\subset\Omega$ is {\em coherent}. If in addition the open set $\Omega\subset\C^n$ is Stein, then the coherence of $X$ (that we have by Oka's coherence theorems) implies: 
\begin{itemize}
\item[(i)] $X$ is the zero set of finitely many analytic functions on $\Omega$ (see \cite[Proof of Prop.15]{c2}).
\item[(ii)] The ideal of analytic functions on $\Omega$ that are identically zero on $X$ generates the ideal of analytic germs on $\Omega_x$ that vanishes identically on $X_x$ for each $x\in X$ (as a consequence of Cartan's Theorem A).
\item[(iii)] The analytic functions on $X$ (that is, the global sections of the sheaf $\an_{\C^n}/\Jj_X$) are restrictions to $X$ of analytic functions on $\Omega$ (as a consequence of Cartan's Theorem B).
\end{itemize}
Observe that the limitation to have the previous properties is of global nature, that is, $\Omega$ is a Stein space, or equivalently, $\Omega$ has a suitable shape to have `enough' analytic functions.

\subsubsection{Coherence of real analytic sets}
We provide next a more systematic approach to the concept of {\em coherence of real analytic sets} than the one provided in the Introduction. Assume first $\Omega=\R^n$ and let $X\subset\R^n$ be a $C$-analytic subset of $\R^n$, that is, the common zero set of finitely many analytic functions on $\R^n$. We will comment below in \S\ref{general} (via Whitney's analytic immersion theorem) that this assumption is not restrictive. For each open subset $U\subset\R^n$ denote the ring of analytic functions on $U$ with $\an(U)$. Let $\Ii(X):=\{f\in\an(\R^n):\ X\subset\ZZ(f)\}$ be the ideal of $\an(\R^n)$ of analytic functions vanishing identically on $X$. Consider the sheaves $\Jj_X$ and $\Ii_X$ of ideals on $\R^n$ given by the following formulas:
$$
\begin{cases}
\Jj_{X,x}:=\{f_x\in\an_{\R^n,x}:\ X_x\subset\ZZ(f_x)\},\\
\Ii_{X,x}:=\Ii(X)\an_{\R^n,x}.
\end{cases}
$$
If $U\subset\R^n$ is an open subset, then $H^0(U,\Jj_{X})=\Ii(X\cap U)=\{f\in\an(U):\ X\cap U\subset\ZZ(f)\}$, whereas $H^0(U,\Ii_X)=\Ii(X)\an(U)$. In general, $\Ii_{X,x}\subset\Jj_{X,x}$ for each $x\in X$ and $\Ii(X)\an(U)\subset\Ii(X\cap U)$ for each open subset $U\subset\R^n$.

A real analytic set $X$ is \em coherent \em if the sheaf of ideals $\Jj_X$ is coherent. Recall that a sheaf $\Ff$ of $\an_{\R^n}$-modules is \em coherent \em if:

\begin{itemize}
\item[(i)] $\Ff$ is of finite type, that is, for each $x\in\R^n$ there exist an open neighborhood $U\subset\R^n$ of $x$, $m\in\N^*$ and a surjective morphism $\an_{\R^n}^m|_U\rightarrow\Ff|_U$, and 
\item[(ii)] the kernel of each homomorphism $\an_{\R^n}^p|_V\rightarrow\Ff|_V$ is of finite type for each $p\geq1$ and each open subset $V$ of $\R^n$.
\end{itemize}

As the sheaf of rings $\an_{\R^n}$ (see \cite[\S1.2.15.Def.3]{se} for the definition of sheaf of rings) is by \cite[Prop.4]{c2} coherent, a sheaf of ideals $\Ff$ of $\an_{\R^n}$ is coherent if and only if $\Ff$ is of finite type \cite[IV.B.Prop.8]{gr}. The famous Cartan's Theorems A and B (\cite[Thm.3]{c2}) describe the local-global behavior of coherent sheaves $\Ff$ of $\an_{\R^n}$-modules: 
\begin{itemize}
\item[(A)] \em The stalks of a coherent sheaf $\Ff$ are spanned by the global sections.\em
\item[(B)] \em Each $p$-cohomology group of a coherent sheaf $\Ff$ is trivial for each $p>0$\em. 
\end{itemize}

\begin{remark}
The previous results for real analytic sets are derived from Cartan's Theorems A and B for Stein spaces and the following fact: {\em each open set $U\subset\R^n$ has by \cite[\S1]{c2} a basis of `very nice' open neighborhoods in $\C^n$}, which: are Stein open sets, are invariant under conjugation in $\C^n$ and have $U$ as a deformation retract. Thus, in the real case a sufficient condition to have the properties (i), (ii), (iii) described above in \S\ref{complex} for complex analytic sets is \em coherence\em. The substantial difference between complex and real analytic sets concerns coherence: complex analytic sets are always coherent (Oka's coherence theorems), whereas real analytic sets behave as complex analytic sets (in a Stein open subset of $\C^n$) when they are coherent.
$\hfill\sqbullet$
\end{remark}

Recall that a real analytic set $X\subset\R^n$ is pure dimensional if the analytic germs $X_x$ have the same dimension as $X$ for each $x\in X$. For each real analytic set $X\subset\R^n$ we have the following property concerning coherence:

\begin{lem}[{\cite[Prop.14\&15]{c2}}]\label{fact:cohpure} 
Each coherent analytic set $X\subset\R^n$ is a $C$-analytic set. In addition, if $X\subset\R^n$ is a $C$-irreducible coherent analytic set, then it is pure dimensional.
\end{lem}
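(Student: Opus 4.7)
The proof splits naturally into the two claims. For the first (coherent $\Rightarrow$ $C$-analytic), my plan is to apply Cartan's Theorem~A to the coherent sheaf $\Jj_X$: its stalks are generated by global sections, so the ideal $\Ii(X)\subset\an(\R^n)$ generates $\Jj_{X,x}$ for every $x\in\R^n$. In particular, at any $x\notin X$ one has $\Jj_{X,x}=\an_{\R^n,x}$, hence some $g\in\Ii(X)$ is a unit at $x$, which shows $\ZZ(\Ii(X))=X$. To replace the possibly uncountable family $\Ii(X)$ by a single global equation, I would cover $\R^n$ by countably many relatively compact open sets $U_k$ and, on each $U_k$, use the finite-typeness of $\Jj_X$ together with Theorem~A to extract finitely many analytic functions $g_{k,1},\ldots,g_{k,r_k}\in\Ii(X)$ whose germs generate $\Jj_X|_{U_k}$. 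This produces a countable sequence $\{h_j\}_{j\in\N}\subset\Ii(X)$ whose common zero set is $X$. The final step is the classical convergence trick: one chooses positive reals $\varepsilon_j$ tending to zero so rapidly that the series $f:=\sum_{j\in\N}\varepsilon_j h_j^2$ (together with all its partial derivatives) converges uniformly on every compact set, so $f\in\an(\R^n)$ and $\ZZ(f)=\bigcap_j\ZZ(h_j)=X$. Consequently $X$ is $C$-analytic.

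For the second claim, let $d:=\dim X$. At each $x\in X$, decompose the analytic germ as $X_x=X^{=d}_x\cup X^{<d}_x$, where $X^{=d}_x$ is the union of the irreducible components of $X_x$ of dimension $d$ and $X^{<d}_x$ is the union of those of dimension strictly less than $d$. The plan is to show that the sheaves of ideals $\Jj_{X^{=d}}$ and $\Jj_{X^{<d}}$, defined stalkwise by these germs, are coherent subsheaves of $\an_{\R^n}$; this uses the coherence of $\Jj_X$, primary decomposition in the Noetherian local rings $\an_{\R^n,x}$, and the fact that coherence makes the local irreducible components propagate analytically so that their dimensions are locally constant. Once this is established, the first claim of the lemma gives that both $X^{=d}$ and $X^{<d}$ are $C$-analytic subsets of $\R^n$, and by construction $X=X^{=d}\cup X^{<d}$. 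The $C$-irreducibility of $X$ then forces one of these two $C$-analytic subsets to equal $X$; since $\dim X^{<d}<d=\dim X$, we must have $X=X^{=d}$, i.e.\ $X$ is pure dimensional.

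The main obstacle is the coherence of the sheaves $\Jj_{X^{=d}}$ and $\Jj_{X^{<d}}$: one must promote a purely pointwise splitting of germs according to dimension into a sheaf-theoretic statement of finite-typeness in a neighborhood of every point. This is precisely where the coherence hypothesis on $\Jj_X$ is essential, as it guarantees that the primary decomposition in $\an_{\R^n,x}$ is compatible with passage to nearby stalks, and hence that the pure-$d$-dimensional and the lower-dimensional parts of $X$ glue into coherent analytic subsets of $\R^n$. The remaining ingredients (Theorem~A, Noetherianity of the local rings, and the convergence trick) are standard once this coherent decomposition is in hand.
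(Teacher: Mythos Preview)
The paper does not prove this lemma; it is stated with a citation to Cartan \cite[Prop.~14\&15]{c2} and no argument is supplied. So there is nothing in the paper to compare your proposal against directly.

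Your plan for the first assertion is correct and is essentially Cartan's original argument: Theorem~A applied to the coherent sheaf $\Jj_X$ gives that $\Ii(X)=H^0(\R^n,\Jj_X)$ generates every stalk, hence $\ZZ(\Ii(X))=X$; a countable extraction followed by the weighted sum-of-squares convergence trick then yields a single global analytic equation for $X$.

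For the second assertion your outline is reasonable, but you correctly identify and then leave open the only nontrivial step: the coherence of $\Jj_{X^{=d}}$ and $\Jj_{X^{<d}}$. Saying that ``coherence makes the local irreducible components propagate analytically so that their dimensions are locally constant'' names the phenomenon without proving it; establishing it requires a genuine argument (for instance via local parametrizations of the irreducible components of $X_x$, showing they spread to analytic sets of the same dimension on a neighborhood). A shorter route, more in the spirit of the paper's later sections, is to complexify: once you have the first assertion and Theorem~A you know $\Ii_{X,x}=\Jj_{X,x}$, so the germ of the complexification $\widetilde X$ at each $x\in X$ is the complexification of $X_x$ and $\dim_\C\widetilde X_x=\dim_\R X_x$; $C$-irreducibility of $X$ lets you take $\widetilde X$ irreducible, hence pure dimensional in the complex sense, and the dimension equality transports this back to $X$.
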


A non-empty $C$-analytic set $X\subset\R^n$ is {\em $C$-irreducible} if it is not the union of two proper $C$-analytic subsets. We refer the reader to \cite{fe1,wb} for a detailed study about $C$-irreducibility. As a consequence of the previous lemma and Cartan's Theorem A, we have the following crucial characterization of coherent real analytic sets, which will be useful along the sequel.

\begin{cor}\label{ij}
A real analytic set $X\subset\R^n$ is coherent if and only if it is a $C$-analytic set and $\Ii_{X,x}=\Jj_{X,x}$ for each $x\in X$.
\end{cor}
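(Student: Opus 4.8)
The plan is to prove the two implications separately, invoking Lemma \ref{fact:cohpure} and Cartan's Theorem A as the main external inputs. For the `only if' direction, suppose $X\subset\R^n$ is a coherent real analytic set. By Lemma \ref{fact:cohpure}, $X$ is a $C$-analytic set, so the sheaf $\Ii_X$ is defined and we already know the inclusions $\Ii_{X,x}\subset\Jj_{X,x}$ for each $x\in X$ from the discussion preceding the statement. It remains to prove the reverse inclusion. Here I would use that coherence of $X$ means $\Jj_X$ is a coherent sheaf of $\an_{\R^n}$-modules, so by Cartan's Theorem A its stalks are spanned by global sections: for each $x\in X$, the ideal $\Jj_{X,x}$ is generated over $\an_{\R^n,x}$ by the germs $g_x$ with $g\in H^0(\R^n,\Jj_X)$. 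But $H^0(\R^n,\Jj_X)=\Ii(X\cap\R^n)=\Ii(X)$ by the formula recorded just before the statement, so $\Jj_{X,x}$ is generated by $\{g_x:g\in\Ii(X)\}$, which is precisely $\Ii(X)\an_{\R^n,x}=\Ii_{X,x}$. This gives $\Jj_{X,x}\subset\Ii_{X,x}$, hence equality, for each $x\in X$.

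For the `if' direction, assume $X\subset\R^n$ is a $C$-analytic set with $\Ii_{X,x}=\Jj_{X,x}$ for every $x\in X$. Since $\Ii(X)$ is a finitely generated ideal of $\an(\R^n)$ — indeed $X$ is the common zero set of finitely many analytic functions $f_1,\dots,f_r$ on $\R^n$, and one may take these as generators, or invoke noetherianity-type statements appropriate to the global analytic setting — the sheaf $\Ii_X$ is generated by finitely many global sections $f_{1},\dots,f_{r}$. Concretely, the morphism $\an_{\R^n}^r\to\Ii_X$ sending the standard basis to $(f_{1,x},\dots,f_{r,x})$ is surjective on every stalk by definition of $\Ii_X$, so $\Ii_X$ is of finite type. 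By the hypothesis $\Ii_X=\Jj_X$ as subsheaves of $\an_{\R^n}$, hence $\Jj_X$ is of finite type. Finally, since $\an_{\R^n}$ is a coherent sheaf of rings (\cite[Prop.4]{c2}), a sheaf of ideals of $\an_{\R^n}$ is coherent as soon as it is of finite type (\cite[IV.B.Prop.8]{gr}); therefore $\Jj_X$ is coherent, i.e.\ $X$ is a coherent real analytic set.

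I expect the only delicate point to be making sure that $\Ii(X)$ is genuinely finitely generated as an ideal of $\an(\R^n)$ so that $\Ii_X$ is of finite type in the `if' direction: one must cite that $X$ being $C$-analytic it is the zero set of finitely many global analytic functions, and that these functions can be arranged (or it can be shown directly) to generate $\Ii_X$ stalkwise. Everything else is a bookkeeping translation between the three descriptions of the relevant objects (global ideal, sheaf generated by it, zero-ideal sheaf) already set up in the text, together with a direct appeal to Cartan's Theorem A in one direction and to the coherence of $\an_{\R^n}$ plus \cite[IV.B.Prop.8]{gr} in the other.
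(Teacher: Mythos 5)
Your `only if' direction is correct and is exactly the route the paper intends: coherence gives $C$-analyticity by Lemma \ref{fact:cohpure}, and Cartan's Theorem A applied to the coherent sheaf $\Jj_X$, together with $H^0(\R^n,\Jj_X)=\Ii(X)$, yields $\Jj_{X,x}=\Ii(X)\an_{\R^n,x}=\Ii_{X,x}$.

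The `if' direction, however, has a genuine gap, precisely at the point you flag. It is not true that one "may take the defining equations $f_1,\dots,f_r$ as generators" of $\Ii(X)$, nor that the morphism $\an_{\R^n}^r\to\Ii_X$ they induce is "surjective on every stalk by definition of $\Ii_X$": by definition $\Ii_{X,x}$ is generated by the germs of \emph{all} global analytic functions vanishing on $X$, not by a chosen set of equations. For instance $X=\{0\}\subset\R^2$ is the zero set of $f_1=\x_1^2+\x_2^2$, yet $f_{1,0}$ does not generate $\Ii_{\{0\},0}=(\x_1,\x_2)\an_{\R^2,0}$; likewise the defining equation of Whitney's umbrella fails to generate $\Jj_{W,x}$ along the handle. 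Nor is there a "noetherianity-type statement" to fall back on: $\an(\R^n)$ is not noetherian, and finite generation of $\Ii(X)$ is in fact a \emph{consequence} of the coherence of $\Ii_X$ (via Cartan's Theorem B, or Forster--Ramspott \cite{fk}), not an input one can assume while proving coherence. What the argument really requires is the nontrivial classical fact, recorded in \S\ref{s2} of the paper right after this corollary, that $\Ii_X=\Ii(X)\an_{\R^n}$, being generated by global sections, is of finite type and hence coherent; this rests on Cartan's results on ideals generated by global analytic functions (complexification to an invariant Stein neighborhood, \cite{c1,c2}), or alternatively on Frisch's theorem \cite{fr} that $\an(K)$ is noetherian for compact $K$, which gives finitely many elements of $\Ii(X)$ generating $\Ii_{X,y}$ for all $y$ in the interior of a compact ball $K$. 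Once this is granted, your conclusion goes through, with one small addition: the hypothesis only gives $\Ii_{X,x}=\Jj_{X,x}$ for $x\in X$, so to get $\Ii_X=\Jj_X$ as sheaves you should also note that at $x\notin X$ both stalks equal $\an_{\R^n,x}$, because $X$, being $C$-analytic, has a global equation $g\in\Ii(X)$ with $g(x)\neq0$.
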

\begin{remark}
Let $X\subset\R^n$ be a coherent $C$-analytic set and $U\subset\R^n$ an open subset. Then $\Ii(X\cap U)=\Ii(X)\an(U)$.

As $X\cap U$ is a coherent $C$-analytic subset of $U$, we have $\Ii(X\cap U)\an_{\R^n}|_U=\Jj_X|_U=\Ii_X|_U=\Ii(X)\an_{\R^n}|_U$, so $\Ii(X\cap U)=H^0(U,\Ii(X\cap U)\an_{\R^n}|_U)=H^0(U,\Ii(X)\an_{\R^n}|_U)=\Ii(X)\an(U)$.\hfill$\sqbullet$
\end{remark}

\subsubsection{Analytic extension property}
Let $X\subset\R^n$ be a $C$-analytic subset. The \em sheaf of analytic functions germs on $X$ \em is $\Cc^\omega_X:=\an_{\R^n}/\Jj_X$, whereas $\an_X:=\an_{\R^n}/\Ii_X$ is the \em sheaf of global analytic functions germs on $X$\em. The name of the latter follows from Cartan's Theorem B. The sheaf of ideals $\Ii_X$ is generated by global sections, so it is a coherent $\an_{\R^n}$-sheaf of ideals on $\R^n$ whose support is $X$ and in fact: {\em $\Ii_X$ is the biggest coherent $\an_{\R^n}$-sheaf of ideals whose support is $X$}.
\begin{proof} 
Let $\Ff$ be a coherent $\an_{\R^n}$-sheaf of ideals whose support is $X$. By Cartan's Theorem A the stalks $\Ff_x$ are generated by global sections of $\Ff$ and $H^0(X,\Ff)\subset\Ii(X)$. Thus, $\Ff=H^0(X,\Ff)\an_{\R^n}\subset\Ii(X)\an_{\R^n}=\Ii_X$.
\end{proof}
In particular, the $\an_{\R^n}$-sheaf of quotient rings $\an_{\R^n}/\Ii_X$ is coherent. By Cartan's Theorem B the cohomology group $H^1(\R^n,\Ii_X)=0$, so the exact sequence 
$$
0\to H^0(\R^n,\Ii_X)\to H^0(\R^n,\an_{\R^n})\to H^0(\R^n,\an_{\R^n}/\Ii_X)\to 0
$$
is short and consequently each global section of $\an_{\R^n}/\Ii_X$ is the image of a global analytic function on $\R^n$. We denote $\an(X):=H^0(\R^n,\an_{\R^n}/\Ii_X)\equiv H^0(X,(\an_{\R^n}/\Ii_X)|_X)$ the {\em ring of global analytic functions on $X$}, whereas $\Cc^\omega(X):=H^0(X,(\an_{\R^n}/\Jj_X)|_X)$ is the {\em ring of analytic functions on $X$}. At this point we reformulate what means the analytic extension property for $C$-analytic sets.

\begin{defn}[Analytic extension property]
A $C$-analytic set $X\subset\R^n$ has the \em analytic extension property \em if the homomorphism of rings $\an(\R^n)\to\Cc^\omega(X)$ is surjective, that is, each analytic function on $X$ is the restriction to $X$ of a analytic function on $\R^n$. 
\end{defn}

\subsubsection{Reduction to the case of affine spaces}\label{general}
We comment briefly why our choice of considering $C$-analytic subsets of $\R^n$ instead of $C$-analytic subsets of an open subset $\Omega\subset\R^n$ is not restrictive. 
\begin{proof}
Assume that $M$ is a real analytic manifold of dimension $m$ (this includes the case of an open subset of an affine space) and let $X\subset M$ be a $C$-analytic subset, that is, the common zero set of finitely many analytic functions $f_1,\ldots,f_r$ on $M$. By Whitney's analytic immersion theorem there exists an analytic immersion $\varphi:M\hookrightarrow\R^{2m+1}$ such that $\varphi(M)$ is closed in $\R^{2m+1}$, that is, we may assume that $M$ is a closed real analytic submanifold in $\R^n$ for some positive integer $n$. Let $(U,\rho)$ be an analytic tubular neighborhood of $M$ in $\R^n$ and define $F_i:=f_i\circ\rho$ for $i=1,\ldots,r$. As $M$ is a coherent analytic subset of $\R^n$ and $F_1,\ldots,F_r$ define global sections of the coherent sheaf $\an_{\R^n}/\Jj_M$, there exist analytic functions $G_1,\ldots,G_r\in\an(\R^n)$ such that $G_i|_M=f_i$. Let $G_{r+1}$ be an analytic equation of $M$ in $\R^n$. We have $X=\ZZ(G_1,\ldots,G_r,G_{r+1})$ is a $C$-analytic subset of $\R^n$. Thus, in the following we only treat the case of $C$-analytic subsets of $\R^n$.
\end{proof}

%%%
\subsection{Coherence in the Nash setting}\label{nasht}
Next, we turn out to the Nash case. We have already defined in the Introduction semialgebraic sets, Nash functions and Nash sets. A semialgebraic set $M\subset\R^n$ is called an \em (affine) Nash manifold \em if it is in addition a smooth submanifold of (an open semialgebraic subset of) $\R^n$. By \cite[Prop.8.1.8 \& Cor.9.3.10]{bcr} Nash manifolds coincide with analytic submanifolds of (an open semialgebraic subset of) $\R^n$ that are theirselves semialgebraic sets. Let $U\subset\R^n$ be an open semialgebraic set and let us denote the ring of Nash functions on $U$ with $\Nn(U)$. Let $S\subset\R^n$ be a semialgebraic set. We say that a function $f:S\to\R$ is a \em Nash function \em if there exist an open semialgebraic neighborhood $U\subset\R^n$ of $S$ and a Nash extension $F:U\to\R$ of $f$. We denote $\Nn(S)$ the ring of Nash functions on $S$ (see also Definition \ref{nashs}, Remark \ref{nashsr}, \S\ref{sss} and \eqref{ssseq}). At this point we prove (see also \cite[Prop.2.8(ii)]{fgr}) that Nash subsets of an open semialgebraic subset $U\subset\R^n$ coincide with those $C$-analytic subset of $U$ that are semialgebraic subsets of $\R^n$. This means that the Nash category corresponds to the amalgamation of the (global) analytic and semialgebraic categories.

\begin{lem}\label{ncas}
Let $U\subset\R^n$ be an open semialgebraic set and let $X\subset U$. The following conditions are equivalent:
\begin{itemize}
\item[(i)] $X$ is a Nash subset of $U$.
\item[(ii)] $X$ is a $C$-analytic subset of $U$ that is a semialgebraic subset of $\R^n$. 
\end{itemize}
\end{lem}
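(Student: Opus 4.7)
For the direction $(i) \Rightarrow (ii)$, the argument is direct. If $X = \ZZ(g_1, \ldots, g_r)$ for Nash functions $g_i \in \Nn(U)$, then by \cite[Prop.8.1.8]{bcr} each $g_i$ is real analytic on $U$, so $X$ is a $C$-analytic subset of $U$. Since each $g_i$ has semialgebraic graph, each zero set $\ZZ(g_i)$ is semialgebraic in $\R^n$, and therefore so is the intersection $X$.

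For the nontrivial direction $(ii) \Rightarrow (i)$, assume $X \subset U$ is a $C$-analytic set that is semialgebraic in $\R^n$. I would organize the proof into three steps.

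\emph{Step 1 (Local Nash description).} Fix $a \in X$. Since $X$ is $C$-analytic in $U$, the germ $X_a$ is an analytic set germ; since $X$ is semialgebraic, $X_a$ is also the germ of a semialgebraic set. The standard fact that a semialgebraic analytic set germ is a Nash set germ (a consequence of the Tarski--Seidenberg principle applied to the henselian noetherian local ring $\Nn_{\R^n, a}$) provides Nash germs $g_{1,a}, \ldots, g_{s,a} \in \Nn_{\R^n, a}$ with $X_a = \ZZ(g_{1,a}, \ldots, g_{s,a})$. Consequently there exists an open semialgebraic neighborhood $V^a \subset U$ of $a$ and Nash functions $g_1^a, \ldots, g_s^a \in \Nn(V^a)$ such that $X \cap V^a = \ZZ(g_1^a, \ldots, g_s^a)$.

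\emph{Step 2 (Globalization via Nash noetherianity).} Consider the ideal
\[
J := \{h \in \Nn(U) : h|_X \equiv 0\}
\]
of the Nash ring $\Nn(U)$, which is noetherian by \cite[Thm.8.7.18]{bcr}. Pick a finite system of generators $h_1, \ldots, h_m$ of $J$ and set $X' := \ZZ(h_1, \ldots, h_m) \subset U$. Then $X'$ is a Nash subset of $U$ containing $X$, and it remains to verify $X' = X$.

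\emph{Step 3 (Separation $X = X'$).} The task is to show that, for every $p \in U \setminus X$, some $h \in J$ satisfies $h(p) \neq 0$. This is the main obstacle: Nash sheaves are not acyclic (Hubbard's example $H^1(\R, \Nn_\R) \neq 0$ recalled in \S\ref{nasht}), so the local Nash generators of Step 1 cannot be patched globally by a sheaf-theoretic partition-of-unity argument. The route I would take is to combine (a) a global analytic equation $f \in \an(U)$ of $X$ with $f(p) \neq 0$, which exists because $X$ is $C$-analytic in $U$; (b) the local Nash equations from Step 1 around $p$, which give a Nash open neighborhood $V^p$ of $p$ on which some Nash function vanishing on $X \cap V^p$ does not vanish at $p$; and (c) Nash approximation of analytic functions on semialgebraic sets, which allows one to approximate $f$ by a global Nash function $h \in \Nn(U)$ close enough that $h(p) \neq 0$ while using the local Nash description to guarantee $h|_X \equiv 0$, and thus $h \in J$.

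The principal difficulty is Step 3, where semi-algebraicity of $X$ must be used in an essential way to compensate for the failure of the Nash analogue of Cartan's Theorem A; Steps 1 and 2 reduce, respectively, to a classical property of semialgebraic germs and to the noetherianity of $\Nn(U)$.
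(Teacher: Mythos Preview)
Your direction $(i)\Rightarrow(ii)$ is fine and matches the paper. For $(ii)\Rightarrow(i)$, Steps~1 and~2 are reasonable, but Step~3 is where the real content lies, and your sketch does not close the gap. Approximating an analytic equation $f$ of $X$ by a Nash function will not, in general, produce a Nash function that vanishes \emph{identically} on $X$: Nash approximation gives closeness, not exact vanishing on a prescribed set, and there is no ``approximation with side constraint $h|_X=0$'' available here without already knowing that $\Ii(X)$ is generated by Nash functions---which is essentially the statement you are trying to prove. Your item~(b) (local Nash equations near $p$) does not help globally for the same reason you yourself flag: Nash sheaf cohomology does not vanish, so you cannot glue.

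The paper bypasses this entirely with a different idea. First reduce to the case where $X$ is $C$-irreducible: stratify the semialgebraic set $X$ into finitely many connected Nash manifolds; by the identity principle each such piece lies in a single $C$-irreducible component of $X$, so the $C$-irreducible components are finite in number and each is a finite union of strata, hence semialgebraic. Now, for $X$ $C$-irreducible and semialgebraic, let $Y$ be its Nash closure in $U$ (which exists by noetherianity of $\Nn(U)$). Then $Y$ is Nash-irreducible, so $\Ii^\bullet(Y)$ is prime in $\Nn(U)$. The decisive input is the Coste--Shiota result \cite[Cor.~2]{cs} (combined with \cite[Prop.~2.8(i)]{fgr}): the extension $\Ii(Y)=\Ii^\bullet(Y)\an(U)$ remains a \emph{prime} ideal of $\an(U)$, so $Y$ is also $C$-irreducible as a $C$-analytic set. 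Since $X\subset Y$ are both $C$-irreducible $C$-analytic sets and $\dim(X)=\dim(Y)$ (the semialgebraic dimension of $X$ equals that of its Nash closure), you conclude $X=Y$, hence $X$ is Nash. The key lemma you are missing is this preservation of primality under $\Nn(U)\hookrightarrow\an(U)$; it replaces any need for approximation or gluing.
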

\begin{proof}
The implication (i) $\Longrightarrow$ (ii) is clear, because Nash functions are analytic functions \cite[Prop.8.1.8]{bcr}, so let us prove the converse implication. We claim: {\em We may assume that $X$ is a $C$-irreducible $C$-analytic subset of $U$.} 

By \cite[Prop.2.9.10]{bcr} $X$ is a finite pairwise disjoint union of connected Nash manifolds $M_i$, each one Nash diffeomorphic to $(0,1)^{\dim(M_i)}$. By \cite[\S6.Prop.8 \& Prop.9]{wb} there exist the locally finite family $\{X_j\}_{j\in J}$ of $C$-irreducible $C$-analytic components of $X$. By the identity principle each Nash manifold $M_i$ is contained in some $X_j$. As the $M_i$ are finitely many, the $X_j$ are finitely many and each one is a finite union of some of the Nash manifolds $M_i$. Consequently, each $X_j$ is a semialgebraic set. Thus, we may assume that $X$ is a $C$-irreducible $C$-analytic subset of $U$.

Let $Y$ be the smallest Nash set that contains $X$, which exists, because $\Nn(U)$ is noetherian \cite[Thm.8.7.18]{bcr}. As $X$ is $C$-irreducible, $Y$ is an irreducible Nash set, so the zero ideal $\Ii^\bullet(Y):=\{f\in\Nn(U): X\subset\ZZ(f)\}$ of $Y$ in $\Nn(U)$ is prime. By \cite[Prop.2.8(i)]{fgr} the ideal $\Ii(Y)$ is generated by $\Ii^\bullet(Y)$ in $\an(U)$, that is, $\Ii(Y)=\Ii^\bullet(Y)\an(U)$. By \cite[Cor.2]{cs} the ideal $\Ii(Y)=\Ii^\bullet(Y)\an(U))$ is a prime ideal of $\an(U)$, so $Y$ is a $C$-irreducible $C$-analytic subset of $U$. As $X$ is semialgebraic, we have $\dim(X)=\dim(Y)$, so $Y=X$, because both are irreducible Nash sets. Consequently, $X$ is a Nash subset of $U$, as required. 
\end{proof}

\subsubsection{Coherence of Nash sets}
Assume first we work in the open semialgebraic set $\Omega=\R^n$. We will comment below in \S\ref{omegarn} (via Mostowski's Lemma \cite[Lem.6]{m2}) that this assumption is not restrictive. We recover the definition of Nash function on a Nash subset $X$ of $\R^n$ via global sections of a certain sheaf \cite[\S2.B]{bfr}. Let $\Jj_X^\bullet$ be the sheaf of $\Nn_{\R^n}$-ideals given by $\Jj_{X,x}^\bullet:=\{f_x\in\Nn_{\R^n,x}:\ X_x\subset\ZZ(f_x)\}$, which is the ideal of germs of Nash functions in $\R^n_x$ vanishing identically on $X_x$ for each $x\in\R^n$ and whose support is $X$. As in the analytic case, the sheaf $\Nn_{\R^n}/\Jj_X^\bullet$ has in general a bad local-global behavior and the ring of its global sections does not coincide necessarily with the ring of the restrictions to $X$ of Nash functions on $\R^n$.

As $\Nn_{\R^n}$ is a coherent sheaf of rings (see \cite[\S1.2.15.Def.3]{se} and \cite[(I.6.6)]{s}), a sheaf $\Ff$ of $\Nn_{\R^n}$-ideals is coherent if and only if $\Ff$ is of finite type \cite[IV.B.Prop.8]{gr}. When dealing with sheaves of $\Nn_{\R^n}$-ideals, coherence is not enough in general to have results in the vein of Cartan's Theorems (A) and (B) (see the Introduction of \cite{crs3}), because semialgebraicity involves a finiteness phenomenon that coherence alone does not capture. We say that a sheaf $\Ff$ of $\Nn_{\R^n}$-ideals is \em finite \em if there exists a finite open semialgebraic covering $\{U_i\}_{i=1}^r$ of $\R^n$ such that each restriction $\Ff|_{U_i}$ is generated by finitely many Nash functions over $U_i$. As a finite sheaf of $\Nn_{\R^n}$-ideals is of finite type, we deduce that it is coherent. In \cite{cs,crs2} the authors prove that this is the correct notion for sheaves of $\Nn_{\R^n}$-ideals to obtain results in the vein of Cartan's Theorems (A) and (B) (in general for $\Nn_{\R^n}$-modules the right notion is \em strong coherence \em \cite{crs3}). Recall that a coherent sheaf $\Mm$ of $\Nn_{\R^n}$-modules is {\em strongly coherent} if there exists an exact sequence $\Nn_{\R^n}^q\to\Nn_{\R^n}^p\to\Mm\to 0$. We suggest the survey \cite{crs1} as a general reference.

\begin{thm}[{\cite[Thm.1, Cor.2]{cs}, \cite[Prop.0.6, Prop.0.7]{crs2}}]\label{factfinitesheaf}
Let $\Ff$ be a finite sheaf of $\Nn_{\R^n}$-ideals and $x\in\R^n$. Then: 
\begin{itemize}
\item[(A)] The stalk $\Ff_x$ is generated by global sections of $\Ff$. 
\item[(B)] Every global section of $\Nn_{\R^n}/\Ff$ is represented by a Nash function on $\R^n$. 
\end{itemize}
\end{thm}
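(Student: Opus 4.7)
The plan is to reduce both (A) and (B) to the vanishing of the first \v{C}ech cohomology of $\Ff$ on the finite open semialgebraic covering $\mathcal{U}=\{U_i\}_{i=1}^r$ witnessing finiteness. Fix, once and for all, finite families of Nash generators $g_{i,1},\ldots,g_{i,m_i}\in\Nn(U_i)$ of $\Ff|_{U_i}$. Once $\check H^1(\mathcal{U},\Ff)=0$ is established, assertion (A) follows by a standard patching: writing each germ of a local generator on the overlap $U_i\cap U_j$ as an $\Nn_{U_i\cap U_j}$-combination of the $g_{i,k}$ and of the $g_{j,\ell}$ produces a 1-cocycle with values in the matrix sheaf over $\Ff$, whose vanishing delivers global generators. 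Assertion (B) follows because a global section of $\Nn_{\R^n}/\Ff$ lifts to Nash representatives $f_i\in\Nn(U_i)$ whose differences $f_i-f_j\in\Ff(U_i\cap U_j)$ form a 1-cocycle whose coboundary trivialization is exactly a global Nash lift.

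For the cohomological vanishing itself I would pass to a complexification. By the Whitney--Bruhat--style procedure reviewed in the preceding section, there exists a Stein open neighborhood $V\subset\C^n$ of $\R^n$, invariant under complex conjugation and retracting onto $\R^n$, together with a coherent sheaf of ideals $\Ff_\C$ on $V$ extending $\Ff$ and a finite open Stein covering $\{V_i\}_{i=1}^r$ refining $\mathcal{U}$. Classical Cartan Theorems~A and~B applied to $\Ff_\C$ on $V$ yield both a global analytic system of generators of each stalk and the vanishing $H^1(V,\Ff_\C)=0$; restricting to $\R^n$ gives an analytic solution to the patching problem and an analytic lift in the case of~(B).

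The remaining and genuinely delicate task is to descend these analytic data to Nash data \emph{inside} $\Ff$. Here I would invoke Efroymson--Shiota Nash approximation: an analytic function on an open semialgebraic set can be approximated by a Nash function in a strong Whitney topology, and the approximation can be arranged to preserve membership in a finitely generated ideal when the target ideal has a finite set of Nash generators that are available on the covering. Applying this after first projecting the analytic cochain onto $\Ff$ using the fixed local Nash generators $g_{i,k}$, one obtains a Nash cochain whose difference with the analytic one already lies in $\Ff$; the approximation then produces the required Nash splitting. Noetherianity of $\Nn(U_i)$, guaranteed by Mostowski's theorem, is what makes the projection and bookkeeping of generators terminate.

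The main obstacle to anticipate is precisely this descent: the naive analogue of Cartan~B fails for $\Nn_{\R^n}$ (Hubbard's example gives $H^1(\R,\Nn_\R)\neq 0$), so one cannot avoid using the hypothesis that $\Ff$ is \emph{finite} and the covering $\mathcal{U}$ is \emph{finite semialgebraic}. The whole argument thus hinges on simultaneously controlling (i) the semialgebraic shape of the overlaps $U_i\cap U_j$, (ii) the closedness of finite ideal sheaves under Nash approximation, and (iii) the finiteness of the generator bookkeeping; this is the technical heart of \cite{cs,crs2} that I would reproduce, rather than attempt a shortcut through sheaf theory alone.
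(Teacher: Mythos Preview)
The paper does not prove this theorem; it is quoted as a known result from \cite[Thm.1, Cor.2]{cs} and \cite[Prop.0.6, Prop.0.7]{crs2}, so there is no in-paper proof to compare your proposal against.

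That said, your outline is broadly faithful to the strategy of those cited works: pass to an invariant Stein complexification, apply Cartan's Theorems A and B to the coherent extension $\Ff\otimes_\R\C$, and then descend the analytic solution to a Nash one via approximation, exploiting the finiteness of the semialgebraic covering and the noetherianity of the rings $\Nn(U_i)$. You also correctly flag the essential difficulty: Hubbard's example shows that naive cohomological vanishing fails for $\Nn_{\R^n}$, so the descent step cannot be bypassed and the \emph{finite} hypothesis on $\Ff$ is indispensable. Where your sketch remains genuinely incomplete is precisely in that descent: the phrase ``the approximation can be arranged to preserve membership in a finitely generated ideal'' hides the substantive content of \cite{cs,crs2}, which requires Artin-type approximation relative to Nash ideals (not merely Efroymson--Shiota $C^\infty$-topology approximation) and a careful induction on the nerve of the covering. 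Your proposal acknowledges this is ``the technical heart'' you would ``reproduce,'' which is honest, but as written it is a plan rather than a proof.
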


Let $X\subset\R^n$ be a Nash set and denote the (finitely generated) ideal $\{f\in\Nn(\R^n): X\subset\ZZ(f)\}$ of Nash functions vanishing identically on $X$ with $\Ii^\bullet(X)$. In the following the sheaf of $\Nn_{\R^n}$-ideals $\Ii_{X,x}^\bullet:=\Ii^\bullet(X)\Nn_{\R^n,x}$ will play a central role, because it is the both the biggest finite and the biggest coherent $\Nn_{\R^n}$-sheaf of ideals whose support is $X$ (Corollary \ref{bfbc}). The proof of this fact takes advantage of the faithfully flatness of the inclusion homomorphism $\Nn_{\R^n,x}\hookrightarrow\an_{\R^n,x}$ for each $x\in\R^n$. Let us recall how one prove this latter fact, which will be useful several times. 

\begin{lem}\label{ff}
Let $(A,\gtm_A)$ and $(B,\gtm_B)$ be local excellent rings and let $\varphi:A\hookrightarrow B$ be an inclusion homomorphism such that $\varphi(\gtm_A)\subset\gtm_B$. Assume that the completions $\widehat{A}$ and $\widehat{B}$ coincide. Then $\varphi$ is faithfully flat.
\end{lem}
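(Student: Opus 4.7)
The plan is to first establish that $\varphi$ is flat, and then upgrade flatness to faithful flatness using the local character of the homomorphism. I would begin by recalling that for noetherian local rings the completion map is faithfully flat; since excellent rings are in particular noetherian, both canonical maps $A\to\widehat A$ and $B\to\widehat B$ are faithfully flat. Because $\varphi$ is local (i.e.\ $\varphi(\gtm_A)\subset\gtm_B$), the induced map $\widehat\varphi\colon\widehat A\to\widehat B$ is well defined, and the hypothesis ``$\widehat A$ and $\widehat B$ coincide'' is to be read as $\widehat\varphi$ being an isomorphism. Under this identification, the composition $A\hookrightarrow B\hookrightarrow\widehat B=\widehat A$ is precisely the $\gtm_A$-adic completion morphism of $A$, so it is faithfully flat.

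The next step is a descent argument for flatness along faithfully flat maps: given ring homomorphisms $f\colon A\to B$ and $g\colon B\to C$ with $g$ faithfully flat, if $g\circ f$ is flat then $f$ is flat. Applied here with $f=\varphi$ and $g\colon B\to\widehat B$, this immediately yields flatness of $\varphi$. The justification of the descent principle is the standard tensor-product criterion: for any exact test sequence of $A$-modules, exactness after $-\otimes_A B$ can be detected by further tensoring with the faithfully flat $B$-algebra $C=\widehat B$, and the resulting sequence is $-\otimes_A\widehat A$ applied to the original one, which is exact by flatness of the composition.

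Finally, I would upgrade flatness to faithful flatness. For a flat local homomorphism of local rings $(A,\gtm_A)\to(B,\gtm_B)$, faithful flatness is equivalent to the nonvanishing of $B/\gtm_A B=B\otimes_A A/\gtm_A$, which is guaranteed by $\varphi(\gtm_A)\subset\gtm_B\subsetneq B$. Equivalently, a flat local homomorphism of local rings is automatically faithfully flat, since the only maximal ideal of $A$ lies under a maximal ideal of $B$ and every nonzero finitely generated $A$-module has nonzero reduction modulo $\gtm_A$, hence nonzero base change to $B$.

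The main obstacle is the descent step from flatness of $g\circ f$ to flatness of $f$ along the faithfully flat $g$: it is standard but requires the careful tensor-product manipulation sketched above (or a direct citation to a reference such as Matsumura's \emph{Commutative Ring Theory}). Once this is in place, the identification $\widehat A=\widehat B$ together with the local condition on $\varphi$ delivers both flatness and, a posteriori, faithful flatness.
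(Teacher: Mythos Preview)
Your proof is correct and takes a genuinely more elementary route than the paper. The paper leans on excellence in a stronger way: it observes that for an excellent local ring the completion map is not merely faithfully flat but \emph{regular} (flat with geometrically regular fibers), and then applies Matsumura's descent lemma for regular morphisms---if $\psi\circ\varphi$ is regular and $\psi$ is faithfully flat then $\varphi$ is regular---to conclude that $\varphi$ itself is regular, hence flat. The final upgrade from flat to faithfully flat via locality is identical in both approaches. Your argument, by contrast, uses only the noetherian consequence of excellence (faithful flatness of completion) together with the elementary descent of \emph{flatness} along a faithfully flat map; this suffices for the stated conclusion and in fact shows that the hypothesis ``excellent'' can be weakened to ``noetherian''. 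The paper's route yields the stronger conclusion that $\varphi$ is regular, but this extra information is not needed for the lemma as stated nor for its immediate application (Corollary~\ref{ffna}).
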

\begin{proof}
As $A$ and $B$ are excellent local rings, the inclusions homomorphisms $\phi:A\hookrightarrow\widehat{A}$ and $\psi:B\hookrightarrow\widehat{B}$ are by \cite[(33.A), (34.A)]{mt} regular. As $\widehat{A}=\widehat{B}$, we may assume $\phi=\psi\circ\varphi$. As both $\phi$ and $\psi$ are regular homomorphisms, we deduce by \cite[(33.B) Lem.1]{mt} that $\varphi$ is regular, so it is flat. As $A$ and $B$ are local rings and $\varphi(\gtm_A)\subset\gtm_B$, we deduce by \cite[(4.D) Thm.3]{mt} that $\varphi$ is faithfully flat, as required.
\end{proof}
\begin{cor}\label{ffna}
The inclusion homomorphism $\Nn_{\R^n,x}\hookrightarrow\an_{\R^n,x}$ is faithfully flat.
\end{cor}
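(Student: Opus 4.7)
The plan is to deduce Corollary \ref{ffna} as a direct application of Lemma \ref{ff} with $A:=\Nn_{\R^n,x}$ and $B:=\an_{\R^n,x}$, so essentially all the work consists in verifying that the four hypotheses of that lemma hold.

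First I would recall that both $\Nn_{\R^n,x}$ and $\an_{\R^n,x}$ are local rings whose maximal ideals $\gtm_A$ and $\gtm_B$ consist of those Nash (resp.\ analytic) germs at $x$ that vanish at $x$. Since a Nash germ vanishing at $x$ is in particular an analytic germ vanishing at $x$, the natural inclusion $\varphi:\Nn_{\R^n,x}\hookrightarrow\an_{\R^n,x}$ sends $\gtm_A$ into $\gtm_B$, which dispatches the hypothesis $\varphi(\gtm_A)\subset\gtm_B$.

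Next I would check excellence. The ring $\an_{\R^n,x}$ of convergent power series over $\R$ (centered at $x$) is known to be excellent, e.g.\ by the classical results of Matsumura on convergent power series rings. The ring $\Nn_{\R^n,x}$ of Nash germs at $x$ can be identified (after translation to the origin) with the Henselization at the irrelevant maximal ideal of the localization of $\R[\x_1,\ldots,\x_n]$, equivalently with the ring of algebraic power series in $n$ variables over $\R$; this ring is also excellent (it is a standard consequence of the fact that the Henselization of an excellent local ring is excellent). I would cite the appropriate references (Matsumura \cite{mt}, or Artin--Mazur for algebraic power series) rather than reprove these facts.

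Finally, I would verify that the completions coincide. After translating $x$ to the origin, both $\widehat{\Nn_{\R^n,x}}$ and $\widehat{\an_{\R^n,x}}$ identify canonically with the formal power series ring $\R[[\x_1,\ldots,\x_n]]$, since the $\gtm$-adic completion of a ring of (algebraic or convergent) power series at the irrelevant maximal ideal is the corresponding ring of formal power series. Thus $\widehat{A}=\widehat{B}$. Having checked all four hypotheses, Lemma \ref{ff} applies and yields that $\varphi$ is faithfully flat. I do not expect any real obstacle here: the only non-trivial input is the excellence of $\Nn_{\R^n,x}$, which is entirely a citation issue rather than a computational one.
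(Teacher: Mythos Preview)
Your proposal is correct and follows essentially the same approach as the paper: identify $\Nn_{\R^n,x}$ with the ring of algebraic power series and $\an_{\R^n,x}$ with convergent power series, cite excellence of both, observe that their common completion is $\R[[\x_1,\ldots,\x_n]]$ and that the inclusion respects the maximal ideals, then apply Lemma~\ref{ff}. The paper uses \cite[Cor.8.1.6]{bcr} and \cite[Ch.VII.Ex.2.3]{abr} for the identifications and excellence, but the argument is otherwise identical to yours.
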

\begin{proof}
As $\an_{\R^n,x}\cong\R\{\x_1,\ldots,\x_n\}$ and $\Nn_{\R^n,x}$ is by \cite[Cor.8.1.6]{bcr} $\R$-isomorphic to the ring of algebraic formal series $\R[[\x_1,\ldots,\x_n]]_{\rm alg}$, we deduce by \cite[Ch.VII.Ex.2.3]{abr} that both rings are local excellent rings. Their common completion is $\R[[\x_1,\ldots,\x_n]]$ and their maximal ideals are generated by the variables $\x_1,\ldots,\x_n$. By Lemma \ref{ff} we conclude that the inclusion homomorphism $\Nn_{\R^n,x}\hookrightarrow\an_{\R^n,x}$ is faithfully flat, as required. 
\end{proof}

\begin{cor}\label{bfbc}
The sheaf of $\Nn_{\R^n}$-ideals given by $\Ii_{X,x}^\bullet:=\Ii^\bullet(X)\Nn_{\R^n,x}$ for each $x\in X$ is both: 
\begin{itemize}
\item[(i)] The biggest finite sheaf of $\Nn_{\R^n}$-ideals whose support is $X$.
\item[(ii)] The biggest coherent sheaf of $\Nn_{\R^n}$-ideals whose support is $X$. 
\end{itemize}
In addition, $\Ii_X=\Ii_X^\bullet\an_{\R^n}$.
\end{cor}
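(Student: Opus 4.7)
The plan is to first check that $\Ii_X^\bullet$ is itself finite (hence coherent), then to prove the maximality statements (i) and (ii) by two related but distinct arguments, and finally to extract the identity $\Ii_X=\Ii_X^\bullet\an_{\R^n}$ as a by-product. Finiteness of $\Ii_X^\bullet$ is automatic: by noetherianity of $\Nn(\R^n)$ (\cite[Thm.8.7.18]{bcr}), $\Ii^\bullet(X)$ admits a finite system of Nash generators $h_1,\ldots,h_s\in\Nn(\R^n)$ whose germs generate $\Ii^\bullet_{X,x}$ at every $x\in\R^n$, so the trivial covering $\{\R^n\}$ witnesses the finiteness condition. In particular $\Ii_X^\bullet$ is coherent, and its support is $X$, as required.

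For part (i), let $\Ff$ be any finite sheaf of $\Nn_{\R^n}$-ideals whose support is $X$. By Theorem \ref{factfinitesheaf}(A), each stalk $\Ff_x$ is generated by $H^0(\R^n,\Ff)\subset\Nn(\R^n)$. Because $\supp(\Ff)=X$, every global section of $\Ff$ vanishes identically on $X$ and therefore belongs to $\Ii^\bullet(X)$. Passing to stalks gives $\Ff_x\subset\Ii^\bullet(X)\Nn_{\R^n,x}=\Ii^\bullet_{X,x}$ for every $x\in\R^n$, which is the desired inclusion $\Ff\subset\Ii_X^\bullet$.

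For part (ii) the core difficulty is that a merely coherent sheaf of $\Nn_{\R^n}$-ideals is \emph{not} guaranteed to be spanned by its global sections (this is exactly the pathology signalled by Hubbard's $H^1(\R,\Nn_\R)\neq 0$), so Theorem \ref{factfinitesheaf} is unavailable. My strategy is to transport the problem to the analytic setting, where coherence is enough. Given a coherent $\Ff$ with support $X$, I form the $\an_{\R^n}$-sheaf $\Ff\an_{\R^n}$; local Nash generators of $\Ff$ are still generators after tensoring, so $\Ff\an_{\R^n}$ is a coherent sheaf of $\an_{\R^n}$-ideals. Its support is still $X$: for $x\notin X$ one has $\Ff_x=\Nn_{\R^n,x}$, whence $(\Ff\an_{\R^n})_x=\an_{\R^n,x}$; for $x\in X$ the proper inclusion $\Ff_x\subsetneq\Nn_{\R^n,x}$ persists after extension by the faithful flatness of $\Nn_{\R^n,x}\hookrightarrow\an_{\R^n,x}$ (Corollary \ref{ffna}). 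The analytic maximality result, already proved in the excerpt, then gives $\Ff\an_{\R^n}\subset\Ii_X$. Combined with the identity $\Ii(X)=\Ii^\bullet(X)\an(\R^n)$ from \cite[Prop.2.8(i)]{fgr} (the same identity invoked in the proof of Lemma \ref{ncas}), this reads stalkwise as $\Ff_x\an_{\R^n,x}\subset\Ii^\bullet_{X,x}\an_{\R^n,x}$, and contracting via faithful flatness yields $\Ff_x\subset\Ii^\bullet_{X,x}$.

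The supplementary equality $\Ii_X=\Ii_X^\bullet\an_{\R^n}$ drops out at the level of stalks from the same identity $\Ii(X)=\Ii^\bullet(X)\an(\R^n)$. The main obstacle I foresee is the careful verification that $\Ff\an_{\R^n}$ truly has support $X$ and that the contraction argument is rigorous; both rest critically on the faithful flatness provided by Corollary \ref{ffna}, which is therefore the indispensable ingredient of the whole proof.
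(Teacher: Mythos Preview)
Your proof is correct and follows essentially the same route as the paper: part (i) via Theorem \ref{factfinitesheaf}(A) and the inclusion $H^0(\R^n,\Ff)\subset\Ii^\bullet(X)$, part (ii) by extending to the coherent analytic sheaf $\Ff\an_{\R^n}$, invoking the analytic maximality of $\Ii_X$, and contracting back along the faithfully flat inclusion $\Nn_{\R^n,x}\hookrightarrow\an_{\R^n,x}$. You are slightly more explicit than the paper in verifying that $\Ii_X^\bullet$ is itself finite and that $\supp(\Ff\an_{\R^n})=X$, but these are details the paper takes for granted rather than genuine differences in strategy.
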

\begin{proof}
(i) Let $\Ff$ be a finite sheaf of $\Nn_{\R^n}$-ideals whose support is $X$. By Theorem \ref{factfinitesheaf}(A) the stalks $\Ff_x$ are generated by global sections of $\Ff$. As the global sections of $\Ff$ vanish identically on $X$, we have $H^0(X,\Ff)\subset\Ii^\bullet(X)$, so $\Ff_x=H^0(X,\Ff)\Nn_{\R^n,x}\subset\Ii^\bullet(X)\Nn_{\R^n,x}=\Ii_{X,x}^\bullet$ for each $x\in\R^n$.

(ii) Let $\Ff$ be a coherent sheaf of $\Nn_{\R^n}$-ideals whose support is $X$. Then $\Ff\an_{\R^n}$ is a coherent sheaf of $\an_{\R^n}$-ideals whose support is $X$ as well, so $H^0(X,\Ff\an_{\R^n})\subset\Ii(X)=\Ii^\bullet(X)\an(\R^n)$ by \cite[Prop.2.8(i)]{fgr}. By Cartan's Theorem A 
$$
\Ff_x\an_{\R^n,x}=H^0(X,\Ff\an_{\R^n})\an_{\R^n,x}\subset(\Ii^\bullet(X)\Nn_{\R^n,x})\an_{\R^n,x}=\Ii^\bullet_{X,x}\an_{\R^n,x}
$$ 
for each $x\in\R^n$. As the monomorphism $\Nn_{\R^n,x}\hookrightarrow\an_{\R^n,x}$ is by Corollary \ref{ffna} faithful flat, we conclude $\Ff_x\subset\Ii^\bullet_{X,x}$ for each $x\in X$, as required. 

The latter assertion holds, because by \cite[Prop.2.8(i)]{fgr}
$$
\Ii_X=\Ii(X)\an_{\R^n}=\Ii^\bullet(X)\an(\R^n)\an_{\R^n}=\Ii^\bullet(X)\an_{\R^n}=\Ii^\bullet(X)\Nn_{\R^n,x}\an_{\R^n}=\Ii_X^\bullet\an_{\R^n},
$$
as required.
\end{proof} 

\subsubsection{Nash coherence versus analytic coherence}
In this framework it is natural to ask when $X$ is \em coherent as a Nash set\em, that is, when $\Jj_X^\bullet$ is a coherent sheaf of $\Nn_{\R^n}$-ideals. The following result borrowed from \cite[\S2.B]{bfr} establishes the equivalence of Nash coherence and analytic coherence for Nash sets. 

\begin{lem}\label{ijn}
Let $X\subset\R^n$ be a Nash set. The following conditions are equivalent:
\begin{itemize}
\item[(i)] $X$ is coherent as a Nash set.
\item[(ii)] $X$ is coherent as a $C$-analytic set.
\item[(iii)] $\Jj_X^\bullet=\Ii_X^\bullet$.
\end{itemize}
\end{lem}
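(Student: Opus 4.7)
The plan is to prove the three equivalences by exploiting two main tools already at our disposal: Corollary \ref{bfbc}, which identifies $\Ii_X^\bullet$ as the biggest finite (equivalently, biggest coherent) sheaf of $\Nn_{\R^n}$-ideals with support $X$ and asserts $\Ii_X=\Ii_X^\bullet\an_{\R^n}$; and the faithful flatness of the inclusion $\Nn_{\R^n,x}\hookrightarrow\an_{\R^n,x}$ from Corollary \ref{ffna}. Analytic coherence will be translated via Corollary \ref{ij} into the equality $\Jj_X=\Ii_X$. Throughout one has the tautological inclusions $\Ii_X^\bullet\subset\Jj_X^\bullet$, $\Ii_X\subset\Jj_X$, and $\Jj_X^\bullet\subset\Jj_X$.

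First I would establish (i) $\Leftrightarrow$ (iii). The sheaf $\Jj_X^\bullet$ has support $X$ and contains $\Ii_X^\bullet$; if $\Jj_X^\bullet$ is coherent then by the maximality clause of Corollary \ref{bfbc}(ii) one gets $\Jj_X^\bullet\subset\Ii_X^\bullet$, hence (iii). Conversely, (iii) yields $\Jj_X^\bullet=\Ii^\bullet(X)\Nn_{\R^n}$, which is of finite type by noetherianity of $\Nn(\R^n)$ (\cite[Thm.8.7.18]{bcr}), and thus coherent since $\Nn_{\R^n}$ is a coherent sheaf of rings.

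Next I would show (ii) $\Rightarrow$ (iii). By Corollary \ref{ij}, (ii) gives $\Jj_{X,x}=\Ii_{X,x}=\Ii_{X,x}^\bullet\an_{\R^n,x}$ at each $x\in X$. For any germ $f_x\in\Jj_{X,x}^\bullet\subset\Jj_{X,x}=\Ii_{X,x}^\bullet\an_{\R^n,x}$, faithful flatness gives $\Ii_{X,x}^\bullet\an_{\R^n,x}\cap\Nn_{\R^n,x}=\Ii_{X,x}^\bullet$, so $f_x\in\Ii_{X,x}^\bullet$; combined with the tautological reverse inclusion this yields $\Jj_X^\bullet=\Ii_X^\bullet$.

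The remaining direction (iii) $\Rightarrow$ (ii) is where the real work lies. Extending (iii) to $\an_{\R^n}$ via Corollary \ref{bfbc} yields $\Ii_X=\Ii_X^\bullet\an_{\R^n}=\Jj_X^\bullet\an_{\R^n}\subset\Jj_X$, so the desired equality $\Jj_X=\Ii_X$ reduces to proving $\Jj_X\subset\Jj_X^\bullet\an_{\R^n}$, i.e., that every analytic germ vanishing on $X_x$ is an $\an_{\R^n,x}$-combination of Nash germs vanishing on $X_x$. This is the main obstacle. I would attack it by fixing Nash equations $g_1,\ldots,g_r$ of $X$ on a neighborhood of $x$ and appealing to the real Nullstellensatz: both $\Jj_{X,x}$ in $\an_{\R^n,x}$ and $\Jj_{X,x}^\bullet$ in $\Nn_{\R^n,x}$ are the real radicals of the ideal $(g_{1,x},\ldots,g_{r,x})$. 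Since $\Nn_{\R^n,x}$ and $\an_{\R^n,x}$ are excellent local rings sharing the completion $\R[[\x_1,\ldots,\x_n]]$, the faithfully flat inclusion between them preserves the associated real-valued places and therefore commutes with real radicalization, giving $\Jj_{X,x}^\bullet\an_{\R^n,x}=\Jj_{X,x}$ and hence (ii). An alternative route is to reduce first to the $C$-irreducible case via the global Nash decomposition: then $\Ii^\bullet(X)$ and $\Ii(X)=\Ii^\bullet(X)\an(\R^n)$ are prime by \cite[Prop.2.8(i)]{fgr} and \cite[Cor.2]{cs}, and a direct comparison of these primes against $\Jj_X^\bullet$ and $\Jj_X$ closes the argument.
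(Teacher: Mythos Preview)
Your proof is correct but organized differently from the paper's. The paper argues the cycle (i)$\Rightarrow$(ii)$\Rightarrow$(iii)$\Rightarrow$(i), whereas you prove (i)$\Leftrightarrow$(iii) and (ii)$\Leftrightarrow$(iii). Your implication (i)$\Rightarrow$(iii) via the maximality clause of Corollary~\ref{bfbc}(ii) is slicker than the paper's indirect route through (ii); and your (ii)$\Rightarrow$(iii) is essentially the same faithful-flatness contraction argument the paper uses.

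The one point worth flagging is your (iii)$\Rightarrow$(ii). What you isolate as ``the real work''---the identity $\Jj_{X,x}=\Jj_{X,x}^\bullet\an_{\R^n,x}$---is in fact a standard result valid at \emph{every} point of \emph{any} Nash set, with no coherence hypothesis whatsoever; the paper simply cites it from \cite[Proof of Prop.8.6.9]{bcr} and uses it in its step (i)$\Rightarrow$(ii). So there is no need to re-derive it via real radicals and preservation of real places (your sketch of that argument is plausible but vague as stated). Once you take $\Jj_{X}=\Jj_{X}^\bullet\an_{\R^n}$ as known, your (iii)$\Rightarrow$(ii) becomes a one-liner: $\Jj_X=\Jj_X^\bullet\an_{\R^n}=\Ii_X^\bullet\an_{\R^n}=\Ii_X$, and Corollary~\ref{ij} finishes. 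With that adjustment, both routes are equally short and use the same ingredients.
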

\begin{proof}
(i) $\Longrightarrow$ (ii) Suppose first $X$ is a coherent Nash set. Recall that $\Jj_{X,x}=\Jj_{X,x}^\bullet\an_{\R^n,x}$ (see \cite[Proof of Prop.8.6.9]{bcr}) and the ideal $\Ii(X\cap U)$ is generated by $\Ii^\bullet(X\cap U)$ for each open semialgebraic set $U$ of $\R^n$ (see \cite[Prop.2.8(i)]{fgr}). As $\Jj_X^\bullet$ is of finite type, for each $x\in\R^n$ there exists an open semialgebraic neighborhood $U^x\subset\R^n$ such that $\Jj_{X}^\bullet|_{U^x}=\Ii^\bullet(X\cap U^x)\Nn_{\R^n}|_{U^x}$. The sheaf of $\an_{\R^n}$-ideals $\Ff$ given by $\Ff|_{U^x}:=\Ii(X\cap U^x)\an_{\R^n}|_{U^x}=\Ii^\bullet(X\cap U^x)\an_{\R^n}|_{U^x}$ for each $x\in\R^n$ is of finite type (because the ideal $\Ii^\bullet(X\cap U^x)$ is finitely generated, as $\Nn(U^x)$ is a Noetherian ring) and its support is $X$. Thus, $\Ff$ is coherent and by Cartan's Theorem A $\Ff_x=\Ii(X)\an_{\R^n,x}=\Ii_{X,x}$. Putting all together:
$$
\Jj_{X,x}=\Jj_{X,x}^\bullet\an_{\R^n,x}=(\Ii^\bullet(X\cap U^x)\Nn_{\R^n,x})\an_{\R^n,x}=\Ii^\bullet(X\cap U^x)\an_{\R^n,x}=\Ff_x=\Ii_{X,x}
$$
for each $x\in X$, so $X$ is a coherent analytic set.

(ii) $\Longrightarrow$ (iii) Suppose next $X$ is a coherent $C$-analytic set. Consider the ideal $\Ii(X)$ of all analytic functions in $\an(\R^n)$ vanishing identically on $X$. By Cartan's Theorem A the stalks $\Jj_{X,x}$ are generated by the global sections $H^0(\R^n,\Jj_X)=\Ii(X)$. As $\Ii(X)$ is generated by $\Ii^\bullet(X)$ in $\an(\R^n)$ (see \cite[Prop.2.8(i)]{fgr}), it follows 
$$
\Ii_{X,x}^\bullet\an_{\R^n,x}=\Ii^\bullet(X)\an_{\R^n,x}=\Ii(X)\an_{\R^n,x}=\Jj_{X,x}\supset\Jj^\bullet_{X,x}\an_{\R^n,x}\supset\Ii_{X,x}^\bullet\an_{\R^n,x},
$$ 
so $\Ii_{X,x}^\bullet\an_{\R^n,x}=\Jj_{X,x}^\bullet\an_{\R^n,x}$. The monomorphism $\Nn_{\R^n,x}\hookrightarrow\an_{\R^n,x}$ is by Corollart \ref{ffna} faithful flat, so $\Ii_{X,x}^\bullet=\Jj_{X,x}^\bullet$ for each $x\in\R^n$, that is, $\Jj_X^\bullet=\Ii_X^\bullet$.

(iii) $\Longrightarrow$ (i) As $\Jj_X^\bullet=\Ii_X^\bullet$ is a coherent sheaf of $\Nn_{\R^n}$-ideals, $X$ is coherent as a Nash set. 
\end{proof}

\begin{remark}\label{generalnash}
If $X\subset\R^n$ is a coherent Nash set, then $\Ii^\bullet(X)\Nn(U)=\Ii^\bullet(X\cap U)$ for each semialgebraic open subset $U\subset\R^n$, where $\Ii^\bullet(X\cap U)=\{f\in\Nn(U):\ X\cap U\subset\ZZ(f)\}$. 

As $X\cap U$ is a coherent Nash subset of $U$, we have $\Ii^\bullet(X\cap U)\Nn_{\R^n}|_U=\Jj^\bullet_{X\cap U}=\Jj^\bullet_X|_U=\Ii_X^\bullet|_U=\Ii^\bullet(X)\Nn_{\R^n}|_U$. By \cite[I.6.5]{s} we have 
$$
\Ii^\bullet(X\cap U)=H^0(U,\Ii^\bullet(X\cap U)\Nn_{\R^n}|_U)=H^0(U,\Ii^\bullet(X)\Nn_{\R^n}|_U)=\Ii^\bullet(X)\Nn(U),
$$ 
as required.\hfill$\sqbullet$
\end{remark}

\subsubsection{Nash extension property}
Roughly speaking, the coherent sheaf $\Nn_{\R^n}/\Ii_X^\bullet$ captures the global notions concerning Nash functions, whereas $\Nn_{\R^n}/\Jj_X^\bullet$ the local ones. Define $\Cc^{\scriptscriptstyle\Nn}(X):=H^0(X,(\Nn_{\R^n}/\Jj^\bullet_X)|_X)$, which is the ring of local Nash functions on a Nash set $X$ of $\R^n$. We will see in Lemma \ref{ens} that the elements of $\Cc^{\scriptscriptstyle\Nn}(X)$ are semialgebraic functions on $X$. We reformulate what means the Nash extension property for Nash sets.

\begin{defn}
A Nash set $X\subset\R^n$ has the \em Nash extension property \em if the homomorphism $\Nn(X)\to\Cc^{\scriptscriptstyle\Nn}(X)$ is surjective, that is, each local Nash function on $X$ is the restriction to $X$ of a Nash function on $\R^n$.
\end{defn}

\subsubsection{Reduction to the case of affine spaces}\label{omegarn}
As in the real analytic case, we comment briefly why our choice of considering Nash subsets of $\R^n$ instead of Nash subsets of an open semialgebraic subset $\Omega\subset\R^n$ is not restrictive. 
\begin{proof}
Let $M\subset\R^n$ be a Nash manifold and let $X\subset M$ be a Nash subset, that is, the common zero set of finitely many Nash functions $f_1,\ldots,f_r$ on $M$. As $M$ is locally compact, $U:=\R^n\setminus(\cl(M)\setminus M)$ is an open semialgebraic subset of $\R^n$. By Mostowski's Lemma \cite[Lem.6]{m2} there exists a continuous semialgebraic function $h:\R^n\to\R$ such that $h|_U$ is a strictly positive Nash function and $\ZZ(h)=\R^m\setminus U=\cl(M)\setminus M$. The image of the Nash map $\varphi:U\to\R^{m+1},\ x\mapsto\big(x,\frac{1}{h(x)}\big)$ is the closed Nash manifold $T:=\varphi(U)=\{(x,y)\in\R^{n+1}:\ yh(x)=1\}$ and the projection $\pi:\R^{n+1}\to\R^n,\ (x,x_{n+1}):=(x_1,\ldots,x_n,x_{n+1})\to x$ induces by restriction a Nash map $\rho:=\pi|_T:T\to U$. Observe that $\varphi:U\to T$ and $\rho:T\to U$ are mutually inverse diffeomorphisms. Thus, as $M$ is closed in $U$, also $N:=\varphi(M)$ is closed in $\R^{n+1}$, so we may assume from the beginning that $M$ is a closed Nash submanifold in $\R^n$ for some positive integer $n$. Let $(U,\rho)$ be a Nash tubular neighborhood of $M$ in $\R^n$ and define $F_i:=f_i\circ\rho$ for $i=1,\ldots,r$. As $M$ is a coherent Nash subset of $\R^n$ and $F_1,\ldots,F_r$ define global sections of the coherent sheaf $\Nn_{\R^n}/\Jj_M^\bullet$, there exist by Theorem \ref{factfinitesheaf}(i), Corollary \ref{bfbc} and Lemma \ref{ijn} Nash functions $G_1,\ldots,G_r\in\Nn(\R^n)$ such that $G_i|_M=f_i$. Let $G_{r+1}$ be a Nash equation of $M$ in $\R^n$. We have $X=\ZZ(G_1,\ldots,G_r,G_{r+1})$ is a Nash subset of $\R^n$.
\end{proof}

\subsubsection{Cartan's Theorems {\em(A)} and {\em(B)} in the Nash setting.}
Let us summarize a part of the information above in the following Cartan's style result:
\begin{thm}\label{summer}
Let $\Omega\subset\R^n$ be an open semialgebraic set and let $X\subset\Omega$ be a coherent Nash subset of $\Omega$. Then:
\begin{itemize}
\item[(A)] The ideal $\Ii^\bullet(X)$ of Nash functions on $\Omega$ that are identically zero on $X$ generates the ideal $\Jj^\bullet_{X,x}$ of Nash germs on $\Omega_x$ that vanishes identically on $X_x$ for each $x\in X$.
\item[(B)] The Nash functions on $X$, that is, the global sections of the sheaf $\Nn_{\R^n}/\Jj^\bullet_X$ are restrictions to $X$ of Nash functions on $\Omega$.
\end{itemize}
\end{thm}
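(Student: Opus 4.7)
The plan is to deduce both assertions directly from the identification of sheaves $\Jj^\bullet_X = \Ii^\bullet_X$, which by Lemma \ref{ijn} is precisely a reformulation of the Nash coherence of $X$, and then to feed this into the Cartan-style results for finite sheaves of $\Nn_{\R^n}$-ideals supplied by Theorem \ref{factfinitesheaf}.

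First I would reduce to the case $\Omega = \R^n$ following the construction of \S\ref{omegarn}. Using Mostowski's Lemma, one replaces the possibly non-closed ambient $\Omega$ by its graph realization as a closed Nash submanifold of some $\R^N$, composes the defining Nash equations $f_1,\ldots,f_r$ of $X$ on $\Omega$ with a Nash tubular retraction, and appends a Nash equation of the embedded $\Omega$ to obtain Nash functions on $\R^N$ whose common zero set is $X$. Thus $X$ becomes a coherent Nash subset of $\R^N$, and Remark \ref{generalnash} (together with the stalkwise compatibility of $\Ii^\bullet$ under pull-back to open semialgebraic subsets) allows one to transport the statements (A) and (B) back from $\R^N$ to the original $\Omega$. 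Hence it suffices to treat $\Omega = \R^n$.

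For assertion (A), Lemma \ref{ijn} gives that Nash coherence of $X$ is equivalent to the equality of sheaves $\Jj^\bullet_X = \Ii^\bullet_X$. Taking stalks at an arbitrary $x \in X$ and recalling the definition $\Ii^\bullet_{X,x} := \Ii^\bullet(X)\Nn_{\R^n,x}$, one obtains
\[
\Jj^\bullet_{X,x} \;=\; \Ii^\bullet_{X,x} \;=\; \Ii^\bullet(X)\,\Nn_{\R^n,x},
\]
which is exactly the content of (A). For assertion (B), the same equality $\Jj^\bullet_X = \Ii^\bullet_X$ combined with Corollary \ref{bfbc} shows that $\Jj^\bullet_X$ is a \emph{finite} sheaf of $\Nn_{\R^n}$-ideals (in fact, the largest such one with support $X$). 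Theorem \ref{factfinitesheaf}(B) then applies directly to this finite sheaf and yields that every global section of $\Nn_{\R^n}/\Jj^\bullet_X$ is represented by a Nash function on $\R^n$; restricting to $X$, we conclude that every Nash function on $X$ — that is, every element of $H^0(X, (\Nn_{\R^n}/\Jj^\bullet_X)|_X)$ — is the restriction of a Nash function on $\R^n$, which is (B).

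The main (and essentially the only) substantive step is the reduction from $\Omega$ to $\R^n$, since one must ensure the Nash coherence of $X$, the ideal $\Ii^\bullet(X)$, and the stalks of $\Jj^\bullet_X$ all behave correctly under the tubular-neighborhood construction; this is where the finiteness features peculiar to the Nash category (Corollary \ref{bfbc} and the noetherianity of $\Nn(\R^n)$) are used. Once that reduction is carried out, both (A) and (B) are immediate formal consequences of Lemma \ref{ijn}, Corollary \ref{bfbc} and Theorem \ref{factfinitesheaf}.
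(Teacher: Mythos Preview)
Your proposal is correct and follows precisely the route the paper intends: the theorem is stated there explicitly as a summary of the preceding results, and your derivation from Lemma \ref{ijn} (giving $\Jj^\bullet_X=\Ii^\bullet_X$), Corollary \ref{bfbc} (finiteness of $\Ii^\bullet_X$), and Theorem \ref{factfinitesheaf} is exactly how the paper expects the reader to assemble it. The reduction to $\Omega=\R^n$ via \S\ref{omegarn} is likewise the paper's own device, so there is nothing to add.
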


We recall here (by the sake of completeness) the identity $\Nn(X)=H^0(\R^n,\Nn_{\R^n}/\Ii_X^\bullet)$.
\begin{cor}\label{iso}
There exist a natural isomorphism between the ring $\Nn(X)$ of Nash functions on $X$ and the ring of global sections of the sheaf $\Nn_{\R^n}/\Ii_X^\bullet$, that is, $\Nn(X)=H^0(\R^n,\Nn_{\R^n}/\Ii_X^\bullet)$.
\end{cor}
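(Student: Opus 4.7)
The plan is to apply Theorem \ref{factfinitesheaf} to the finite sheaf $\Ii_X^\bullet$ (which is finite by Corollary \ref{bfbc}(i)) and then identify $\Nn(X)$ with $\Nn(\R^n)/\Ii^\bullet(X)$ through a standard extension-from-a-neighborhood argument.

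First, I would take global sections of the short exact sequence
\begin{equation*}
0\longrightarrow\Ii_X^\bullet\longrightarrow\Nn_{\R^n}\longrightarrow\Nn_{\R^n}/\Ii_X^\bullet\longrightarrow 0
\end{equation*}
and use Theorem \ref{factfinitesheaf}(B) to conclude that the induced map $\Nn(\R^n)\to H^0(\R^n,\Nn_{\R^n}/\Ii_X^\bullet)$ is surjective. To identify its kernel, note that a section $F\in H^0(\R^n,\Ii_X^\bullet)\subset\Nn(\R^n)$ satisfies $F_x\in\Ii_{X,x}^\bullet\subset\Jj_{X,x}^\bullet$ for every $x\in X$, so $F$ vanishes on $X$ and belongs to $\Ii^\bullet(X)$; the reverse inclusion holds by construction. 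This yields the canonical isomorphism
\begin{equation*}
\Nn(\R^n)/\Ii^\bullet(X)\;\xrightarrow{\sim}\;H^0(\R^n,\Nn_{\R^n}/\Ii_X^\bullet).
\end{equation*}

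Second, I would prove that the restriction map $\rho:\Nn(\R^n)\to\Nn(X),\ F\mapsto F|_X$ is surjective with kernel $\Ii^\bullet(X)$. The kernel statement is immediate from the definition of $\Ii^\bullet(X)$. For surjectivity, let $f\in\Nn(X)$ and choose a Nash extension $F\in\Nn(U)$ on an open semialgebraic neighborhood $U$ of $X$. The germs of $F$ yield a section $[F]\in H^0(U,(\Nn_{\R^n}/\Ii_X^\bullet)|_U)$. The stalks of $\Nn_{\R^n}/\Ii_X^\bullet$ vanish outside $X$: at any $x\in\R^n\setminus X$, some Nash equation $g$ of $X$ satisfies $g(x)\neq 0$, hence $g$ is a unit in the local ring $\Nn_{\R^n,x}$ and $\Ii_{X,x}^\bullet=\Nn_{\R^n,x}$. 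Since $\{U,\R^n\setminus X\}$ covers $\R^n$ and the sheaf is zero on $\R^n\setminus X$, the section $[F]$ glues with the zero section to produce a global section of $\Nn_{\R^n}/\Ii_X^\bullet$. By Theorem \ref{factfinitesheaf}(B) some $\tilde{F}\in\Nn(\R^n)$ represents this global section, and for each $x\in X$ the containment $(\tilde{F}-F)_x\in\Ii_{X,x}^\bullet$ forces $\tilde{F}(x)=F(x)=f(x)$, so $\tilde{F}|_X=f$.

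Chaining the two isomorphisms delivers the natural identification $\Nn(X)=H^0(\R^n,\Nn_{\R^n}/\Ii_X^\bullet)$. The only delicate point is the second step: different Nash extensions of the same $f$ over different neighborhoods may, in general, give distinct sections of $\Nn_{\R^n}/\Ii_X^\bullet$ (they differ by elements of $\Jj_X^\bullet$, not of $\Ii_X^\bullet$), but this ambiguity is absorbed by $\Ii^\bullet(X)$ and does not threaten surjectivity of $\rho$, since one only needs the mere existence of a single lift $\tilde F\in\Nn(\R^n)$ with $\tilde F|_X=f$.
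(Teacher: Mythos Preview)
Your proof is correct and follows essentially the same approach as the paper: both establish $\Nn(\R^n)/\Ii^\bullet(X)\cong H^0(\R^n,\Nn_{\R^n}/\Ii_X^\bullet)$ via Theorem~\ref{factfinitesheaf}(B), then use that the support of $\Nn_{\R^n}/\Ii_X^\bullet$ is $X$ to pass from a Nash extension on a neighborhood $U$ to a global section, and finally lift again by Theorem~\ref{factfinitesheaf}(B) to a Nash function on $\R^n$. The paper phrases the middle step as an identification $H^0(U,\cdot)\cong H^0(\R^n,\cdot)\cong\Nn(\R^n)/\Ii^\bullet(X)$ (citing Remark~\ref{generalnash} and Theorem~\ref{summer}(B)), whereas you do it by an explicit extension-by-zero along the covering $\{U,\R^n\setminus X\}$; these are two ways of saying the same thing, and your direct appeal to Theorem~\ref{factfinitesheaf}(B) is arguably the cleaner citation since no coherence hypothesis is needed.
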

\begin{proof}
By Remark \ref{generalnash} we have $\Ii^\bullet(X\cap V)=\Ii^\bullet(X)\Nn(V)$ for each open semialgebraic subset $V\subset\R^n$. Then by Theorem \ref{summer}(B) we deduce $H^0(V,(\Nn_{\R^n}/\Ii_X^\bullet)|_V)=\Nn(V)/\Ii^\bullet(X)\Nn(V)$ for each open semialgebraic subset $V\subset\R^n$. The support of $\Nn_{\R^n}/\Ii_X^\bullet$ is $X$, so for each open semialgebraic neighborhood $U$ of $X$ in $\R^n$ we have $H^0(X,(\Nn_{\R^n}/\Ii_X^\bullet)|_X)\equiv H^0(U,(\Nn_{\R^n}/\Ii_X^\bullet)|_U)\equiv H^0(\R^n,\Nn_{\R^n}/\Ii_X^\bullet)$. Fix an open semialgebraic neighborhood $U$ of $X$ in $\R^n$. As 
$$
\Nn(\R^n)/\Ii^\bullet(X)\equiv H^0(\R^n,\Nn_{\R^n}/\Ii_X^\bullet)\equiv H^0(U,\Nn_{\R^n}/\Ii_X^\bullet|_U)\equiv\Nn(U)/\Ii^\bullet(X)\Nn(U),
$$ 
we deduce that for each Nash function $g:U\to\R$ there exists a Nash function $h:\R^n\to\R$ such that $g-h|_U\in\Ii^\bullet(X)\Nn(U)$, so $g=h$ on $X$. In particular, \em each function $f\in\Nn(X)$ has a Nash extension to $\R^n$\em, so that the restriction homomorphism $\Nn(\R^n)\to\Nn(X),\ f\mapsto f|_X$ is surjective. Consequently, $\Nn(X)=\Nn(\R^n)/\Ii^\bullet(X)\equiv H^0(X,\Nn_{\R^n}/\Ii_X^\bullet)$, as required.
\end{proof}

%%%
\section{Set of tails and set of points of non-coherence. Examples}\label{s3}

In this section we recall the description of the set $N(X)$ of points of non-coherence of a $C$-analytic set $X$ provided in \cite[\S5.1]{abf2} and we analyze the set $T(X)$ of `tails' of $X$. The characterization of $N(X)$ is inspired by \cite{t2,tt} and the description of both $N(X)$ and $T(X)$ involve an invariant complexification $\widetilde{X}$ of $X$ and an invariant normalization $(Y,\pi)$ of $\widetilde{X}$. We also refer the reader to \cite{fn,gal} for further information concerning $N(X)$. In addition, we analyze the corresponding concepts in the Nash setting. 

Denote $\sigma:=\sigma_n:\C^n\to\C^n,\ z:=(z_1,\ldots,z_n)\mapsto\ol{z}:=(\ol{z}_1,\ldots,\ol{z}_n)$ the complex conjugation in $\C^n$. Recall that a subset $A\subset\C^n$ is {\em invariant} (under conjugation) if $\sigma_n(A)=A$ and a function $F:\Omega\to\C$ (defined on an open subset $\Omega\subset\C^n$) is {\em invariant} (under conjugation) if $\Omega$ is invariant and $\ol{F\circ\sigma_n}=F$. We recall some concepts involved in the descriptions of the set $N(X)$ of points of non-coherence and the set $T(X)$ of `tails' of a $C$-analytic set.

\subsection{Involved concepts for the analytic setting}
We analyze first the concept of {\em complexification} of a $C$-analytic set $X$. 

\subsubsection{Complexification of a \texorpdfstring{$C$}{C}-analytic set}\label{ccas0}
Let us recall briefly how the {\em complexification} of $X$ is constructed. Consider the coherent sheaves of $\an_{\C^n}$-ideals $\Ii_X\otimes_\R\C$ on $\R^n$. By \cite[Prop.2]{c2} there exists an open neighborhood $\Omega\subset\C^n$ of $\R^n$ and a coherent sheaf $\Ff$ of $\an_{\C^n}$-ideals on $\Omega$ such that $\Ff|_{\R^n}=\Ii_X\otimes_\R\C=\Ii(X)\an_{\C^n}$. Shrinking $\Omega$ if necessary, we may assume that the support of $\Ff$ is an invariant (under conjugation in $\C^n$) complex analytic set $\widetilde{X}\subset\Omega$ such that its fixed part (with respect to conjugation in $\C^n$) is $X$. Recall that $\widetilde{X}$ is called a \em complexification \em of $X$. By \cite{c2} (see also \cite[Thm.1.2.12]{abf5}) two complexifications of $X$ coincide in an open neighborhood of $\R^n$ in $\C^n$ and shrinking $\Omega$ we may always assume that it is an invariant Stein open subset of $\C^n$ and $\widetilde{X}\subset\C^n$ is an invariant Stein complex analytic set. 

Denote the sheaf of zero $\an_{\C^n}$-ideals of $\widetilde{X}$ with $\Jj_{\widetilde{X}}^\C$ (we use the superindex $\C$ to stress that we are considering a complex analytic set), that is, $\Jj_{\widetilde{X},x}^\C$ is the ideal of complex analytic germs $f_x\in\an_{\C^n,x}$ such that $\widetilde{X}_x\subset\ZZ(f_x)$. Denote $\Jj^\C(\widetilde{X}):=H^0(\Omega,\Jj_{\widetilde{X}}^\C)$ and as $\widetilde{X}$ is a Stein complex analytic set, by Cartan's Theorem A $\Jj_{\widetilde{X},x}^\C=\Jj^\C(\widetilde{X})\an_{\C^n,x}$ for each $x\in\widetilde{X}$. Observe that $\widetilde{X}\cap\R^n=X$, because $\widetilde{X}$ is the support of $\Ff$ and $\Ff|_{\R^n}=\Ii(X)\an_{\C^n}|_{\R^n}$. As $\widetilde{X}$ is invariant, $\Jj^\C(\widetilde{X})$ is generated by invariant analytic functions that vanish identically on $X$. By R\"uckert's Nullstellensatz \cite[Thm.3.4.4]{jp}
$$
\Ii(X)\an_{\C^n,x}=\Ff_x\subset\sqrt{\Ff_x}=\Jj_{\widetilde{X},x}^\C=\Jj^\C(\widetilde{X})\an_{\C^n,x}\subset\Ii(X)\an_{\C^n,x}
$$
for each $x\in X$. Consequently, all the previous inclusion are equalities
\begin{equation}\label{icc}
\Jj^\C(\widetilde{X})\an_{\C^n,x}=\Jj_{\widetilde{X},x}^\C=\Ii(X)\an_{\C^n,x}=\Ff_x
\end{equation} 
for each $x\in X$. This means in particular that $\Ff_x$ is a radical ideal for each $x\in X$ and that we may assume that the sheaf of $\an_{\C^n}$-ideals $\Ff$ (chosen at the begining) is $\Jj_{\widetilde{X}}$.

Denote $\an_{\widetilde{X}}:=\an_{\C^n}/\Jj_{\widetilde{X}}^\C$ and define $\an(\widetilde{X}):=H^0(\Omega,\an_{\Omega}/\Jj_{\widetilde{X}}^\C)$ and $\an(\Omega)=H^0(\Omega,\an_{\Omega})$. As $\widetilde{X}$ is a Stein complex analytic set, Cartan's Theorem B implies that $\an(\widetilde{X})=\an(\Omega)/\Jj^\C(\widetilde{X})$.

We refer to \cite{abf5,c2,t1,wb} for a deeper study of the complexification of a $C$-analytic set. A crucial property of the complexification of a $C$-analytic subset of $\R^n$ is the following.
\begin{lem}\label{fx}
Let $X\subset\R^n$ be a $C$-analytic set and let $\widetilde{X}\subset\C^n$ be a complexification of $X$. Let $f:\R^n\to\R$ be an analytic function such that $f|_X=0$. Let $F$ be an invariant analytic extension of $f$ to an invariant open neighborhood $\Omega\subset\C^n$ of $\R^n$. Then there exists an invariant open neighborhood $\Theta\subset\Omega$ of $\R^n$ such that $F|_{\widetilde{X}\cap\Theta}=0$.
\end{lem}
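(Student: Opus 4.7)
The plan is to exploit the ideal identity \eqref{icc}, which says that $\Ii(X)\an_{\C^n,x}=\Jj_{\widetilde X,x}^\C$ for every $x\in X$, together with the closedness of $\widetilde X$ in its ambient open set and the equality $\widetilde X\cap\R^n=X$. I want to verify, point by point along $\R^n$, that the germ of $F$ annihilates $\widetilde X$, and then assemble an invariant open neighborhood $\Theta$ by a standard patching argument. After possibly replacing $\Omega$ by its intersection with a (smaller) invariant open neighborhood on which the chosen complexification $\widetilde X$ is defined, I may assume that $\widetilde X$ and $F$ live on a common invariant open subset of $\C^n$ containing $\R^n$.

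First I treat a point $x\in X$. Since $F$ is an invariant analytic extension of $f$, its germ $F_x\in\an_{\C^n,x}$ coincides with the image of $f$ under the canonical homomorphism $\an(\R^n)\to\an_{\C^n,x}$ that sends a real analytic function to the germ of its invariant complexification at $x$; this identification is unambiguous, because two invariant analytic extensions of $f$ agree as germs at any real point. As $f|_X=0$, one has $f\in\Ii(X)$, and therefore
\[
F_x\in\Ii(X)\an_{\C^n,x}.
\]
Applying \eqref{icc} gives $F_x\in\Jj_{\widetilde X,x}^\C$, so there exists an open neighborhood of $x$ in $\C^n$ on which $F$ vanishes identically on $\widetilde X$. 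Intersecting this neighborhood with its complex conjugate produces an invariant open neighborhood $U_x$ of $x$ with $F|_{\widetilde X\cap U_x}=0$, where the invariance of the conclusion uses that $\widetilde X$ and $F$ are invariant.

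For a point $y\in\R^n\setminus X$, the equality $\widetilde X\cap\R^n=X$ combined with the closedness of $\widetilde X$ in its ambient neighborhood implies $y\notin\widetilde X$, so there exists an invariant open neighborhood $U_y$ of $y$ with $U_y\cap\widetilde X=\emptyset$, on which $F|_{\widetilde X\cap U_y}=0$ holds trivially. Setting
\[
\Theta:=\bigcup_{x\in\R^n}U_x
\]
yields an invariant open neighborhood of $\R^n$ in $\Omega$ on which $F$ vanishes identically on $\widetilde X\cap\Theta$, as required. The only delicate step is the identification of $F_x$ with the image of $f$ under the canonical complexification map, which underlies the reduction to \eqref{icc}; once that is in hand, the argument is essentially a routine patching.
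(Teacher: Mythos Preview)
Your proof is correct and follows essentially the same approach as the paper: both use the identity \eqref{icc} to conclude that the germ $F_x$ lies in $\Jj_{\widetilde X,x}^\C$ at every $x\in X$, then handle points of $\R^n\setminus X$ via $\widetilde X\cap\R^n=X$, and patch. Your presentation is actually cleaner; the paper phrases the same step through the auxiliary set $Z:=\widetilde X\cap\{F=0\}$ and its germwise coincidence with $\widetilde X$ along $X$, but the underlying argument is identical.
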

\begin{proof}
Shrinking $\Omega$ if necessary, we may assume $\widetilde{X}\cap\Omega$ is a closed subset of $\Omega$. The complex analytic set $Z:=\{z\in\widetilde{X}\cap\Omega:\ F(z)=0\}$ satisfies $\Jj_{Z,x}^\C=\Ii(X)\an_{\R^n,x}\otimes_\R\C=\Jj_{\widetilde{X},x}^\C$ for all $x\in X$, because $f=F|_X\in\Ii(X)$. Consequently, by Lemma \ref{fx} there exists an open neighborhood $\Theta\subset\Omega$ of $\R^n$ such that $Z\cap\Theta=\widetilde{X}\cap\Theta$. Consequently, $F|_{\widetilde{X}\cap\Theta}=0$, as required.
\end{proof}

\subsubsection{Invariant normalization in the \texorpdfstring{$C$}{C}-analytic setting}
Let $X\subset\R^n$ be a $C$-analytic set. Denote the restriction to $\widetilde{X}$ of the complex conjugation on $\C^n$ with $\sigma:\widetilde{X}\to\widetilde{X}$. It holds $d:=\dim(X)=\dim_\C(\widetilde{X})$ and $X=\widetilde{X}^\sigma:=\{x\in\widetilde{X}:\ \sigma(x)=x\}$. Let $\pi:Y\to\widetilde{X}$ be the normalization of $\widetilde{X}$. As $\widetilde{X}$ is Stein, also $Y$ is Stein \cite{n3}. The complex conjugation of $\widetilde{X}$ extends to an anti-involution $\widehat{\sigma}$ on $Y$ that makes the following diagram commutative \cite[IV.3.10]{gmt}
$$
\xymatrix{
&Y^{\widehat{\sigma}}\ar[d]_{\pi|_{Y^{\widehat{\sigma}}}}\ar@{^{(}->}[r]&Y\ar[r]^{\widehat{\sigma}}\ar[d]_{\pi}&Y\ar[d]^{\pi}\\
X\ar@{=}[r]&\widetilde{X}^{\sigma}\ar@{^{(}->}[r]&\widetilde{X}\ar[r]^{\sigma}&\widetilde{X}
}
$$
where $Y^{\widehat{\sigma}}:=\{y\in Y:\ \widehat{\sigma}(y)=y\}$ is the set of fixed points of $\widehat{\sigma}$.

\subsubsection{Regular and singular points in the analytic setting}\label{rspas}
We say that $x$ is a \em regular point of a $C$-analytic set $X\subset\R^n$ \em if $\an(X)_{\gtm_x}$ is a regular local ring (where $\gtm_x$ is the maximal ideal of $\an(\R^n)$ associated to $x$). Pick a point $x$ of $X$ and assume for simplicity $x=0$. Thus, $\an_{\R^n,x}$ coincides with the ring $\R\{\x\}$ of real convergent power series in $n$ variables. Its completion is the ring $\R[[\x]]$ of real formal power series in $n$ variables, which is also the completion of the local ring $\an(\R^n)_{\gtm_x}$. The completions of the local rings $\an_{\R^n,x}/\Ii_{X,x}=\an_{\R^n,x}/(\Ii(X)\an_{\R^n,x})$ and $\an(X)_{\gtm_x}=(\an(\R^n)/\Ii(X))_{\gtm_x}$ are by \cite[(23.I) Thm.54, (23.L) Thm.55, Cor.3]{mt} both isomorphic to $\R[[\x]]/(\Ii(X)\R[[\x]])$. Let $\widetilde{X}$ be a complexification of $X$. The completion of the local ring $\an(\widetilde{X})_{\gtn_x}=(\an(\Omega)/\Jj^\C(\widetilde{X}))_{\gtn_x}$ (where $\gtn_x$ is the maximal ideal of $\an(\Omega)$ associated to $x$) is by \cite[(23.I) Thm.54, (23.L) Thm.55, Cor.3]{mt} and \eqref{icc}
$$
\C[[\x]]/(\Jj^\C(\widetilde{X})\C[[\x]])=\C[[\x]]/(\Ii(X)\C[[\x]])=(\R[[\x]]/(\Ii(X)\R[[\x]]))\otimes_\R\C,
$$
whereas the completion of the local ring $\an_{\widetilde{X},x}=\an_{\C^n,x}/\Jj_{\widetilde{X},x}^\C$ is by \cite[(23.I) Thm.54, (23.L) Thm.55, Cor.3]{mt} and \eqref{icc}
\begin{equation*}
\C[[\x]]/(\Jj_{\widetilde{X},x}^\C\C[[\x]])=\C[[\x]]/(\Ii(X)\C[[\x]])=(\R[[\x]]/(\Ii(X)\R[[\x]]))\otimes_\R\C.
\end{equation*}
Thus, the completions of the local rings $\an(\widetilde{X})_{\gtn_x}=(\an(\Omega)/\Jj^\C(\widetilde{X}))_{\gtn_x}$ and $\an_{\widetilde{X},x}$ coincide. In addition, they are obtained tensoring by $\C$ the common completions of $\an(X)_{\gtm_x}$ and $\an_{\R^n,x}/\Ii_{X,x}$.

All the local rings above are by \cite[Ch.VII.Ex.2.3]{abr} excellent, so the inclusions of such local rings in their respective completions are faithfully flat, regular morphisms \cite[(33.A), (34.A)]{mt}. In addition, the homomorphism $\R\hookrightarrow\C$ is faithfully flat and regular, so if one of the excellent local rings $
\an(X)_{\gtm_x}$, $\an_{\R^n,x}/\Ii_{X,x}$, $\an_{\widetilde{X},x}$, $\an(\widetilde{X})_{\gtn_x}$ is regular, all the other are regular too \cite[(33.B), Lem.2, (33.E), Lem.4]{mt}. Thus, the \em set of regular points \em of $X$ is $\Reg(X)=\Reg(\widetilde{X})\cap X$ and the \em singular locus of $X$ \em is $\Sing(X):=X\setminus\Reg(X)=\Sing(\widetilde{X})\cap X$. The connected components of the open subset $\Reg(X)$ of $X$ is a finite union of real analytic manifolds (possibly of different dimensions). As $\Sing(\widetilde{X})$ is a complex analytic set of strictly smaller dimension than $\widetilde{X}$, we deduce $\Sing(X)$ is a $C$-analytic set of strictly smaller dimension than $X$.

\subsection{Involved concepts for the Nash setting}\label{icns}
In the Nash setting the constructions above are slightly more intricate, because we need to take care of the semialgebraicity of the involved objects. We analyze the concept of (semialgebraic) {\em complexification} of a Nash set $X\subset\R^n$. To lighten the presentation we postpone the proofs of this part until Appendix \ref{A}.

\subsubsection{Semialgebraic complexification of a Nash set}\label{scns0}
Let $X\subset\R^n$ be a Nash set. Then there exists an open semialgebraic neighborhood $W\subset\C^n$ (endowed with its underlying real structure) of $X$ and an invariant complexification of $X\subset W$, which is both a complex analytic subset of and a semialgebraic subset of $W$.

\subsubsection{Invariant normalization in the Nash setting: semialgebraic structure}\label{innsss0}
For each $k\geq1$ we identify $\C^k$ with its underlying real structure $\R^{2k}$ when referring to its semialgebraic subsets and denote $\sigma_k:\C^k\to\C^k$ the complex conjugation of $\C^k$. Let $X\subset\R^n$ be a Nash set. Then there exist $m\geq0$, open semialgebraic subsets $\Omega\subset\C^n$, $\Theta\subset\C^{n+m}$, complex analytic sets $\widetilde{X}\subset\Omega$ and $Y\subset\Theta$ such that:
\begin{itemize}
\item[(i)] $\widetilde{X}$ is an invariant complexification of $X$ and it is a semialgebraic subset of $\C^n$.
\item[(ii)] $Y$ is a semialgebraic subset of $\C^{n+m}$.
\item[(iii)] $\rho:Y\to\widetilde{X},\ (x,w)\mapsto x$ is an invariant normalization of $\widetilde{X}$ (with respect to the usual conjugation of $\C^{n+m}$), where $\rho$ is the restriction to $Y$ of the projection $P:\C^{n+m}=\C^n\times\C^m\to\C^n$ onto the first factor. Observe that $\rho\circ\sigma_{n+m}=\sigma_n\circ\rho$.
\end{itemize} 

\subsubsection{Regular and singular points in the Nash setting}\label{rspns}
We say that $x$ is a \em regular point of a Nash set $X\subset\R^n$ \em if $\Nn(X)_{\gtn_x}$ is a regular local ring (where $\gtn_x$ is the maximal ideal of $\Nn(\R^n)$ associated to $x$). Let us check that regular points in this case coincide with the regular points of $X$ endowed with its analytic structure. Pick a point $x$ of the Nash set $X\subset\R^n$ and assume for simplicity $x=0$. Thus, $\Nn_{\R^n,x}$ coincides with the ring $\R[[\x]]_{\rm alg}$ of real algebraic power series in $n$ variables \cite[Cor.8.1.6]{bcr}. Its completion is the ring $\R[[\x]]$ of real formal power series in $n$ variables, which is also the completion of the local ring $\Nn(\R^n)_{\gtn_x}$. The completions of the local rings $\Nn_{\R^n,x}/\Ii^\bullet_{X,x}=\Nn_{\R^n,x}/(\Ii^\bullet(X)\Nn_{\R^n,x})$ and $\Nn(X)_{\gtn_x}=(\Nn(\R^n)/\Ii^\bullet(X))_{\gtn_x}$ are by \cite[(23.I) Thm.54, (23.L) Thm.55, Cor.3]{mt} both isomorphic to $\R[[\x]]/(\Ii^\bullet(X)\R[[\x]])$. 

If we consider $X$ as a $C$-analytic set, the completions of the local excellent rings 
$$
\begin{cases}
\an(X)_{\gtm_x}=(\an(\R^n)/\Ii(X))_{\gtm_x}=(\an(\R^n)/\Ii^\bullet(X)\an(\R^n))_{\gtm_x},\\
\an_{\R^n,x}/\Ii_{X,x}=\an_{\R^n,x}/(\Ii^\bullet_{X,x}\an_{\R^n,x})=\an_{\R^n,x}/(\Ii^\bullet(X)\an_{\R^n,x})
\end{cases}
$$ 
are by \cite[(23.I) Thm.54, (23.L) Thm.55, Cor.3]{mt} both isomorphic to $\R[[\x]]/(\Ii^\bullet(X)\R[[\x]])$. This means that the completions of the rings $\Nn(X)_{\gtn_x},\Nn_{\R^n,x}/\Ii^\bullet_{X,x},\an_{X,x}$ and $\an(X)_{\gtm_x}$ coincide.

All the local rings above are by \cite[Ch.VII.Ex.2.3]{abr} excellent, so the inclusions of such local rings in their respective completions are faithfully flat, regular morphisms \cite[(33.A), (34.A)]{mt}. This means that if one of the excellent local rings $\Nn(X)_{\gtn_x}$, $\Nn_{\R^n,x}/\Ii^\bullet_{X,x}$, $\an_{X,x}$, $\an(X)_{\gtm_x}$ is regular, then all the others are regular as well, see \cite[(33.B), Lem.2]{mt} and \cite[(33.E), Lem.4]{mt}. Consequently, the regular (resp. singular) points of $X$ are the same if we consider either the Nash structure or the analytic structure of the Nash set $X$. Using Jacobian criterion for Nash manifolds, we deduce that if $X\subset\R^n$ is a Nash set, then $\Sing(X)$ is a Nash subset of $X$ of strictly smaller dimension and $\Reg(X)=X\setminus\Sing(X)$ is a semialgebraic subset of $X$.

\subsection{Set of `tails' of a \texorpdfstring{$C$}{C}-analytic set}
Let us characterize, using the normalization of the complexification of a $C$-analytic set $X\subset\R^n$, the set $T(X):=\{x\in X:\ \Jj_{X,x}\neq\Ii_{X,x}\}$ of `tails' of $X$, that is, the set of points of $X$ where the sheaves $\Jj_X$ and $\Ii_X$ differ. We call $T(X)$ in this way, because it generalizes the visible `tails' of celebrated $C$-analytic sets like Cartan's or Whitney's umbrellas (even if in the general case the `tails' of $X$ could be hidden). 
\begin{remark}
Let $X\subset\R^n$ be a Nash set. As the inclusion homomorphism $\Nn_{\R^n,x}\hookrightarrow\an_{\R^n,x}$ is by Corollary \ref{ffna} faithfully flat, $\Jj_{X,x}=\Jj^\bullet_{X,x}\an_{\R^n,x}$ and $\Ii_{X,x}=\Ii^\bullet_{X,x}\an_{\R^n,x}$ for each $x\in X$, we deduce that $\Jj^\bullet_{X,x}\neq\Ii^\bullet_{X,x}$ if and only if $\Jj_{X,x}\neq\Ii_{X}$. This means that $T(X)=\{x\in X:\ \Jj^\bullet_{X,x}\neq\Ii^\bullet_{X,x}\}$.\hfill$\sqbullet$
\end{remark}

We provide next a geometric property of the `tails' of a $C$-analytic set.

\begin{lem}\label{tx0}
Let $X\subset\R^n$ be a $C$-analytic set and let $\widetilde{X}$ be an invariant complexification of $X$. Then for each $x\in T(X)$ there exists an irreducible component $T_x$ of $\widetilde{X}$ (depending on $x$) such that $\dim_\R(T_x\cap\R^n)<\dim_\C(T_x)$.
\end{lem}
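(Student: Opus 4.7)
\emph{Proof plan.} The strategy is to exploit the very definition of $T(X)$ together with the identification $\Ii(X)\an_{\C^n,x}=\Jj^\C_{\widetilde{X},x}$ from \eqref{icc}, to produce a holomorphic germ at $x$ which vanishes on the real locus of some complex irreducible component of $\widetilde{X}_x$ but not on the component itself. Since $x\in T(X)$, one can pick $g\in\Jj_{X,x}\setminus\Ii_{X,x}$, i.e.\ a real analytic germ that vanishes on $X_x$ but not on $\widetilde{X}_x$. Indeed, the extension of scalars $\an_{\R^n,x}\hookrightarrow\an_{\R^n,x}\otimes_\R\C=\an_{\C^n,x}$ is faithfully flat, so $\Ii_{X,x}\an_{\C^n,x}\cap\an_{\R^n,x}=\Ii_{X,x}$; combined with \eqref{icc} this gives $\Ii_{X,x}=\Jj^\C_{\widetilde{X},x}\cap\an_{\R^n,x}$, hence our $g$, viewed in $\an_{\C^n,x}$, does not lie in $\Jj^\C_{\widetilde{X},x}$.

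After shrinking $\Omega$ to a small invariant neighbourhood of $x$, decompose the germ $\widetilde{X}_x$ into its finitely many complex irreducible components $T_1,\dots,T_k$. Because $g$ does not vanish on $\widetilde{X}_x$, at least one $T_i$ satisfies $g|_{T_i}\not\equiv 0$; set $T_x:=T_i$ and write $d:=\dim_\C T_x$. It remains to prove $\dim_\R(T_x\cap\R^n)<d$. Assume, for contradiction, that equality holds.

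If $T_x$ is not $\sigma$-invariant, then $\sigma(T_x)$ is a different irreducible component of $\widetilde{X}_x$ of the same complex dimension $d$, and
\[
T_x\cap\R^n\subset T_x\cap\sigma(T_x),
\]
which is a proper complex analytic subgerm of $T_x$, hence of complex dimension $<d$, and therefore of real dimension $<d$, contradicting our assumption. Hence $T_x=\sigma(T_x)$, and $T_x\cap\R^n$ is the fixed locus of the anti-holomorphic involution $\sigma|_{T_x}$. Pick a point $a\in T_x\cap\R^n$ which is regular both for $T_x$ and for $T_x\cap\R^n$; such $a$ exist in abundance once the real dimension of $T_x\cap\R^n$ attains $d$. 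Near $a$, $T_x$ is a complex manifold of complex dimension $d$ and $T_x\cap\R^n$ is the fixed locus of the restriction of $\sigma$, hence a totally real analytic submanifold of real dimension equal to the complex dimension of the ambient manifold. By the classical identity principle for holomorphic functions vanishing on a maximal totally real submanifold (which follows from a Taylor expansion in $\sigma$-adapted coordinates at $a$), the holomorphic extension of $g$ vanishes on a neighbourhood of $a$ in $T_x$, and therefore on the whole irreducible germ $T_x$. This contradicts $g|_{T_x}\not\equiv 0$.

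The main obstacle is the last step, that is, making precise and rigorous the identity principle in the $\sigma$-invariant case; once that is granted, the rest of the argument is a direct unwrapping of definitions plus the faithful flatness input. This identity principle is a classical fact about totally real submanifolds of maximal dimension and can be found, for instance, in Whitney--Bruhat's treatment of $C$-analytic sets \cite{wb}.
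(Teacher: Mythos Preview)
Your proof is correct and follows a route genuinely different from the paper's. Both arguments start from $\Ii_{X,x}\subsetneq\Jj_{X,x}$ and pass to $\C$, but then diverge.

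The paper works at the level of germs of sets: it introduces the complexification $Z_x:=\ZZ(\Jj_{X,x}\otimes_\R\C)$ of $X_x$ and argues that if every irreducible component $\widetilde{X}_{j,x}$ of $\widetilde{X}_x$ contained a real irreducible piece $S_j$ of full dimension, then (since an irreducible complex germ containing a real germ of the same dimension must be its complexification) each $\widetilde{X}_{j,x}$ would be the complexification of $S_j\subset X_x$, forcing $\widetilde{X}_x\subset Z_x$ and hence $\Jj^\C_{\widetilde{X},x}=\Jj_{X,x}\otimes_\R\C$, a contradiction. No specific function is ever named; the input is \cite[Ch.V.Prop.2]{n1}.

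You instead pick a \emph{witness} $g\in\Jj_{X,x}\setminus\Ii_{X,x}$, locate a component $T_x$ on which its holomorphic extension does not vanish, and derive the contradiction from the identity principle on a maximal totally real submanifold. This is more hands-on and makes transparent \emph{why} full real dimension forces the component into the complexification of $X_x$: a holomorphic function vanishing on a real slice of top dimension vanishes on the whole irreducible component. Your approach is essentially the ``converse'' direction that the paper later packages in Lemma~\ref{dimm}.

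One wording issue worth tightening: in the non-invariant case, the clause ``hence of complex dimension $<d$, and therefore of real dimension $<d$'' reads as though you are bounding the real (topological) dimension of the complex germ $T_x\cap\sigma(T_x)$, which is $2\dim_\C<2d$, not $<d$. What you actually need (and presumably intend) is that the real analytic germ $T_x\cap\R^n$ has real dimension at most the complex dimension of any $\sigma$-invariant complex germ containing it, because its complexification sits inside $T_x\cap\sigma(T_x)$; hence $\dim_\R(T_x\cap\R^n)\le\dim_\C\bigl(T_x\cap\sigma(T_x)\bigr)<d$. With that clarification the argument is complete.
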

\begin{proof}
Pick a point $x\in X$. As $\Ii_{X,x}\subsetneq\Jj_{X,x}$, we also have $\Jj^\C_{\widetilde{X},x}=\Ii_{X,x}\otimes_\R\C\subsetneq\Jj_{X,x}\otimes_\R\C$ (because the homomorphism $\R\hookrightarrow\C$ is faithfully flat). The germ $Z_x:=\ZZ(\Jj_{X,x}\otimes_\R\C)$ is the complexification of the germ $X_x=\ZZ(\Jj_{X,x})$ and it holds $Z_x\neq\widetilde{X}_x$. Let $\widetilde{X}_{1,x},\ldots,\widetilde{X}_{s,x}$ be the irreducible components of $\widetilde{X}_x$. The irreducible components of $X_x$ are however by \cite[Ch.V.Prop.2]{n1} the intersections with $\R^n_x$ of the irreducible components of $Z_x$. We have $Z_x\subsetneq\widetilde{X}_x$ and $Z_x\cap\R^n_x=X_x=\widetilde{X}_x\cap\R^n_x$. Suppose that for each $j=1,\ldots,s$ there exists an irreducible component $S_j$ of $\widetilde{X}_{j,x}\cap\R^n_x$ such that $\dim_\R(S_j)=\dim_\C(\widetilde{X}_{j,x})$. Thus, as $\widetilde{X}_{j,x}$ is irreducible, $\widetilde{X}_{j,x}$ is the complexification of $S_j\subset X$, so $\widetilde{X}_x$ is the complexification of $\bigcup_{j=1}^sS_j\subset X$. Consequently, $\widetilde{X}_x\subset Z_x$, which is a contradiction. Consequently, the statement holds, as required.
\end{proof}
\begin{remark}
We prove in Corollary \ref{txca} that this property characterizes the points of $T(X)$.\hfill$\sqbullet$
\end{remark}

Recall that a {\em $C$-semianalytic subset} $S$ of $\R^n$ is a locally finite union in $\R^n$ of {\em basic $C$-semianalytic sets}, that is, subsets of $\R^n$ of the type $\{f=0,g_1>0,\ldots,g_r>0\}$ where $r\geq1$ and $f,g_i\in\an(\R^n)$ (see \cite[\S3]{abf2}). We will use the following remark freely along this article.

\begin{remarks}\label{dense}
(i) If $S\subset\R^n$ is a $C$-semianalytic set, there exists a countable subset $D\subset S$ that is dense in $S$.

As $S$ is a locally finite union of set of the type $T:=\{f=0,g_1>0,\ldots,g_r>0\}$, we deduce using an exhaustion of $R^n$ by compact sets that the previous locally finite union is countable. Thus, it is enough to consider the case of basic $C$-semianalytic sets $T:=\{f=0,g_1>0,\ldots,g_r>0\}$. Denote $A:=\{g_1>0,\ldots,g_r>0\}$ and $X:=\{f=0\}$, so $T=X\cap A$. As $T$ is an open subset of $X$, if $D$ is a dense subset of $X$, then $D\cap A$ is a dense subset of $T$, because $\cl(D\cap A)\cap A=\cl(D)\cap A=T\cap A$. Consider the stratification of $X$ (as a finite union of analytic manifolds) given by $X=\bigcap_{k\geq0}\Sing_k(X)\setminus\Sing_{k+1}(X)$ where $\Sing_k(X):=\Sing_{k-1}(\Sing(X))$ and $\Sing_0(X):=X$. Thus, it is enough to prove that analytic manifolds admit dense countable subsets, which follows from their local structure diffeomorphic to $\R^n$ and their paracompactness.

(ii) In the Nash setting $C$-semianalytic sets correspond to semialgebraic sets. As semialgebraic sets admit finite stratification with strata Nash manifolds \cite[\S9]{bcr} and Nash manifolds can be covered by finitely many open subsets that are Nash diffeomorphic to $\R^n$, each semialgebraic set admit a dense countable subset.
\hfill$\sqbullet$
\end{remarks}

\begin{lem}[Description of tails]\label{dot}
Let $X\subset\R^n$ be a $C$-analytic set (resp. a Nash set), let $\widetilde{X}$ be an invariant Stein complexification of $X$ and let $(Y,\pi)$ be an invariant Stein normalization of $\widetilde{X}$ endowed with an involution $\widehat{\sigma}$ compatible with the complex conjugation $\sigma$ of $\C^n$. Then $T(X)=\pi(\pi^{-1}(X)\setminus\cl(Y^{\widehat{\sigma}}\setminus\Sing(Y^{\widehat{\sigma}})))$ and it is a $C$-semianalytic set (resp. a semialgebraic set) of dimension $\dim(T(X))<\dim(X)$ contained in $\Sing(X)$.
\end{lem}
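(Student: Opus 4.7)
The plan is to reduce the stated identity to a pointwise analysis of the fibers of the normalization $\pi\colon Y\to\widetilde X$, combining the characterization of $T(X)$ already isolated in Lemma \ref{tx0} with the fact that the normalization separates the irreducible components of the germs $\widetilde X_x$. Concretely, since $Y$ is normal and hence locally irreducible, for every $x\in\widetilde X$ the fiber $\pi^{-1}(x)$ is in bijection with the set of irreducible components of $\widetilde X_x$, a point $y\in\pi^{-1}(x)$ corresponding to a component $T(y)\subset\widetilde X_x$. The compatibility $\pi\circ\widehat\sigma=\sigma\circ\pi$ then translates invariance: if $T(y)$ is an invariant component, uniqueness of the normalization forces $\widehat\sigma(y)=y$, whereas if $T(y)$ is non-invariant one has $\widehat\sigma(y)\neq y$ and $\widehat\sigma(y)$ is the preimage corresponding to the conjugate component $\sigma(T(y))$.

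Next I would establish the key dimensional equivalence: for $y\in\pi^{-1}(X)$,
\[
y\in\cl\bigl(Y^{\widehat\sigma}\setminus\Sing(Y^{\widehat\sigma})\bigr)\Longleftrightarrow\dim_\R\bigl(T(y)\cap\R^n_{\pi(y)}\bigr)=\dim_\C T(y).
\]
For the forward direction, at a regular point $y'$ of $Y^{\widehat\sigma}$ the germ $Y_{y'}$ is a complex manifold and $\widehat\sigma$ is an anti-holomorphic involution fixing $y'$, so $Y^{\widehat\sigma}_{y'}$ is a totally real submanifold of dimension $\dim_\C Y_{y'}$; passing to $\widetilde X$ via $\pi$ (which is finite and biholomorphic on a dense open set) transfers full real dimension to $T(y)\cap\R^n_{\pi(y)}$. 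For the converse, a component $T(y)$ whose real part has dimension equal to $\dim_\C T(y)$ must be invariant and must contain a dense open set of regular real points lifting to $\Reg(Y^{\widehat\sigma})$, so $y$ lies in the closure. Non-invariant components fail automatically because $y\notin Y^{\widehat\sigma}$. Combining this equivalence with Lemma \ref{tx0} and its converse (if every irreducible component of $\widetilde X_x$ has full real dimension, then $\widetilde X_x$ is the complexification of $X_x$, so by R\"uckert's Nullstellensatz and faithful flatness of $\R\hookrightarrow\C$ one gets $\Jj_{X,x}=\Ii_{X,x}$, hence $x\notin T(X)$) yields the formula for $T(X)$.

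For the remaining assertions, the inclusion $T(X)\subset\Sing(X)$ follows because at a regular point $x$ of $X$ the analytic germ $X_x$ is a real analytic manifold whose complexification is irreducible of matching dimension, hence $x\notin T(X)$; the dimension bound $\dim T(X)<\dim X$ is then inherited from $\dim\Sing(X)<\dim X$ (see \S\ref{rspas}). Finally, for semianalyticity (resp.\ semialgebraicity): $Y^{\widehat\sigma}$, $\Sing(Y^{\widehat\sigma})$ and their difference are $C$-semianalytic (resp.\ semialgebraic in view of \S\ref{icns}), closures and preimages under $\pi$ preserve this class, and $\pi$ is proper (resp.\ in the Nash case the restriction of a linear projection as in \S\ref{innsss0}), so the image under $\pi$ is $C$-semianalytic (resp.\ semialgebraic by Tarski--Seidenberg). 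The main obstacle is the dimensional equivalence in the second step, where the relation between the purely topological object $\cl(Y^{\widehat\sigma}\setminus\Sing(Y^{\widehat\sigma}))$ and the algebro-geometric condition on $T(y)\cap\R^n$ must be analyzed also at singular points of $Y$; normality of $Y$ and the behaviour of an anti-holomorphic involution on a locally irreducible complex space are the technical ingredients making this translation work.
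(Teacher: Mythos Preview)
Your proposal is correct and follows essentially the same route as the paper: reduce the identity to a fiberwise analysis of $\pi^{-1}(x)$, identify the points of $\pi^{-1}(x)$ with the irreducible components of $\widetilde{X}_x$, and translate membership in $\cl(Y^{\widehat\sigma}\setminus\Sing(Y^{\widehat\sigma}))$ into the dimensional condition $\dim_\R(T(y)\cap\R^n)=\dim_\C T(y)$.

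The organizational difference is that you package both inclusions through the single equivalence and then appeal to Lemma~\ref{tx0} together with its converse, for which you supply an independent sketch (full real dimension of every component forces $\widetilde X_x$ to be the complexification of $X_x$, hence $\Ii_{X,x}=\Jj_{X,x}$). The paper instead proves the inclusion $\pi(\cdots)\subset T(X)$ by a direct case split (Case~1: $y\notin Y^{\widehat\sigma}$, where the conjugate pair $\pi(Y_y),\pi(Y_{\widehat\sigma(y)})$ forces the real part into $\Sing(\widetilde X)$; Case~2: $y\in\Sing(Y^{\widehat\sigma})\setminus\cl(Y^{\widehat\sigma}\setminus\Sing(Y^{\widehat\sigma}))$, where again $\pi(Y_y)\cap\R^n_x\subset\Sing(\widetilde X)_x$), and only afterwards records the converse of Lemma~\ref{tx0} as Corollary~\ref{txca} via Lemma~\ref{dimm}. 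Your argument for that converse is correct and makes the proof self-contained; the paper's case analysis has the minor advantage that the inclusion $T(X)\subset\Sing(X)$ drops out for free from the observation that in both cases $x\in\Sing(\widetilde X)$, whereas you derive it separately from $\Reg(X)=\Reg(\widetilde X)\cap X$.
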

\begin{proof}
Pick a point $x\in T(X)$. Write $\pi^{-1}(x)=\{y_1,\ldots,y_s\}$ and recall that $\pi(Y_{y_1}),\ldots,\pi(Y_{y_s})$ are the irreducible components of $\widetilde{X}_x$. By Lemma \ref{tx0} there exists an irreducible component $\pi(Y_{y_j})$ of $\widetilde{X}_x$ such that $\dim_\R(\pi(Y_{y_j})\cap\R^n_x)<\dim_\C(\pi(Y_{y_j}))$. As $\pi$ is a proper analytic map with finite fibers, we have $\dim_\R(\pi^{-1}(X)_{y_j})<\dim_\C(Y_{y_j})$. As $\pi\circ\widehat{\sigma}=\sigma\circ\pi$, we deduce $Y^{\widehat{\sigma}}_{y_j}\subset\pi^{-1}(X)_{y_j}$, so $\dim_\R(Y^{\widehat{\sigma}}_{y_j})<\dim_\C(Y_{y_j})$. Thus, $y_j\in\pi^{-1}(X)\setminus\cl(Y^{\widehat{\sigma}}\setminus\Sing(Y^{\widehat{\sigma}}))$, because $\cl(Y^{\widehat{\sigma}}\setminus\Sing(Y^{\widehat{\sigma}}))=\{y\in Y^{\widehat{\sigma}}:\ \dim_\R(Y^{\widehat{\sigma}}_y)=\dim_\C(Y_{y})\}$.

We prove next the converse. Let $x\in X$ be such that $\pi(y)=x$ for some $y\in\cl(Y^{\widehat{\sigma}}\setminus\Sing(Y^{\widehat{\sigma}}))$. 

\noindent{\sc Case 1}. Suppose first $x\in\pi^{-1}(X)\setminus Y^{\widehat{\sigma}}$. We have $\pi(\widehat{\sigma}(y))=\sigma(\pi(y))=\sigma(x)=x$ and $y\neq\widehat{\sigma}(y)$. Thus, $\pi(Y_y)$ and $\pi(Y_{\widehat{\sigma}(y)})$ are two (different) irreducible components of $\widetilde{X}_x$ that are mutually conjugated, because $\sigma(\pi(Y_y))=\pi(\widehat{\sigma}(Y_y))=\pi(Y_{\widehat{\sigma}(y)})$. The germ $Z_x:=\ZZ(\Jj_{X,x}\otimes_\R\C)$ is the complexification of the germ $X_x=\ZZ(\Jj_{X,x})$. If no irreducible component of $Z_x$ is contained in $\pi(Y_y)$, then $\Jj_{X,x}\otimes_\R\C\neq\Ii_{X,x}\otimes_\R\C$, so $\Jj_{X,x}\neq\Ii_{X,x}$. Thus, suppose $Z_{1,x}$ is an irreducible component of $Z_x$ contained in $\pi(Y_y)$ and denote $X_{1,x}:=Z_{1,x}\cap\R^n_x$, which is by \cite[Ch.V.Prop.2]{n1} an irreducible component of $X_x$. As $X_{1,x}\subset\pi(Y_y)\cap\R^n_x$, we have $X_{1,x}\subset\pi(Y_y)\cap\pi(Y_{\widehat{\sigma}(y)})\subset\Sing(\widetilde{X})_x$, so the complexification $Z_{1,x}$ of $X_{1,x}$ is contained in $\pi(Y_y)\cap\pi(Y_{\widehat{\sigma}(y)})\subset\Sing(\widetilde{X})_x$. Thus, $Z_x\neq\widetilde{X}_x$ and $\Jj_{X,x}\neq\Ii_{X,x}$, so $x\in T(X)$. 

\noindent{\sc Case 2}. Suppose $y\in\Sing(Y^{\widehat{\sigma}})\setminus\cl(Y^{\widehat{\sigma}}\setminus\Sing(Y^{\widehat{\sigma}}))$ with $\pi(x)=y$. Then $Y^{\widehat{\sigma}}_y=\Sing(Y^{\widehat{\sigma}})_y$ and $\pi(Y_y)$ is an irreducible component of $\widetilde{X}_x$. As $\pi|_{Y\setminus\pi^{-1}(\Sing(\widetilde{X}))}:Y\setminus\pi^{-1}(\Sing(\widetilde{X}))\to\widetilde{X}\setminus\Sing(\widetilde{X})$ is an analytic diffeomorphism and $\sigma\circ\pi=\pi\circ\widehat{\sigma}$, we deduce
$$
\pi(Y_y)\cap\R^n_x=\pi(Y_y)\cap X_x\subset\Sing(\widetilde{X})_x\cup\pi(Y^{\widehat{\sigma}}_y)=
\Sing(\widetilde{X})_x\cup\pi(\Sing(Y^{\widehat{\sigma}})_y)\subset\Sing(\widetilde{X})_x.
$$
Thus, the complexification of $\pi(Y_y)\cap\R^n_x$ is contained in $\Sing(\widetilde{X})_x$, so $\pi(Y_y)$ is not the complexification of $\pi(Y_y)\cap\R^n_x$. Consequently, $\Ii_{X,x}\neq\Jj_{X,x}$ and $x\in T(X)$.

Observe that in both cases $x\in\Sing(\widetilde{X})$. Thus, $T(X)\subset\Sing(\widetilde{X})\cap X=\Sing(X)$ and $\dim(T(X))\leq\dim(\Sing(X))<\dim(X)$.

The $C$-semianalyticity of $T(X)$ follows from \cite[Thm.1.1, Prop.3.5]{abf2}. We follow the description of $T(X)$, when $X$ is a Nash set, and we observe that the involved objects are of semialgebraic nature: complexification (semialgebraic, see \S\ref{scns0}), normalization of the complexificacion (semialgebraic, see \S\ref{innsss0}), sets of regular and singular points (semialgebraic, see \S\ref{rspns}). The remaining operations that appear in the construction of $T(X)$ are: boolean operations, images under semialgebraic maps, inverse images under semialgebraic maps, topological closures, which are of semialgebraic nature. Thus, $T(X)$ is a semialgebraic set, as required. 
\end{proof}

\subsection{Description of the set of points of non-coherence.}\label{dspnc}
Let $X\subset\R^n$ be a $C$-analytic set and define $N(X)$ as the set of points $x\in X$ such that $\Jj_{X}$ is not of finite type at $x$ (or equivalently, the analytic germ $X_x$ is not coherent), that is, for each open neighborhood $U\subset X$ of $x$ the restriction sheaf $\Jj_{X}|_U$ is not of finite type. We describe next the set $N(X)$ of points of non-coherence of a $C$-analytic set $X\subset\R^n$ (following \cite[\S5]{abf2}) and then we show that it $X$ is in addition a semialgebraic set, then $N(X)$ is semialgebraic as well. It can be enlightening for the reader to have the following examples in mind when reading \S\ref{asnx}. The description of $N(X)$ recalled bellow is crucial to prove Theorem \ref{xy}.
\begin{examples}
(i) The normalization of the complexification $\widetilde{W}\subset\C^3$ of Whitney's umbrella $W:=\{y^2-zx^2=0\}\subset\R^2$ is $\pi:\C^2\to\widetilde{W},\ (u,v)\mapsto(u,uv,v^2)$ and the inverse image $\pi^{-1}(W)=\R^2\cup\{(0,\pm{\tt i}w):\ w\in\R\}$.

(ii) The set of singular points of the complexification $\widetilde{X}\subset\C^4$ of the $3$-dimensional $C$-irreducible $C$-analytic hypersuface $X:=\{w^2-z(x^2+y^2)=0\}\subset\R^4$ is 
$$
\Sing(\widetilde{X})=\{x=0,y=0,w=0\}\cup\{x^2+y^2=0,z=0,w=0\}\subset\C^4, 
$$
which has codimension $2$ in $\widetilde{X}$. The irreducible complex analytic hypersurface $\widetilde{X}$ is by \cite{ok} normal (so it is locally irreducible), whereas $X$ is non-coherent, because it has a visible `tail' (the germs $X_{p}$ at the points $p:=(0,0,z,0)\in X$ with $z<0$ have dimension $1$). We refer the reader to \cite[Esempio, p. 211]{t1} and \cite[Ex.2.4]{abf2} for further details.

(iii) Let $X:=X_1\cup X_2\subset\R^4$ where $X_1=\{(x^2-(z+1)y^2)^2z-u^2=0\}$ is a modified Whitney's umbrella and $X_2:=\{u=0\}$. Then $N(X_1)=\{x^2-y^2=0,z=0,u=0\}\cup\{(0,0,-1,0)\}$, whereas $N(X)=\{x^2-y^2=0,z=0,u=0\}$ (see the concrete details in \cite[Ex.5.7(ii)]{abf2}).
\end{examples}

\subsubsection{Analytic setting}\label{asnx}
Let $X\subset\R^n$ be a $C$-analytic set. For each $\ell\geq0$ define $\Sing_\ell(X)$ recursively as $\Sing_\ell(X):=\Sing(\Sing_{\ell-1}(X))$, where $\Sing_0(X):=X$. For each $k\in\{0,\ldots,d\}$ let ${\mathfrak F}_k$ be the family of all the $C$-analytic sets $Z\subset\R^n$ of dimension $k$ that are a $C$-irreducible component of $\Sing_\ell(X)$ for some $\ell\geq0$. Define $Z_k:=\bigcup_{Z\in{\mathfrak F}_k}Z$ and 
$$
\textstyle
R_k:=\bigcup_{j=k+1}^dZ_{j,(j)}\quad\text{where}\quad Z_{j,(j)}:=\{z\in Z_j:\dim(Z_{j,z})=j\}=\cl(Z_j\setminus\Sing(Z_j)). 
$$

Let $\widetilde{Z}_k$ be a complexification of $Z_k$ and let $(Y_k,\pi_k)$ be the normalization of $\widetilde{Z}_k$ endowed with the anti-involution $\sigma_k:Y_k\to Y_k$ that is induced by the usual conjugation on $\widetilde{Z}_k$. Let 
$$
Y_k^{\sigma_k}:=\{y\in Y_k:\ \sigma_k(y)=y\} 
$$
be the fixed part space of $Y_k$, which is a $C$-analytic space. Define 
\begin{align*} 
&Y^{\sigma_k}_{k,(k)}:=\{y\in Y_k^{\sigma_k}:\dim_{\R}(Y_{k,y}^{\sigma_k})=k\}=\cl(Y^{\sigma_k}_k\setminus\Sing(Y^{\sigma_k}_k)),\\
&C_{k,1}:=\pi_k^{-1}(Z_k)\setminus Y_k^{\sigma_k},\quad C_{k,2}:=Y_k^{\sigma_k}\setminus Y^{\sigma_k}_{k,(k)},\\
&A_{k,i}:=\cl(C_{k,i})\cap\cl(Y^{\sigma_k}_{k,(k)}\setminus\pi_k^{-1}(R_k))\quad\text{for $i=1,2$}.
\end{align*}
\begin{thm}[{\cite[\S5.1]{abf2}}]\label{ncp0}
Suppose the $C$-analytic set $X\subset\R^n$ has dimension $d\geq2$ and let $N_k(Z_k,R_k):=\pi_k(A_{k,1})\cup\pi_k(A_{k,2})$ for $k\in\{2,\ldots,d\}$. Then, 
\begin{itemize}
\item[(i)] $N_k(Z_k,R_k)$ is a $C$-semianalytic set of dimension $\leq k-2$.
\item[(ii)] A point $x\in X$ belongs to $N_k(Z_k,R_k)$ if and only if the germ $Z_{k,x}$ has a non-coherent irreducible component $T_x$ of dimension $k$ such that $\dim(T_x\setminus R_{k,x})=k$.
\item[(iii)] $\bigcup_{k=j}^dN_k(Z_k,R_k)$ is the set of points of $X$ such that the germ $X_x$ has a non-coherent irreducible component of dimension $\geq j$.
\item[(iv)] $N(X)=\bigcup_{k=2}^dN_k(Z_k,R_k)$ is a $C$-semianalytic subset of $\R^n$.
\end{itemize}
\end{thm}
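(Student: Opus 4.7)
\smallskip

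\textbf{Proof plan.} My plan is to prove (ii) first as the main structural statement, then derive (iii), (iv), and (i) as consequences, mirroring the usual strategy of reducing non-coherence of a $C$-analytic germ to a dimension mismatch between the germ and its complexification along each irreducible component (in the spirit of Lemma \ref{tx0}).

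First I would reformulate the problem component-by-component. For a fixed $x\in X$, the germ $X_x$ decomposes into irreducible components of various dimensions; the germ $X_x$ is non-coherent exactly when at least one of these components is non-coherent. The construction of $Z_k$ via the iterated singular loci $\Sing_\ell(X)$ is designed so that the union of $k$-dimensional irreducible components of $X_x$ is captured, for every $x$, by $Z_{k,x}$. This is the content of the standard `hidden components' decomposition, and the set $R_k$ records the components of dimension $>k$ that already sit inside $Z_k$, so that $Z_{k,x}\setminus R_{k,x}$ isolates the part of $Z_k$ which is genuinely $k$-dimensional at $x$. Given this reduction, (iii) is immediate from (ii) by summing over $k\geq j$, and (iv) is the case $j=2$ together with the observation that every $1$-dimensional analytic germ is coherent, hence $1$-dimensional components contribute nothing to $N(X)$.

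Next I would prove the characterization (ii). The forward direction goes as follows: for $x\in N_k(Z_k,R_k)$, pick $y\in A_{k,i}$ with $\pi_k(y)=x$; the defining condition $y\in\cl(Y^{\sigma_k}_{k,(k)}\setminus\pi_k^{-1}(R_k))$ produces an irreducible component $T_x$ of $Z_{k,x}$ of pure real dimension $k$ not coming from $R_k$. If $y\in\cl(C_{k,1})$ there is another sheet of $Y_k$ accumulating at $y$ through points lying outside $Y_k^{\sigma_k}$, showing that the complexification $\widetilde{T}_{y}$ has a complex branch whose real locus is strictly smaller-dimensional; by the converse direction of Lemma \ref{tx0} (which I would establish by comparing the complexification of $T_x$ with the actual complex component of $\widetilde{Z}_k$ meeting it) this forces $\Jj_{T_x,x}\neq\Ii_{T_x,x}$. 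If instead $y\in\cl(C_{k,2})$, the real fixed-point locus $Y_k^{\sigma_k}$ drops in dimension at $y$, giving again a complex branch with hidden lower-dimensional real part. The converse direction reverses this: if $T_x$ is a non-coherent $k$-dimensional component of $Z_{k,x}$ avoiding $R_k$ generically, then pulling $T_x$ up to $Y_k$ yields either a conjugate pair of sheets ($\to C_{k,1}$) or a drop of real dimension on the fixed part ($\to C_{k,2}$), with approximation by genuinely $k$-dimensional real points outside $\pi_k^{-1}(R_k)$.

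Finally I would settle (i). The $C$-semianalyticity of $N_k(Z_k,R_k)$ follows because every ingredient ($\widetilde{Z}_k$, $Y_k$, $Y_k^{\sigma_k}$, $Y_{k,(k)}^{\sigma_k}$, $C_{k,i}$, closures, the proper analytic map $\pi_k$) is either $C$-analytic, $C$-semianalytic, or preserves the class under its natural operations (see the closure and projection stability results in \cite{abf2}). For the dimension bound $\dim N_k(Z_k,R_k)\leq k-2$, I would argue that $A_{k,i}$ sits inside $Y_{k,(k)}^{\sigma_k}$, a $C$-semianalytic set of real dimension $k$, but is also contained in $\cl(C_{k,i})$, which is nowhere dense there: points of $C_{k,1}$ project to points where two distinct sheets of the normalization meet, a condition of codimension $\geq 2$ in $Y_{k,(k)}^{\sigma_k}$ (since single-sheet singular curves inside a $k$-manifold would give coherent germs); similarly $C_{k,2}$ has codimension $\geq 2$ by definition. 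Hence $\dim(A_{k,i})\leq k-2$ and $\dim N_k(Z_k,R_k)\leq k-2$ because $\pi_k$ has finite fibers.

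The main obstacle I anticipate is the converse direction of (ii), namely recovering a point of $A_{k,i}$ from the mere hypothesis of non-coherence of a $k$-dimensional component $T_x$. This requires a careful analysis of how the complex irreducible components of $\widetilde{T}_x$ are distributed among sheets of the normalization $Y_k\to\widetilde{Z}_k$, and of how the real locus $Y_k^{\sigma_k}$ interacts with these sheets. Controlling the approximation by points outside $\pi_k^{-1}(R_k)$ — so that one really captures the $Z_k$-behaviour and not contributions from higher-dimensional strata — is the delicate part, and is essentially where the inductive definition of $R_k$ earns its keep.
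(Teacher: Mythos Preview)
The paper does not prove this theorem: it is quoted verbatim from \cite[\S5.1]{abf2} and used as a black box (the surrounding section only recalls the construction of the sets $Z_k$, $R_k$, $A_{k,i}$ and then states the result with the citation). So there is no proof in the present paper to compare your proposal against; any genuine comparison would have to be made with the arguments in \cite{abf2}.

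That said, your outline follows the expected architecture and the reductions (ii)$\Rightarrow$(iii)$\Rightarrow$(iv) are correct. The part that is not yet a proof is the codimension-$2$ bound in (i): your claim that ``points of $C_{k,1}$ project to points where two distinct sheets of the normalization meet, a condition of codimension $\geq 2$'' conflates the non-normal locus of $\widetilde{Z}_k$ (which has complex codimension $\geq 1$, not $\geq 2$) with the set $A_{k,1}$. What actually forces the extra drop in dimension is that $A_{k,i}=\cl(C_{k,i})\cap\cl(Y^{\sigma_k}_{k,(k)}\setminus\pi_k^{-1}(R_k))$ is a \emph{frontier} (the closure of one $C$-semianalytic set intersected with another it is disjoint from in the relevant locus), and one must invoke the frontier inequality $\dim(\cl(A)\setminus A)<\dim A$ for $C$-semianalytic sets twice, once inside the $k$-dimensional real part and once to pass from the sheets $C_{k,i}$ to their boundary in $Y^{\sigma_k}_{k,(k)}$. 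Your sketch gestures at this but does not carry it out.
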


\begin{remarks}\label{remnx}
(i) Each set $N_k(Z_k,R_k):=\pi_k(A_{k,1})\cup\pi_k(A_{k,2})$ is a closed subset of $X$ (for $k=2,\ldots,d$), because each $A_{k,i}$ is a closed subset of $Y_k$, $\pi_k:Y_k\to\widetilde{Z}_k$ is a proper map and $Z_k$ is a closed subset of both $X$ and $\widetilde{Z}_k$. 

(ii) For each open set $U\subset\R^n$ we have $N(X)\cap U=N(X\cap U)$, because coherence and non-coherence are local concepts. By Theorem \ref{ncp0}(i),(iv) we deduce $X\setminus N(X)$ is dense in $X$.

(iii) If $X\subset\R^n$ be a $C$-analytic set, then $N(X)$ is a closed subset of $X$ and $X\setminus N(X)$ is a coherent $C$-analytic subset of the open set $\R^n\setminus N(X)$.$\hfill\sqbullet$
\end{remarks}

\subsubsection{Nash setting}
Let use show: {\em The set $N(X)$ of points of non-coherence of a Nash set $X\subset\R^n$ is a semialgebraic subset of $\R^n$}.
\begin{proof}
We follow the description of $N(X)$, when $X$ is a Nash set, and we observe that the involved objects are of semialgebraic nature: complexification (semialgebraic, see \S\ref{scns0}), normalization of the complexificacion (semialgebraic, see \S\ref{innsss0}), sets of regular and singular points (semialgebraic, see \S\ref{rspns}), $C$-analytic irreducible components of $X$ are finitely many and coincide with the Nash irreducible components of $X$ (see \cite[Prop.2.8(ii)]{fgr}).

The remaining operations that appear in the construction of $N(X)$ are: boolean operations, images under semialgebraic maps, topological closures, considering sets of points of a certain dimension, and all of them are of semialgebraic nature. Thus, $N(X)$ is a semialgebraic set. 
\end{proof}

\begin{remark}\label{xnx}
By Lemma \ref{ijn} a Nash subset $X$ of $\R^n$ is coherent as an analytic set if and only if it is coherent as a Nash set. This means that the semialgebraic set $X\setminus N(X)$ is a coherent Nash subset of the open semialgebraic set $\R^n\setminus N(X)$.$\hfill\sqbullet$
\end{remark}

\subsection{Pure dimensional non-coherent examples}\label{examples}
A general idea in Real Geometry is that non-coherence arises when the irreducible components of the objects are not pure dimensional. However, non-coherence appears also due to the presence of hidden `tails' inside the irreducible components of the object. We present next several pure dimensional examples that reproduce several situations concerning $\Sing(X)$, $T(X)$ and $N(X)$. A first example is $X:=\{z^3-x^2y^3=0\}$, which appears in \cite{h} and \cite[Ex.2.1]{bm5}. It holds $\Sing(X)=\{xy=0,z=0\}$, $N(X)=\{(0,0,0)\}$ and $T(X)=\Sing(X)\setminus N(X)$. Observe that $z^3-x^2y^3=(z-x^{2/3}y)(z-wx^{2/3}y)(z-w^2x^{2/3}y)$, where $w=\frac{1}{2}+{\tt i}\frac{\sqrt{3}}{2}$, so $X$ is homeomorphic to $\R^2$. Other relevant example from \cite[Ex.1.9]{gal} and \cite[\S5.3.1, p.197]{abf5} is $X:=\{z(x+y)(x^2+y^2)-x^4=0\}\subset\R^3$, which has $\Sing(X)=\{x=0,y=0\}$, $N(X)=\{(0,0,0)\}$ and $T(X)=\Sing(X)\setminus N(X)$. We leave in both cases the concrete computations to the reader.  

The previous situation suggests to distinguish in the real analytic case between regular points and smooth points, something that in the complex analytic case never happens, because in such setting both concepts are equivalent. Let $X\subset\R^n$ be a $C$-analytic set (resp. Nash set) and let $x\in X$. We say that $x$ is a \em smooth point of $X$ \em if the analytic germ $X_x$ coincides with the germ at $x$ of a real analytic manifold. Denote ${\tt Smooth}(X)$ the set of smooth points of $X$ and ${\tt Non}\text{-}{\tt Smooth}(X):=X\setminus{\tt Smooth}(X)$. The set {\em ${\tt Smooth}(X)$ is always contained in $X\setminus N(X)$}, because the germs at a point of a real analytic manifold are coherent. By the Jacobian criterion $\Reg(X)\subset{\tt Smooth}(X)$, but the inclusion can be strict (see $X:=\{x^3-z^2y^3=0\}$, which has $\Reg(X)=X\setminus\{xy=0,z=0\}$ and ${\tt Smooth}(X)=X\setminus\{x=0,z=0\}$, and Examples \ref{snr} and \ref{snr2} below). However, {\em if $X$ is coherent, the equality $\Reg(X)={\tt Smooth}(X)$ holds}. 
\begin{proof}
Suppose $X\subset\R^n$ is a coherent $C$-analytic set and pick a point $x\in{\tt Smooth}(X)$. Then, the analytic germ $X_x$ is the germ at $x$ of a real analytic manifold, so the quotient ring $\an_{\R^n,x}/\Jj_{X,x}$ is a regular local ring. As $X$ is coherent, $\Jj_{X,x}=\Ii_{X,x}$, so $\an_{\R^n,x}/\Ii_{X,x}$ is a regular local ring. Consequently, $x\in\Reg(X)$, as required.
\end{proof}
\begin{example}
If $X$ is a $C$-analytic set (resp. a Nash set), the equality $\Reg(X)={\tt Smooth}(X)$ does not guarantee that $X$ is coherent. Let us modify the example that appeared in \cite[Ex.1.9]{gal} (see also \cite[\S5.3.1, p.197]{abf5}). Consider $X:=\{z^2(x+y)^2(x^2+y^2)-x^6=0\}\subset\R^3$ (see Figure \ref{fig2}). It holds $N(X)=\{(0,0,0)\}$, $\Sing(X)=\{x=0,yz=0\}$, $\Reg(X)={\tt Smooth}(X)$ and $T(X)=\{x=0,y=0\}\setminus N(X)$. We leave the concrete details to the reader.\hfill$\sqbullet$
\end{example}

\begin{center}
\begin{figure}[ht]
\begin{minipage}{0.49 \textwidth} % 0.5 \textwidth = 50% ancho de p\'agina
\begin{center}
\includegraphics[width=0.63\textwidth]{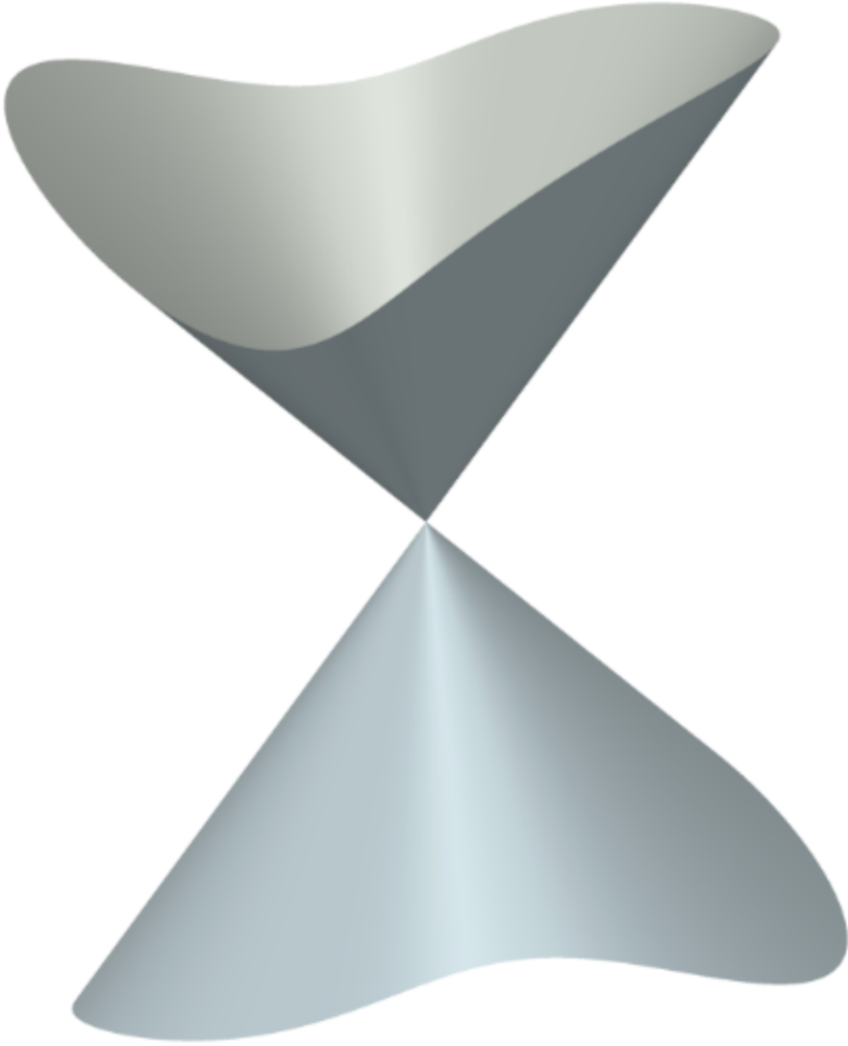}
\caption{\Small$X:z(x+y)(x^2+y^2)-x^4=0$\label{fig1}}
\end{center}
\end{minipage}
\hfil
\begin{minipage}{0.49\textwidth}
\begin{center}
\vspace*{2mm}
\includegraphics[width=0.8\textwidth]{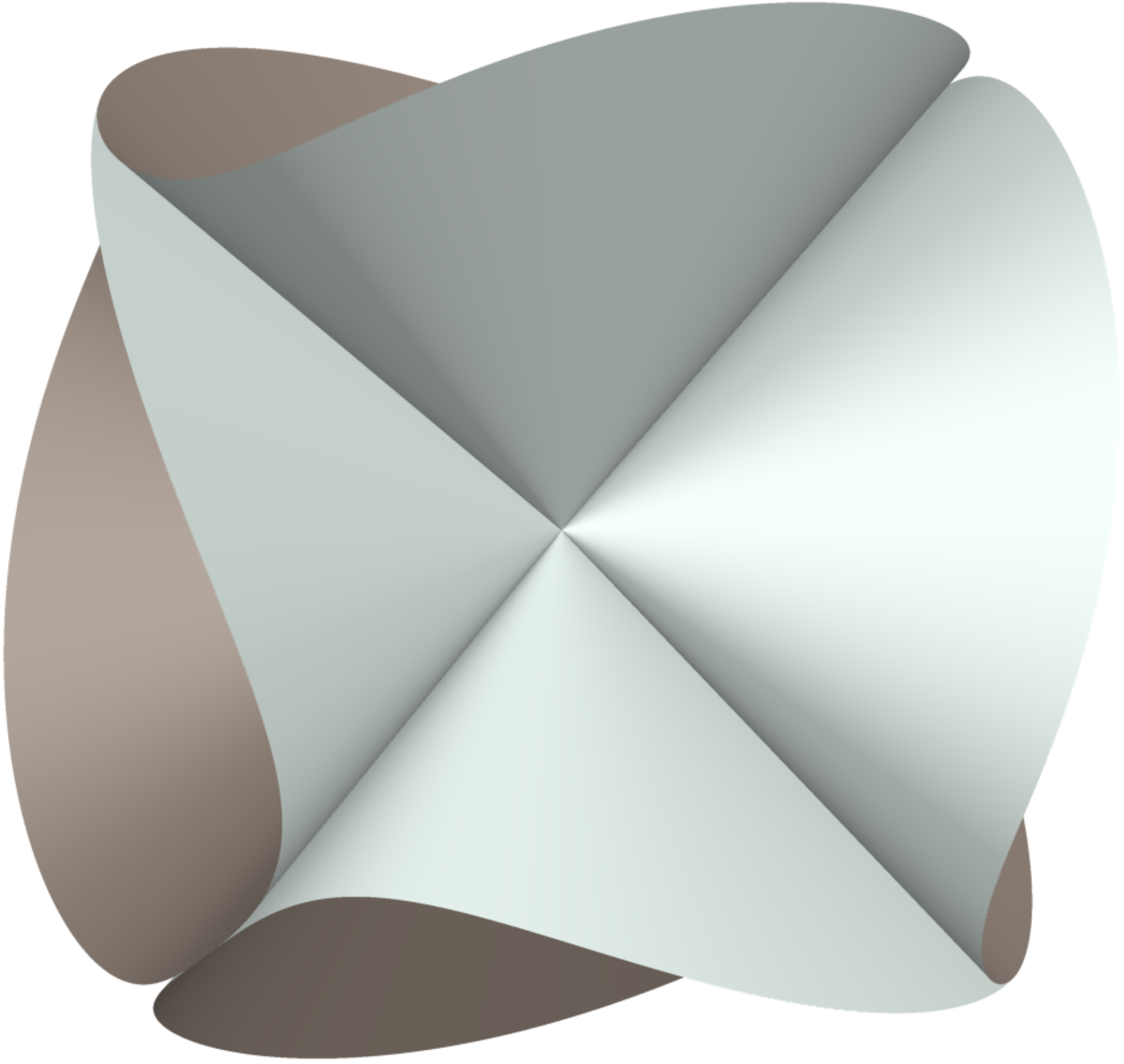}
\caption{\Small$X:z^2(x+y)^2(x^2+y^2)-x^6=0$\label{fig2}}
\end{center}
\end{minipage}
\end{figure}
\end{center}

We borrow the following pure dimensional example from \cite[Rem.7.3]{bm2}, \cite[Ex.2.2]{bm5} and \cite[Ex.2.1]{fe2}.

\begin{example}[Birdie non-coherent singularity]\label{snr}
Consider the algebraic set $X:=\{(x^2+zy^2)x-y^4=0\}\subset\R^3$. The set of regular points of $X$ is $X\setminus\{x=0,y=0\}$, whereas: {\em The set of smooth points of $X$ is $X\setminus\{x=0,y=0,z\leq0\}$} (see Figure \ref{fig3}).

To prove that the points of the open half-line $\{x=0,y=0,z<0\}$ are non-smooth we proceed by contradiction. Pick a point $p:=(0,0,-a^2)\in\{x=0,y=0,z<0\}$ and assume that it is smooth. As the line $\{x=0,y=0\}\subset X$, the vector $(0,0,1)$ would be tangent to $X$ at $p$, so the plane $z=-a^2$ would be transversal to $X$ at $p$. Thus, the intersection $X\cap\{z=-a^2\}$ should be a curve that is smooth at $p$, but this is a contradiction, because such curve $\{(x^2-(ay)^2)x-y^4=0,z=-a^2\}$ has three tangent lines at $p$, which are those lines of equations $\{x-ay=0\}$, $\{x+ay=0\}$ and $\{x=0\}$ inside the plane $\{z=-a^2\}$. The origin cannot be a smooth point of $X$, because the set of smooth points of $X$ is an open subset of $X$. Consequently, the set of non-smooth points of $X$ contains the closed half-line $\{x=0,y=0,z\leq0\}$.

Let us prove that the points of the open half-line $\{x=0,y=0,z>0\}$ are smooth. To that end, observe that the map 
$$
\varphi:\{(s,u)\in\R^2:\ u>0\}\to\R^3,\ (s,u)\mapsto((s^2+u)s^2,(s^2+u)s,u)
$$ 
is an analytic embedding whose image is $X\cap\{z>0\}$. Observe (computing the Jacobian matrix of $\varphi$) that $\varphi$ is a local analytic diffeomorphism onto its image. The analytic map $\varphi$ extends to a holomorphic map
$$
\Phi:\C^2\to\C^3,\ (v,w)\to((v^2+w)v^2,(v^2+w)v,w),
$$
which provides the normalization of the complexification $\widetilde{X}:=\{(x^2+zy^2)x-y^4=0\}$ of $X$. To that end, observe $v=\frac{x}{y}$ satisfies the integral equation $\t^3+z\t-y=0$ (and in addition $z=w$, $y=v^3+zv$ and $x=vy$). 

We prove next that $N(X)=\{(0,0,0)\}$. As $\Reg(X)\subsetneq{\tt Smooth}(X)=X\setminus\{x=0,y=0,z\leq0\}\subset X\setminus N(X)$, we have to check that the points of the open half-line $\{x=0,y=0,z<0\}$ provide coherent analytic germs. Pick a point $(0,0,t)$ with $t<0$. Then $\Phi^{-1}(\{(0,0,t)\})=\{(0,t),(\sqrt{-t},t),(-\sqrt{-t},t)\}$ and 
\begin{align*}
X_{(0,0,t)}&=\varphi(\R^2_{(0,t)})\cup\varphi(\R^2_{(\sqrt{-t},t)})\cup\varphi(\R^2_{(-\sqrt{-t},t)}),\\
\widetilde{X}_{(0,0,t)}&=\Phi(\C^2_{(0,t)})\cup\Phi(\C^2_{(\sqrt{-t},t)})\cup\Phi(\C^2_{(-\sqrt{-t},t)}).
\end{align*}
Let $g\in\Jj_{X,(0,0,t)}$ and let $G\in\an_{\C^n,(0,0,t)}$ its natural holomorphic extension. As $g$ vanishes identically on $\varphi(\R^2_{(0,t)})$, $\varphi(\R^2_{(\sqrt{-t},t)})$ and $\varphi(\R^2_{(-\sqrt{-t},t)})$, we deduce that $G$ vanishes identically on $\Phi(\C^2_{(0,t)})$, $\Phi(\C^2_{(\sqrt{-t},t)})$ and $\Phi(\C^2_{(-\sqrt{-t},t)})$, because the latters are the complexifications of the analytic germs $\varphi(\R^2_{(0,t)})$, $\varphi(\R^2_{(\sqrt{-t},t)})$ and $\varphi(\R^2_{(-\sqrt{-t},t)})$. Thus, $G\in\Jj_{\widetilde{X},(0,0,t)}=((x^2+zy^2)x-y^4)\an_{\C^n,(0,0,t)}$, so $(x^2+zy^2)x-y^4$ divides $g$ and $\Jj_{X,(0,0,t)}=((x^2+zy^2)x-y^4)\an_{\C^n,(0,0,t)}$. Consequently, $X_{(0,0,t)}$ is a coherent analytic germ for each $t<0$.  

We deduce also $T(X)=\Sing(X)\cap\{z<0\}$. The real parameter $s=\frac{x}{y}$ is a solution of the integral equation $\t^3+z\t-y=0$. Using the solution general equation of degree $3$ we find that
$$
s=f(z,y):=\sqrt[3]{\frac{y}{2}+\sqrt{\frac{z^3}{27}+\frac{y^2}{4}}}+\sqrt[3]{\frac{y}{2}-\sqrt{\frac{z^3}{27}+\frac{y^2}{4}}}
$$
As $x=ys$, we deduce that $F(x,y,z):=x-yf(y,z)$ is an analytic equation of $X\setminus\{z\leq0\}$ in $\R^3\setminus\{z\leq0\}$, which is the complement of a $C$-semianalytic set with no empty interior. The previous equation $F$ generates the ideal $\Jj_{X,p}$ for each $p\in X\setminus\{z\leq0\}$. By Lemma \ref{eq} below there exists an analytic $h\in\an(\R^n\setminus N(X))$ such that $h_x$ generates the ideal $\Jj_{X,x}$ for each $x\in\R^n\setminus N(X)$. We have not found an explicit expression of such analytic equation defined on $\R^n\setminus N(X)$ in terms of elementary analytic functions.
\hfill$\sqbullet$
\end{example}

\begin{center}
\begin{figure}[ht]
\begin{minipage}{0.49 \textwidth} % 0.5 \textwidth = 50% ancho de p\'agina
\begin{center}
\includegraphics[width=0.8\textwidth]{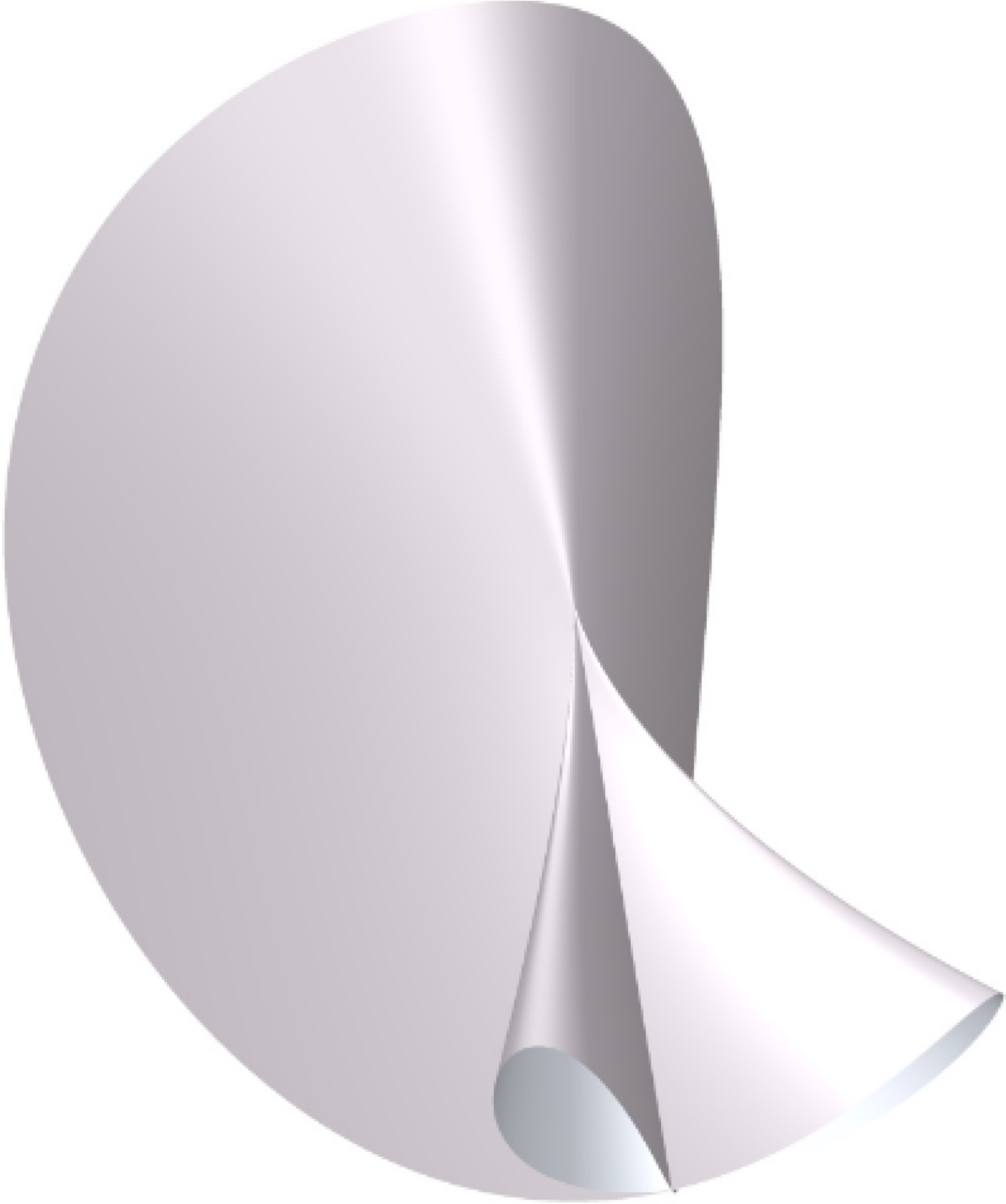}
\caption{\Small$X:(x^2+zy^2)x-y^4=0$\label{fig3}}
\end{center}
\end{minipage}
\hfil
\begin{minipage}{0.49\textwidth}
\begin{center}
\vspace*{2mm}
\includegraphics[width=0.93\textwidth]{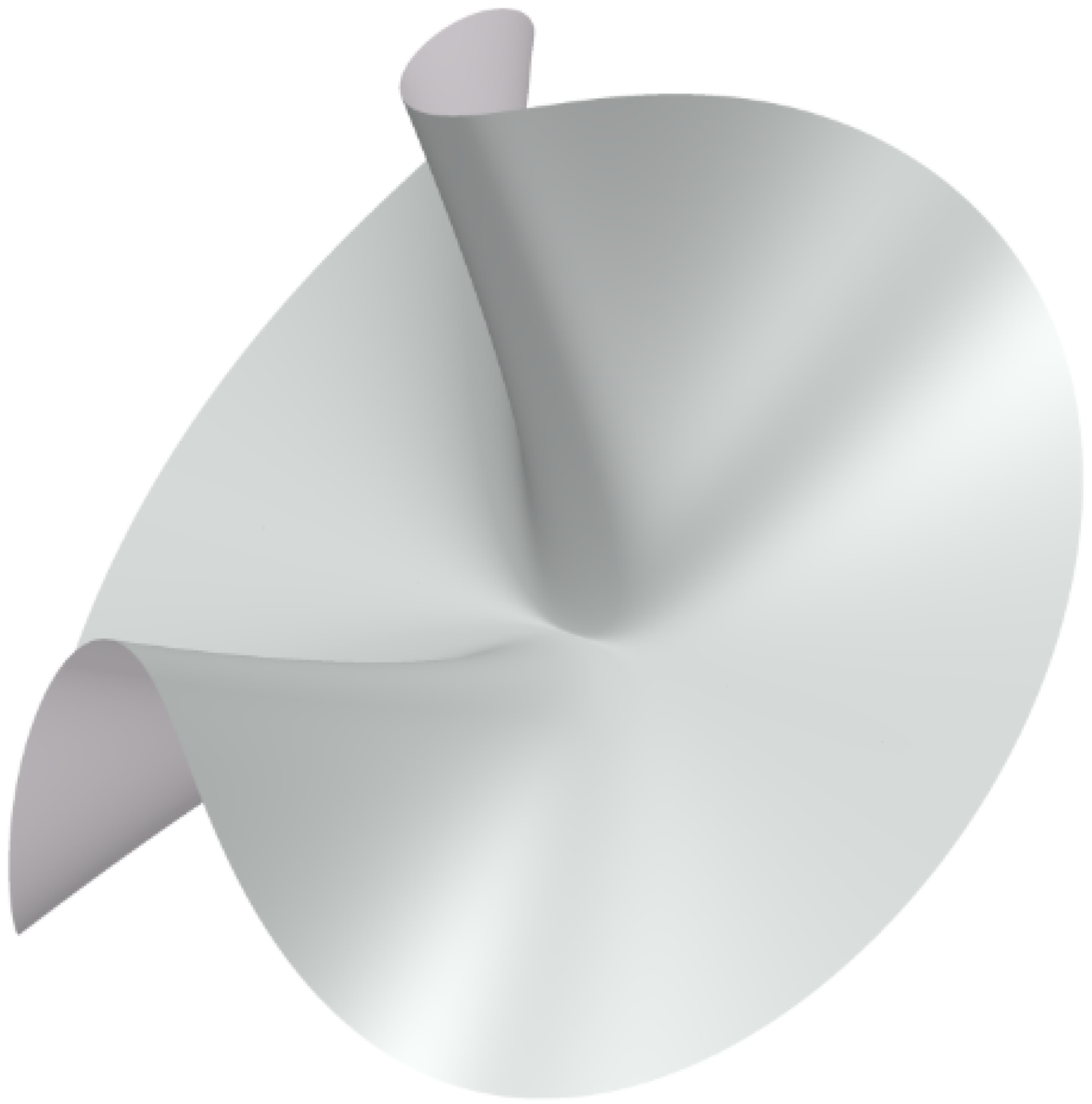}
\caption{\Small$X:(x^2+z^2y^2)x-y^4=0$\label{fig4}}
\end{center}
\end{minipage}
\end{figure}
\end{center}

\begin{example}[Fake blanket]\label{snr2}
Consider the algebraic set $X:=\{(x^2+z^2y^2)x-y^4=0\}\subset\R^3$. The set of regular points of $X$ is $X\setminus\{x=0,y=0\}$, whereas: {\em The set of smooth points of $X$ is $X\setminus\{(0,0,0)\}$} (see Figure \ref{fig4}). 

To prove that the origin is a non smooth-point we proceed by contradiction. Assume it is a smooth point. As the line $\{x=0,y=0\}\subset X$, the vector $(0,0,1)$ would be tangent to $X$ at $p$, so the plane $z=0$ would be transversal to $X$ at $p$. Thus, the intersection $X\cap\{z=0\}$ should be a curve that is smooth at $p$, but this is a contradiction, because such curve is $\{x^3-y^4=0,z=0\}$.

We prove next that the points of $X\setminus\{(0,0,0)\}$ are smooth. To that end, observe that the map 
$$
\varphi:\R^2\setminus\{(0,0)\}\to\R^3,\ (s,u)\mapsto((s^2+u^2)s^2,(s^2+u^2)s,u)
$$ 
is an analytic embedding whose image is $X\setminus\{(0,0,0)\}$. In fact, the previous map extends to a homeomorphism between $\R^2$ and $X$, whose inverse is the map 
$$
\psi:X\to\R^2,\ (x,y,z)\mapsto
\begin{cases}
(\frac{x}{y},z)&\text{if $y\neq0$},\\
(0,z)&\text{if $y=0$.}
\end{cases}
$$
As $X$ is non-coherent, because $\Reg(X)\subsetneq{\tt Smooth}(X)$, and $X\setminus\{(0,0,0)\}\subset{\tt Smooth}(X)\subset X\setminus N(X)\subsetneq X$, we deduce $N(X)=\{(0,0,0)\}$. In addition, $T(X)=\Sing(X)\setminus N(X)$.

The analytic map $\varphi$ extends to a holomorphic map
$$
\Phi:\C^2\to\C^3,\ (v,w)\to((v^2+w^2)v^2,(v^2+w^2)v,w),
$$
which provides the normalization of the complexification $\widetilde{X}:=\{(x^2+zy^2)x-y^4=0\}$ of $X$. To that end, observe $v=\frac{x}{y}$ satisfies the integral equation $\t^3+z^2\t-y=0$ (an in addition $z=w$, $y=v^3+z^2v$ and $x=vy$).

The real parameter $s=\frac{x}{y}$ is a solution of the integral equation $\t^3+z^2\t-y=0$. Using the solution general equation of degree $3$ we find that
$$
s=f(z,y):=\sqrt[3]{\frac{y}{2}+\sqrt{\frac{z^6}{27}+\frac{y^2}{4}}}+\sqrt[3]{\frac{y}{2}-\sqrt{\frac{z^6}{27}+\frac{y^2}{4}}}
$$
As $x=ys$, we deduce that $F(x,y,z):=x-yf(y,z)$ is an analytic equation of $X\setminus\{z=0\}$ in $\R^3\setminus\{z=0\}$, which is the complement in $\R^3$ of a $C$-analytic set of dimension $2$. The previous equation $F$ generates the ideal $\Jj_{X,p}$ for each $p\in X\setminus\{z=0\}$. By Lemma \ref{eq} below there exists an analytic $h\in\an(\R^n\setminus N(X))$ such that $h_x$ generates the ideal $\Jj_{X,x}$ for each $x\in\R^n\setminus N(X)$. Again, we have not found an explicit expression of such analytic equation defined on $\R^n\setminus N(X)$ in terms of elementary analytic functions.$\hfill\sqbullet$ 
\end{example}

%%%
\section{Special properties of the set of points of non-coherence}\label{s4}

We will keep the notations for the set of points of non coherence of a $C$-analytic set already introduced in Subsection \ref{dspnc}. The main result of this section is the following.

\begin{thm}\label{xy}
Let $X\subset\R^n$ be a $C$-analytic set (resp. a Nash set). Then $\cl(T(X))=\cl(T(X)\setminus N(X))=T(X)\cup N(X)$.
\end{thm}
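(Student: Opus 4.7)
The plan is to establish three inclusions which together force both equalities: (i) $\cl(T(X))\subset T(X)\cup N(X)$; (ii) $N(X)\subset\cl(T(X))$; (iii) $N(X)\subset\cl(T(X)\setminus N(X))$. Inclusions (i) and (ii) give $\cl(T(X))=T(X)\cup N(X)$, and (iii) together with the trivial chain $T(X)\setminus N(X)\subset\cl(T(X)\setminus N(X))\subset\cl(T(X))=T(X)\cup N(X)$ yields $\cl(T(X)\setminus N(X))=T(X)\cup N(X)$. The Nash version will go through verbatim once one invokes Section~\ref{icns}, Corollary~\ref{bfbc} and Lemma~\ref{ijn} to replace $\Ii_X$ by the finite Nash sheaf $\Ii_X^\bullet$ and to guarantee that $T(X)$, $N(X)$ and all intermediate constructions remain semialgebraic.

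For (i), suppose $x\notin T(X)\cup N(X)$. Since $x\notin N(X)$, the sheaf $\Jj_X$ is of finite type on some open neighborhood $U$ of $x$, so one can pick $f_1,\dots,f_r\in\Jj_X(U)$ generating $\Jj_{X,y}$ at every $y\in U$. Since $x\notin T(X)$, $\Ii_{X,x}=\Jj_{X,x}$, and the coherence of $\Ii_X$ together with Cartan's Theorem A provides finitely many $h_1,\dots,h_s\in\Ii(X)$ generating $\Ii_X$ in a neighborhood of $x$, plus relations $f_{i,x}=\sum_j a_{ij}h_j$ with $a_{ij}\in\an_{\R^n,x}$. These equalities extend to a smaller open neighborhood $V\subset U$, and then $\Jj_{X,y}=(f_{1,y},\dots,f_{r,y})\subset(h_{j,y})\an_{\R^n,y}\subset\Ii_{X,y}\subset\Jj_{X,y}$ for every $y\in V$, forcing $V\cap T(X)=\emptyset$ and $x\notin\cl(T(X))$.

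For (ii), the essential point is that $\Ii_X$ is a coherent $\an_{\R^n}$-sheaf of ideals, as recalled before Corollary~\ref{ij}. If an open set $V$ satisfies $V\cap T(X)=\emptyset$, then $\Ii_X|_V=\Jj_X|_V$: the stalks coincide on $X\cap V$ by hypothesis and both equal $\an_{\R^n,y}$ off $X$ since $X=\ZZ(\Ii(X))$. Consequently $\Jj_X|_V$ is coherent and $V\cap N(X)=\emptyset$; the contrapositive is (ii).

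Step (iii) will be the main obstacle. My plan is to exploit the stratified description $N(X)=\bigcup_{k=2}^d N_k(Z_k,R_k)$ from Theorem~\ref{ncp0}, where $N_k(Z_k,R_k)=\pi_k(A_{k,1})\cup\pi_k(A_{k,2})$ and $\pi_k\colon Y_k\to\widetilde{Z}_k$ is the invariant normalization. Given $x\in N(X)$ and $y\in A_{k,i}$ with $\pi_k(y)=x$, the definition of $A_{k,i}$ makes $y$ a limit of points $y_m\in C_{k,1}=\pi_k^{-1}(Z_k)\setminus Y_k^{\sigma_k}$ (or of $C_{k,2}$), so $x_m:=\pi_k(y_m)\in Z_k\subset X$ converges to $x$. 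The distinct pair $y_m,\widehat{\sigma}_k(y_m)\in\pi_k^{-1}(x_m)$ produces a conjugate pair of irreducible components of $\widetilde{Z}_{k,x_m}$; the clause $\dim(T_x\setminus R_{k,x})=k$ in Theorem~\ref{ncp0}(ii) prevents these tails from being absorbed into higher-dimensional pure strata of $\widetilde{X}$, so they persist as conjugate components of $\widetilde{X}_{x_m}$ and Case~1 of the proof of Lemma~\ref{dot}, now applied to $\widetilde{X}$, gives $x_m\in T(X)$. A transversality argument resting on $\dim\pi_k^{-1}(N(X))\le\dim N(X)\le d-2$ lets one choose the $y_m$ so that $x_m\notin N(X)$ for large $m$; the case $y_m\in C_{k,2}$ is handled by Case~2 of the same proof.
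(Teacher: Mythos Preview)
Your reductions (i) and (ii) are fine and match the paper's opening claim that $T(X)\cup N(X)$ is closed. The genuine gap is in step (iii), and it is exactly the point where the paper works hardest.

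You argue with the pair $(Y_k,\pi_k)$, the normalization of $\widetilde{Z}_k$, and produce points $x_m=\pi_k(y_m)$ with $y_m\in C_{k,i}$. Case~1 of Lemma~\ref{dot} applied to this data gives you a conjugate pair of irreducible components of $\widetilde{Z}_{k,x_m}$ with deficient real part, hence $x_m\in T(Z_k)$; it does \emph{not} give $x_m\in T(X)$. The irreducible components of $\widetilde{Z}_{k,x_m}$ have no reason to be irreducible components of $\widetilde{X}_{x_m}$: the set $Z_k$ is assembled from $k$-dimensional components of the successive singular loci $\Sing_\ell(X)$, not from components of $X$ itself, and an irreducible germ of $\widetilde{Z}_{k,x_m}$ may well sit strictly inside a higher-dimensional component of $\widetilde{X}_{x_m}$. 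Your appeal to the clause $\dim(T_x\setminus R_{k,x})=k$ of Theorem~\ref{ncp0}(ii) does not help: that clause concerns the germ at $x$, not at the nearby $x_m$, and in any case speaks about $Z_k$ relative to the coarse stratification $R_k$, not about how $\widetilde{Z}_k$ sits inside $\widetilde{X}$.

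The paper closes this gap by a localization and an induction on the pair $(e,\ell_e)=(\dim N(X),\max\{k:\dim N_k=e\})$. It first restricts to a small product neighborhood $U^x$ (turned into all of $\R^n$ by a Nash diffeomorphism) in which the higher strata $N_k$, $k>\ell_e$, are empty and the local irreducible components $X_1,\dots,X_s$ of $X$ are visible; then it replaces $Z_k$ by a single $\ell_e$-dimensional local component $X_1$ of $X$. Now $\widetilde{X_1}$ really is a union of irreducible components of $\widetilde{X}$, so a component of $\widetilde{X}_{1,\alpha(t)}$ with small real part is a legitimate input for Lemma~\ref{dimm} applied to $\gta=\Ii_{X,\alpha(t)}$, yielding $\alpha(t)\in T(X)$. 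The avoidance of $N(X)$ is obtained not by a vague transversality but from the precise inequalities of Lemma~\ref{prop}, which force $\dim(C_i)_w>e=\dim\pi^{-1}(N(X))$ locally at a well-chosen $w$, after which curve selection does the rest. Finally the induction on $(e,\ell_e)$ handles the residual piece $N(X)\setminus\cl(S_{e,\ell_e}^*)$. Without the passage from $Z_k$ to an honest local component of $X$, and without Lemma~\ref{dimm} to promote a bad component of $\widetilde{X}_1$ to an element of $\Jj_{X}\setminus\Ii_{X}$, your argument does not reach $T(X)$.
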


Before proving the previous result we present some consequences (see Figures \ref{fig1a} and \ref{fig2a}).

\begin{remarks}
(i) The previous result implies $\dim(N(X)_x)<\dim(T(X)_x)$ for each $x\in N(X)$.

(ii) If $X\subset\R^n$ is a $C$-analytic set, $N(X)$ and $T(X)$ may have non-empty intersection, even if $X$ is $C$-irreducible. Consider for instance $X:=\{(x^2-y^2-x^3)^2(y-x)-z^2=0\}$. We have $T(X)=\{x^2-y^2-x^3=0,x-y>0\}\cup\{(0,0,0)\}$, whereas $N(X)=\{(0,0,0)\}$ (see Figure \ref{fig1b}).
\end{remarks}

\begin{cor}\label{txnx}
Let $X\subset\R^n$ be a $C$-analytic set (resp. a Nash set). The following conditions are equivalent:
\begin{itemize}
\item[(i)] $X$ is coherent.
\item[(ii)] $T(X)=\varnothing$.
\item[(iii)] $N(X)=\varnothing$.
\end{itemize}
\end{cor}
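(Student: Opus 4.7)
The plan is to assemble the corollary from three ingredients already at hand: Corollary \ref{ij}, Theorem \ref{xy} just proved, and the very definition of $N(X)$. I would handle the $C$-analytic case first and then reduce the Nash case to it via Lemma \ref{ijn} and the Remark opening the discussion of $T(X)$, which identifies the set of tails computed from $\Jj_X^\bullet,\Ii_X^\bullet$ with the one computed from $\Jj_X,\Ii_X$.

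For (i) $\Leftrightarrow$ (iii) I would argue directly from the definitions. Since $\an_{\R^n}$ is a coherent sheaf of rings, a sheaf of $\an_{\R^n}$-ideals is coherent iff it is of finite type, and at a point $x\notin X$ the stalk $\Jj_{X,x}$ coincides with $\an_{\R^n,x}$ and is trivially of finite type. Hence coherence of $\Jj_X$ as a sheaf on $\R^n$ depends only on what happens at points of $X$, which is by definition the vanishing of $N(X)$. For (i) $\Leftrightarrow$ (ii) I would invoke Corollary \ref{ij}: since $X$ is by hypothesis a $C$-analytic set, $X$ is coherent iff $\Ii_{X,x}=\Jj_{X,x}$ for every $x\in X$, i.e., iff $T(X)=\varnothing$.

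As a consistency check — not strictly needed, but making explicit the role of Theorem \ref{xy} — the identity $T(X)\cup N(X)=\cl(T(X))$ yields (ii) $\Rightarrow$ (iii) for free: if $T(X)=\varnothing$ then $N(X)\subset\cl(T(X))=\varnothing$. The converse (iii) $\Rightarrow$ (ii) is instead the nontrivial content of Corollary \ref{ij}, which goes through Cartan's Theorem A to show that coherence forces $\Jj_{X,x}=\Ii(X)\an_{\R^n,x}=\Ii_{X,x}$. The Nash statement is now automatic: Lemma \ref{ijn} identifies Nash coherence with $C$-analytic coherence, and the Remark noted above, together with the fact that $N(X)$ is intrinsic to the underlying analytic set (the set of regular points and the complexification used to describe it are the same whether one works over $\Nn$ or over $\an$, as explained in \S\ref{icns}), show that (ii) and (iii) in the Nash reading agree with their $C$-analytic counterparts.

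There is no real obstacle here; the statement is a packaging of results already proved. The only subtle point to keep straight is the direction (iii) $\Rightarrow$ (i): one has to recall that coherence of $\Jj_X$ is demanded at every point of $\R^n$ (not just at points of $X$), which is precisely why the trivial behavior of $\Jj_X$ on the open set $\R^n\setminus X$ makes the reduction to $N(X)=\varnothing$ complete.
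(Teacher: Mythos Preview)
Your argument is correct, and in one respect it is more economical than the paper's. The paper proves the cycle (i) $\Rightarrow$ (ii) $\Rightarrow$ (iii) $\Rightarrow$ (i), invoking Theorem \ref{xy} for the middle implication (ii) $\Rightarrow$ (iii): if $T(X)=\varnothing$ then $N(X)\subset\cl(T(X))=\varnothing$. You instead close the equivalence without Theorem \ref{xy}, using both directions of Corollary \ref{ij}: since $X$ is assumed $C$-analytic and $\Ii_X$ is always a coherent sheaf of ideals, $T(X)=\varnothing$ gives $\Jj_X=\Ii_X$ directly, hence $\Jj_X$ is coherent, hence (i); and (i) $\Leftrightarrow$ (iii) is, as you say, immediate from the definition of $N(X)$ together with the fact that $\Jj_{X,x}=\an_{\R^n,x}$ off $X$. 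Your handling of the Nash case via Lemma \ref{ijn} and the Remark identifying the two computations of $T(X)$ is also fine.

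The upshot is that Corollary \ref{txnx} does not actually require Theorem \ref{xy}; the paper's route makes the dependence on Theorem \ref{xy} explicit because the corollary is positioned as an application of that theorem, but your observation that Corollary \ref{ij} (with the coherence of $\Ii_X$) already suffices is a valid and slightly sharper reading.
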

\begin{proof}
The implication (i) $\Longrightarrow$ (ii) follows from Corollary \ref{ij} in the analytic case and from Corollary \ref{ijn} in the Nash case. The implication (ii) $\Longrightarrow$ (iii) follows from Theorem \ref{xy}, whereas (iii) $\Longrightarrow$ (i) is a consequence of the definition of $N(X)$ and of coherent sheaf of ideals. 
\end{proof}

\begin{center}
\begin{figure}[ht]
\begin{minipage}{0.49 \textwidth} % 0.5 \textwidth = 50% ancho de p\'agina
\begin{center}
\begin{tikzpicture}[scale=0.33]
%\draw[style=help lines,step=1cm] (-12,-8) grid (12,8);

\draw[line width=1.25pt] (-10,0) -- (10,0);
\draw[line width=1.25pt] (-9.85,6.975) -- (9.85,-6.975);
\draw[line width=1.25pt] (-10,0) arc(270:90:1cm and 3.5cm);
\draw[line width=1.25pt] (-10,7) arc(90:-90:1cm and 3.5cm);

\draw[line width=1.25pt] (10,0) arc(90:-90:1cm and 3.5cm);
\draw[line width=1.25pt,dashed] (10,-7) arc(270:90:1cm and 3.5cm);

\draw[fill=gray!70,opacity=0.3,draw=none] (0,0) -- (-10,7) arc(90:-90:1cm and 3.5cm) -- (0,0);

\draw[fill=gray!70,opacity=0.3,draw=none] (0,0) -- (10,0) arc(90:-90:1cm and 3.5cm) -- (0,0);

\draw[fill=gray!90,opacity=0.5,draw=none] (-10,0) arc(270:90:1cm and 3.5cm) -- (-10,7) arc(90:-90:1cm and 3.5cm);

\draw (0,0) node{$\bullet$};
\draw (-5,5) node{\Small $X$};
\draw (1.5,1) node{\Small$\Sing(X)$?};

\end{tikzpicture}
\caption{`Real vision' of a $C$-analytic set $X$\label{fig1a}}
\end{center}
\end{minipage}
\hfil
\begin{minipage}{0.49\textwidth}
\begin{center}
\vspace*{2mm}
\begin{tikzpicture}[scale=0.33]
%\draw[style=help lines,step=1cm] (-12,-8) grid (12,8);

\draw[line width=1.25pt] (-10,0) -- (10,0);
\draw[line width=1.25pt] (-9.85,6.975) -- (9.85,-6.975);
\draw[line width=1.25pt] (-10,0) arc(270:90:1cm and 3.5cm);
\draw[line width=1.25pt] (-10,7) arc(90:-90:1cm and 3.5cm);

\draw[line width=1.25pt] (10,0) arc(90:-90:1cm and 3.5cm);
\draw[line width=1.25pt,dashed] (10,-7) arc(270:90:1cm and 3.5cm);

\draw[fill=gray!70,opacity=0.3,draw=none] (0,0) -- (-10,7) arc(90:-90:1cm and 3.5cm) -- (0,0);

\draw[fill=gray!70,opacity=0.3,draw=none] (0,0) -- (10,0) arc(90:-90:1cm and 3.5cm) -- (0,0);

\draw[fill=gray!90,opacity=0.5,draw=none] (-10,0) arc(270:90:1cm and 3.5cm) -- (-10,7) arc(90:-90:1cm and 3.5cm);

\draw[line width=1.25pt, color=orange] (-10,0) -- (10,0);
\draw[line width=1.25pt, color=orange] (-9.85,3.975) -- (9.85,-3.975);
\draw[line width=1.25pt, color=orange] (-10,0) arc(270:90:0.6cm and 2cm);
\draw[line width=1.25pt, color=orange] (-10,4) arc(90:-90:0.6cm and 2cm);

\draw[line width=1.25pt, color=orange] (10,0) arc(90:-90:0.6cm and 2cm);
\draw[line width=1.25pt,dashed, color=orange] (10,-4) arc(270:90:0.6cm and 2cm);

\draw[fill=orange!70,opacity=0.3,draw=none] (0,0) -- (-10,4) arc(90:-90:0.6cm and 2cm) -- (0,0);

\draw[fill=orange!70,opacity=0.3,draw=none] (0,0) -- (10,0) arc(90:-90:0.6cm and 2cm) -- (0,0);

\draw[fill=orange!90,opacity=0.5,draw=none] (-10,0) arc(270:90:0.6cm and 2cm) -- (-10,4) arc(90:-90:0.6cm and 2cm);

\draw[line width=1.25pt, color=orange] (-10,0) -- (10,0);
\draw[line width=1.25pt, color=orange] (-9.85,-3.975) -- (9.85,3.975);
\draw[line width=1.25pt, color=orange] (-10,0) arc(90:270:0.6cm and 2cm);
\draw[line width=1.25pt, color=orange] (-10,-4) arc(-90:90:0.6cm and 2cm);

\draw[line width=1.25pt, color=orange] (10,0) arc(-90:90:0.6cm and 2cm);
\draw[line width=1.25pt,dashed, color=orange] (10,4) arc(90:270:0.6cm and 2cm);

\draw[fill=orange!70,opacity=0.3,draw=none] (0,0) -- (-10,-4) arc(-90:90:0.6cm and 2cm) -- (0,0);

\draw[fill=orange!70,opacity=0.3,draw=none] (0,0) -- (10,0) arc(-90:90:0.6cm and 2cm) -- (0,0);

\draw[fill=orange!90,opacity=0.5,draw=none] (-10,0) arc(90:270:0.6cm and 2cm) -- (-10,-4) arc(-90:90:0.6cm and 2cm);

\draw[line width=1.75pt, color=red] (-10,0) -- (10,0);

%\draw[line width=1.25pt] (0,0) -- (5,0) -- (10,5) -- (10,10) -- (5,10) -- (0,5) -- (0,0);
%\draw[fill=gray!100,opacity=0.5,draw=none] (0,0) -- (5,0) -- (10,5) -- (10,10) -- (5,10) -- (0,5) -- (0,0);

%\draw[->,line width=1pt] (-1,0) -- (11,0);
%\draw[->,line width=1pt] (0,-1) -- (0,11);

\draw (0,0) node{$\bullet$};
\draw (-5,5) node{\Small$X$};
\draw (0.25,1.2) node{\Small$N(X)$};
\draw (-5,0.75) node{\Small\color{red}$T(X)$};
\draw (3,4.5) node{\Small${\color{red}T(X)}\cup N(X)\subset\Sing(X)$};
\end{tikzpicture}
\caption{`Imaginary vision'\\ of $X$ inside $\widetilde{X}$\label{fig2a}}
\end{center}
\end{minipage}
\end{figure}
\end{center}

\begin{figure}[ht]
\begin{minipage}{0.49 \textwidth} % 0.5 \textwidth = 50% ancho de p\'agina
\begin{center}
\includegraphics[width=0.6\textwidth]{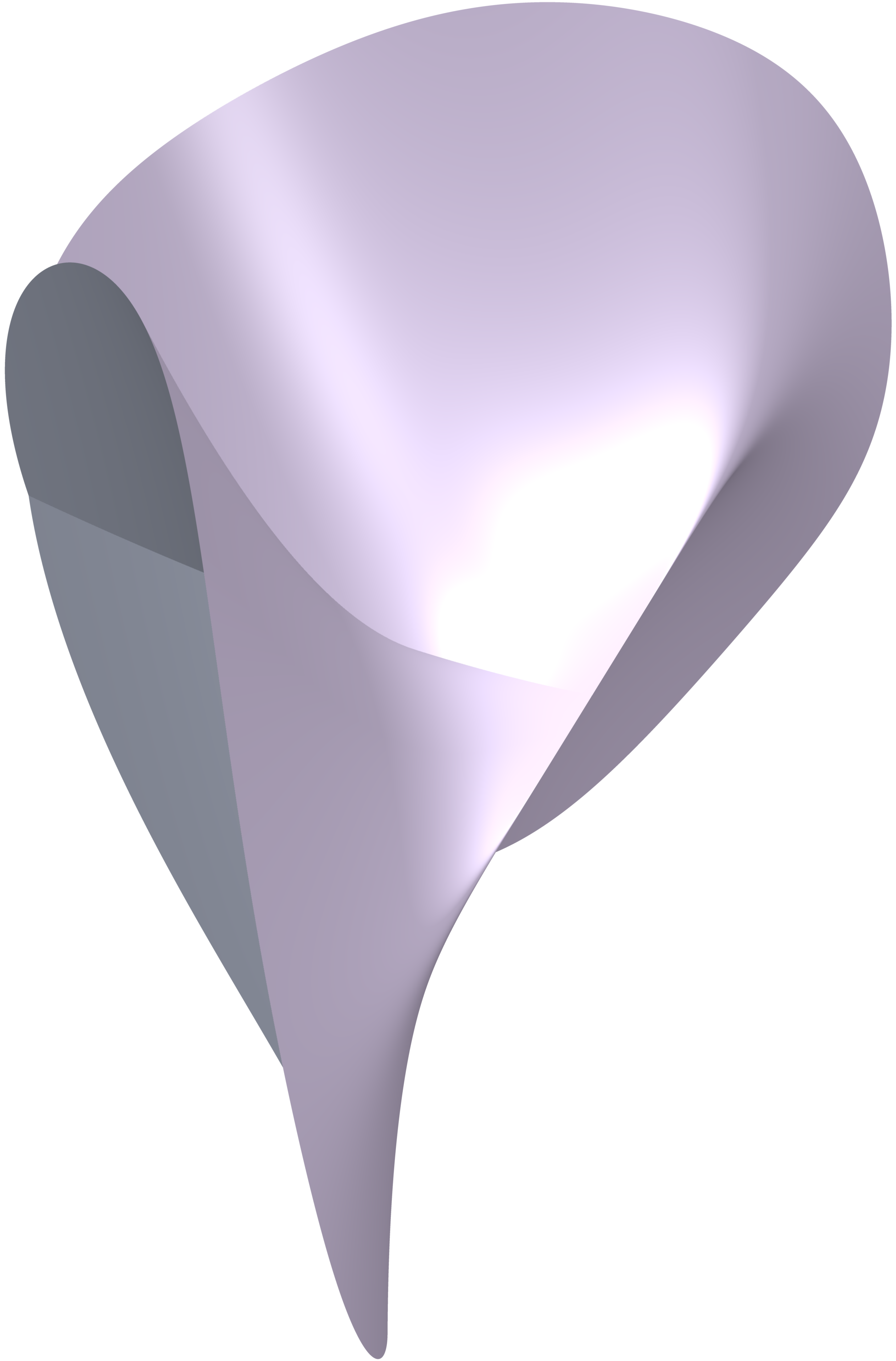}
%\caption{\Small$X:z(x+y)(x^2+y^2)-x^4=0$\label{fig1}}
\end{center}
\end{minipage}
\hfil
\begin{minipage}{0.49\textwidth}
\begin{center}
\begin{tikzpicture}[scale=1.2]
 \begin{axis}[
 axis equal,
 axis equal image,
 axis x line = middle,
 axis y line = middle,
 xticklabels={,,},
 yticklabels={,,},
 ]
 \addplot [domain = -1:1,samples=3000, unbounded coords=jump,black,thick,smooth]
 {sqrt(x*x-x*x*x)};
 \addplot [domain = -1:1,samples=3000, unbounded coords=jump,black,thick,smooth]
 {-sqrt(x*x-x*x*x)};
 \addplot [domain = -1.25:1.25,samples=3000,dashed, unbounded coords=jump,black,thick,smooth]
 {x};
 \addplot[soldot] coordinates{(0,0)};
 \end{axis}
 \draw[fill=gray!100,opacity=0.5,draw=none] (0,5.35) -- (5.05,5.35) -- (0,0.3);
\end{tikzpicture}
\end{center}
\end{minipage}
\caption{$X:(x^2-y^2-x^3)^2(y-x)-z^2=0$\label{fig1b}}
\end{figure}

We prove next that each `tail' of a $C$-analytic set is welded to $X$ through $N(X)$.

\begin{cor}\label{stx}
Let $X\subset\R^n$ be a $C$-analytic set and let $S$ be a connected component of $\cl(T(X))$. Then $S\cap N(X)\neq\varnothing$.
\end{cor}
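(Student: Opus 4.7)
I argue by contradiction. Suppose $S$ is a connected component of $\cl(T(X))$ with $S\cap N(X)=\varnothing$. By Theorem \ref{xy} we have $\cl(T(X))=T(X)\cup N(X)$, so $S\subset T(X)$. Both $T(X)$ (Lemma \ref{dot}) and $N(X)$ (Theorem \ref{ncp0}(iv)) are $C$-semianalytic, hence $\cl(T(X))$ is a closed $C$-semianalytic subset of $\R^n$; such sets are triangulable and therefore locally connected, so every connected component is both open and closed in $\cl(T(X))$. In particular, $F:=\cl(T(X))\setminus S$ is closed in $\cl(T(X))$ and thus closed in $\R^n$, and it contains all of $N(X)$. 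Set $W:=\R^n\setminus F$. Then $W$ is an open neighborhood of $S$ with $W\cap N(X)=\varnothing$ and $W\cap\cl(T(X))=S$, and by Remark \ref{remnx}(iii) the set $X\cap W$ is a coherent $C$-analytic subset of $W$.

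Applying Corollary \ref{ij} to the coherent $C$-analytic set $X\cap W$ in the open set $W$ (together with Cartan's Theorems A and B, which are valid for coherent sheaves on open subsets of $\R^n$), we obtain $\Jj_{X,x}=\Ii(X\cap W)\an_{W,x}$ for every $x\in X\cap W$. Pick $p\in S$; since $p\in T(X)$, the inclusion $\Ii(X)\an_{\R^n,p}\subsetneq\Jj_{X,p}=\Ii(X\cap W)\an_{W,p}$ is strict, so we may choose a function $g\in\Ii(X\cap W)\subset\an(W)$ with $g_p\notin\Ii(X)\an_{\R^n,p}$.

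The plan is now to use $g$ to manufacture a global analytic equation $G\in\Ii(X)$ whose germ at $p$ satisfies $G_p\equiv g_p\pmod{\Ii(X)\an_{\R^n,p}}$; this will contradict our choice of $g$. The construction proceeds by gluing: the local data is the section $g$ on $W$, and we must produce a compatible section across $F=\R^n\setminus W$. On $W$ the coherent quotient sheaf $\Jj_X|_W/\Ii_X|_W$ has support equal to $T(X)\cap W=S$, and $g$ defines a non-zero global section of this quotient with non-zero value at $p$. The idea is to extend this section trivially to a section of a suitable globally coherent $\an_{\R^n}$-subsheaf of $\an_{\R^n}/\Ii_X$, and then lift it to $\an(\R^n)$ via Cartan's Theorem B ($H^1(\R^n,\Ii_X)=0$).

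The main obstacle is precisely this extension step: the sheaf $\Jj_X/\Ii_X$ is \emph{not} coherent on all of $\R^n$ (it fails to be coherent exactly on $N(X)$), so one cannot directly apply Cartan's Theorem B to it. To get around this I plan to exploit the invariant Stein complexification $\widetilde X\subset\Omega$ of $X$ and its invariant normalization $(Y,\pi)$ from Lemma \ref{dot}: extend $g$ to an invariant holomorphic function on a complex neighborhood and, using Lemma \ref{fx} together with the fact that $\pi^{-1}(S)\subset\pi^{-1}(X)\setminus\cl(Y^{\widehat\sigma}\setminus\Sing(Y^{\widehat\sigma}))$ lies in a compact piece of $Y$ whose image is separated in $Y$ from $\pi^{-1}(F)$, construct an invariant holomorphic extension to all of $\widetilde X$; multiplying by $C$-analytic equations of $X$ that vanish where needed, descend to a global $G\in\Ii(X)\subset\an(\R^n)$ with $G_p\equiv g_p\pmod{\Ii(X)\an_{\R^n,p}}$, yielding the desired contradiction.
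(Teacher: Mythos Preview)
Your setup through the choice of $g\in\Ii(X\cap W)$ with $g_p\notin\Ii_{X,p}$ is fine, and you correctly identify the core difficulty: you need a global $G\in\Ii(X)$ with $G_p\equiv g_p\pmod{\Ii_{X,p}}$, and the quotient sheaf $\Jj_X/\Ii_X$ is not coherent across $N(X)$. But the second half of your argument is only a plan, and the plan does not work as written. Two concrete problems: (a) the assertion that $\pi^{-1}(S)$ ``lies in a compact piece of $Y$'' is unjustified and typically false ($S$ need not be bounded), so the separation/extension argument on the normalization has no footing; (b) more seriously, ``multiplying by $C$-analytic equations of $X$ that vanish where needed'' destroys exactly the information you want to preserve: any factor $h\in\Ii(X)$ has $h_p\in\Ii_{X,p}$, so the product germ lands in $\Ii_{X,p}$ and no longer witnesses $g_p\notin\Ii_{X,p}$. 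In short, the extension step---which is the entire content of the corollary---is missing.

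The paper's proof takes a different and more efficient route: instead of trying to extend a single section, it packages the obstruction into the transporter sheaf $\Ff$ with stalks $(\Ii_{X,x}:\Jj_{X,x})$ on $S$ and $\an_{\R^n,x}$ off $S$. Since $S\cap N(X)=\varnothing$, both $\Ii_X$ and $\Jj_X$ are coherent near $S$, so $\Ff$ is coherent, hence its support $S$ is a $C$-analytic set. From $\Ff_x\cdot\Jj_{X,x}\subset\Ii_{X,x}$ one gets, after complexifying, that some irreducible complexification $\widetilde X_i$ is forced inside the complex support $Z$ of $\Ff\otimes_\R\C$; intersecting with $\R^n$ gives $X_i\subset S\subset\Sing(X)$, a contradiction. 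The key idea you are missing is precisely this: rather than gluing sections, prove that $S$ itself is $C$-analytic via the coherence of the transporter sheaf.
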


To lighten the proof of Corollary \ref{stx} we extract the following result.

\begin{lem}\label{quotten}
Let $\gta,\gtb$ be ideals of $\an_{\R^n,x}$. Then $(\gta:\gtb)\otimes_\R\C=(\gta\otimes_\R\C:\gtb\otimes_\R\C)$.
\end{lem}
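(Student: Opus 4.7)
The plan is to exploit the faithful flatness of the field extension $\R\hookrightarrow\C$ together with the Noetherianity of the local ring $A:=\an_{\R^n,x}$. Writing $A_\C:=A\otimes_\R\C$, we identify $\gta\otimes_\R\C$ with its image in $A_\C$ (this is legitimate because $\C$ is flat over $\R$, so $\gta\otimes_\R\C\hookrightarrow A_\C$). Since $A$ is Noetherian, $\gtb$ is finitely generated, say $\gtb=(b_1,\ldots,b_r)$, and then both colon ideals split as finite intersections
$$
(\gta:\gtb)=\bigcap_{i=1}^r(\gta:b_i),\qquad \bigl(\gta\otimes_\R\C:\gtb\otimes_\R\C\bigr)=\bigcap_{i=1}^r\bigl(\gta\otimes_\R\C:b_i\bigr).
$$
Thus the problem reduces to two facts: (a) the identity for a single element $b$, and (b) that finite intersections of ideals commute with $\otimes_\R\C$.

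For (a), first notice the exact sequence
$$
0\longrightarrow(\gta:b)\longrightarrow A\xrightarrow{\ \mu_b\ }A/\gta,
$$
where $\mu_b(f):=fb+\gta$. Tensoring with $\C$ preserves exactness by flatness, and the natural isomorphism $(A/\gta)\otimes_\R\C\cong A_\C/(\gta\otimes_\R\C)$ identifies $\mu_b\otimes\mathrm{id}_\C$ with multiplication by $b$ into $A_\C/(\gta\otimes_\R\C)$. Hence $(\gta:b)\otimes_\R\C$ is the kernel of this multiplication map, which is exactly $(\gta\otimes_\R\C:b)$.

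For (b), given submodules $M_1,M_2$ of an $A$-module $M$, the short exact sequence
$$
0\longrightarrow M_1\cap M_2\longrightarrow M_1\oplus M_2\longrightarrow M_1+M_2\longrightarrow 0
$$
(with maps $x\mapsto(x,x)$ and $(x,y)\mapsto x-y$) tensored with the flat $\R$-module $\C$ stays exact, and identifies $(M_1\cap M_2)\otimes_\R\C$ with $(M_1\otimes_\R\C)\cap(M_2\otimes_\R\C)$ inside $M\otimes_\R\C$; an easy induction extends this to any finite intersection. Applying (a) to each $b_i$ and then (b) to the family $\{(\gta:b_i)\}_{i=1}^r$ inside $A$ yields the desired equality.

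I do not anticipate any real obstacle: the argument is a routine application of flatness, and the only point that needs care is the identification of $\gta\otimes_\R\C$ with the corresponding ideal of $A_\C$, which is itself a consequence of flatness. Noetherianity of $\an_{\R^n,x}$ is used solely to guarantee a finite generating set of $\gtb$, so that the reduction to a single element is legitimate.
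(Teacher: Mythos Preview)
Your argument is correct and complete: flatness of $\R\hookrightarrow\C$ makes both the single-generator colon identity and the compatibility with finite intersections routine, and Noetherianity of $\an_{\R^n,x}$ justifies the reduction to finitely many generators.

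The paper, however, takes a more elementary and concrete route that exploits the specific structure of the extension $\R\subset\C$. For the nontrivial inclusion $(\gta\an_{\C^n,x}:\gtb\an_{\C^n,x})\subset(\gta:\gtb)\an_{\C^n,x}$, it writes an arbitrary element $f=f_1+{\tt i}f_2$ with $f_1,f_2\in\an_{\R^n,x}$, observes that complex conjugation sends $\gta\an_{\C^n,x}$ and $\gtb\an_{\C^n,x}$ to themselves (since both are extended from real ideals), and deduces that $\bar f=f_1-{\tt i}f_2$ also lies in the colon; hence $f_1,f_2$ belong to $(\gta\an_{\C^n,x}:\gtb\an_{\C^n,x})\cap\an_{\R^n,x}$, and a direct check gives $f_1,f_2\in(\gta:\gtb)$. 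This is a Galois-descent style argument: it avoids any appeal to exact sequences, flatness, or Noetherianity, and would work for any ideal $\gtb$ (not necessarily finitely generated). Your approach, by contrast, is the standard commutative-algebra argument valid for an arbitrary flat base change $A\to B$ over a Noetherian ring $A$; it is more portable but relies on slightly heavier machinery. Both are perfectly adequate here.
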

\begin{proof}
We have to prove $(\gta:\gtb)\an_{\C^n,x}=(\gta\an_{\C^n,x}:\gtb\an_{\C^n,x})$. If $f\in(\gta:\gtb)$, then $f\gtb\subset\gta$, so $f\gtb\an_{\C^n,x}\subset\gta\an_{\C^n,x}$. Thus, $f\in(\gta\an_{\C^n,x}:\gtb\an_{\C^n,x})$ and $(\gta:\gtb)\an_{\C^n,x}\subset(\gta\an_{\C^n,x}:\gtb\an_{\C^n,x})$.

Let $f\in(\gta\an_{\C^n,x}:\gtb\an_{\C^n,x})$ and write $f:=f_1+{\tt i}f_2$ where $f_1,f_2\in\an_{\R^n,x}$, see \S\ref{pr}. If $h\in\gta$, then $(f_1+{\tt i}f_2)h\in\gtb\an_{\C^n,x}$. As $\gtb$ is an ideal of $\an_{\R^n,x}$, we have $(f_1-{\tt i}f_2)h\in\gtb\an_{\C^n,x}$. Thus, $f_1h,f_2h\in\gtb\an_{\C^n,x}\cap\an_{\R^n,x}=\gtb$, so $f_1,f_2\in(\gta:\gtb)$ and $f=f_1+{\tt i}f_2\in(\gta:\gtb)\otimes_\R\C$, as required.
\end{proof}

We are ready to prove Corollary \ref{stx}.

\begin{proof}[Proof of Corollary \em\ref{stx}]
Suppose $S\cap N(X)=\varnothing$. Let $\Ff$ be the sheaf of ideals on $\R^n$ given by
$$
\Ff_x:=\begin{cases}
\an_{\R^n,x}&\text{if $x\in\R^n\setminus S$,}\\
(\Ii_{X,x}:\Jj_{X,x})&\text{if $x\in S$.}
\end{cases}
$$
As $T(X)$ is a $C$-semianalytic set, also $\cl(T(X))$ is a $C$-semianalytic set. We deduce by \cite[Ch.VIII.Prop.3.1]{abr} $\cl(T(X))$ is locally connected, so its connected components are both open and closed subset of $T(X)$. As $(\Ii_{X,x}:\Jj_{X,x})=\an_{\R^n,x}$ for each $x\in\R^n\setminus\cl(T(X))$ and both $\Jj_X$ and $\Ii_X$ are coherent sheaves on $S$ (because $S\cap N(X)=\varnothing$), we deduce by \cite[Ch.I.(6.12)]{g} that the transporter sheaf $(\Jj_{X}:\Ii_X)$ is coherent on an open neighborhood of $S$. Consequently, $\Ff$ is a coherent sheaf on $\R^n$ and its support $S\subset X$ is by \cite[Prop.15]{c2} a $C$-analytic subset of $\R^n$. As $S\subset\cl(T(X))\subset\Sing(X)$, no $C$-irreducible component of $X$ is contained in $S$. Let $\{X_i\}_{i\in I}$ be the $C$-irreducible components of $X$. By \cite[Prop.11]{wb} there exist an open neighborhood $\Omega\subset\C^n$ of $\R^n$ and a complexification $\widetilde{X}$ of $X$, which is an invariant complex analytic subset of $\Omega$, and its irreducible components $\{\widetilde{X}_i\}_{i\in I}$ are the complexifications of the irreducible components of $X_i$. This property can be kept after shrinking suitably $\Omega$ (if needed).
 
By \cite[Prop.2 \& 5]{c2} the coherent sheaf of $\an_{\C^n}$-ideals $\Ff\otimes_\R\C$ extends to a coherent invariant sheaf of $\an_{\C^n}$-ideals $\Gg$ on an invariant open Stein neighborhood of $\R^n$ in $\C^n$, which we may assume it is $\Omega$. By Lemma \ref{quotten} $\Gg_x=(\Ii_{X,x}\otimes_\R\C:\Jj_{X,x}\otimes_\R\C)$ for each $x\in S$. Thus, $\Gg_x\cdot(\Jj_{X,x}\otimes_\R\C)\subset\Ii_{X,x}\otimes_\R\C=\Jj_{\widetilde{X},x}^\C$ (see \S\ref{ccas0}), so $\widetilde{X}_x\subset\ZZ(\Gg_x)\cup\ZZ(\Jj_{X,x}\otimes_\R\C)$ for each $x\in S$.

The support of $\Gg$ is a complex analytic subset $Z$ of $\Omega$ such that $Z\cap\R^n=S$. Pick a point $x_0\in S$. As $\Jj_{X,x_0}\neq\Ii_{X,x_0}$, we also have $\Jj_{X,x_0}\otimes\C\neq\Ii_{X,x_0}\otimes\C=\Jj_{\widetilde{X},x_0}^\C$, so there exists an irreducible component $X_i$ of $X$ such that $x_0\in X_i$ and $\widetilde{X}_{i,x_0}\not\subset\ZZ(\Jj_{X,x_0}\otimes\C)$. As $\widetilde{X}_{x_0}\subset\ZZ(\Gg_{x_0})\cup\ZZ(\Jj_{X,x_0}\otimes_\R\C)$, there exists an irreducible component $T_{x_0}$ of $\widetilde{X}_{i,x_0}$ such that $T_{x_0}\subset\ZZ(\Gg_{x_0})=Z_{x_0}$. As $\widetilde{X}_i$ is irreducible and in particular pure dimensional, we deduce by the identity principle that $\widetilde{X}_i\subset Z$, so $X_i=\widetilde{X}_i\cap\R^n\subset Z\cap\R^n=S$, which is a contradiction. Consequently, $S\cap N(X)\neq\varnothing$, as required. 
\end{proof}

\subsection{Preliminary results}\label{pr}
Before proving Theorem \ref{xy} we need some preliminary results. Let $\Omega\subset\C^n$ be an invariant open set and let $F:\Omega\to\C$ be a holomorphic function. Recall that $\ol{F\circ\sigma}:\Omega\to\C$ is a holomorphic function on $\Omega$. Define 
$$
\Re(F):=\frac{F+\ol{F\circ\sigma}}{2}\quad\text{and}\quad\Im(F):=\frac{F-\ol{F\circ\sigma}}{2{\tt i}}
$$
the real and the imaginary part of $F$, which are invariant holomorphic functions on $\Omega$ and satisfy $F=\Re(F)+{\tt i}\Im(F)$. The same type of definitions hold for holomorphic germs at the points of $\Omega\cap\R^n$. If $Y_x\subset\C^n_x$ is a complex analytic germ, we denote $\Jj^\C_{Y,x}$ the ideal of complex analytic germs $F_x\in\an_{\C^n,x}$ such that $Y_x\subset\ZZ(F_x)$ (see \S\ref{ccas0}). 

We say that $Y_x$ is \em a complex Nash germ \em if there exists algebraic analytic function germs $F_{1,x},\ldots,F_{r,x}\in\an_{\C^n,x}$ such that $Y_x=\ZZ(F_{1,x},\ldots,F_{r,x})$. Recall that $F_x$ is an {\em algebraic analytic function germ} if there exists a non-zero polynomial $P\in\C[\x,\y]:=\C[\x_1,\ldots,\x_n,\y]$ such that $P(\x,f_x)=0$. If $x\in\R^n$, then $\Nn_{\R^n,x}\otimes_\R\C$ is the ring of complex algebraic analytic function germs at $x$. If $Y_x$ is an invariant complex Nash germ, $Y_x\cap\R^n_x$ is a Nash germ. In addition, if $\gta$ is a radical ideal of $\Nn_{\R^n,x}$, then $\ZZ(\gta\otimes_\R\C)$ defines an invariant complex Nash germ at $x$. By means of Artin's approximation theorem for the complex case one can imitate \cite[Cor.8.3.2 \& Prop.8.6.9]{bcr} almost {\em vervatim} to show that the (analytic) irreducible components $Y_{1,x},\ldots,Y_{r,x}$ of a complex Nash germs are again complex Nash germs. If $Y_x\subset\C^n_x$ is a complex Nash germ, we denote $\Jj^{\C\bullet}_{Y,x}$ the ideal of complex Nash germs $F_x\in\an_{\C^n,x}$ such that $Y_x\subset\ZZ(F_x)$ (see \S\ref{ccas0}). 

\subsubsection{Local properties of the set of `tails'}
We prove next the following result concerning the local behavior of the set of `tails'.

\begin{lem}\label{dimm}
Let $x\in\R^n$, let $\gta\subset\an_{\R^n,x}$ (resp. $\gta\subset\Nn_{\R^n,x}$) be a radical ideal and define $X_x:=\ZZ(\gta)$. Suppose there exists a union of irreducible component $Z_x$ of the invariant (complex) analytic set germ $Y_x:=\ZZ(\gta\otimes_\R\C)$ such that $\dim_\R(Z_x\cap\R^n_x)<\dim_\C(Z_x)$. Then $\gta\subsetneq\Jj_{X,x}$ (resp. $\gta\subsetneq\Jj^\bullet_{X,x}$).
\end{lem}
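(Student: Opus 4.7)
The plan is to produce an explicit $f_x \in \an_{\R^n,x}$ (resp.\ $\Nn_{\R^n,x}$) that vanishes on $X_x$ but whose complexification does not vanish identically on $Y_x$: such an element lies in $\Jj_{X,x}\setminus\gta$ (resp.\ $\Jj^\bullet_{X,x}\setminus\gta$), which is the required conclusion. I describe the analytic case; the Nash case runs in parallel using the objects of \S\ref{pr} and the faithful flatness furnished by Corollary \ref{ffna}.

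First I would reduce to a pure-dimensional invariant $Z_x$. Since $\dim_\C Z_x=\max_j\dim_\C Z_{j,x}$ over the irreducible components $Z_{j,x}$ of $Z_x$, at least one top-dimensional component $Z_{j^*,x}$ inherits the strict inequality $\dim_\R(Z_{j^*,x}\cap\R^n_x)<\dim_\C Z_{j^*,x}$. Replacing $Z_x$ by $Z_{j^*,x}\cup\sigma(Z_{j^*,x})$ I may assume $Z_x$ is $\sigma$-invariant and pure-dimensional of complex dimension $d$, with $\dim_\R T_x<d$ for $T_x:=Z_x\cap\R^n_x$. Let $W_x$ be the (invariant) union of the remaining irreducible components of $Y_x$, so $Y_x=Z_x\cup W_x$, and let $\widetilde{T}_x$ be the complexification of the real analytic germ $T_x$, so $\dim_\C\widetilde{T}_x=\dim_\R T_x<d$. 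In particular, no irreducible component $Z_{j,x}$ of $Z_x$ is contained in either $W_x$ (as a proper inclusion of irreducibles is impossible) or in $\widetilde{T}_x$ (by dimension).

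Next I would build $f_x$ as a product $g_xh_x$ obtained by prime avoidance. Since $\gta$ is radical and $\R\hookrightarrow\C$ is finite \'etale (so a reduced $\R$-algebra stays reduced after $\otimes_\R\C$), the ideal $\gta\an_{\C^n,x}$ is radical, hence by R\"uckert's Nullstellensatz it equals $\Jj^\C_{Y,x}$; by faithful flatness of $\an_{\R^n,x}\hookrightarrow\an_{\C^n,x}$, one has $\gta=\Jj^\C_{Y,x}\cap\an_{\R^n,x}$. For each irreducible component $Z_{j,x}$ of $Z_x$ set $\gtp_j:=\Jj^\C_{Z_j,x}\cap\an_{\R^n,x}$, which is prime. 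Let $\gtb:=\Jj^\C_{W,x}\cap\an_{\R^n,x}$ and $\gtc:=\Jj_{T,x}$. Invariance of $W_x$ together with the averaging $F\mapsto\tfrac12(F+\overline{F\circ\sigma})$ yields $\gtb\an_{\C^n,x}=\Jj^\C_{W,x}$; since $Z_{j,x}\not\subset W_x$ we get $\Jj^\C_{W,x}\not\subset\Jj^\C_{Z_j,x}$, hence $\gtb\not\subset\gtp_j$. Similarly, $\sqrt{\gtc\an_{\C^n,x}}=\Jj^\C_{\widetilde{T},x}$, and $Z_{j,x}\not\subset\widetilde{T}_x$ forces $\gtc\an_{\C^n,x}\not\subset\Jj^\C_{Z_j,x}$, so $\gtc\not\subset\gtp_j$. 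Prime avoidance in the Noetherian local ring $\an_{\R^n,x}$ applied to the finite family $\{\gtp_j\}_j$ then produces $g_x\in\gtb\setminus\bigcup_j\gtp_j$ and $h_x\in\gtc\setminus\bigcup_j\gtp_j$.

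Finally, I would check that $f_x:=g_xh_x$ has the required properties. It vanishes on $W_x$ (through $g_x$) and on $T_x$ (through $h_x$), hence on all of $X_x=(W_x\cap\R^n_x)\cup T_x$, so $f_x\in\Jj_{X,x}$. Each $\gtp_j$ is prime and $g_x,h_x\notin\gtp_j$, so $f_x\notin\gtp_j$ for any $j$; as $\gta\subset\bigcap_j\gtp_j$ (because $\gta\an_{\C^n,x}=\Jj^\C_{Y,x}\subset\Jj^\C_{Z_j,x}$), this delivers $f_x\in\Jj_{X,x}\setminus\gta$. The main obstacle I anticipate is the bookkeeping between invariant ideals of $\an_{\C^n,x}$ and their contractions to $\an_{\R^n,x}$ -- verifying that $\gtb\an_{\C^n,x}=\Jj^\C_{W,x}$ and the analogous equalities are what allow the chain $Z_{j,x}\not\subset W_x\Rightarrow\gtb\not\subset\gtp_j$ to close. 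Once this is in place, replacing $\an$ by $\Nn$ and $\Jj^\C$ by $\Jj^{\C\bullet}$ throughout, and invoking the Nash faithful flatness of Corollary \ref{ffna} and the semialgebraic complexification of \S\ref{icns}, yields the Nash version.
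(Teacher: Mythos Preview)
Your argument is correct and takes a genuinely different route from the paper's proof. Both proofs start the same way: use that $\gta\otimes_\R\C$ is radical so that $\Jj^\C_{Y,x}=\gta\otimes_\R\C$, and single out one irreducible component $Y_{i_0,x}$ of $Y_x$ with $\dim_\R(Y_{i_0,x}\cap\R^n_x)<\dim_\C Y_{i_0,x}$. From there the strategies diverge. The paper works \emph{constructively}: for each irreducible component $Y_{i,x}$ of $Y_x$ it manufactures, via explicit sums of squares of real and imaginary parts of generators together with the linear-algebra trick of Lemma~\ref{integers}, a real germ $h_{i,x}$ with $\ZZ(h_{i,x})=X_{i,x}$ whose complexification avoids $\Jj^\C_{Y_{i_0},x}$; the product $h_x=\prod_i h_{i,x}$ then lies in $\Jj_{X,x}\setminus\gta$ and moreover satisfies $\ZZ(h_x)=X_x$. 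You instead argue \emph{algebraically}: after the reduction you split $Y_x=Z_x\cup W_x$, observe that the contracted primes $\gtp_j=\Jj^\C_{Z_j,x}\cap\an_{\R^n,x}$ contain neither $\gtb=\Jj^\C_{W,x}\cap\an_{\R^n,x}$ (because $\gtb\an_{\C^n,x}=\Jj^\C_{W,x}$ and $Z_{j,x}\not\subset W_x$) nor $\gtc=\Jj_{T,x}$ (because $\gtc\an_{\C^n,x}=\Jj^\C_{\widetilde T,x}$ and $\dim_\C\widetilde T_x<\dim_\C Z_{j,x}$), and then invoke prime avoidance.

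What each approach buys: your prime-avoidance argument is shorter and more transparent, and the ``bookkeeping'' you flag as the main obstacle is indeed the only delicate point---it goes through exactly because $W_x$ and $T_x$ are $\sigma$-invariant, so the averaging map surjects $\Jj^\C_{W,x}$ and $\Jj^\C_{\widetilde T,x}$ onto their real contractions. The paper's constructive approach is longer but yields the extra conclusion $\ZZ(h_x)=X_x$, which your $f_x=g_xh_x$ need not satisfy; this stronger property is not part of the lemma's statement, but it foreshadows the kind of ``good equation'' built globally in Theorem~\ref{h}. Either proof suffices for all subsequent uses of Lemma~\ref{dimm} in the paper (Corollary~\ref{txca} and Step~3 of Theorem~\ref{xy}), since those only invoke the bare inequality $\gta\subsetneq\Jj_{X,x}$.
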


To lighten its proof we present before a preliminary result.

\begin{lem}\label{integers}
Let $A$ be a commutative ring that is a $K$-algebra over a field $K$ and let $\gta\subset A$ be an ideal. Let $a_1,\ldots,a_n\in A$ and assume that there exists a square matrix $M$ of order $n$ with coefficients in $K$ such that $\det(M)\neq0$ and all the entries of $M(a_1,\ldots,a_n)^t$ belong to $\gta$. Then $a_1,\ldots,a_n\in\gta$.
\end{lem}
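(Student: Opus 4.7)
The key observation is that, since $M$ has entries in the field $K$ and $\det(M)\neq 0$, the matrix $M$ is invertible over $K$. Hence its inverse $N:=M^{-1}$ is a well-defined square matrix of order $n$ with entries in $K\subset A$. The plan is simply to multiply the column vector $M(a_1,\ldots,a_n)^t$ on the left by $N$ and observe that the result, on the one hand, equals $(a_1,\ldots,a_n)^t$ and, on the other hand, has all entries in $\gta$ because $\gta$ is closed under multiplication by elements of $A$ (in particular by elements of $K$) and closed under sums.

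Alternatively, and essentially equivalently, one can avoid mentioning the inverse and argue with the adjugate matrix $\mathrm{adj}(M)$, whose entries are polynomial expressions in the entries of $M$ and therefore belong to $K$. From the Cramer identity $\mathrm{adj}(M)\cdot M=\det(M)\, I_n$ one obtains
\[
\det(M)\,(a_1,\ldots,a_n)^t=\mathrm{adj}(M)\cdot M(a_1,\ldots,a_n)^t,
\]
so each $\det(M)\,a_i$ is a $K$-linear combination of the entries of $M(a_1,\ldots,a_n)^t$, which lie in $\gta$; hence $\det(M)\,a_i\in\gta$. Since $\det(M)\in K^\times$ is a unit of $A$, we conclude $a_i\in\gta$ for $i=1,\ldots,n$.

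There is no real obstacle: the statement is a basic linear algebra fact, requiring nothing more than the invertibility of a nonzero-determinant matrix over a field together with the definition of an ideal. The only mild subtlety worth noting is that the hypothesis ``coefficients in $K$'' together with ``$K$ is a field'' is exactly what makes $\det(M)$ a unit in $A$ (via the structure map $K\to A$); if one only assumed $M$ to have entries in $A$, then one would need to require $\det(M)$ itself to be a unit, and the same adjugate-matrix argument would still give the conclusion.
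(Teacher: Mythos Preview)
Your argument is correct and is exactly the paper's approach: set $b^t=Ma^t$ with $b_i\in\gta$, then $a^t=M^{-1}b^t$ and each $a_i$ is a $K$-linear combination of the $b_i$, hence lies in $\gta$. Your adjugate variant is an equivalent rephrasing of the same one-line computation.
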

\begin{proof}
Write $M:=(m_{ij})_{1\leq i,j\leq n}$ and define $b_i:=m_{i1}a_1+\cdots+m_{in}a_n\in\gta$, $a:=(a_1,\ldots,a_n)$ and $b:=(b_1,\ldots,b_n)$. Then $b^t=Ma^t$, so $a^t=M^{-1}b^t$ and as $b_1,\ldots,b_n\in\gta$, we conclude $a_1,\ldots,a_n\in\gta$, as required.
\end{proof}

We are ready to prove Lemma \ref{dimm}.

\begin{proof}[Proof of Lemma \em\ref{dimm}]
We prove first the {\sc Analytic case}. The ideal $\gta\otimes_\R\C$ is by \cite[Ch.V,\! \S15,\! Prop.5]{b} a radical ideal, so by R\"uckert's Nullstellensatz $\Jj_{Y_x,x}^\C=\gta\otimes_\R\C$. If $Y_{1,x},\ldots,Y_{r,x}$ are the irreducible components of $Y_x$, we have $\gta\otimes_\R\C=\bigcap_{i=1}^r\Jj^\C_{Y_i,x}$. Denote $X_{i,x}:=Y_{i,x}\cap\R^n$. As $Z_x$ is a union of irreducible components of $Y_x$ and $\dim_\R(Z_x\cap\R^n_x)<\dim_\C(Z_x)$, there exists $1\leq i_0\leq r$ such that $\dim_\R(X_{i_0,x})<\dim_\C(Y_{i_0,x})$. Let us prove: {\em There exists $h_x\in\Jj_{X,x}$ such that $\ZZ(h_x)=X_x$ and its invariant holomorphic extension $H_x$ to $\C^n_x$ does not belong to $\Jj^\C_{Y_{i_0},x}$.} Consequently, $h_x\in\Jj_{X,x}\setminus\Jj^\C_{Y_{i_0},x}\subset\Jj_{X,x}\setminus\bigcap_{i=1}^r\Jj^\C_{Y_i,x}=\Jj_{X,x}\setminus(\gta\otimes_\R\C)=\Jj_{X,x}\setminus\gta$.

Let $\gtb:=\Jj_{X_{i_0},x}\otimes_\R\C$, which is by \cite[Ch.V,\! \S15,\! Prop.5]{b} a radical ideal. Define $V_{i_0,x}:=\ZZ(\gtb)$ and as $\gtb$ is a radical ideal, we deduce by R\"uckert's Nullstellensatz $\Jj_{V_{i_0},x}^\C=\gtb$. Observe that $V_{i_0,x}$ is invariant (because $\Jj_{V_{i_0},x}^\C=\gtb$ admits a system of invariant generators). By \cite[IV.1.Prop.3]{n1} $\dim_\C(V_{i_0,x})=\dim_\R(X_{i_0,x})<\dim_\C(Y_{i_0,x})$, so $\Jj_{Y_{i_0},x}^\C\subsetneq\Jj_{V_{i_0},x}^\C=\gtb$. Let $H_{i_0,x}'\in\gtb\setminus\Jj_{Y_{i_0},x}^\C$. As $V_{i_0,x}$ is invariant, also $\ol{H_{i_0,x}'\circ\sigma}\in\gtb$, so $\Re(H_{i_0,x}'),\Im(H_{i_0,x}')\in\gtb$. As $H_{i_0,x}'=\Re(H_{i_0,x}')+{\tt i}\Im(H_{i_0,x}')$, we conclude that either $\Re(H_{i_0,x}')\not\in\Jj_{Y_{i_0},x}^\C$ or $\Im(H_{i_0,x}')\not\in\Jj_{Y_{i_0},x}^\C$, so after substituting $H_{i_0,x}'\in\gtb$ by either $\Re(H_{i_0,x}')$ or $\Im(H_{i_0,x}')$, we may assume $H_{i_0,x}'\in\gtb\setminus\Jj_{Y_{i_0},x}^\C$ is in addition invariant. Let $H_{i_0,x}''\in\gtb$ be invariant such that $\ZZ(H_{i_0,x}'')\cap\R^n_x=X_{i_0,x}$ (it is enough to consider the sum of squares of a finite system of invariant generators of $\gtb$) and pick a positive $\rho\in\R$ such that $H_{i_0,x}:=H_{i_0,x}'^2+\rho H_{i_0,x}''^2\not\in\Jj_{Y_{i_0},x}^\C$ (to prove the existence of $\rho$ use Lemma \ref{integers} applied to a $2\times2$ invertible matrix whose rows are of the type $(1,\rho_i)$ where $\rho_i>0$ is a positive real number for $i=1,2$). Thus, $h_{i_0,x}:=H_{i_0,x}|_{\R^n_x}\in\gtb\cap\an_{\R^n,x}=\Jj_{X_{i_0},x}$, $\ZZ(h_{i_0,x})=X_{i_0,x}$ and $H_{i_0,x}\not\in\Jj_{Y_{i_0},x}^\C$. 

For each $i=1,\ldots,r$ such that $X_{i,x}\subset X_{i_0,x}$ define $h_{i,x}:=h_{i_0,x}$, whereas if $X_{i,x}\not\subset X_{i_0,x}$ we claim: {\em There exists $h_{i,x}\in\Jj_{X_i,x}\setminus\Jj_{Y_{i_0},x}^\C$ such that $\ZZ(h_{i,x})=X_{i,x}$}. 

Pick a system of generators $G_{1,x},\ldots,G_{s,x}$ of $\Jj_{Y_i,x}^\C$ and observe that 
$$
X_{i,x}\subset\ZZ(\ol{G_{1,x}\circ\sigma},\ldots,\ol{G_{s,x}\circ\sigma})=\sigma(Y_{i,x}). 
$$
Consequently,
\begin{equation}\label{ysy}
\begin{split}
X_{i,x}&=Y_{i,x}\cap\R^n_x=Y_{i,x}\cap\sigma(Y_{i,x})\cap\R^n_x=\ZZ(G_{1,x},\ldots,G_{s,x},\ol{G_{1,x}\circ\sigma},\ldots,\ol{G_{s,x}\circ\sigma})\cap\R^n_x\\
&=\ZZ(\Re(G_{1,x}),\ldots,\Re(G_{s,x}),\Im(G_{1,x}),\ldots,\Im(G_{s,x}))\cap\R^n_x. 
\end{split}
\end{equation}
Choose positive real numbers $\lambda_1,\ldots,\lambda_s,\mu_1,\ldots,\mu_s\in\R$ such that $H_{i,x}:=\sum_{k=1}^s\lambda_k\Re(G_{k,x})^2+\mu_k\Im(G_{k,x})^2\not\in\Jj_{Y_{i_0},x}^\C$. Such positive real numbers exist, because otherwise by Lemma \ref{integers} (applied to a $(2s)\times(2s)$ invertible matrix whose coefficients are all real and strictly positive) $\Re(G_{k,x}),\Im(G_{k,x})\in\Jj_{Y_{i_0},x}$ for $k=1,\ldots,s$, so $\Jj_{Y_i,x}^\C\subset\Jj_{Y_{i_0},x}^\C$, which is a contradiction, because $i\neq i_0$ (recall that $X_{i,x}\not\subset X_{i_0,x}$) and both $Y_{i,x}$ and $Y_{i_0,x}$ are irreducible components of $Y_x$. 

As $X_{i,x}=\ZZ(\Re(G_{1,x}),\ldots,\Re(G_{s,x}),\Im(G_{1,x}),\ldots,\Im(G_{s,x}))\cap\R^n_x$ (see \eqref{ysy}), we deduce $X_{i,x}=\ZZ(h_{i,x})$, where $h_{i,x}:=H_{i,x}|_{\R^n_x}\in\Jj_{X_i,x}\setminus\Jj_{Y_{i_0},x}^\C$, as claimed.

By construction $H_{i,x}\not\in\Jj_{Y_{i_0},x}^\C$ for $i=1,\ldots,r$. As $\Jj_{Y_{i_0},x}^\C$ is a prime ideal, we conclude $H_x:=\prod_{i=1}^rH_{i,x}\not\in\Jj_{Y_{i_0},x}^\C$, whereas $h_x:=H_x|_{\R^n_x}\in\bigcap_{i=1}^r\Jj_{X_i,x}=\Jj_{X,x}$ and $\ZZ(h_x)=\ZZ(h_{1,x}\cdots h_{r,x})=\bigcup_{i=1}^rX_{i,x}=X_x$, as required. 

The proof in the {\sc Nash case} follows {\em vervatim} after changing $\an_{\R^n,x}$ by $\Nn_{\R^n,x}$, $\an_{\C^n,x}$ by $\Nn_{\R^n,x}\otimes_\R\C$, $\Jj^\C_{T,x}$ by $\Jj^{\C\bullet}_{T,x}$ (if $T$ is a complex Nash set) and $\Jj_{S,x}$ by $\Jj_{S,x}^\bullet$ (if $S$ is a Nash set).
\end{proof}

We provide an alternative geometric characterization of the set of `tails' of a $C$-analytic set.

\begin{cor}[Geometric characterization of the set of `tails']\label{txca}
Let $X\subset\R^n$ be a $C$-analytic set and let $\widetilde{X}$ be a complexification of $X$. Then $x\in T(X)$ if and only if there exists an irreducible component $T_x$ of $\widetilde{X}_x$ (depending on $x$) such that $\dim_\R(T_x\cap\R^n_x)<\dim_\C(T_x)$.
\end{cor}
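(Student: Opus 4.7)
The statement is a biconditional, and each direction reduces to one of the two preceding results. The forward implication ($\Rightarrow$) is already provided by Lemma \ref{tx0}, which says that whenever $\Ii_{X,x}\subsetneq\Jj_{X,x}$ the complexified germ $\widetilde{X}_x$ must carry an irreducible component with a strict (real) dimensional drop upon intersecting with $\R^n_x$. So I would just cite Lemma \ref{tx0} for this direction.

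For the reverse implication ($\Leftarrow$), the plan is to apply Lemma \ref{dimm} with the ideal $\gta:=\Ii_{X,x}$, using the single irreducible component $T_x$ as the prescribed union $Z_x$ of irreducible components of $Y_x:=\ZZ(\gta\otimes_\R\C)$. To legitimately invoke Lemma \ref{dimm}, I need to verify two preparatory facts: (i) $\Ii_{X,x}$ is a radical ideal of $\an_{\R^n,x}$, and (ii) $\ZZ(\Ii_{X,x}\otimes_\R\C)=\widetilde{X}_x$. Both follow immediately from the chain of equalities \eqref{icc} established in \S\ref{ccas0}, namely
$$
\Ii_{X,x}\otimes_\R\C=\Ii(X)\an_{\C^n,x}=\Jj^\C_{\widetilde{X},x}.
$$
R\"uckert's Nullstellensatz shows that $\Jj^\C_{\widetilde{X},x}$ is radical, and \cite[Ch.V,\! \S15,\! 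Prop.5]{b} then implies that $\Ii_{X,x}$ itself is radical, giving (i); meanwhile $\ZZ(\Jj^\C_{\widetilde{X},x})=\widetilde{X}_x$, which is (ii).

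With these in hand, the hypothesis that some irreducible component $T_x$ of $\widetilde{X}_x$ satisfies $\dim_\R(T_x\cap\R^n_x)<\dim_\C(T_x)$ is exactly the input required by Lemma \ref{dimm} (taking $Z_x:=T_x$). Its conclusion delivers $\Ii_{X,x}\subsetneq\Jj_{X,x}$, i.e.\ $x\in T(X)$, completing the proof.

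I do not expect any serious obstacle: this corollary is essentially a clean repackaging of Lemmas \ref{tx0} and \ref{dimm}, the only bookkeeping being to identify the correct ideal ($\Ii_{X,x}$, not $\Jj_{X,x}$) to feed into Lemma \ref{dimm} and to recognize that its complexification reproduces the zero ideal $\Jj^\C_{\widetilde{X},x}$ of the complexification.
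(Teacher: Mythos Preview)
Your proposal is correct and matches the paper's proof exactly: the paper simply cites Lemma \ref{tx0} for the forward direction and Lemma \ref{dimm} applied to $\gta=\Ii_{X,x}$ for the reverse, without spelling out the verification of (i) and (ii) that you supply. One minor remark: for (i) you invoke \cite[Ch.V,\! \S15,\! Prop.5]{b}, but that proposition goes the other way (radical real ideal $\Rightarrow$ radical complexification); the direction you need, that $\Ii_{X,x}=(\Ii_{X,x}\otimes_\R\C)\cap\an_{\R^n,x}$ is radical because contractions of radical ideals are radical, is even more elementary.
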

\begin{proof}
The only if implication follows from Lemma \ref{tx0}, whereas the if implication follows from Lemma \ref{dimm} applied to $\gta=\Ii_{X,x}$.
\end{proof}

\subsubsection{Dimensional formula}
A subset $S$ of an open set $\Omega\subset\R^n$ is a \em semianalytic set \em if for each point $x\in\Omega$ there exists an open neighborhood $U^x$ such that $S\cap U^x$ is a finite union of sets of the type $\{f=0,g_1>0,\ldots,g_r>0\}\subset U^x$ where $f,g_i\in\an(U^x)$ are analytic functions on $U^x$. We recall the following result concerning dimensions of certain differences of semianalytic sets.

\begin{lem}\label{prop}
Let $\Omega\subset\R^n$ be an open set and let $A,B\subset\Omega$ be semianalytic sets. Then $\dim(\cl(A\setminus B)\cap B)<\dim(A\setminus B)$.
\end{lem}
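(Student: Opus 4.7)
The plan is to set $C:=A\setminus B$. Since the class of semianalytic sets is stable under Boolean operations (locally, a finite Boolean combination of sets $\{f=0\}$, $\{g>0\}$ with $f,g$ analytic is again such), the set $C$ is semianalytic. By construction $C\cap B=\varnothing$, so
\[
\cl(A\setminus B)\cap B=\cl(C)\cap B\subset\cl(C)\setminus C.
\]
Hence it suffices to prove the purely intrinsic statement: for every semianalytic set $C\subset\Omega$,
\[
\dim(\cl(C)\setminus C)<\dim(C).
\]

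To establish this reduced statement I would invoke a semianalytic stratification. By the local structure theorem for semianalytic sets (\cite[Ch.VIII]{abr}, originally due to \L ojasiewicz), one can write $C=\bigsqcup_{i\in I}M_i$ as a locally finite disjoint union of connected real analytic submanifolds $M_i$ of $\Omega$ with $\dim M_i\leq\dim C$. Since each $M_i\subset C$, any point of $\cl(C)\setminus C$ lying in $\cl(M_i)$ must belong to the frontier $\cl(M_i)\setminus M_i$, so
\[
\cl(C)\setminus C\subset\bigcup_{i\in I}\bigl(\cl(M_i)\setminus M_i\bigr).
\]
Each $M_i$ is a locally closed semianalytic subset of $\Omega$, hence its frontier $\cl(M_i)\setminus M_i$ is a closed semianalytic set of dimension strictly less than $\dim M_i$ (this is the classical ``frontier inequality'' for semianalytic sets, which ultimately rests on the Puiseux-type local description and the fact that a real analytic manifold cannot be densely accumulated by semianalytic pieces of the same dimension outside itself). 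A locally finite union of semianalytic sets of dimension $<\dim C$ still has dimension $<\dim C$, and the desired inequality follows.

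The main obstacle is the frontier inequality $\dim(\cl(M_i)\setminus M_i)<\dim M_i$: the plan treats it as a black box from the theory of semianalytic sets, but it is the only non-formal input. Everything else is a combination of Boolean stability, the trivial inclusion $\cl(C)\cap B\subset\cl(C)\setminus C$ coming from disjointness of $C$ and $B$, and monotonicity of dimension under locally finite unions of semianalytic sets.
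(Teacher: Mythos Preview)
Your proposal is correct and follows essentially the same route as the paper: both set $C:=A\setminus B$, use the inclusion $\cl(C)\cap B\subset\cl(C)\setminus C$ (the paper writes this via the identity $\cl(C)\setminus C=(\cl(C)\cap B)\cup(\cl(C)\setminus A)$), and then invoke the semianalytic frontier inequality $\dim(\cl(C)\setminus C)<\dim(C)$. The only difference is that the paper cites this inequality directly from \L ojasiewicz's notes, whereas you sketch its proof via stratification.
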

\begin{proof}
By \cite[\S17.Prop.5, p.60]{l} it holds $\dim(\cl(A\setminus B)\setminus(A\setminus B))<\dim(A\setminus B)$. As 
$$
\cl(A\setminus B)\setminus(A\setminus B)=(\cl(A\setminus B)\cap B)\cup(\cl(A\setminus B)\setminus A), 
$$
we deduce $\dim(\cl(A\setminus B)\cap B)\leq\dim(\cl(A\setminus B)\setminus(A\setminus B))<\dim(A\setminus B)$, as required.
\end{proof}

We are ready to prove Theorem \ref{xy}.

\subsection{Proof of Theorem \ref{xy}}
We prove both the analytic and the Nash cases (pointing out the differences in the second case with respect to the first). We keep all the notations introduced in \S\ref{dspnc} for the description of the set of points on non-coherence of a $C$-analytic set. We claim: {\em It is enough to prove $N(X)\subset\cl(T(X)\setminus N(X))$.} 

As $T(X)=\{x\in X:\ \Jj_{X,x}\neq\Ii_{X,x}\}$ and $N(X)$ is the set of points $x\in X$ such that for each open neighborhood $U\subset X$ of $x$ the restriction sheaf $\Jj_{X}|_U$ is not of finite type, we deduce that $X\setminus(T(X)\cup N(X))$ is the set of points $x\in X$ such that there exists an open neighborhood $U^x\subset X$ of $x$ satisfying $\Jj_{X}|_{U^x}$ is of finite type and $\Jj_{X,x}=\Ii_{X,x}$. Thus, for each $x\in X\setminus(T(X)\cup N(X))$ there exists a perhaps smaller open neighborhood $V^x\subset U^x$ of $x$ such that $\Jj_{X}|_{V^x}=\Ii_{X}|_{V^x}$. Consequently, $X\setminus(T(X)\cup N(X))$ is an open subset of $X$, so $T(X)\cup N(X)$ is a closed subset of $X$. If $N(X)\subset\cl(T(X)\setminus N(X))$, we deduce $\cl(T(X))\subset T(X)\cup N(X)=(T(X)\setminus N(X))\cup N(X)\subset\cl(T(X)\setminus N(X))\subset\cl(T(X))$. Thus, all the equalities in the statement hold, as claimed.

We prove next $N(X)\subset\cl(T(X)\setminus N(X))$ in several steps:

\noindent{\sc Step 1.} {\em Initial preparation}. Define 
\begin{align*}
e:=&\max\{\dim(N_k(Z_k,R_k)):\ k=2,\ldots,d\}=\dim(N(X)),\\
\ell_e:=&\max\{k=2,\ldots,d:\ \dim(N_k(Z_k,R_k))=e\}.
\end{align*}
As $N(X)\neq\varnothing$, both integers exist. By Theorem \ref{ncp0} $N_{\ell_e}(Z_{\ell_e},R_{\ell_e})\setminus\bigcup_{k={\ell_e}+1}^dN_k(Z_k,R_k)$ is the set of points $z\in X$ such that $X_z$ has a non-coherent irreducible component of dimension ${\ell_e}$ and all the irreducible components of $X_z$ of higher dimension are coherent. As coherence and non-coherence are properties of local nature, $N(X\cap U)=N(X)\cap U$ for each open set $U\subset\R^n$. We have $\dim(N_k(Z_k,R_k))<\dim(N_{\ell_e}(Z_{\ell_e},R_{\ell_e}))$ for each $k={\ell_e}+1,\ldots,d$, so $S_{e,{\ell_e}}:=N_{\ell_e}(Z_{\ell_e},R_{\ell_e})\setminus\bigcup_{k={\ell_e}+1}^dN_k(Z_k,R_k)\neq\varnothing$ and has dimension $e$. Let $S_{e,{\ell_e}}^*$ be the set of points of dimension $e$ of $S_{e,{\ell_e}}$, which is a dense subset of the set of points of dimension $e$ of $N_{\ell_e}(Z_{\ell_e},R_{\ell_e})$.

\noindent{\sc Step 2.} Pick a point $x\in S_{e,{\ell_e}}^*$ and observe that the $C$-semianalytic germ $N_{\ell_e}(Z_{\ell_e},R_{\ell_e})_x$ has dimension $e$. Let $X_{1,x},\ldots,X_{s,x}$ be the irreducible components of $X_x$. We may assume:
\begin{itemize}
\item $X_{1,x},\ldots,X_{p,x}$ have dimension ${\ell_e}$.
\item $X_{p+1,x},\ldots,X_{m,x}$ have dimension $<{\ell_e}$.
\item $X_{m+1,x},\ldots,X_{s,x}$ have dimension $>{\ell_e}$.
\end{itemize}

By \cite[Ch.VIII.Prop.3.1]{abr} there exist: (1) $\veps_1,\ldots,\veps_n>0$ such that $x\in U^x:=\prod_{i=1}^n(-\veps_i,\veps_i)$ and (2) irreducible $C$-analytic sets $X_i\subset U^x$ such that $X_i$ is a representative of $X_{i,x}$ and $\dim(X_i)=\dim(X_{i,x})$ for $i=1,\ldots,r$. In case $X$ is a Nash set, the $X_i$ are also semialgebraic, so they are irreducible Nash sets (Lemma \ref{ncas}). As each $N_k(Z_k,R_k)$ is by Remark \ref{remnx}(i) closed, we may assume $N_k(Z_k,R_k)\cap U^x=\varnothing$ for $k={\ell_e}+1,\ldots,d$. This means that
$$
N(X\cap U^x)=N(X)\cap U^x=\bigcup_{k=2}^{\ell_e} N_k(Z_k,R_k)\cap U^x.
$$
We assume in addition $X_k$ is coherent in $U^x$ for $k=m+1,\ldots,r$. Let $V_0\subset\C^n$ be an invariant open (semialgebraic) neighborhood of $U^x$ such that $g_1,\ldots,g_r$ admit invariant holomorphic extensions $G_1,\ldots,G_r\in\an(V_0)$ to $V_0$.

Consider the Nash diffeomorphism
$$
\varphi:U^x=(-\veps,\veps)^n\to\R^n,\ (y_1,\ldots,y_n)\mapsto\Big(\frac{y_1}{\sqrt{\veps_1^2-y_1^2}},\ldots,\frac{y_n}{\sqrt{\veps_n^2-y_n^2}}\Big).
$$
As $\varphi$ is a local Nash diffeomorphism on an invariant open semialgebraic neighborhood of $B_n(0,1)$ in $\C^n\equiv\R^{2n}$, there exists by \cite[Lem.9.2]{bfr} invariant open semialgebraic neighborhoods $V\subset V_0$ of $B_n(0,1)$ and $W\subset\C^n\equiv\R^{2n}$ of $\R^n$ such that $\varphi$ extends to an invariant analytic (in fact invariant complex Nash) diffeomorphism $\Phi:V\to W$. Using $\Phi$ we may assume $U^x=\R^n$. After our preliminary reductions $N_k(Z_k,R_k)=\varnothing$ for $k={\ell_e}+1,\ldots,d$, so $N(X)=\bigcup_{k=2}^{\ell_e} N_k(Z_k,R_k)$.

If $z\in N_{\ell_e}(Z_{\ell_e},R_{\ell_e})$, then $X_z$ has a non-coherent irreducible component of dimension ${\ell_e}$. As $X_{m+1},\ldots,X_s$ are coherent and the irreducible $C$-analytic components of $X$ of dimension ${\ell_e}$ are $X_1,\ldots,X_p$, we deduce $N_{\ell_e}(Z_{\ell_e},R_{\ell_e})\subset\bigcup_{i=1}^pN_{\ell_e}(X_i,\varnothing)$. Consequently,
\begin{multline*}
N_{\ell_e}(Z_{\ell_e},R_{\ell_e})=\bigcup_{i=1}^p(N_{\ell_e}(Z_{\ell_e},R_{\ell_e})\cap N_{\ell_e}(X_i,\varnothing))\\ 
\leadsto\ N_{\ell_e}(Z_{\ell_e},R_{\ell_e})_x=\bigcup_{i=1}^p(N_{\ell_e}(Z_{\ell_e},R_{\ell_e})_x\cap N_{\ell_e}(X_i,\varnothing)_x).
\end{multline*}
We may assume $x\in N_{\ell_e}(Z_{\ell_e},R_{\ell_e})\cap N_{\ell_e}(X_1,\varnothing)$ and $\dim(N_{\ell_e}(Z_{\ell_e},R_{\ell_e})_x\cap N_{\ell_e}(X_1,\varnothing)_x)=e$, so $\dim(N_{\ell_e}(X_1,\varnothing)_x)\geq e$. 

\noindent{\sc Step 3.} {\em We prove $x\in\cl(T(X)\setminus N(X))$.} Let us recall (and simplify) how to construct $N_{\ell_e}(X_1,\varnothing)$. Write $Z=X_1$. As $Z$ is an irreducible $C$-analytic set of dimension ${\ell_e}$, we may assume that the complexification $\widetilde{Z}$ of $Z$ is irreducible. Let $\pi:Y\to\widetilde{Z}$ be the normalization of $\widetilde{Z}$ endowed with an anti-involution $\sigma:Y\to Y$, which is induced by the usual conjugation on $\widetilde{Z}$. Let $Y^\sigma:=\{y\in Y:\ \sigma(y)=y\}$ be the fixed part of $Y$, which is a $C$-analytic space (and in addition semialgebraic in some $\C^m$ if $X$ is a Nash set). Define
\begin{align*}
&Y^\sigma_{({\ell_e})}:=\cl(Y^\sigma\setminus\Sing(Y^\sigma)),\\
&C_1:=\pi^{-1}(Z)\setminus Y^\sigma,\quad C_2:=Y^\sigma\setminus Y^\sigma_{({\ell_e})},\\
&A_i:=\cl(C_i)\cap Y^\sigma_{({\ell_e})}
\end{align*}
It holds $N_{\ell_e}(Z,\varnothing)=\pi(A_1)\cup\pi(A_2)$. We consider here the real analytic structure of $\pi:Y\to\widetilde{Z}$ to give meaning to the structure of $C$-semianalytic set of $\pi^{-1}(X)$ and to compute (real) dimensions of the involved objects. As $\pi$ is a proper map with finite fibers, we have 
$$
0\leq e\leq\dim(N_{\ell_e}(Z,\varnothing))=\max\{\dim(\pi(A_1)),\dim(\pi(A_2))\}=\max\{\dim(A_1),\dim(A_2)\}
$$
As $Y^{\sigma}_{({\ell_e})}$ is a closed subset of $Y^\sigma$, we have 
\begin{align*}
\dim(A_1)&=\dim(\cl(C_1)\cap\cl(Y^{\sigma}_{({\ell_e})}))\leq\dim(\cl(\pi^{-1}(Z)\setminus Y^\sigma)\cap Y^\sigma),\\
\dim(A_2)&=\dim(\cl(C_2)\cap\cl(Y^\sigma_{({\ell_e})}))=\dim(\cl(Y^\sigma\setminus Y^\sigma_{({\ell_e})})\cap Y^\sigma_{({\ell_e})}).
\end{align*}
By Lemma \ref{prop} and as $\pi$ is an analytic proper map with finite fibers, we have
\begin{align}
\dim(\pi(A_1))=\dim(A_1)&\leq\dim(\cl(\pi^{-1}(Z)\setminus Y^\sigma)\cap Y^\sigma)<\dim(\pi^{-1}(Z)\setminus Y^\sigma),\label{a1}\\
\dim(\pi(A_2))=\dim(A_2)&=\dim(\cl(Y^\sigma\setminus Y^\sigma_{({\ell_e})})\cap Y^\sigma_{({\ell_e})})<\dim(Y^\sigma\setminus Y^\sigma_{({\ell_e})}).\label{a2}
\end{align}
Thus, either $\pi^{-1}(Z)\setminus Y^\sigma$ or $Y^\sigma\setminus Y^\sigma_{({\ell_e})}$ has dimension $>e$.

As $x\in N_{\ell_e}(Z,\varnothing)=\pi(A_1)\cup\pi(A_2)$ and $\dim(N_{\ell_e}(Z,\varnothing)_x)\geq e$, there exists a point $w\in A_1\cup A_2$ such that $\pi(w)=x$ and $\dim(A_{1,w}\cup A_{2,w})=\dim(N_{\ell_e}(Z,\varnothing)_x)\geq e$ (recall that $\pi$ is an analytic proper map with finite fibers). Using \eqref{a1} and \eqref{a2} we deduce that either $\dim(\pi^{-1}(Z)_w\setminus Y^\sigma_w)>e$ or $\dim(Y^\sigma_w\setminus Y^\sigma_{({\ell_e}),w})>e$. As $\dim(N(X))=e$ and $\pi$ is an analytic proper map with finite fibers, $\pi^{-1}(N(X))$ is a $C$-semianalytic set of dimension $e$. Thus, either $\dim((\pi^{-1}(Z)_w\setminus Y^\sigma_w)\setminus\pi^{-1}(N(X))_w)>e\geq0$ or $\dim((Y^\sigma_w\setminus Y^\sigma_{({\ell_e}),w})\setminus\pi^{-1}(N(X))_w)>e\geq0$. By the analytic curve selection lemma \cite[Rem.VII.4.3(b)]{abr} (in the analytic case) or the Nash curve selection lemma \cite[Prop.8.1.13]{bcr} (in the Nash case) there exists an analytic arc (resp. Nash arc) $\beta:(-1,1)\to\R^n$ such that $\beta(0)=x$ and either $\beta((0,1))\subset(\pi^{-1}(Z)\setminus Y^\sigma)\setminus\pi^{-1}(N(X))$ or $\beta((0,1))\subset(Y^\sigma\setminus Y^\sigma_{({\ell_e})})\setminus\pi^{-1}(N(X))$. Define $\alpha:(-1,1)\to X_1,\ t\mapsto (\pi\circ\beta)(t)$, which is an analytic arc (resp. Nash arc) such that $\alpha(0)=x$ and $\alpha((0,1))\subset T(X_1)\setminus N(X)$ (Lemma \ref{dot}), so $\Ii_{X_1,\alpha(t)}\subsetneq\Jj_{X_1,\alpha(t)}$ for each $t\in(0,1)$. We claim: {\em $\Ii_{X,\alpha(t)}\subsetneq\Jj_{X,\alpha(t)}$ for each $t\in(0,1)$}, so $\alpha((0,1))\subset T(X)\setminus N(X)$ and $x\in\cl(T(X)\setminus N(X))$.

As $\Ii_{X,\alpha(t)}\subset\bigcap_{i=1}^s\Ii_{X_i,\alpha(t)}$ and $\bigcap_{i=1}^s\Jj_{X_i,\alpha(t)}=\Jj_{X,\alpha(t)}$, it is enough to check $\Jj_{X,\alpha(t)}\setminus\Ii_{X_1,\alpha(t)}$ for each $t\in(0,1)$. As $\Ii_{X_1,\alpha(t)}\subsetneq\Jj_{X_1,\alpha(t)}$, there exists by Lemma \ref{txca} an irreducible component $T_{t,\alpha(t)}$ of $\widetilde{X}_{1,\alpha(t)}$ such that $\dim(T_{t,\alpha(t)}\cap\R^n_{\alpha(t)})<\dim(T_{t,\alpha(t)})$ for each $t\in(0,1)$. By Lemma \ref{dimm} we have $\Jj_{X,\alpha(t)}\setminus\Ii_{X_1,\alpha(t)}\neq\varnothing$ for each $t\in(0,1)$, as claimed. 

We have proved that $S_{e,{\ell_e}}^*\subset\cl(T(X)\setminus N(X))$, so $\cl(S_{e,{\ell_e}}^*)\subset\cl(T(X)\setminus N(X))$. 

Consider the $C$-analytic subset $X':=X\setminus\cl(S_{e,{\ell_e}}^*)$ of $\Omega:=\R^n\setminus\cl(S_{e,{\ell_e}}^*)$ and observe that $N(X')=N(X)\setminus\cl(S_{e,{\ell_e}}^*)$ and $T(X')\subset T(X)\setminus\cl(S_{e,{\ell_e}}^*)$, because $\widetilde{X'}\subset\widetilde{X}\setminus\cl(S_{e,{\ell_e}}^*)$. Denote
\begin{align*}
e':=&\max\{\dim(N_k(Z_k,R_k)):\ k=2,\ldots,d\}=\dim(N(X')),\\
\ell_{e'}:=&\max\{k=2,\ldots,d:\ \dim(N_k(Z_k,R_k))=e'\}
\end{align*}
and observe that $e'\leq e$ and $\ell_{e'}<\ell_e$ if $e'=e$, because $S_{e,{\ell_e}}^*$ is dense in the set of points of dimension $e$ of $N_{\ell_e}(Z_{\ell_e},R_{\ell_e})$. Thus, by induction hypothesis on the pair $(e,\ell_e)$ each $x\in N(X)\setminus\cl(S_{e,{\ell_e}}^*)$ belongs to $\cl(T(X')\setminus N(X'))\subset\cl(T(X)\setminus N(X))$ (because $N(X)=N(X')\cup\cl(S_{e,{\ell_e}}^*)$), as required.
\qed

%%%
\section{Special global equations outside the set of points of non coherence}\label{s5}

In this section we find global equations of a $C$-analytic set $X\subset\R^n$ outside the set $N(X)$ of points of non coherence that encodes the differential property of the points of $T(X)\setminus N(X)$.

\begin{thm}\label{h}
Let $X\subset\R^n$ be a $C$-analytic set (resp. Nash set) and let $N(X)$ be its set of points of non-coherence. Then there exists $h\in\Ii(X\setminus N(X))$ (resp. $h\in \Ii^\bullet(X\setminus N(X))$) such that $\ZZ(h)=X\setminus N(X)$ and $h_x\in\Jj_{X,x}\setminus\Ii_{X,x}$ for each $x\in T(X)\setminus N(X)$.
\end{thm}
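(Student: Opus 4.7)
Set $U:=\R^n\setminus N(X)$. By Remark~\ref{remnx}(iii) (resp.\ Remark~\ref{xnx}), $X\setminus N(X)$ is a coherent $C$-analytic (resp.\ Nash) subset of $U$, so $\Jj_X|_U$ (resp.\ $\Jj_X^\bullet|_U$) is a coherent sheaf of ideals on $U$. Together with the coherent subsheaf $\Ii_X|_U$ (resp.\ $\Ii_X^\bullet|_U$) this produces a coherent quotient sheaf $\Qq:=\Jj_X/\Ii_X|_U$ whose support is, by the very definition of $T(X)$, equal to $T(X)\setminus N(X)$. I will fix a finite family $g_1,\ldots,g_r\in\Ii(X\setminus N(X))$ (resp.\ $\Ii^\bullet(X\setminus N(X))$) whose germs generate $\Jj_{X,y}$ (resp.\ $\Jj_{X,y}^\bullet$) at every $y\in U$ by Cartan's Theorem~A (resp.\ Theorem~\ref{factfinitesheaf}(A) combined with Lemma~\ref{ijn}); then $q:=\sum_i g_i^2\ge 0$ satisfies $\ZZ(q)=X\setminus N(X)$.

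The argument rests on two elementary facts. First, \emph{$\Ii_{X,x}$ is a radical ideal} for every $x\in U$: by \eqref{icc} its complexification $\Ii_{X,x}\otimes_\R\C=\Jj_{\widetilde X,x}^\C$ is the radical ideal of the complex analytic germ $\widetilde X_x$, and radicality descends along the faithfully flat inclusion $\an_{\R^n,x}\hookrightarrow\an_{\C^n,x}$ (in the Nash case use Corollary~\ref{ffna}). Second, \emph{for any $f\in\Ii(X\setminus N(X))$ the ``bad set'' $B(f):=\{x\in U:f_x\in\Ii_{X,x}\}$ is open in $U$}: coherence of $\Ii_X|_U$ allows one to write, near any $y\in B(f)$, an equality $f=\sum_i a_i\psi_i$ in terms of a finite generating family $\psi_1,\ldots,\psi_m$ of $\Ii_X|_{V^y}$, and this same expression forces $f_z\in\Ii_{X,z}$ for every $z\in V^y$.

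The heart of the proof is the following Baire-and-openness trick. Let $D\subset T(X)\setminus N(X)$ be a countable dense subset (it exists by Remark~\ref{dense}). For every $x\in T(X)\setminus N(X)=\operatorname{supp}\Qq$ the classes $\bar g_{1,x},\ldots,\bar g_{r,x}\in\Qq_x$ generate the non-zero stalk, so
$$K_x:=\Bigl\{\lambda\in\R^r:\sum_i\lambda_i\bar g_{i,x}=0\text{ in }\Qq_x\Bigr\}$$
is a proper $\R$-linear subspace of $\R^r$. The countable union $\bigcup_{x\in D}K_x$ has empty interior in $\R^r$; I pick any $\lambda\in\R^r$ outside it and set $s:=\sum_i\lambda_i g_i$, so $s_x\notin\Ii_{X,x}$ for every $x\in D$. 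By the second fact the set $B(s)\cap(T(X)\setminus N(X))$ is open in $T(X)\setminus N(X)$ and disjoint from the dense set $D$, hence \emph{empty}; thus $s_x\notin\Ii_{X,x}$ for every $x\in T(X)\setminus N(X)$.

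To finish, since $s$ may acquire extra zeros in $U\setminus X$, I set $h:=s^2+t\,q$ for a suitable $t\in\R_{>0}$. For every $t>0$ one has $\ZZ(h)=X\setminus N(X)$, because $s^2,q\ge 0$ and $\ZZ(q)=X\setminus N(X)\subset\ZZ(s)$. Radicality of $\Ii_{X,x}$ gives $\overline{s_x^2}\neq 0$ in $A:=\an_{\R^n,x}/\Ii_{X,x}$ at each $x\in T(X)\setminus N(X)$; the equation $\overline{s_x^2}+t\bar q_x=0$ in $A$ then admits at most one real solution in $t$ (if $\bar q_x=0$ none; if $\bar q_x\neq 0$ a difference $t_1-t_2\neq 0$ would be a unit of $A$ annihilating $\bar q_x\neq 0$, contradiction). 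Excluding the at most countable collection $\{t(x):x\in D\}$ of bad values yields $t>0$ with $h_x\notin\Ii_{X,x}$ for every $x\in D$, and a final application of the open-plus-dense argument extends the conclusion to all of $T(X)\setminus N(X)$. The main obstacle throughout---making a single choice of $\lambda$ and of $t$ work uniformly over the uncountable set $T(X)\setminus N(X)$---is precisely what the openness of $B(\cdot)$ converts into the tractable task of avoiding countably many conditions at the points of $D$.
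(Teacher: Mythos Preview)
Your strategy---pick a countable dense subset $D\subset T(X)\setminus N(X)$, choose coefficients by Baire to avoid $\Ii_{X,x}$ at every $x\in D$, then use openness of $\{x:h_x\in\Ii_{X,x}\}$ to propagate to all of $T(X)\setminus N(X)$---is the same skeleton as the paper's proof, and your implementation via proper linear subspaces $K_x\subsetneq\R^r$ is in fact cleaner than the paper's (which complexifies, passes to curves $\gamma_k$ in $\widetilde X$, and tracks orders of $H_j\circ\gamma_k$ to make the Baire choice). The radicality step and the two-parameter trick $h=s^2+tq$ with at most one bad $t$ per point are fine.

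There is, however, a genuine gap in the analytic case. You invoke Cartan's Theorem~A to obtain a \emph{finite} family $g_1,\ldots,g_r\in H^0(U,\Jj_X)$ generating every stalk $\Jj_{X,y}$ on the non-compact open set $U=\R^n\setminus N(X)$. Theorem~A only says that the (possibly infinite) set of all global sections generates each stalk; it does not give you finitely many that work simultaneously at every point. This is exactly why the paper, in the analytic case, takes a \emph{countable} family $\{H_j\}_{j\ge 1}$ (finitely many suffice on each compact set of an exhaustion) and then invokes the Fr\'echet-closure Lemma~\ref{series} to produce convergent series $\sum\mu_j^{2p_j}H_j^{2p_j}$. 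Your linear-algebra argument does not survive passage to an infinite family without a convergence device of this kind. In the Nash case there is no issue, because $\Nn(U)$ is Noetherian and $\Ii^\bullet(X\setminus N(X))$ is genuinely finitely generated---and indeed the paper's Nash proof is essentially yours. To repair the analytic case you must either supply an independent reason that $\Jj_X|_U$ is globally finitely generated (a Forster--Ramspott type bound on Stein spaces, not Theorem~A), or adapt your Baire argument to a countable family together with a convergence lemma like Lemma~\ref{series}.
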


The previous function $h$ is not unique, however we will see below (Theorem \ref{eq}) that under certain hypothesis ($X$ is either a $C$-analytic locally hypersurface or a Nash locally hypersurface) there is an $h$ satisfying the conditions of Theorem \ref{h} such that all the others are multiples of such $h$. To lighten the proof of Theorem \ref{h} we need the following preliminary result, whose proof is inspired by \cite[Lem.4.1]{abf1}.
\begin{lem}\label{series}
Let $\Omega\subset\C^n$ be an invariant open Stein set and let $\Gg$ be a coherent invariant sheaf of $\an_\Omega$-ideals on $\Omega$. Let $\{H_j\}_{j\geq1}\subset H^0(\Omega,\Gg)$ be a sequence. There exists positive real numbers $\nu_j>0$ for each $j\geq1$ such that: if $0<\mu_j\leq\nu_j$ and $p_j\geq1$ is a positive integer for each $j\geq1$, then $H:=\sum_{j\geq1}\mu_j^{2p_j}H_k^{2p_j}\in H^0(\Omega,\Gg)$.
\end{lem}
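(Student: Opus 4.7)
The only non-trivial work is (i) choosing the $\nu_j$ so that the series converges uniformly on compacta, and then (ii) upgrading the holomorphic limit from $\an(\Omega)$ to a section of $\Gg$. The invariance of $\Omega$ and $\Gg$ plays no role here; what matters is that $\Omega$ is Stein and $\Gg$ is a coherent sheaf of ideals.

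\textbf{Step 1 (exhaustion and choice of $\nu_j$).} I would fix a normal compact exhaustion $K_1\Subset K_2\Subset\cdots$ of $\Omega$ with $\bigcup_{m\geq 1}K_m=\Omega$. For each $j\geq 1$ set
\[
M_j:=\max\Bigl\{1,\ \sup_{z\in K_j}|H_j(z)|\Bigr\}\in[1,+\infty)
\]
(finite because $H_j$ is holomorphic on $\Omega$ and $K_j$ is compact), and define
\[
\nu_j:=\frac{1}{M_j\,2^{j/2}}>0.
\]
For any choice $0<\mu_j\leq\nu_j$ and any integer $p_j\geq 1$, one has $\mu_jM_j\leq\nu_jM_j=2^{-j/2}\leq 1$, so $(\mu_jM_j)^{2p_j}\leq(\mu_jM_j)^{2}\leq 2^{-j}$. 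Hence
\[
\sup_{z\in K_j}\bigl|\mu_j^{2p_j}H_j(z)^{2p_j}\bigr|\leq(\mu_jM_j)^{2p_j}\leq 2^{-j}.
\]

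\textbf{Step 2 (uniform convergence on compacta).} Any compact $K\subset\Omega$ lies in some $K_m$, and then $K\subset K_m\subset K_j$ for every $j\geq m$. Consequently $\sup_K|\mu_j^{2p_j}H_j^{2p_j}|\leq 2^{-j}$ for $j\geq m$, and the tail $\sum_{j\geq m}\sup_K|\mu_j^{2p_j}H_j^{2p_j}|\leq\sum_{j\geq m}2^{-j}<\infty$. The partial sums $S_N:=\sum_{j=1}^N\mu_j^{2p_j}H_j^{2p_j}$ therefore form a Cauchy sequence in the Fr\'echet topology of uniform convergence on compact subsets of $\Omega$, so by Weierstrass they converge to a holomorphic function $H\in\an(\Omega)$.

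\textbf{Step 3 (identifying the limit as a section of $\Gg$).} Since $\Gg$ is a sheaf of ideals and $H_j\in H^0(\Omega,\Gg)$, each power $H_j^{2p_j}$ and hence each partial sum $S_N$ lies in $H^0(\Omega,\Gg)$. The task is thus to show that $H^0(\Omega,\Gg)$ is closed in $\an(\Omega)$ with respect to the topology of uniform convergence on compacta. For this I would use Cartan's Theorem B: the short exact sequence $0\to\Gg\to\an_\Omega\to\an_\Omega/\Gg\to 0$ of coherent $\an_\Omega$-sheaves yields, on the Stein manifold $\Omega$, a short exact sequence
\[
0\longrightarrow H^0(\Omega,\Gg)\longrightarrow\an(\Omega)\stackrel{\pi}{\longrightarrow}H^0(\Omega,\an_\Omega/\Gg)\longrightarrow 0.
\]
Endow the global sections of the coherent sheaves $\an_\Omega$ and $\an_\Omega/\Gg$ with their canonical Fr\'echet topologies; the projection $\pi$ is continuous, so its kernel $H^0(\Omega,\Gg)$ is closed in $\an(\Omega)$. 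Since $\pi(S_N)=0$ for every $N$ and $S_N\to H$ in $\an(\Omega)$, continuity of $\pi$ gives $\pi(H)=0$, i.e.\ $H\in H^0(\Omega,\Gg)$, as required.

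\textbf{Expected obstacle.} The genuinely technical point is Step 3: the closedness of $H^0(\Omega,\Gg)$ in $\an(\Omega)$. This is a standard consequence of the Fr\'echet–Stein machinery associated to Cartan's Theorems A and B, but it is the only non-elementary ingredient of the proof; if one prefers a more hands-on argument, one can instead fix $z\in\Omega$, use Cartan A to express $\Gg$ near $z$ as the image of a free $\an_\Omega$-module $\an_\Omega^r\twoheadrightarrow\Gg$, and then invoke Cartan B for the kernel together with the open mapping theorem to lift the uniform convergence of $S_N$ to convergence of coefficients, concluding $H_z\in\Gg_z$.
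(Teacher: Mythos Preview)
Your proof is correct and follows essentially the same route as the paper: choose $\nu_j$ via a compact exhaustion so that $\sup_{K_j}|\mu_j H_j|^{2p_j}\leq 2^{-j}$, obtain locally uniform convergence of the partial sums, and conclude via the closedness of $H^0(\Omega,\Gg)$ in the Fr\'echet space $\an(\Omega)$. The only cosmetic difference is that the paper invokes this closedness by a direct citation to Cartan's closure theorem for sections of coherent ideal sheaves, whereas you deduce it from the continuity of $\pi:\an(\Omega)\to H^0(\Omega,\an_\Omega/\Gg)$ in the Fr\'echet topologies; both justifications are standard and equivalent.
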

\begin{proof}
Let $\{L_\ell\}_{\ell\geq1}$ be an exhaustion of $\Omega$ by compact sets. Define $\lambda_j:=\max_{L_j}\{|H_j|^2\}+1$ and $\nu_j:=1/(\sqrt{2^j\lambda_j})$. Fix $0<\mu_j\leq\nu_j$ and a positive integer $p_j\geq1$ for each $j\geq1$. Consider the series $H:=\sum_{j\geq1}\mu_j^{2p_j}H_j^{2p_j}$. We claim: {\em $H$ converges uniformly on the compact subsets of $\Omega$}. 

Indeed, let $L\subset\Omega$ be a compact set and observe that there exists an index $\ell_0\geq1$ such that $L\subset L_\ell$ for all $\ell\geq\ell_0$. Moreover, for each $z\in L$ we have $\mu_\ell^2|H_\ell(z)|^2\leq\nu_\ell^2|H_\ell(z)|^2\leq\frac{1}{2^\ell}<1$, so $\mu_\ell^{2p_j}|H_\ell(z)|^{2p_j}\leq\mu_\ell^2|H_\ell(z)|^2\leq\frac{1}{2^\ell}$ if $\ell\geq\ell_0$. Thus,
$$
\Big|\sum_{\ell\geq\ell_0}\mu_\ell^{2p_j}H_\ell^{2p_j}(z)\Big|\leq\sum_{\ell\geq\ell_0}\mu_\ell^{2p_j}|H_\ell(z)|^{2p_j}\leq\sum_{\ell\geq\ell_0}\frac{1}{2^\ell}=\frac{1}{2^{\ell_0-1}}\leq 1
$$ 
for each $z\in L$, as claimed. 

Denote $S_m:=\sum_{j=1}^m\mu_j^{2p_j}H_j^{2p_j}\in H^0(\Omega,\Gg)$ and observe $H:=\sum_{j\geq1}\mu_j^{2p_j}H_j^{2p_j}=\lim_{m\to\infty}S_m$ in the Fr\'{e}chet topology of $H^0(\Omega,\Gg)$. As $H^0(\Omega,\Gg)$ is by \cite[VIII.Thm.11, pag.62]{c1} a closed ideal of $H^0(\Omega,\an_\Omega)$, we conclude $H\in H^0(\Omega,\Gg)$, as required.
\end{proof}

We denote $\C\{\t\}$ the ring of convergent series in one variable with coefficients in $\C$ and let $\omega:\C\{\t\}\to\N\cup\{+\infty\},\ f:=\sum_{k\geq 0}a_k\t^k\mapsto\omega(f):=\inf\{k\geq 0:\ a_k\neq0\}$ the function {\em order} in $\C\{\t\}$. We are ready to prove Theorem \ref{h}.

\begin{proof}[Proof of Theorem \em\ref{h}]
We prove both cases separately:

\noindent{\sc Analytic Case}. Consider the coherent sheaf of $\an_{\C^n}$-ideals $\Ff:=\Jj_X|_{\R^n\setminus N(X)}\otimes_R\C$ on the open set $\R^n\setminus N(X)$. By \cite[Prop.2 \& 5]{c2} the coherent sheaf of $\an_{\C^n}$-ideals $\Ff$ extends to a coherent invariant sheaf of $\an_{\C^n}$-ideals $\Gg$ on an invariant open Stein neighborhood $\Omega$ of $U$ in $\C^n$. Let $\{L_\ell\}_{\ell\geq1}$ be an exhaustion of $\Omega$ by compact sets. As $\Gg$ is invariant and coherent, Cartan's Theorem A implies that there exists a countable collection of invariant holomorphic sections $\{H_j\}_{j\geq1}\subset H^0(\Omega,\Gg)$ such that for each $\ell\geq1$ there exists $j(\ell)$ so that the analytic germs $H_{1,z},\ldots,H_{j(\ell),z}$ generate the ideal $\Gg_z$ for each $z\in L_\ell$. 

Let $D:=\{x_k\}_{k\geq1}\subset T(X)$ be a countable dense subset of $T(X)$. For each $k\geq1$ we have $\Jj_{X,x_k}\setminus\Ii_{X,x_k}\neq\varnothing$, so $(\Jj_{X,x_k}\otimes_\R\C)\setminus\Jj^\C_{\widetilde{X},x_k}\neq\varnothing$ (because $\Ii_{X,x_k}\otimes_\R\C=\Jj^\C_{\widetilde{X},x_k}$). Thus, $\widetilde{X}_{x_k}\setminus\ZZ(\{H_{j}:\ j\geq1\})\neq\varnothing$ for each $k\geq1$. By the curve selection lemma \cite[Rem.VII.4.3(b)]{abr} (using the underlying real structures of all the involved complex analytic sets) there exists an analytic curve $\gamma_k:(-1,1)\to\widetilde{X}$ such that $\gamma_k(0)=x_k$ and $\gamma_k((0,1))\subset\widetilde{X}\setminus\ZZ(\{H_{j}:\ j\geq1\})$ for each $k\geq1$.

For each $k\geq1$ let $j_k\geq1$ be such that the analytic series $H_{j_k}\circ\gamma_k\in\C\{\t\}\setminus\{0\}$. Denote $A_k:=H_{j_k}$ for each $k\geq1$. We claim: {\em There exists $H_0\in H^0(\Omega,\Gg)$ such that $H_0\circ\gamma_k\in\C\{\t\}\setminus\{0\}$ for each $k\geq1$, $h_0:=H_0|_{\R^n\setminus N(X)}\geq0$ and $X\setminus N(X)\subset\ZZ(h_0)$.}

Denote $p_1:=1$ and for each $k\geq 2$ let $p_k\geq1$ be such that $\omega(A_k^{2p_k}\circ\gamma_j)>\omega(A_j^{2p_j}\circ\gamma_j)$ for $1\leq j\leq k-1$. By Lemma \ref{series} there exists positive real numbers $\nu_k>0$ for each $k\geq1$ such that if $0<\mu_k\leq\nu_k$ for each $k\geq1$, then $H_0:=\sum_{k\geq1}\mu_k^{2p_k}A_k^{2p_k}\in H^0(\Omega,\Gg)$. We choose inductively each $0<\mu_k\leq\nu_k$ such that $\omega(\sum_{k=1}^m\mu_k^{2p_k}(A_k^{2p_k}\circ\gamma_j))\leq\omega(A_j^{2p_j}\circ\gamma_j)<+\infty$ for each $1\leq j\leq m$ and each $m\geq1$. We conclude $\omega(H_0\circ\gamma_j)\leq\omega(A_j^{2p_j}\circ\gamma_j)<+\infty$ for each $j\geq1$. Thus, $H_0\in H^0(\Omega,\Gg)$, $H_0\circ\gamma_k\in\C\{\t\}\setminus\{0\}$ for each $k\geq1$, $h_0:=H_0|_{\R^n\setminus N(X)}\geq0$ (because it is a sum of squares of invariant analytic functions) and $X\setminus N(X)\subset\ZZ(h_0)$ (because $H_0\in H^0(\Omega,\Gg)$), as claimed.

By Lemma \ref{series} there exist positive real numbers $\lambda_j>0$ such that $H':=\sum_{j\geq1}\lambda_j^2H_j^2\in H^0(\Omega,\Gg)$. For each $j\geq1$ denote $h_j:=\lambda_jH_j|_{\R^n\setminus N(X)}$ and write $h':=H'|_{\R^n\setminus N(X)}=\sum_{j\geq1}h_j^2$. It holds: $\ceros(h')=X\setminus N(X)$. 

Indeed,
$$
\ceros(h')=\bigcap_{j\geq1}\ceros(h_j)=\bigcap_{j\geq1}(\ceros(H_j)\cap\R^n)=\Big(\bigcap_{j\geq1}\ceros(H_j)\Big)\cap\R^n=\supp(\Gg)\cap\R^n=X\setminus N(X).
$$

For each $k\geq1$ let $m_k\geq1$ be an integer such that the tuple 
$$
(H_0\circ\gamma_k,H'\circ\gamma_k)\in\{\t^{m_k}\}\C\{\t\}^2\setminus\{\t^{m_k+1}\}\C\{\t\}^2
$$ 
and define $w_k:=(\frac{H_1\circ\gamma_k}{\t^{m_k}},\frac{H'\circ\gamma_k}{\t^{m_k}})\in\C\{\t\}^2\setminus\{\t\}\C\{\t\}^2$, which satisfies $w_k(0):=(w_{1k},w_{2k})\in\C^2\setminus\{0\}$. By Baire's Theorem there exists positive real numbers $\lambda_0,\lambda'>0$ such that $\lambda_0w_{1k}+\lambda' w_{2k}\neq0$ for each $k\geq1$. Consequently, $H:=\lambda_0H_0+\lambda'H'$ satisfies that
$$
\frac{H\circ\gamma_k}{\t^{m_k}}=\lambda_0\frac{H_1\circ\gamma_k}{\t^{m_k}}+\lambda'\frac{H'\circ\gamma_k}{\t^{m_k}}\in\C\{\t\}\setminus\{\t\}\C\{\t\}
$$
for each $k\geq1$. Thus, $H\circ\gamma_k\in\C\{\t\}\setminus\{0\}$ for each $k\geq1$ and we define $h:=H|_{\R^n\setminus N(X)}$, which is a positive semidefinite analytic function on $X\setminus N(X)$. We conclude $h_{x_k}\not\in\Jj_{X,x_k}\setminus\Ii_{X,x_k}$ for each $k\geq1$ and $\ZZ(h)=\ZZ(h_0)\cap\ZZ(h')=X\setminus N(X)$, so $h\in\Ii(X\setminus N(X))$. Suppose there exists $x\in T(X)$ such that $h_x\in\Ii_{X,x_k}$. Then there exists an open neighborhood $U^x\subset\R^n$ of $x$ such that $h|_{U^x}\in\Ii(X)\an(U^x)$. As $D$ is dense in $T(X)$, there exists some $k\geq1$ such that $x_k\in U^x$, so $h_{x_k}\in\Ii_{X,x_k}$, which is a contradiction. Consequently, $h_x\in\Jj_{X,x}\setminus\Ii_{X,x}$ for each $x\in T(X)$.

\noindent{\sc Nash Case}. As $X\setminus N(X)$ is a coherent Nash subset of $V:=\R^n\setminus N(X)$, Theorem \ref{summer}(A) assures that the global sections of the sheaf of $\Nn_{\R^n}$-ideals $\Jj_X^\bullet|_{V}$ generate $\Jj^\bullet_{X,x}$ for each $x\in\R^n\setminus N(X)$. As the ring of Nash functions $\Nn(V)$ is Noetherian, the ideal $\Ii^\bullet(X\setminus N(X))$ is generated by finitely many global sections $h_1,\ldots,h_s$. Thus, $\Jj^\bullet_{X,x}=\{h_1,\ldots,h_s\}\Nn_{\R^n,x}$ for each $x\in V$. 

Let $\Omega\subset\C^n$ be an invariant open neighborhood of $V$ such that $h_1,\ldots,h_s$ have invariant holomorphic extensions $H_1,\ldots,H_s\in\an(\Omega)$. Let $D:=\{x_k\}_{k\geq1}\subset T(X)$ be a countable dense subset of $T(X)$. For each $k\geq1$ we have $\Jj_{X,x_k}\setminus\Ii_{X,x_k}\neq\varnothing$ (because $\Jj_{X,x_k}^\bullet\setminus\Ii_{X,x_k}^\bullet\neq\varnothing$ and the inclusion $\Nn_{\R^n,\x_k}\hookrightarrow\an_{\R^n,\x_k}$ is by Corollary \ref{ffna} faithfully flat for each $k\geq1$), so $(\Jj_{X,x_k}\otimes_\R\C)\setminus\Jj^\C_{\widetilde{X},x_k}\neq\varnothing$ (because $\Ii_{X,x_k}\otimes_\R\C=\Jj^\C_{\widetilde{X},x_k}$). Thus, $\widetilde{X}_{x_k}\setminus\ZZ(\{H_1,\ldots,H_s\})\neq\varnothing$ for each $k\geq1$. By the curve selection lemma \cite[Rem.VII.4.3(b)]{abr} (using the underlying real structures of all the involved complex analytic sets) there exists an analytic curve $\gamma_k:(-1,1)\to\widetilde{X}$ such that $\gamma_k(0)=x_k$ and $\gamma_k((0,1))\subset\widetilde{X}\setminus\ZZ(\{H_1,\ldots,H_s\})$ for each $k\geq1$.

For each $k\geq1$ let $j_k\in\{1,\ldots,s\}$ be such that the analytic series $H_{j_k}\circ\gamma_k\in\C\{\t\}\setminus\{0\}$. For each $k\geq1$ let $m_k\geq1$ be an integer such that the tuple $(H_1^2\circ\gamma_k,\ldots,H_s^2\circ\gamma_k)\in\{\t^{m_k}\}\C\{\t\}^s\setminus\{\t^{m_k+1}\}\C\{\t\}^s$ and define $w_k:=(\frac{H_1^2\circ\gamma_k}{\t^{m_k}},\ldots,\frac{H_s^2\circ\gamma_k}{\t^{m_k}})\in\C\{\t\}^s\setminus\{\t\}\C\{\t\}^s$, which satisfies $w_k(0):=(w_{1k},\ldots,w_{sk})\neq0$. By Baire's Theorem there exists positive real numbers $\lambda_1,\ldots,\lambda_s>0$ such that $\lambda_1w_{1k}+\cdots+\lambda_sw_{sk}\neq0$ for each $k\geq1$. Consequently, $H:=\lambda_1H_1^2+\cdots+\lambda_sH_s^2$ satisfies that
$$
\frac{H\circ\gamma_k}{\t^{2m_k}}=\lambda_1\frac{H_1^2\circ\gamma_k}{\t^{2m_k}}+\cdots+\lambda_r\frac{H_s^2\circ\gamma_k}{\t^{2m_k}}\in\C\{\t\}\setminus\{\t\}\C\{\t\}
$$
for each $k\geq1$. Thus, $H\circ\gamma_k\in\C\{\t\}\setminus\{0\}$ for each $k\geq1$ and we define $h:=H|_V$. We conclude $h_{y_k}\not\in\Ii_{X,x_k}^\bullet$ for each $k\geq1$ and $\ZZ(h)=\ZZ(h_1,\ldots,h_s)=X\setminus N(X)$, so $h\in\Ii^\bullet(X\setminus N(X))$. Suppose there exists $x\in T(X)$ such that $h_x\in\Ii_{X,x_k}^\bullet$. Then there exists an open (semialgebraic) neighborhood $U^x\subset\R^n$ of $x$ such that $h|_{U^x}\in\Ii^\bullet(X)\Nn(U^x)$. As $D$ is dense in $T(X)$, there exists some $k\geq1$ such that $x_k\in U^x$, so $h_{x_k}\in\Ii_{X,x_k}^\bullet$, which is a contradiction. Consequently, $h_x\in\Jj_{X,x}^\bullet\setminus\Ii_{X,x}^\bullet$ for each $x\in T(X)$, as required.
\end{proof}

\subsection{\texorpdfstring{$C$}{C}-analytic and Nash locally hypersurfaces}
We say that a $C$-analytic set (resp. Nash set) $X\subset\R^n$ is a \em $C$-analytic locally hypersurface \em (resp. \em Nash locally hypersurface\em) if for each $x\in\R^n$ the ideal $\Jj_{X,x}$ of $\an_{\R^n,x}$ (resp. $\Jj_{X,x}^\bullet$ of $\Nn_{\R^n,x}$) is principal. In particular, a $C$-analytic locally hypersurface is pure dimensional. Thus, not every $C$-analytic hypersurface is a $C$-analytic locally hypersurface. For instance, Whitney's umbrella $W:=\{y^2-zx^2=0\}$ or Cartan's umbrella $C:=\{(x^2+y^2)z-y^3=0\}$ are $C$-analytic hypersurfaces that are not $C$-analytic locally hypersurfaces. However, the examples in \S\ref{examples} are $C$-analytic locally hypersurfaces. Let us show the existence of `good' global equations for $C$-analytic locally hypersurfaces (resp. Nash locally hypersurfaces) outside the set of points on non-coherence. 

\begin{lem}\label{eq}
Let $X\subset\R^n$ be a $C$-analytic locally hypersurface (resp. Nash locally hypersurface). Then there exists $h\in\an(\R^n\setminus N(X))$ (resp. $h\in\Nn(\R^n\setminus N(X))$) such that $h_x$ generates the ideal $\Jj_{X,x}$ (resp. $\Jj_{X,x}^\bullet$) of $\an_{\R^n,x}$ (resp. $\Nn_{\R^n,x}$) for each $x\in\R^n\setminus N(X)$.
\end{lem}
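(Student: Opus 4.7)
\textit{Plan.} Set $U:=\R^n\setminus N(X)$. By Remark \ref{remnx}(iii) in the analytic case and Remark \ref{xnx} in the Nash case, $X\cap U$ is a coherent locally principal hypersurface in $U$, so $\Jj_X|_U$ (resp.\ $\Jj_X^\bullet|_U$) is an invertible coherent subsheaf of $\an_U$ (resp.\ $\Nn_U$). The lemma therefore amounts to the triviality of this invertible ideal sheaf; equivalently, the vanishing of its class in $H^1(U,\an_U^*)$ (resp.\ $H^1(U,\Nn_U^*)$), since a trivialization $\Jj_X|_U\cong\an_U$ of $\an_U$-modules is nothing but a global generator.

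\textit{Analytic case.} I would use the short exact sequence of sheaves of abelian groups
\[
1\longrightarrow(\an_U^*)_{>0}\longrightarrow\an_U^*\xrightarrow{\operatorname{sgn}}\underline{\Z/2}_U\longrightarrow 1,
\]
where $(\an_U^*)_{>0}$ is the subsheaf of germs of positive units and the quotient map records the (locally constant) sign. The real analytic logarithm realizes an isomorphism $(\an_U^*)_{>0}\cong\an_U$, so Cartan's Theorem B gives $H^p(U,(\an_U^*)_{>0})=H^p(U,\an_U)=0$ for all $p\geq 1$. The long exact cohomology sequence collapses to an isomorphism $H^1(U,\an_U^*)\cong H^1(U,\underline{\Z/2}_U)$. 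Since $X$ is a locally hypersurface, $X\setminus N(X)$ is purely $(n-1)$-dimensional, so Theorem \ref{ncp0}(i) yields $\dim N(X)\leq n-3$. Triangulating the $C$-semianalytic pair $(\R^n,N(X))$ realizes $N(X)$ as a subcomplex of codimension $\geq 3$ in the contractible complex $\R^n$; a standard CW argument shows that removing such a subcomplex does not affect $H^1(\,\cdot\,,\Z/2)$, so $H^1(U,\Z/2)=0$. This forces $H^1(U,\an_U^*)=0$ and produces a global generator $h\in\an(U)$.

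\textit{Nash case.} Here the direct cohomological route fails because the logarithm of a positive Nash unit is only analytic, not Nash. My strategy is to bootstrap from the analytic case. Let $h_0\in\an(U)$ be the analytic global generator just constructed. By Noetherianity of $\Nn(U)$ (Mostowski), the ideal $\Ii^\bullet(X\setminus N(X))$ is generated by finitely many Nash functions $h_1,\dots,h_s\in\Nn(U)$. Coherence of $X\setminus N(X)$ as a Nash set, together with Corollary \ref{ij} and Remark \ref{generalnash}, gives $\Ii(X\setminus N(X))=\Ii^\bullet(X\setminus N(X))\,\an(U)$, hence
\[
h_0=\sum_{i=1}^s\alpha_i h_i\qquad\text{for some }\alpha_i\in\an(U).
\]
I would then approximate each $\alpha_i$ by a Nash function $\beta_i\in\Nn(U)$ in a sufficiently fine Whitney topology (Efroymson-type approximation of $C^\infty$ functions on the affine Nash manifold $U$ by Nash functions). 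At each $x\in U$, writing $\Jj_{X,x}^\bullet=(f_x)$ and $h_i=v_{i,x}f_x$ with $v_{i,x}\in\Nn_{\R^n,x}$, the condition that $h:=\sum_i\beta_i h_i$ generates $\Jj_{X,x}^\bullet$ is the non-vanishing of $\sum_i\beta_i(x)v_{i,x}(x)$, an open condition preserved under a sufficiently close approximation. The resulting $h\in\Nn(U)$ is the desired Nash generator.

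\textit{Main obstacle.} The most delicate step is the Nash case: the approximation must be Whitney-fine uniformly on $U$, including near $N(X)$ and at infinity, so that the pointwise generator condition survives \emph{simultaneously} at every point of $U$; this rests on Efroymson-type approximation on affine Nash manifolds. The analytic case hinges on the topological vanishing $H^1(U,\Z/2)=0$, which relies on the codimension bound $\dim N(X)\leq n-3$ from Theorem \ref{ncp0}(i) and the triangulability of $C$-semianalytic sets.
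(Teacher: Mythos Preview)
Your analytic argument is essentially the paper's: the same exponential/sign exact sequence (your $(\an_U^*)_{>0}\cong\an_U$ via $\log$ is the paper's $\an_U\xrightarrow{\exp}\an_U^*$), the same reduction to $H^1(U,\Z/2)$, and an equivalent topological computation---the paper uses $\pi_1(U)=0$ together with the universal coefficient theorem rather than triangulation, but with the same codimension-$3$ input from Theorem~\ref{ncp0}(i).

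Your Nash argument has a genuine gap. Efroymson's theorem approximates $C^\infty$ \emph{semialgebraic} functions by Nash functions relative to a \emph{semialgebraic} positive bound; your $\alpha_i$ are merely analytic, and the control you actually need---roughly $|\beta_i-\alpha_i|<(s\cdot\max_j|h_j/h_0|)^{-1}$---involves the analytic, in general non-semialgebraic, quotients $h_j/h_0$. More fundamentally, Nash functions are \emph{not} dense in $C^\infty(U)$ for the strong Whitney topology when $U$ is non-compact: a positive semialgebraic function tending to $0$ at infinity decays only polynomially (Puiseux expansion at infinity), so for instance no Nash $g$ on $\R$ can satisfy $|g(x)-e^{-x^2}|<e^{-x^4}$ for all $x$. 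Since ``Whitney-fine uniformly on $U$, including near $N(X)$ and at infinity'' is precisely the strength you need, the approximation step cannot be carried out.

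The paper bypasses approximation entirely via the Coste--Shiota factorization theorem \cite[Cor.~2]{cs}: pick any nonzero $g\in\Ii^\bullet(X\setminus N(X))$ and write $g=bh$ with $b$ analytic and $h$ the analytic generator already built; Coste--Shiota then yields Nash $b_1,h_1\in\Nn(U)$ and analytic units $u,v$ with $uv=1$, $b=ub_1$, $h=vh_1$, whence $g=b_1h_1$. Since $v$ is a unit, $h_1$ generates $\Jj_{X,x}$ at every $x\in U$, and faithful flatness of $\Nn_{\R^n,x}\hookrightarrow\an_{\R^n,x}$ (Corollary~\ref{ffna}) transfers this to $\Jj^\bullet_{X,x}$.
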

\begin{proof}
Assume first $X$ is a $C$-analytic locally hypersurface. Consider the exact sequence 
$$
0\to\an_{\R^n}^*\to\an_{\R^n}\setminus\{0\}\overset{\rho}{\to}\an_{\R^n}\setminus\{0\}/\an_{\R^n}^*\to0
$$
where $\an_{\R^n}^*$ is the sheaf of invertible analytic germs, whereas $\an_{\R^n}\setminus\{0\}/\an_{\R^n}^*$ is the sheaf of germs of positive divisors and $\rho$ is the canonical projection. The locally principal coherent sheaf of ideals $\Jj_X$ on $\R^n\setminus N(X)$ defines naturally an element $\sigma_X\in H^0(\R^n\setminus N(X),\an_{\R^n}\setminus\{0\}/\an_{\R^n}^*)$. Consider the long exact sequence
\begin{multline*}
0\to H^0(\R^n\setminus N(X),\an_{\R^n}^*)\to H^0(\R^n\setminus N(X),\an_{\R^n}\setminus\{0\})\\
\to H^0(\R^n\setminus N(X),\an_{\R^n}\setminus\{0\}/\an_{\R^n}^*)\to H^1(\R^n\setminus N(X),\an_{\R^n}^*).
\end{multline*}
We claim: $H^1(\R^n\setminus N(X),\an_{\R^n}^*)=0$. To that end, consider the exact sequence of sheaves
$$
0\to\an_{\R^n}\overset{\varphi}{\to}\an_{\R^n}^*\overset{\pi}{\to}\an_{\R^n}^*/\varphi(\an_{\R^n})\to0
$$
where $\varphi(f_x)=\exp(f_x)$ and $\pi(g_x)=g_x\varphi(\an_{\R^n,x})$. Observe that $\varphi(\an_{\R^n,x})$ is the set of all strictly positive analytic germs and $\an_{\R^n}^*=\varphi(\an_{\R^n,x})\sqcup-\varphi(\an_{\R^n,x})$, so $\pi(g_x)$ is determined by the sign of $g_x$. Thus, $\an_{\R^n}^*/\varphi(\an_{\R^n})$ is isomorphic to the constant sheaf $\Z_2$ and after identifying both sheaves, we may assume $\pi(g_x)=\frac{g_x}{|g_x|}$ for each $g_x\in\an_{\R^n}^*$. Consider the long exact sequence
\begin{multline*}
0\to H^0(\R^n\setminus N(X),\an_{\R^n})\to H^0(\R^n\setminus N(X),\an_{\R^n}^*)\to H^0(\R^n\setminus N(X),\Z_2)\\
\to H^1(\R^n\setminus N(X),\an_{\R^n})\to H^1(\R^n\setminus N(X),\an_{\R^n}^*)\to H^1(\R^n\setminus N(X),\Z_2)\to H^2(\R^n\setminus N(X),\an_{\R^n}).
\end{multline*}
By Cartan's Theorem B we have $H^1(\R^n\setminus N(X),\an_{\R^n})=0$ and $H^2(\R^n\setminus N(X),\an_{\R^n})=0$, so
$$
H^1(\R^n\setminus N(X),\an_{\R^n}^*)\cong H^1(\R^n\setminus N(X),\Z_2).
$$
To compute the last group we use universal coefficient theorem for cohomology \cite[Thm.3.2]{ha} and obtain the following exact sequence
$$
0\to{\rm Ext}_\Z^1(H_0(\R^n\setminus N(X),\Z),\Z_2)\to H^1(\R^n\setminus N(X),\Z_2)\to{\rm Hom}(H_1(\R^n\setminus N(X),\Z),\Z_2)\to0.
$$
As $N(X)\subset X$ has codimension $\geq2$, the $C$-semialnalytic set $N(X)\subset\R^n$ has codimension $\geq 3$ in $\R^n$, so $\R^n\setminus N(X)$ is connected and $H_0(\R^n\setminus N(X),\Z)=\Z$. As $\Z$ is a free group, 
$$
{\rm Ext}_\Z^1(H_0(\R^n\setminus N(X),\Z),\Z_2)={\rm Ext}_\Z^1(\Z,\Z_2)=0
$$
(see \cite[p.195]{ha}), so $H^1(\R^n\setminus N(X),\Z_2)$ is isomorphic to ${\rm Hom}(H_1(\R^n\setminus N(X),\Z),\Z_2)$. In addition, as $N(X)$ has codimension $\geq 3$ in $\R^n$, the first homotopy group $\pi_1(\R^n\setminus N(X))=0$, so its abelianization $H_1(\R^n\setminus N(X),\Z)=0$. Thus, ${\rm Hom}(H_1(\R^n\setminus N(X),\Z),\Z_2)=0$, so also $H^1(\R^n\setminus N(X),\Z_2)=0$. This means that the homomorphism
$$
H^0(\R^n\setminus N(X),\an_{\R^n}\setminus\{0\})\overset{\rho}{\to} H^0(\R^n\setminus N(X),\an_{\R^n}\setminus\{0\}/\an_{\R^n}^*)\to0
$$
is surjective. Consequently, there exists $h\in H^0(\R^n\setminus N(X),\an_{\R^n}\setminus\{0\})$ such that $\rho(h)=\sigma_X$. Thus, $h$ generates the ideal $\Jj_{X,x}$ for each $x\in\R^n\setminus N(X)$.

Assume next $X$ is a Nash subset of $\R^n$. By the previous case there exist $h\in\Ii(X\setminus N(X))$ that generates the ideal $\Jj_{X,x}$ for each $x\in\R^n\setminus N(X)$. Let $g\in\Ii^\bullet(X\setminus N(X))\setminus\{0\}$. Then there exists $b\in\an(\R^n\setminus N(X))$ such that $g=bh$. By \cite[Cor.2]{cs} there exists Nash functions $b_1,h_1\in\Nn(\R^n\setminus N(X))$ and analytic functions $u,v\in\an(\R^n\setminus N(X))$ such that $uv=1$, $b=ub_1$ and $h=vh_1$. Thus, $g=b_1h_1$. As $v$ is a unit of $\an(\R^n\setminus N(X))$, the Nash function $h_1$ generates the ideal $\Jj_{X,x}$ for each $x\in\R^n\setminus N(X)$. As $(h_1)\Nn_{\R^n,x}\an_{\R^n,x}=(h_1)\an_{\R^n,x}=\Jj_{X,x}=\Jj_{X,x}^\bullet\an_{\R^n,x}$ and the inclusion $\Nn_{\R^n,x}\hookrightarrow\an_{\R^n,x}$ is by Corollary \ref{ffna} faithfully flat, we deduce $\Jj_{X,x}^\bullet=(h_1)\Nn_{\R^n,x}$ for each $x\in X\setminus N(X)$, as required.
\end{proof}
\begin{remarks}\label{epoint}
Let $X\subset\R^n$ be a $C$-analytic locally hypersurface (resp. Nash locally hypersurface).

(i) The irreducible components of each analytic germ $X_x$ have dimension $n-1$. Thus, if $X_x$ is not coherent, it has a non-coherent irreducible component $X_{1,x}$ of dimension $n-1$. Following the notation in \S\ref{dspnc} we deduce by Theorem \ref{ncp0} that $N(X)=N_{n-1}(X,\varnothing)$. The proof of Theorem \ref{xy} in this particular case can be substantially simplified, because $\ell_e=n-1$ in all cases.

(ii) By Lemma \ref{eq} there exists $h\in\an(\R^n\setminus N(X))$ (resp. $h\in\Nn(\R^n\setminus N(X))$) such that $h_x$ generates the ideal $\Jj_{X,x}$ (resp. $\Jj_{X,x}^\bullet$) for each $x\in\R^n\setminus N(X)$. Thus, $h$ satisfies the requirements of the statement of Theorem \ref{h} for each $x\in T(X)\setminus N(X)$. In addition, any other function satisfying such requirements is a multiple of $h$, because $J_{X,x}=h_x\an_{\R^n,x}$ for each $x\in X\setminus N(X)$.\hfill$\sqbullet$
\end{remarks}

%%%
\section{Proofs of the main results}\label{s6}

In this section we prove Theorems \ref{main1} and \ref{main2}. As an application, we also prove Corollaries \ref{cons1} and \ref{cons2}. We begin with some preliminary results.

\subsection{Obstruction set of a meromorphic function}
In order to manage better meromorphic functions on a $C$-analytic set, we will use freely the following result.
\begin{lem}\label{trof}
Let $R$ be a reduced ring and let $Q(R):=\{\frac{f}{g}:\ a,b\in R,\ \text{$b$ is not a zero divisor}\}$ be its total ring of fractions. Two fractions $\frac{f_1}{g_1},\frac{f_2}{g_2}\in Q(R)$ are equal if and only if $f_1g_2-f_2g_1=0$.
\end{lem}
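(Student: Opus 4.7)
The statement is a basic algebraic fact about total rings of fractions and the proof should be short. My plan is to argue the two implications separately, using only the definition of $Q(R)$ as the localization $S^{-1}R$, where $S\subset R$ is the multiplicatively closed subset of non-zero-divisors of $R$.

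For the ($\Leftarrow$) direction I would simply observe that if $f_1 g_2 - f_2 g_1 = 0$ in $R$, then $f_1 g_2 = f_2 g_1$, and this is precisely the representative-level condition (with unit multiplier $h=1$) for the equivalence relation defining $Q(R)$. Hence $\tfrac{f_1}{g_1} = \tfrac{f_2}{g_2}$ in $Q(R)$.

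For the ($\Rightarrow$) direction I would unfold the definition of the localization $S^{-1}R$: the equality $\tfrac{f_1}{g_1} = \tfrac{f_2}{g_2}$ in $Q(R)$ is equivalent to the existence of an element $h\in S$ such that $h(f_1 g_2 - f_2 g_1) = 0$ in $R$. Since $h$ is a non-zero-divisor by the very definition of $S$, multiplication by $h$ is injective on $R$, so the previous identity forces $f_1 g_2 - f_2 g_1 = 0$.

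I do not foresee any real obstacle: the argument is a one-line unfolding of definitions, and does not even need the hypothesis that $R$ is reduced. The authors presumably include that assumption only because the rings of interest in the sequel ($\an(\Omega)$ and $\Nn(\Omega)$) happen to be reduced, so the lemma is stated in the form in which it will be applied.
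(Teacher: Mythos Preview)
Your proof is correct and, in fact, more direct than the paper's. Both arguments start the same way: unfolding the equivalence relation on $S^{-1}R$ yields an $h\in S$ with $h(f_1g_2-f_2g_1)=0$. You then finish in one line by noting that $h$ is a non-zero-divisor, so multiplication by $h$ is injective and $f_1g_2-f_2g_1=0$; this uses nothing beyond the definition of $S$ and, as you observe, does not need $R$ reduced. The paper instead invokes the structure of minimal primes: it uses that the set of zero-divisors is the union of the minimal primes and that, since $R$ is reduced, the intersection of the minimal primes is the zero ideal; from $h\notin\gtp$ for every minimal prime $\gtp$ it deduces $f_1g_2-f_2g_1\in\gtp$ for all such $\gtp$, hence $f_1g_2-f_2g_1=0$. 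Your route is shorter and more elementary; the paper's detour through minimal primes buys nothing here, though that circle of ideas is relevant elsewhere in the article when analyzing the irreducible components of $C$-analytic sets.
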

\begin{proof}
Let $S\subset R$ be the multiplicative set of the non-zero divisor of $R$. Then $Q(R)=S^{-1}R$ and $R\setminus S$ is the union of the minimal prime ideals of $R$. As $R$ is reduced, the zero ideal is the intersection of the minimal prime ideals of $R$. If $\frac{f_1}{g_1}=\frac{f_2}{g_2}$, there exists $g\in S$ such that $g(f_1g_2-f_2g_1)=0$. Let $\gtp$ be a minimal prime ideal of $R$. As $g(f_1g_2-f_2g_1)\in\gtp$ and $g\in R\setminus S\subset R\setminus\gtp$, we have $f_1g_2-f_2g_1\in\gtp$. As this happens for each minimal prime ideal $\gtp$ of $R$, we deduce that $f_1g_2-f_2g_1=0$, because it belongs to the intersection of the minimal prime ideals of $R$, which is the zero ideal. The converse is straightforward. 
\end{proof}

\subsubsection{}
Let $X$ be a $C$-analytic subset of $\R^n$, let $\zeta:X\to\R$ be a function and let $x\in X$. Suppose there exists analytic germs $f_x,g_x\in\an_{\R^n,x}$ (depending on $x$) such that $\zeta_x=\frac{f_x}{g_x}$ and $g_x$ does not belong to any minimal prime of $\Jj_{X,x}$. Observe that $\zeta$ is an analytic function at $x\in X$ if there exists $a_x\in\an_{\R^n,x}$ such that $\frac{f_x}{g_x}=-a_x$ on $X_x$ or, equivalently, if $f_x+a_xg_x\in\Jj_{X,x}$. This is equivalent to $f_x\in g_x\an_{\R^n,x}+\Jj_{X,x}$. 

\subsubsection{}
Let $\{X_i\}_{i\in I}$ be the $C$-analytic irreducible components of the $C$-analytic set $X$ and let $\zeta:=\frac{f}{g}$ be a meromorphic function on $X$, that is, $f,g\in\an(\R^n)$ and $g|_{X_i}\neq0$ for each $i\in I$. If $\zeta$ coincides with the restriction to $X$ of an analytic function on $\R^n$, there exists $a\in\an(\R^n)$ such that $\frac{f}{g}=-a$ on $X$ or, equivalently, $f\in g\an(\R^n)+\Ii(X)$. We have $f_x\in g_x\an_{\R^n,x}+\Ii(X)\an_{\R^n,x}=g_x\an_{\R^n,x}+\Ii_{X,x}$ for each $x\in X$. Consequently, if there exists a point $x\in X$ such that $f_x\not\in g_x\an_{\R^n,x}+\Ii_{X,x}$, the meromorphic function $\zeta$ admits no analytic extension to $\R^n$. We define the {\em obstructing set} of the meromorphic function $\zeta=\frac{f}{g}$ as ${\tt O}(\zeta):=\{x\in X:\ f_x\not\in g_x\an_{\R^n,x}+\Ii_{X,x}\}$. By Lemma \ref{trof} the obstructing set ${\tt O}(\zeta)$ does not depend on the meromorphic representation of $\zeta$. As $\Ii_X$ is a coherent sheaf of ideals, ${\tt O}(\zeta)$ is in addition a closed subset of $X$. 

In case $X\subset\R^n$ is a Nash set and $\zeta=\frac{f}{g}$, where $f,g$ are Nash functions and $g$ is not identically zero on any of the irreducible components of $X$, we define ${\tt O}^\bullet(\zeta):=\{x\in X:\ f_x\not\in g_x\an_{\R^n,x}^\bullet+\Ii_{X,x}^\bullet\}$. As the inclusion homomorphism $\Nn_{\R^n,x}\hookrightarrow\an_{\R^n,x}$ is by Corollary \ref{ffna} faithfully flat, ${\tt O}^\bullet(\zeta)={\tt O}(\zeta)$.

\begin{remark}
{\em A meromorphic function (resp. meromorphic Nash function) $\zeta$ has an analytic (resp. Nash) extension to $\R^n$ if and only if ${\tt O}(\zeta)=\varnothing$}. Consequently, ${\tt O}(\zeta)$ determines whether $\zeta$ admits or not an analytic (resp. Nash) extension to $\R^n$ and how far is $\zeta$ from having such analytic (resp. Nash) extension.

The `only if' implication is clear, so let us proof the `if' implication. If ${\tt O}(\zeta)=\varnothing$, then $\zeta$ defines an element of $H^0(\R^n,\an_{\R^n}/\Ii_X)$ (resp. of $H^0(\R^n,\Nn_{\R^n}/\Ii_X^\bullet)$). By Cartan's Theorem B (resp. Theorem \ref{summer}(B)) there exists an analytic function $a\in\an(\R^n)$ (resp. Nash function $a\in\Nn(\R^n)$) such that $a|_X=\zeta$.\hfill$\sqbullet$
\end{remark}

\subsection{Winning family of denominators}\label{wf}
Let $Y\subset\R^n$ be a $C$-analytic subset of $\R^n$ and let $\widetilde{Y}$ be an invariant Stein complexification of $Y$, which is a closed subset of an invariant open Stein neighborhood $\Omega\subset\C^n$ of $Y$. By \cite[Proof of Prop.15]{c2} there exist finitely many invariant $P_1,\ldots,P_m\in\an(\Omega)$ such that $\ZZ(P_1,\ldots,P_m)=\widetilde{Y}$. Consider the family of holomorphic functions 
\begin{equation}\label{pkl}
P_{\lambda}:=\lambda_1P_1^2+\cdots+\lambda_mP_m^2\in\an(\Omega)
\end{equation}
where $\lambda:=(\lambda_1,\ldots,\lambda_m)\in\C^m$. We are particularly interested in the case when $\lambda$ belongs to the open orthant $\Qq_m:=\{\lambda_1>0,\ldots,\lambda_m>0\}\subset\R^m$. In that case the holomorphic functions $P_{\lambda}$ are in particular invariant. By \cite[\S2.2]{abf0} and Oka's coherence theorems \cite[Ch.IV]{n1} if $\Omega\subset\C^n$ is a contractible open Stein neighborhood of $\R^n$, we can find a square-free invariant $P^*_{\lambda}\in\an(\Omega)$ such that $P^*_{\lambda}|P_{\lambda}$ and $\ZZ(P^*_{\lambda})=\ZZ(P_{\lambda})$.

\begin{lem}\label{trivial}
Let $Z_y$ be a complex analytic germ at $y\in\C^n$ and let $Z_{1,y},\ldots,Z_{r,y}$ be the irreducible components of $Z_y$. Suppose $\widetilde{Y}_y$ does not contain $Z_{i,y}$ for $i=1,\ldots,r$. Then there exists a proper algebraic subset $S\subsetneq\R^m$ such that ($\Qq_m\setminus S$ is dense in $\Qq_m$ and)
$\dim(Z_y\cap\ZZ(P_{\lambda,y}))<\dim(Z_y)$ for each $\lambda\in\Qq_m\setminus S$.
\end{lem}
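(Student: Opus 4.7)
My plan is to reduce the statement to an irreducible-component-by-irreducible-component question: if for every $i$ one can ensure that $P_{\lambda,y}$ does not vanish identically on the irreducible germ $Z_{i,y}$, then $Z_{i,y}\cap\ZZ(P_{\lambda,y})$ is a proper analytic subset of the irreducible $Z_{i,y}$ and so has dimension strictly less than $\dim Z_{i,y}$. Since there are only finitely many components and dimensions are integers, taking the maximum over $i$ upgrades this componentwise strict inequality to $\dim(Z_y\cap\ZZ(P_{\lambda,y}))<\dim Z_y$. So the whole game is to rule out, by a generic choice of $\lambda$, the possibility that $P_{\lambda,y}$ lies in the prime ideal $J_i:=\Jj^\C_{Z_i,y}$ for some $i$.

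For each fixed $i$, the hypothesis that $Z_{i,y}\not\subset\widetilde{Y}_y=\ZZ(P_{1,y},\ldots,P_{m,y})$ gives an index $k_i$ with $P_{k_i,y}\notin J_i$; primeness of $J_i$ then yields $P_{k_i,y}^2\notin J_i$. I consider the $\R$-linear map
\[
\Lambda_i\colon\R^m\longrightarrow\an_{\C^n,y}/J_i,\qquad \lambda\mapsto\sum_{k=1}^m\lambda_k[P_k^2],
\]
whose value on $e_{k_i}$ is nonzero. Hence $S_i:=\ker\Lambda_i$ is a proper $\R$-linear subspace of $\R^m$. Setting $S:=S_1\cup\cdots\cup S_r$ gives a finite union of proper linear subspaces, which is a proper algebraic subset of $\R^m$; and for $\lambda\in\R^m\setminus S$ one has $P_{\lambda,y}\notin J_i$ for every $i$, which is exactly the componentwise non-vanishing needed above.

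The density assertion is then automatic: $\Qq_m$ is an open subset of $\R^m$, while $S$ is a finite union of proper linear subspaces, hence closed and nowhere dense, so $\Qq_m\setminus S$ is dense (and nonempty) in $\Qq_m$.

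I do not expect a serious obstacle here, since the ingredients (primeness of $J_i$, irreducibility of $Z_{i,y}$, that a proper analytic subset of an irreducible germ has strictly smaller dimension) are standard; the only point to be careful about is the passage from componentwise strict drop in dimension to a strict drop for $Z_y$. That step works because there are only finitely many components and dimensions are integers, so a strict inequality survives taking a maximum. If anything, the mildly delicate point is recording that $[P_k^2]\ne 0$ in $\an_{\C^n,y}/J_i$ follows from $[P_k]\ne 0$, which uses nothing beyond primeness; everything else is linear algebra.
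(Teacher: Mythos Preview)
Your proof is correct and follows the same componentwise skeleton as the paper's, but the mechanism you use to exclude $P_{\lambda,y}\in J_i$ is different and somewhat more direct. The paper invokes the curve selection lemma: for each $i$ it picks an analytic arc $\alpha_i$ in $Z_i$ with $\alpha_i(0)=y$ and $\alpha_i((0,1))\subset Z_i\setminus\widetilde Y$, then looks at the leading coefficients $w_i=(w_{i1},\ldots,w_{im})\in\C^m\setminus\{0\}$ of $(P_1^2\circ\alpha_i,\ldots,P_m^2\circ\alpha_i)$ and sets $S_i=\{\lambda\in\R^m:\sum_k\lambda_k w_{ik}=0\}$. This produces an \emph{explicit} real linear subspace of codimension $\le 2$, and for $\lambda\notin S_i$ one gets $P_\lambda\circ\alpha_i\ne 0$, hence $Z_{i,y}\not\subset\ZZ(P_{\lambda,y})$.

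Your route bypasses curves entirely: you observe that $\lambda\mapsto[P_{\lambda,y}]\in\an_{\C^n,y}/J_i$ is $\R$-linear and nonzero (because some $[P_{k_i}^2]\ne 0$ by primeness), so its kernel $S_i$ is a proper linear subspace. This is cleaner and uses strictly less machinery; the paper's version has the minor advantage of yielding an explicit defining equation for each $S_i$ (which is occasionally convenient when one later combines this lemma with Baire-type arguments, as the paper does). Either way the union $S=\bigcup_i S_i$ is a finite union of proper linear subspaces, hence proper algebraic, and the componentwise strict drop $\dim(Z_{i,y}\cap\ZZ(P_{\lambda,y}))\le\dim(Z_{i,y})-1\le\dim(Z_y)-1$ gives the global inequality exactly as you say.
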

\begin{proof}
Let $W\subset\C^n$ be an open neighborhood of $y$ and let $Z_1,\ldots,Z_r$ be irreducible complex analytic subsets of $W$ such that the irreducible components of $Z_y$ are $Z_{1,y},\ldots,Z_{r,y}$. By the curve selection lemma \cite[Rem.VII.4.3(b)]{abr} (using the underlying real structures of all the involved complex analytic sets) there exists analytic curves $\alpha_i:(-1,1)\to Z_i$ such that $\alpha_i(0)=y$ and $\alpha_i((0,1))\subset Z_i\setminus\widetilde{Y}$ for $i=1,\ldots,r$. As $\widetilde{Y}=\ZZ(P_1,\ldots,P_m)$ there exists $q_i\geq1$ such that $(P_1^2\circ\alpha_i,\ldots,P_m^2\circ\alpha_i)\in((\t^{q_i})\C[[\t]])^m\setminus((\t^{q_i+1})\C[[\t]])^m$, so $\zeta_i:=(\frac{(P_1^2\circ\alpha_i)}{\t^{q_i}},\ldots,\frac{(P_m^2\circ\alpha_i)}{\t^{q_i}})\in\C[[\t]]^m\setminus((\t)\C[[\t]]^m)$ for $i=1,\ldots,r$. Thus, the vectors $w_i:=(w_{i1},\ldots,w_{im})=\zeta_i(0)$ are not identically zero for $i=1,\ldots,r$. Consider $S:=\bigcup_{i=1}^r\{\lambda\in\R^m:\ \lambda_1w_{i1}+\cdots+\lambda_mw_{im}=0\}$, which is a proper algebraic subset of $\R^n$, so $\Qq_m\setminus S$ is a dense open subset of $\Qq_m$. If $\lambda\in\Qq_m\setminus S$, then $\prod_{i=1}^rP_{\lambda}\circ\alpha_i\neq0$. Consequently, $\dim(Z_i\cap \ZZ(P_{\lambda}))<\dim(Z_i)$ for each $i\in\{1,\ldots,r\}$, so $\dim(Z_y\cap \ZZ(P_{\lambda,y}))<\dim(Z_y)$, as required.
\end{proof}

\begin{examples}
(i) An example of application of the previous results is $Y:=\{q:=(q_1,\ldots,q_n)\}$, $\widetilde{Y}:=\{q\}$ and $P_i:=\x_i-q_i$ for $i=1,\ldots,n$.

(ii) Another example of application of the previous result (in the analytic case) is a discrete subset $Y:=\{y_k\}_{k\geq1}$ of $T(X)$. In this case $\widetilde{Y}=Y$. In order to provide finitely many equations to describe $\widetilde{Y}$ we consideran analytic diffeomorphism $\varphi:\R^n\to\R^n$ that transforms $Y$ onto the set $\Z\times\{(0,\overset{(n-1)}{\ldots},0)\}$, see \cite[Cor.3]{bks}. Take $P_1(x):=\sin(2\pi x_1)$, $P_2(x)=x_2$, \ldots, $P_n(x)=x_n$.\hfill$\sqbullet$
\end{examples}

\subsection{Winning equations around a \texorpdfstring{$C$}{C}-analytic subset (almost numerators)}\label{sh}
Let $X\subset\R^n$ be a $C$-analytic set and let $Y\subset X$ be a $C$-analytic subset. 

\subsubsection{}\label{sh1}
We have seen in Lemma \ref{h} that there exists an analytic function $h\in\an(\R^n\setminus N(X))$ (resp. Nash function $h\in\Nn(\R^n\setminus N(X))$) such that $\ZZ(h)=X\setminus N(X)$ and $h_x\in\Jj_{X,x}\setminus\Ii_{X,x}$ (resp. $h_x\in\Jj_{X,x}^\bullet\setminus\Ii_{X,x}^\bullet$) for each $x\in T(X)\setminus N(X)$. Thus, if $Y\subset X\setminus N(X)$, then $U_0:=\R^n\setminus N(X)$ is an open neighborhood (resp. open semialgebraic neighborhood) of $Y$ in $\R^n$ and $h\in\an(U_0)$ (resp. $h\in\Nn(U_0)$) satisfies $\ZZ(h)=X\cap U_0$ and $h_y\in\Jj_{X,y}\setminus\Ii_{X,y}$ (resp. $h_y\in\Jj_{X,y}^\bullet\setminus\Ii_{X,y}^\bullet$) for each $y\in Y\cap T(X)$. If $X$ is non-coherent, then $T(X)\setminus N(X)\neq\varnothing$ (Theorem \ref{xy} and Corollary \ref{txnx}).

\subsubsection{}
If $Y:=\{y_k\}_{k\geq1}$ is a discrete set, for each $y_k\in Y\cap T(X)$ there exists $h_{k,y_k}\in\Jj_{X,y_k}\setminus\Ii_{X,y_k}$ such that $\ZZ(h_{k,y_k})=X_{y_k}$ and $h'_{k,y_k}\in\Ii_{X,y_k}$ such that $\ZZ(h'_{k,y_k})=X_{y_k}$. Substituting $h_{k,y_k}$ by $h_{k,y_k}^2+h_{k,y_k}'^2$ we may assume $h_{k,y_k}\in\Jj_{X,y_k}\setminus\Ii_{X,y_k}$ and $\ZZ(h_{k,y_k})=X_{y_k}$ for each $y_k\in Y\cap T(X)$. For the points $y_k\in Y\setminus T(X)$ let $h_{k,y_k}\in\Ii_{X,y_k}=\Jj_{X,y_k}$ be such that $\ZZ(h_{k,y_k})=X_{y_k}$. Let $\{V_k\}_{k\geq1}$ be pairwise disjoint open neighborhoods in $\R^n$ of the point $y_k$ such that $h_{k,y_k}$ has an analytic representative $h_k$ in $V_k$ for each $k\geq1$. Define $U_0:=\bigsqcup_{k\geq1}V_k$ and 
$$
h:U_0\to\R,\ x\mapsto h_k(x)\ \text{if $x\in V_k$}.
$$
The function $h\in\an(U_0)$ satisfies $\ZZ(h)=X\cap U_0$ and $h_y\in\Jj_{X,x}\setminus\Ii_{X,x}$ for each $x\in Y\cap T(X)$. In this case we have no restriction with respect to the set $N(X)$. The previous construction holds analogously in the Nash case if we consider a finite set instead of a discrete set.

\subsection{Proof of Theorems \ref{main1} and \ref{main2}}
We recall first the $1$-cocycle and $1$-coboundary relations for a sheaf on $\R^n$ corresponding to an open covering of $\R^n$ constituted only by two open sets. 

\begin{remark}\label{2os}
Let ${\mathfrak U}:=\{U_0,U_1\}$ be an open covering of $\R^n$. Let $\Ff$ be a sheaf on $\R^n$. Pick $i,j,k\in\{0,1\}$ and sections $f_{ij}\in\Ff(U_i\cap U_j),f_{ik}\in\Ff(U_i\cap U_k),f_{jk}\in\Ff(U_j\cap U_k)$. All possible intersections of three elements of ${\mathfrak U}$ and the corresponding $1$-cocycle relations are the following:
\begin{align*}
&U_0\cap U_0\cap U_0\ \leadsto\ 0=f_{00}-f_{00}+f_{00}=f_{00},\\
&U_0\cap U_0\cap U_1\ \leadsto\ 0=f_{01}-f_{01}+f_{00}=f_{00},\\
&U_0\cap U_1\cap U_0\ \leadsto\ 0=f_{10}-f_{00}+f_{01}=f_{10}+f_{01},\\
&U_0\cap U_1\cap U_1\ \leadsto\ 0=f_{11}-f_{01}+f_{01}=f_{11},\\
&U_1\cap U_0\cap U_0\ \leadsto\ 0=f_{00}-f_{10}+f_{10}=f_{00},\\
&U_1\cap U_0\cap U_1\ \leadsto\ 0=f_{01}-f_{11}+f_{10}=f_{10}+f_{01},\\
&U_1\cap U_1\cap U_0\ \leadsto\ 0=f_{10}-f_{10}+f_{11}=f_{11},\\
&U_1\cap U_1\cap U_1\ \leadsto\ 0=f_{11}-f_{11}+f_{11}=f_{11}.
\end{align*}
Thus, we obtain the following relations $f_{00}=0$ on $U_0$, $f_{11}=0$ on $U_1$ and $f_{10}=-f_{01}$ on $U_0\cap U_1$. Given $f_0\in\Ff(U_0)$ and $f_1\in\Ff(U_1)$, the $1$-coboundary relations are the following:
\begin{align*}
&U_0\cap U_0\ \leadsto\ f_{00}=f_0-f_0=0,\\
&U_0\cap U_1\ \leadsto\ f_{01}=f_1-f_0,\\
&U_1\cap U_0\ \leadsto\ f_{10}=f_0-f_1,\\
&U_1\cap U_1\ \leadsto\ f_{11}=f_1-f_1=0.
\end{align*}
In this situation a $1$-cocycle 
$$
(f_{00},f_{01},f_{10},f_{11})=(0,f_{01},-f_{01},0)\in\Ff(U_0\cap U_0)\times\Ff(U_0\cap U_1)\times\Ff(U_1\cap U_0)\times\Ff(U_1\cap U_1)
$$ 
is a $1$-coboundary if and only if there exists $f_i\in\Ff(U_i)$ for $i\in\{0,1\}$ such that $f_{01}=f_1-f_0$.$\hfill\sqbullet$
\end{remark}

As we have pointed out in the Introduction, cohomology will not help in the Nash setting, because even $H^1(\R,\Nn_\R)\neq0$, see \cite{hb}. Thus, we will use cohomology to prove Theorem \ref{main1} and find the mentioned meromorphic function that is local analytic on the $C$-analytic set $X\subset\R^n$, but does not admit an analytic extension to $\R^n$. Then we will adapt the last part of the latter construction to prove Theorem \ref{main2} finding a meromorphic Nash function on $\R^n$ that is local Nash on the Nash set $X\subset\R^n$, but does not admit an analytic extension to $\R^n$, skipping however the cohomological procedure. We are ready to prove the main results of this article.

We will prove simultaneously stronger results than Theorems \ref{main1} and \ref{main2} and we provide next the precise statements. When reading the following results the reader should have in mind Theorem \ref{xy}, Theorem \ref{h} and \S\ref{sh}, because they guarantee the existence of the $C$-analytic sets (resp. Nash sets) $Y$ in the statementss. 

\begin{thm}\label{main11}
Let $X\subset\R^n$ be a $C$-analytic set with $N(X)\neq\varnothing$. Let $Y\subset X$ be a $C$-analytic subset that contains no $C$-irreducible component of $X$ and meets $T(X)$, let $U_0\subset\R^n$ be an open neighborhood of $Y$ and let $h\in H^0(U_0,\Jj_X)$ be such that $h_y\in\Jj_{X,y}\setminus\Ii_{X,y}$ for each $y\in Y\cap T(X)$. Then there exist a meromorphic function $\zeta:\R^n\dashrightarrow\R$ with set of poles $Y$ such that $\zeta|_X$ is an analytic function on $X$ and ${\tt O}(\zeta)=Y\cap T(X)$.
\end{thm}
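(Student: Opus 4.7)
The strategy is to build $\zeta$ by globally lifting, via Cartan's Theorem~B on the two-set cover $\{U_0,\R^n\setminus Y\}$, the local meromorphic datum $h/P$ on $U_0$, where $P\in\an(\R^n)$ is a carefully chosen ``winning'' analytic equation of $Y$. The obstruction set will be pinned down by a Baire-type order argument on a countable dense subset of $Y\cap T(X)$.

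\emph{Step 1 (Winning denominator).} Following \S\ref{wf}, fix an invariant Stein open $\Omega\subset\C^n$ containing invariant complexifications $\widetilde Y\subset\widetilde X$ and invariant generators $P_1,\ldots,P_m\in\an(\R^n)$ of $\Ii(Y)\an(\Omega)$, so that $P_\lambda:=\sum_i\lambda_i P_i^2$ is an invariant analytic equation of $Y$ with $\ZZ(P_\lambda)=Y$ for every $\lambda\in\Qq_m$. By Remark~\ref{dense} choose a countable dense subset $D=\{y_k\}_{k\ge1}\subset Y\cap T(X)$. The hypothesis $h_{y_k}\in\Jj_{X,y_k}\setminus\Ii_{X,y_k}$ means that the complexification $H$ of $h$ does not vanish on some irreducible component $T_k$ of $\widetilde X_{y_k}$; the curve selection lemma then provides an analytic arc $\gamma_k\colon(-1,1)\to\widetilde X$ with $\gamma_k(0)=y_k$, $\gamma_k((0,1))\subset T_k\setminus(\widetilde Y\cup\ZZ(H))$, and finite order $m_k:=\omega(H\circ\gamma_k)$. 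A Baire-type inductive construction in the spirit of the proof of Theorem~\ref{h}, combining Lemma~\ref{trivial} (to enforce $P_\lambda\circ\gamma_k\not\equiv0$ for generic $\lambda$), Lemma~\ref{series} (to upgrade $P$ to a convergent series of squares of suitable powers of the $P_i$'s) and the successive choice of sufficiently large exponents, then yields $P\in\an(\R^n)$ with $\ZZ(P)=Y$ and $\omega(P\circ\gamma_k)>m_k$ for every $k\ge1$.

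\emph{Step 2 (Globalization via Cartan~B).} Set $U_1:=\R^n\setminus Y$, so $\{U_0,U_1\}$ covers $\R^n$ and $h/P\in\an(U_0\cap U_1)$. By Cartan's Theorem~B the sheaf $\an_{\R^n}$ is acyclic on every open subset of $\R^n$, so the cover is Leray for $\an_{\R^n}$ and $H^1(\R^n,\an_{\R^n})=0$ forces the $1$-cocycle $h/P$ to be a coboundary: there exist $\phi_0\in\an(U_0)$ and $\phi_1\in\an(U_1)$ with $\phi_1-\phi_0=h/P$ on $U_0\cap U_1$. Setting $\zeta:=h/P+\phi_0$ on $U_0$ and $\zeta:=\phi_1$ on $U_1$ defines a meromorphic function $\zeta\in\Mm(\R^n)$ with pole set contained in $\ZZ(P)=Y$; it admits the global representation $\zeta=F/P$ where $F:=h+P\phi_0\in\an(U_0)$ and $F=P\phi_1\in\an(U_1)$ glue to $F\in\an(\R^n)$.

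\emph{Step 3 (Verification).} Since $h\in\Jj_X|_{U_0}$, on $X\cap U_0\setminus Y$ one has $h/P\equiv0$, so $\zeta|_{X\cap U_0}$ extends analytically as $\phi_0|_X$; this matches $\zeta|_{X\cap U_1}=\phi_1|_X$ on $X\cap U_0\cap U_1$ by the cocycle identity, making $\zeta|_X$ analytic on $X$. From $F_x-h_x=P_x\phi_{0,x}\in P_x\an_{\R^n,x}$ one obtains the equivalence
\[
F_x\in P_x\an_{\R^n,x}+\Ii_{X,x}\ \Longleftrightarrow\ h_x\in P_x\an_{\R^n,x}+\Ii_{X,x}.
\]
Points outside $Y$ are not in ${\tt O}(\zeta)$ by analyticity, and at $y\in Y\setminus T(X)$ the equality $\Jj_{X,y}=\Ii_{X,y}$ together with $h_y\in\Jj_{X,y}$ yields $h_y\in\Ii_{X,y}\subset P_y\an_{\R^n,y}+\Ii_{X,y}$, giving ${\tt O}(\zeta)\subset Y\cap T(X)$. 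Conversely, an identity $h_{y_k}=P_{y_k}a+b$ with $b\in\Ii_{X,y_k}$ would, after complexification and composition with $\gamma_k$ (using $B\circ\gamma_k\equiv0$ since $\gamma_k\subset\widetilde X$), force $m_k=\omega(H\circ\gamma_k)\ge\omega(P\circ\gamma_k)>m_k$, a contradiction; hence $D\subset{\tt O}(\zeta)$. As ${\tt O}(\zeta)$ is closed in $X$ (it is the support of the image of $F$ in the coherent quotient $\an_{\R^n}/(P\an_{\R^n}+\Ii_X)$) and $D$ is dense in $Y\cap T(X)$, the equality ${\tt O}(\zeta)=Y\cap T(X)$ follows.

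\emph{Main obstacle.} The principal difficulty is the uniform construction of $P$ in Step~1: producing a single global analytic equation of $Y$ whose pullback under each of the countably many arcs $\gamma_k$ has order strictly greater than the prescribed $m_k$, while the $m_k$'s may grow without bound. This is the precise ``ratio'' analogue of the convergence-and-order construction at the core of Theorem~\ref{h}, and it requires a delicate inductive choice of coefficients (to ensure convergence through Lemma~\ref{series}) and of both exponents and directions (to beat each $m_k$ through Lemma~\ref{trivial} and a Baire argument avoiding countably many proper algebraic hypersurfaces in $\R^m$) so that the resulting analytic series realises the required lower bound on $\omega(P\circ\gamma_k)$ simultaneously for all $k\ge1$.
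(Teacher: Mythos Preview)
Your global architecture (the two–set cover $\{U_0,\R^n\setminus Y\}$, the candidate cocycle $h/P$, the use of Cartan~B on $\an_{\R^n}$ to split it, and the density argument for the obstruction set) coincides with the paper's. The gap is precisely where you locate the ``main obstacle'': Step~1 is not carried out, and the sketched route cannot work as written. A series $P=\sum_{k}\mu_k^{2p_k}A_k^{2p_k}$ in the style of Lemma~\ref{series} and Theorem~\ref{h} controls orders from \emph{below} in the sense that along any $\gamma_j$ one has $\omega(P\circ\gamma_j)\ge\min_k\,2p_k\,\omega(A_k\circ\gamma_j)$, and to force $\omega(P\circ\gamma_j)>m_j$ you would need \emph{every} term---in particular the first one---to have order $>m_j$. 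Once $A_1$ and $p_1$ are fixed, the quantity $2p_1\,\omega(A_1\circ\gamma_j)$ is determined, so if the $m_j$ (which you have no a~priori bound on) are unbounded the requirement fails for large $j$. Your appeal to Lemma~\ref{trivial} goes in the wrong direction: it ensures $P_\lambda\circ\gamma_k\not\equiv0$, i.e.\ \emph{finite} order, not large order. This is opposite to what Theorem~\ref{h} does, where the goal is $H\circ\gamma_k\ne0$ (finite order), achieved by making later terms have \emph{higher} order so they do not kill the leading one.

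The paper avoids this difficulty by a change of viewpoint: take $P:=P_{\lambda_0}$ for a \emph{single} generic $\lambda_0\in\Qq_m$, and instead of asking $\omega(P\circ\gamma_k)>m_k$, choose the witnessing curves \emph{inside} $\ZZ(P_{\lambda_0})$. Concretely, at each $y\in Y\cap T(X)$ one uses that every irreducible component $Z$ of $\widetilde X_y$ with $Z\not\subset\ZZ(H_y)$ has $\dim_\C Z\ge 2$ (because one–dimensional $C$-analytic germs are coherent, so they never contribute to $T(X)$). Hence $\dim_\C(Z\cap\ZZ(P_{\lambda_0}))\ge 1$ and, by a Baire argument via Lemma~\ref{trivial} applied to the countable family $\{Z_{j,y_j}\cap\ZZ(H)\}_j$, one can pick $\lambda_0$ so that $Z\cap\ZZ(P_{\lambda_0})\not\subset\ZZ(H)$ simultaneously for all $y$ in a dense countable subset, and then for all $y\in Y\cap T(X)$ by density. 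The curve selection lemma now produces $\beta^y$ with image in $(\widetilde X\cap\ZZ(P_{\lambda_0}))\setminus\ZZ(H)$, so that $P_{\lambda_0}\circ\beta^y\equiv 0$ while $H\circ\beta^y\not\equiv 0$. Feeding these curves into your Step~3 computation (with $P=P_{\lambda_0}$) gives the contradiction $0=H\circ\beta^y\ne 0$ directly, with no order comparison needed. In short: keep your Steps~2--3, but replace your Step~1 by the dimension argument $\dim_\C Z\ge 2$ and the choice of curves inside the complex zero locus of the denominator.
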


\begin{thm}\label{main21}
Let $X\subset\R^n$ be a Nash set with $N(X)\neq\varnothing$. Let $Y\subset X$ be a Nash subset that contains no irreducible component of $X$ and meets $T(X)$, let $U_0\subset\R^n$ be an open semialgebraic neighborhood of $Y$ and let $h\in H^0(U_0,\Jj_X^\bullet)$ be such that $h_y\in\Jj_{X,y}^\bullet\setminus\Ii_{X,y}^\bullet$ for each $y\in Y\cap T(X)$. Then there exist a meromorphic Nash function $\zeta:\R^n\dashrightarrow\R$ with set of poles $Y$ such that $\zeta|_X$ is a local Nash function on $X$ and ${\tt O}(\zeta)=Y\cap T(X)$.
\end{thm}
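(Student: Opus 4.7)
The plan is to mirror the proof of Theorem \ref{main11} but replace the single cohomological gluing step (Cartan's Theorem B applied to $\Ii_X$) by an explicit semialgebraic construction, since for non-coherent $X$ the sheaf $\Jj_X^\bullet$ is not of finite type and the relevant $H^1$ of Nash sheaves need not vanish (Hubbard's example \cite{hb}). First, using the Noetherianity of $\Nn(\R^n)$, choose finitely many Nash generators $p_1,\ldots,p_r\in\Nn(\R^n)$ of $\Ii^\bullet(Y)$ and form $P_\lambda := \lambda_1 p_1^2+\cdots+\lambda_r p_r^2\in\Nn(\R^n)$ as in \S\ref{wf}. Applying Lemma \ref{trivial} along a countable dense subset of $Y\cap T(X)$ (which exists by Remark \ref{dense}) and a Baire category argument, a generic $\lambda\in\Qq_r$ yields $P := P_\lambda$ with $P\geq 0$, $\ZZ(P)=Y$, and such that no irreducible component $Z_y$ of $\widetilde X_y$ lying outside $\widetilde Y_y$ is contained in $\ZZ(P_y)$ for $y$ in this dense subset; the hypothesis that $Y$ contains no Nash-irreducible component of $X$ is precisely what makes such $Z_y$ available. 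Set $\xi_0 := h/P\in\Mm^\bullet(U_0)$, a meromorphic Nash function with pole set exactly $Y$ and $\xi_0|_{X\cap(U_0\setminus Y)}\equiv 0$.

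The heart of the argument is the assembly of $\xi_0$ into a global element of $\Mm^\bullet(\R^n)$. In the analytic case (Theorem \ref{main11}) one regards $\xi_0$ on $U_0\cap(\R^n\setminus Y)$ as a \v{C}ech $1$-cocycle for the cover $\{U_0,\R^n\setminus Y\}$ with values in the coherent sheaf $\Ii_X$, and Cartan's Theorem B writes it as a coboundary $\xi_0 = a_1 - a_0$, from which $\zeta := \xi_0 + a_0 = a_1\in\Mm(\R^n)$. This route is unavailable in the Nash setting, so we proceed in two sub-steps. On the open semialgebraic set $V := \R^n\setminus N(X)$, the Nash subset $X\cap V$ is coherent (Lemma \ref{ijn}, Remark \ref{xnx}), the restriction $h|_{U_0\cap V}$ lies in the finite sheaf $\Ii^\bullet_{X\cap V}$ of $\Nn_V$-ideals, and the Cartan-B-type vanishing for finite sheaves in Theorem \ref{factfinitesheaf} (together with the Coste-Ruiz-Shiota results \cite{cs,crs2}) produces Nash sections $a_0\in H^0(U_0\cap V,\Ii^\bullet_{X\cap V})$ and $a_1\in H^0(V\setminus Y,\Ii^\bullet_{X\cap V})$ with $\xi_0 = a_1 - a_0$ on $U_0\cap V\setminus Y$; gluing yields $\zeta' = f'/P\in\Mm^\bullet(V)$ with $f'\in\Nn(V)$ and $f'\equiv h\pmod{\Ii^\bullet_{X\cap V}}$ on $U_0\cap V$.

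The second sub-step extends $f'\in\Nn(V)$ to a global Nash function $f\in\Nn(\R^n)$ preserving the local compatibility with $h$ and with $\Ii^\bullet_X$. This is the technical crux of the Nash argument, exploiting that (i) $N(X)$ is semialgebraic of codimension at least two in $X$ (\S\ref{dspnc}, Theorem \ref{ncp0}), (ii) the denominator $P$ is already globally Nash and the problem is purely about the numerator, and (iii) $\Ii^\bullet_X$ is itself a finite sheaf of $\Nn_{\R^n}$-ideals on all of $\R^n$ (Corollary \ref{bfbc}), to which Theorem \ref{factfinitesheaf} again applies. Covering a semialgebraic neighborhood of $N(X)$ by finitely many open semialgebraic charts, one patches $f'$ with locally-defined Nash corrections that lie in $\Ii^\bullet_X$ (and so leave the class of $\zeta|_X$ unchanged) to produce a global $f\in\Nn(\R^n)$. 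Set $\zeta := f/P\in\Mm^\bullet(\R^n)$; by construction, its pole set is exactly $Y$, and $\zeta|_X$ is local Nash on $X$.

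It remains to verify ${\tt O}(\zeta) = Y\cap T(X)$. For $x\in X\setminus Y$, $P_x$ is a unit so $x\notin{\tt O}(\zeta)$; for $y\in Y\setminus T(X)$, the coherence identity $\Jj^\bullet_{X,y} = \Ii^\bullet_{X,y}$ forces $h_y\in\Ii^\bullet_{X,y}$, hence $y\notin{\tt O}(\zeta)$. The essential case is $y\in Y\cap T(X)$: assume for contradiction $f_y\in P_y\Nn_{\R^n,y}+\Ii^\bullet_{X,y}$; since by construction $f_y\equiv h_y\pmod{\Ii^\bullet_{X,y}}$, this gives $h_y\in P_y\Nn_{\R^n,y}+\Ii^\bullet_{X,y}$. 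Complexify via the faithful flatness $\Nn_{\R^n,y}\hookrightarrow\an_{\R^n,y}$ (Corollary \ref{ffna}) to write $H_y = P_yA_y + B_y$ with $B_y\in\Jj^\C_{\widetilde X,y}$, and choose an irreducible component $Z$ of $\widetilde X_y$ with $H_y|_Z\not\equiv 0$ (which exists because $h_y\notin\Ii^\bullet_{X,y}$, cf.~Lemmas \ref{tx0} and \ref{dimm}); the hypothesis on $Y$ forces $Z\not\subset\widetilde Y_y$. On $Z$, $H_y|_Z = P_y|_Z\cdot A_y|_Z$ with $P_y|_Z\not\equiv 0$ by the generic choice of $P$; picking a holomorphic curve $\gamma:(-1,1)\to Z$ with $\gamma(0)=y$ as in the proof of Theorem \ref{h} and comparing orders of vanishing of $H_y\circ\gamma$, $P_y\circ\gamma$, and $A_y\circ\gamma$ yields the contradiction. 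The main obstacle is the second sub-step of the global assembly: extending $f'$ across the semialgebraic non-coherence locus $N(X)$ into a single element of $\Nn(\R^n)$ without disrupting the local Nash structure of $\zeta|_X$, since the clean cohomological gluing of the analytic case is unavailable and must be replaced by an explicit finitary construction that leans on the geometric description of $N(X)$ in \S\ref{dspnc} and the finiteness theorems for Nash sheaves.
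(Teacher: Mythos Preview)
Your proposal contains a genuine gap and an unnecessary detour, both stemming from a misidentification of the sheaf in which the cocycle must be split.

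You try to split the $1$-cocycle $h/P$ with values in $\Ii^\bullet_{X\cap V}$ on $V=\R^n\setminus N(X)$, obtaining $a_0\in H^0(U_0\cap V,\Ii^\bullet_{X\cap V})$ and $a_1\in H^0(V\setminus Y,\Ii^\bullet_{X\cap V})$. But the sections $a_0,a_1$ needed to assemble $\zeta$ are \emph{not} sections of $\Ii^\bullet_X$ at all; only their \emph{difference} $a_1-a_0=h/P$ lies in $\Jj^\bullet_X$. The correct target sheaf is simply $\Nn_{\R^n}$. More importantly, your ``second sub-step'' (extending $f'\in\Nn(V)$ across $N(X)$ while preserving $f\equiv h\pmod{\Ii^\bullet_X}$) is exactly the kind of global gluing that fails in the Nash category; you identify it as ``the main obstacle'' but do not carry it out, and Theorem~\ref{factfinitesheaf} applied to $\Ii^\bullet_X$ does not resolve it, since that theorem gives lifting of global sections of $\Nn_{\R^n}/\Ff$, not \v{C}ech $H^1$-vanishing for $\Ff$ over an arbitrary two-set cover.

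The paper sidesteps this entirely with a single observation: the \emph{principal} sheaf $\Ff:=P_{\lambda_0}\Nn_{\R^n}$ is finite on all of $\R^n$, and its support is $Y\subset U_0$. Hence $h$ (defined only on $U_0$) already determines a \emph{global} section of $\Nn_{\R^n}/\Ff$, and Theorem~\ref{factfinitesheaf}(B) lifts it directly to $h_{\lambda_0}\in\Nn(\R^n)$ with $h_{\lambda_0}-h\in P_{\lambda_0}\Nn_{\R^n}$ on a neighborhood of $Y$. Then $\zeta:=h_{\lambda_0}/P_{\lambda_0}\in\Mm^\bullet(\R^n)$ in one stroke, with no extension across $N(X)$ required. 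This is strictly parallel to the analytic case; the only change is that Cartan's Theorem~B for $P_{\lambda_0}\an_{\R^n}$ is replaced by Theorem~\ref{factfinitesheaf}(B) for $P_{\lambda_0}\Nn_{\R^n}$.

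A secondary imprecision: in your obstruction step, the curve $\gamma$ must land in $(\widetilde X\cap\ZZ(P_{\lambda_0}))\setminus\ZZ(H)$, not merely in an irreducible component $Z$ of $\widetilde X_y$. The point is that on such a curve $P_{\lambda_0}\circ\gamma=0$ while $H\circ\gamma\neq 0$; the relation $H_{\lambda_0}=H+BP_{\lambda_0}$ then forces $H_{\lambda_0}\circ\gamma=H\circ\gamma\neq 0$, contradicting $H_{\lambda_0}\in P_{\lambda_0}\an_{\C^n,y}+\Jj^\C_{\widetilde X,y}$. ``Comparing orders of vanishing'' along a generic curve in $Z$ does not yield this; the non-emptiness of $(\widetilde X_y\cap\ZZ(P_{\lambda_0,y}))\setminus\ZZ(H_y)$ is exactly what the choice of $\lambda_0$ via Lemma~\ref{trivial} and Baire guarantees.
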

\begin{remark}\label{many}
As there are `many' different possible intersections $Y\cap T(X)$ for $C$-analytic subsets $Y$ of $X$, we deduce by Theorem \ref{main11} (resp. Theorem \ref{main21}) that there exists `many' different meromorphic functions (resp. Nash meromorphic functions) on a non-coherent $C$-analytic set (resp. Nash set) $X$ that are analytic (resp. local Nash) on $X$, but have no analytic extensions (resp. Nash extensions) to $\R^n$, see the statements of Theorems \ref{main1} and \ref{main2}. This also provides enlightening information in the analytic case about the first cohomology group $H^1(X,\Jj_X)$.
\hfill$\sqbullet$
\end{remark}

\begin{proof}[Proofs of Theorems {\em\ref{main11}} and {\em\ref{main21}}]
The proof of both results is conducted in several steps:

\noindent{\sc Step 1.} {\em Initial preparation.} Consider the exact sequence of sheaves
$$
0\to\Jj_X\to\an_{\R^n}\to\an_{\R^n}/\Jj_X\to0
$$
and the exact long corresponding sequence of global sections
\begin{multline*}
0\to H^0(\R^n,\Jj_X)\to H^0(\R^n,\an_{\R^n})\to H^0(\R^n,\an_{\R^n}/\Jj_X)\to H^1(\R^n,\Jj_X)\to H^1(\R^n,\an_{\R^n})\\
\to\cdots\to H^p(\R^n,\an_{\R^n})\to H^p(\R^n,\an_{\R^n}/\Jj_X)\to H^{p+1}(\R^n,\Jj_X)\to H^{p+1}(\R^n,\an_{\R^n}).
\end{multline*}
By Cartan's Theorem B, we have $H^p(\R^n,\an_{\R^n})=0$ for each $p\geq1$. Consequently, 
$$
H^p(\R^n,\an_{\R^n}/\Jj_X)\cong H^{p+1}(\R^n,\Jj_X)
$$ 
for $p\geq1$ and we have a (initial) exact sequence
\begin{equation}\label{delta}
0\to H^0(\R^n,\Jj_X)\to H^0(\R^n,\an_{\R^n})\to H^0(\R^n,\an_{\R^n}/\Jj_X)\overset{\delta}{\to} H^1(\R^n,\Jj_X)\to0.
\end{equation}
The homomorphism $H^0(\R^n,\an_{\R^n})\to H^0(\R^n,\an_{\R^n}/\Jj_X)$ is surjective if and only if $H^1(\R^n,\Jj_X)=0$. If $X$ is coherent, the sheaf of $\an_{\R^n}$-ideals $\Jj_X$ is coherent and by Cartan's Theorem B we have $H^1(\R^n,\Jj_X)=0$, so we will prove: \em if $X$ is non-coherent, then $H^1(\R^n,\Jj_X)\neq0$\em. As $H^1(\R^n,\Jj_X)=\displaystyle\lim_{\longrightarrow}H^1({\mathfrak V},\Jj_X)$, where ${\mathfrak V}$ runs over the open coverings of $\R^n$ and $H^1({\mathfrak V},\Jj_X)$ is the \u{C}ech cohomology group of $\Jj_X$ associated to ${\mathfrak V}$, it suffices to show that $H^1({\mathfrak U},\Jj_X)\neq0$ for coverings ${\mathfrak U}$ constituted by two open sets (see Remark \ref{2os}). 

\noindent{\sc Step 2.} {\em Construction of a $1$-cocycle that it not a $1$-coboundary.} Let $U_0$ be the open set provided in the statement, define $U_1:=\R^n\setminus Y$ and observe that ${\mathfrak U}:=\{U_0,U_1\}$ is an open covering of $\R^n$. We are going to construct a $1$-cocycle $f_{01}\in H^0(U_0\cap U_1,\Jj_X)$ such that for each pair $(f_0,f_1)\in H^0(U_0,\Jj_X)\times H^0(U_1,\Jj_X)$ it holds $f_{01}\neq f_1-f_0$. 

Let $\widetilde{Y}$ be a complexification of $Y$ that is a closed subset of an invariant contractible open Stein neighborhood $\Omega\subset\C^n$ of $\R^n$. Let $P_1,\ldots,P_m\in\an(\Omega)$ be invariant complex analytic functions such that $\widetilde{Y}=\ZZ(P_1,\ldots,P_m)$, use \cite[Proof of Prop.15]{c2}. In the Nash setting we pick a finite system of generators $p_1,\ldots,p_m$ of the ideal $\Ii(Y)$, consider analytic extensions $P_1,\ldots,P_m$ of $p_1,\ldots,p_m$ to an open semialgebraic neighborhood $\Omega$ of $\R^n$ in $\C^n$ and define $\widetilde{Y}:=\ZZ(P_1,\ldots,P_m)$, which is a complexification of $Y$. As $\Nn_{\R^n,y}\hookrightarrow\an_{\R^n,y}$ is by Corollary \ref{ffna} faithfully flat for each $y\in\R^n$, in the Nash case we have $h_y\in\Jj_{X,y}\setminus\Ii_{X,y}=(\Jj_{X,y}^\bullet\an_{\R^n,x})\setminus(\Ii_{X,y}^\bullet\an_{\R^n,x})$ for each $y\in Y\cap T(X)$ as well. For each $\lambda\in\Qq_m:=\{\lambda_1>0,\ldots,\lambda_m>0\}$ define
$$
P_{\lambda}(\z):=\lambda_1P_1^2+\cdots+\lambda_mP_m^2\quad\text{and}\quad f_{01,\lambda}(\z):=\frac{h(\z)}{P_{\lambda}(\z)}\in H^0(U_0\cap U_1,\Jj_X),
$$
where $U_0\cap U_1=U_0\setminus Y$. Substituting $P_{\lambda}$ by a suitable divisor, we may assume $P_{\lambda}$ has no multiple factors (see \S\ref{wf}).

Pick $f_0\in H^0(U_0,\Jj_X)$ and $f_1\in H^0(U_1,\Jj_X)$ and assume that $f_{01,\lambda}=f_1-f_0$ on $U_0\cap U_1$. Thus, $h=f_1P_{\lambda}-f_0P_{\lambda}$ and define $g_{\lambda}:=f_1P_{\lambda}=h+f_0P_{\lambda}$. The latter function is analytic on $\R^n=U_0\cup U_1$, because $P_{\lambda}$ is analytic on $\R^n$, $f_1$ is analytic on $U_1=\R^n\setminus Y$ and both $h,f_0$ are analytic on $U_0$ (which contains $Y$). As $f_1|_{X\setminus Y}=0$, $h|_{X\cap U_0}=0$, $f_0|_{X\cap U_0}=0$, $P_{\lambda}|_{\R^n}\in\an(\R^n)$ and $Y\subset X\cap U_0$, we deduce that $g_{\lambda}|_X=0$. Let $G_{\lambda}$ be an invariant analytic extension of $g_{\lambda}$ to an invariant open neighborhood $\Omega'\subset\Omega$ of $\R^n$. As $g_{\lambda}|_X=0$, we may assume by Lemma \ref{fx} that $G_{\lambda}|_{\widetilde{X}\cap\Omega'}=0$. Let $V_0\subset\C^n$ be an invariant open neighborhood of $U_0$ such that $h,f_0$ extend to invariant holomorphic functions $H,F_0$ on $V_0$. 

Let $D:=\{y_j\}_{j\geq1}$ be a dense subset of $Y\cap T(X)$. For each $j\geq1$ we have $h_{y_j}\in\Jj_{X,y_j}\setminus\Ii_{X,y_j}$, so $H_{y_j}\not\in\Ii_{X,y_j}\otimes_\R\C=\Jj_{\widetilde{X},y_j}^\C$. Thus, for each $j\geq1$ there exists an irreducible component $Z_{j,y_j}$ of $\widetilde{X}_{y_j}$ such that $Z_{j,y_j}\not\subset\ZZ(H_{y_j})$. If $X_1$ is a $C$-irreducible component of $X$ of dimension $1$, then $\Jj_{X_1}=\Ii_{X_1}$, so $\dim_\C(Z_{j,y_j})\geq2$ for each $j\geq1$. By \cite[Cor.10.9]{ei} we deduce $\dim_\C(Z_{j,y_j}\cap\ZZ(H_{y_j}))=\dim_\C(Z_{j,y_j})-1\geq1$. By Lemma \ref{trivial}, \cite[Cor.10.9]{ei} and Baire's Theorem there exists $\lambda_0\in\Qq_n$ such that $\dim_\C(Z_{j,y_j}\cap\ZZ(H_{y_j})\cap\ZZ(P_{\lambda_0,y_j}))=\dim_\C(Z_{j,y_j})-2\geq0$ and $\dim_\C(Z_{j,y_j}\cap\ZZ(P_{\lambda_0,y_j}))=\dim_\C(Z_{j,y_j})-1\geq1$ for each $j\geq1$. Thus, $\dim_\C((\widetilde{X}_{y_j}\cap\ZZ(P_{\lambda_0,y_j}))\setminus\ZZ(H_{y_j}))\geq1$ for each $j\geq1$. The set of possible values of $\lambda_0$ is a countable intersection of open dense subsets of $\Qq_m$ (and it is itself a dense subset of $\Qq_m$), but we fix one $\lambda_0$ for the following.

Pick a point $y\in Y\cap T(X)$ and suppose $\widetilde{X}_y\cap\ZZ(P_{\lambda_0,y})\subset\ZZ(H_y)$. There exists an open neighborhood $W^y$ of $y$ such that $\widetilde{X}\cap\ZZ(P_{\lambda_0})\cap W^y\subset\ZZ(H)\cap W^y$. As $D$ is dense in $Y$, we find $y_j\in D\cap W^y$ such that $\widetilde{X}_{y_j}\cap\ZZ(P_{\lambda_0,y_j})\subset\ZZ(H_{y_j})$, whereas $\dim_\C((\widetilde{X}_{y_j}\cap\ZZ(H_{y_j}))\setminus\ZZ(H_{y_j}))\geq1$ for each $j\geq1$, which is a contradiction. Thus, $\dim_\C((\widetilde{X}_{y}\cap\ZZ(P_{\lambda_0,y}))\setminus\ZZ(H_{y}))\geq1$ for each $y\in Y\cap T(X)$.

Fix a point $y\in Y\cap T(X)$. By the curve selection lemma \cite[Rem.VII.4.3(b)]{abr} (using the underlying real structures of all the involved complex analytic sets) there exists an analytic curve $\beta^y:(-1,1)\to\widetilde{X}$ such that $\beta^y(0)=y$ and $\beta^y((0,1))\subset(\widetilde{X}\cap\ZZ(P_{\lambda_0}))\setminus\ZZ(H)$. We have $G_{\lambda_0}\circ\beta^y=0$, $P_{\lambda_0}\circ\beta^y=0$, whereas $H\circ\beta^y\neq0$. Consequently, 
$$
0=G_{\lambda_0}\circ\beta^y=H\circ\beta^y+(F_0\circ\beta^y)(P_{\lambda_0}\circ\beta^y)=H\circ\beta^y,
$$
which is a contradiction. Thus, $f_{01,\lambda_0}\in H^0(U_0\cap U_1,\Jj_X)$ provides a $1$-cocycle that it is not a $1$-coboundary and the obstruction to be a $1$-coboundary concentrates at the points of $Y\cap T(X)$.

\noindent{\sc Step 3.} {\em Construction of an analytic function (resp. semialgebraic local Nash function) on $X$ that is not the restriction of a analytic function (resp. Nash function) on $\R^n$.}
Let us use the previous $1$-cocycle to find an element $\xi\in H^0(\R^n,\an_{\R^n}/\Jj_X)$ that is not the image of an element of $H^0(\R^n,\an_{\R^n})$. This provides a local analytic function on $X$ that is not the restriction of a analytic function on $\R^n$. As in the Nash case the involved data is Nash, we obtain a semialgebraic local Nash function on $X$ that is not the restriction of a analytic function on $\R^n$, so it is not the restriction of a Nash function on $\R^n$.

Consider the diagram
{\scriptsize$$
\xymatrix{
0\ar[r]&H^0(U_0,\Jj_X)\times H^0(U_1,\Jj_X)\ar[r]\ar[d]&\an_{\R^n}(U_0)\times\an_{\R^n}(U_1)\ar[r]\ar[d]&H^0(U_0,\an_{\R^n}/\Jj_X)\times H^0(U_1,\an_{\R^n}/\Jj_X)\ar[r]\ar[d]&0\\
0\ar[r]&H^0(U_0\cap U_1,\Jj_X)\ar[r]&\an_{\R^n}(U_0\cap U_1)\ar[r]&H^0(U_0\cap U_1,\an_{\R^n}/\Jj_X)\ar[r]&0\\
&(f_0,f_1)\ar@{|->}[r]\ar@{|->}[d]&(f_0,f_1)\ar@{|->}[d]\\
&f_1-f_0\ar@{|->}[r]&f_1-f_0\\
&&(q_0,q_1)\ar@{|->}[r]\ar@{|->}[d]&([q_0],[q_1])\ar@{|->}[d]&\\
&&q_1-q_0\ar@{|->}[r]&[q_1-q_0]&
}
$$}
For $f_{01,\lambda_0}$ we have to find $(q_0,q_1)\in\an_{\R^n}(U_0)\times\an_{\R^n}(U_1)$ such that 
$$
\frac{h}{P_{\lambda_0}}=f_{01,\lambda_0}=q_1-q_0=\delta((q_0,q_1))\ \text{ on $U_0\cap U_1$}.
$$
Thus, $h=q_1P_{\lambda_0}-q_0P_{\lambda_0}$ and $h_{\lambda_0}:=q_1P_{\lambda_0}=h+q_0P_{\lambda_0}$. The latter function is analytic on $\R^n$, because $P_{\lambda_0}$ is analytic in $\R^n$, $q_1$ is analytic on $U_1=\R^n\setminus Y$ and both $h,q_0$ are analytic on $U_0$ (which contains $Y$). Let us show how one constructs $h_{\lambda_0}\in\an(\R^n)$ such that $q_0:=\frac{h_{\lambda_0}-h}{P_{\lambda_0}}\in\an(U_0)$. We distinguish both cases:

\noindent{\sc Analytic Case.} We consider the coherent sheaf of $\an_{\R^n}$-ideals $\Ff:=P_{\lambda_0}\an_{\R^n}$ on $\R^n$ and the exact sequence of coherent $0\to\Ff\to\an_{\R^n}\to\an_{\R^n}/\Ff\to0$. As the support of the sheaf $\Ff$ is $Y$ and $Y\subset U_0$, the analytic function $h$ defines an element of $H^0(\R^n,\an_{\R^n}/\Ff)$. By Cartan's Theorem B the sequence
$$
0\to H^0(\R^n,\Ff)\to H^0(\R^n,\an_{\R^n})\to H^0(\R^n,\an_{\R^n}/\Ff)\to0
$$
is exact, so there exist an analytic function $h_{\lambda_0}\in\an(\R^n)$ such that $h_{\lambda_0}-h$ is a multiple of $P_{\lambda_0}$ in an open neighborhood of $Y$. 

\noindent{\sc Nash Case.} We consider the finite sheaf of $\Nn_{\R^n}$-ideals $\Ff:=P_{\lambda_0}\Nn_{\R^n}$. As the support of $\Ff$ is $Y$ and $Y\subset U_0$, the Nash function $h$ defines an element of $H^0(\R^n,\Nn_{\R^n}/\Ff)$. By Proposition \ref{factfinitesheaf}(B) there exist a Nash function $h_{\lambda_0}\in\Nn(\R^n)$ such that $h_{\lambda_0}-h$ is a multiple of $P_{\lambda_0}$ in an open (semialgebraic) neighborhood of $Y$. 

Thus, we have constructed $h_{\lambda_0}$ both in the analytic and in the Nash case. Observe in both cases that $q_0:=\frac{h_{\lambda_0}-h}{P_{\lambda_0}}\in\an(U_0)$, because $h_{\lambda_0}-h\in\an(U_0)$, $\ZZ(P_{\lambda_0})\cap\R^n=Y$ and $P_{\lambda_0}$ divides $h_{\lambda_0}-h$ in some neighborhood of $Y$.

As $Y\subset U_0$ and $\ZZ(h)=X\cap U_0$, we deduce $h_{\lambda_0}|_Y=0$. As $\ZZ(P_{\lambda_0})=Y$, we have
$$
\begin{cases}
q_0:=\frac{h_{\lambda_0}-h}{P_{\lambda_0}}\in\an(U_0),\\
q_1:=\frac{h_{\lambda_0}}{P_{\lambda_0}}\in\an(U_1).
\end{cases}
$$
Observe that $\delta((q_0,q_1))=q_1-q_0=\frac{h}{P_{\lambda_0}}=f_{01,\lambda_0}$ (see \eqref{delta}). 

\noindent{\sc Sought function.}
Consider the analytic (resp. semialgebraic local Nash) function on $X$ defined by
$$
q_{\lambda_0}:X\to\R,\ z\mapsto\begin{cases}
q_0(z)&\text{if $z\in X\cap U_0=X\setminus N(X)$},\\
q_1(z)&\text{if $z\in X\cap U_1=X\setminus Y$}.
\end{cases}
$$
The previous analytic (resp. semialgebraic local Nash) function is well defined on $X$, because $h$ is an analytic equation of $X\cap U_0$ inside $U_0$. In fact, $\xi_{\lambda_0}:=\frac{h_{\lambda_0}}{P_{\lambda_0}}$ is a meromorphic (resp. meromorphic Nash function) function on $\R^n$ such that $Y=\ZZ(P_{\lambda_0})\subset\ZZ(h_{\lambda_0})$, $\xi_{\lambda_0}|_X=q$ (observe that $h|_{X\cap U_0}=0$) is an analytic (resp. semialgebraic local Nash) function on $X$ and it has no analytic extension to $\R^n$ (and consequently in the Nash case it has no Nash extension to $\R^n$), because it defines a $1$-cocyle that is not a $1$-coboundary. However, let us conclude alternatively: {\em $\xi_{\lambda_0}=\frac{h_{\lambda_0}}{P_{\lambda_0}}$ has no analytic extension to $\R^n$.} 

Suppose there exists an analytic function $a\in\an(\R^n)$ such that $a|_X=\frac{h_{\lambda_0}}{P_{\lambda_0}}|_X$. Let $\Omega\subset\C^n$ be an open neighborhood of $\R^n$ to which both $h_{\lambda_0}$ and $a$ admit holomorphic extensions that we denote $H_{\lambda_0}$ and $A$. As $(aP_{\lambda_0}-h_{\lambda_0})|_X=0$, we have by Lemma \ref{fx} that $(AP_{\lambda_0}-H_{\lambda_0})|_{\widetilde{X}}=0$. Pick a point $y\in Y\cap T(X)$ and let $\beta^y:(-1,1)\to\widetilde{X}$ be the analytic curve described in {\sc Step 2} such that $\beta^y(0)=y$ and $\beta^y((0,1))\subset(\widetilde{X}\cap\ZZ(P_{\lambda_0}))\setminus\ZZ(H)$. Let $V\subset V_0$ be an invariant open neighborhood of $Y$ and an invariant analytic function $B\in\an(V)$ such that $H|_V-H_{\lambda_0}|_V=BP_{\lambda_0}|_V$. We have
\begin{align*}
&(A\circ\beta^y)(P_{\lambda_0}\circ\beta^y)-(H_{\lambda_0}\circ\beta^y)=0,\\
&(H\circ\beta^y)-(H_{\lambda_0}\circ\beta^y)-(B\circ\beta^y)(P_{\lambda_0}\circ\beta^y)=0.
\end{align*}
As $P_{\lambda_0}\circ\beta^y=0$, we deduce $H_{\lambda_0}\circ\beta^y=0$ and $(H\circ\beta^y)-(H_{\lambda_0}\circ\beta^y)=0$, which is a contradiction, because $H\circ\beta^y\neq0$. 

\noindent{\sc Step 4.} {\em Determining the obstructing set.}
We finally prove: ${\tt O}(\xi_{\lambda_0})=Y\cap T(X)$. As $\xi_{\lambda_0}$ is analytic on $X$, $\Ii_{X,x}=\Jj_{X,x}$ for each $x\in\R^n\setminus\cl(T(X))$ and the set of poles of $\xi_{\lambda_0}$ is contained in $Y$, we deduce that ${\tt O}(\xi_{\lambda_0})\subset Y\cap\cl(T(X))$. If $y\in N(X)\setminus T(X)$, we have $\Jj_{X,y}=\Ii_{X,y}$. Thus, $h_y\in\Ii_{X,y}$ and $h_{\lambda_0,y}-h_y\in P_{\lambda_0,y}\an_{\R^n,y}$, so $h_{\lambda_0,y}\in P_{\lambda_0,y}\an_{\R^n,y}+\Ii_{X,y}$. Consequently, $y\not\in{\tt O}(\xi_{\lambda_0})$. As $\cl(T(X))=T(X)\cup N(X)$ (Theorem \ref{xy}), we conclude ${\tt O}(\xi_{\lambda_0})\subset Y\cap T(X)$.

Pick a point $y\in Y\cap T(X)$ and suppose there exists $a_y'\in\an_{\R^n,y}$ such that $h_{\lambda_0,y}-a_y'P_{{\lambda_0},y}\in\Ii_{X,y}$. We keep the notations introduced above. Let $A_y'$ be the invariant analytic extension of $a_y$ to $\C^n_y$ and let $A'$ be an analytic representative of $A_y'$ defined on an open neighborhood of $y$ in $\C^n$. We may assume (after shrinking the domain of $\beta^y$ and reparameterizing) that $\im(\beta^y)$ is contained in such open neighborhood. We have
\begin{align*}
&(A'\circ\beta^y)(P_{\lambda_0}\circ\beta^y)-(H_{\lambda_0}\circ\beta^y)=0,\\
&(H\circ\beta^y)-(H_{\lambda_0}\circ\beta^y)-(B\circ\beta^y)(P_{\lambda_0}\circ\beta^y)=0.
\end{align*}
As $P_{\lambda_0}\circ\beta^y=0$, we deduce $H_{\lambda_0}\circ\beta^y=0$ and $(H\circ\beta^y)-(H_{\lambda_0}\circ\beta^y)=0$, which is a contradiction, because $H\circ\beta^y\neq0$. Thus, $y\in{\tt O}(\xi_{\lambda_0})$ and we deduce $Y\cap T(X)\subset{\tt O}(\xi_{\lambda_0})$, so ${\tt O}(\xi_{\lambda_0})=Y\cap T(X)$. 

This finishes the proof of both Theorem \ref{main1} and Theorem \ref{main2}.
\end{proof}
\begin{remark}\label{summ}
If $X\subset\R^n$ is a non-coherent analytic set, we have proceeded as follows to find $q\in\Cc^\omega(X)\setminus\an(X)$. We choose a $C$-analytic subset $Y\subset X\subset\R^n$ that meets $T(X)$. We choose an analytic function $h$ on a neighborhood $U_0$ of $Y$ such that $h_y\in\Jj_{X,y}\setminus\Ii_{X,y}$ for each $y\in Y$. We can guarantee the existence of such an analytic function if $Y$ is either a discrete set (resp. finite in the Nash case) or if $Y\cap N(X)=\varnothing$ (see \S\ref{sh}). We choose a finite family of analytic functions (resp. Nash functions) $P_1,\ldots,P_m$ on an open neighborhood (resp. open semialgebraic neighborhood) $\Omega\subset\C^n$ of $\R^n$ such that $\widetilde{Y}:=\ZZ(P_1,\ldots,P_m)$ is a complexification of $Y$. We consider the family of invariant analytic functions (resp. Nash functions) $P_{\lambda}=\sum_{i=1}^m\lambda_iP_i^2$ for $\lambda\in\Qq_m$. We choose suitable $\lambda_0\in\Qq_m$ such that $f_{01,\lambda_0}:=\frac{h}{P_{\lambda_0}}$ defines a $1$-cocycle of $H^1(\R^n,\Jj_X)$ that is not a $1$-coboundary. From this $1$-cocycle we construct $q$ finding $h_{\lambda_0}\in\an(\R^n)$ such that $h_{\lambda_0}-h$ is divisible by $P_{\lambda_0}$ in a neighborhood of $Y$. The chosen $q\in\Cc^\omega(X)\setminus\an(X)$ is the restriction to $X$ of the meromorphic function $\xi_{\lambda_0}:=\frac{h_{\lambda_0}}{P_{\lambda_0}}$. In case $X\subset\R^n$ is a Nash set we may choose $h\in\Ii^\bullet(X\setminus N(X))$ and $h_{\lambda_0}\in\Nn(\R^n)$. Thus, $\xi_{\lambda_0}:=\frac{h_{\lambda_0}}{P_{\lambda_0}}$ defines an element $q\in\Cc^{{\Nn}}(X)\setminus\Nn(X)$.
\hfill$\sqbullet$
\end{remark}

\subsection{An application: `non-global' models for non-coherent objects}
From the previous results let us deduce the existence of `non-global' models for each non-coherent subsets of $\R^n$ both in the analytic and in the Nash settings.

\begin{cor}\label{cons1}
Let $X\subset\R^n$ be a non-coherent $C$-analytic set. There exist many real analytic sets $X^*\subset\R^{n+1}$ that are not a $C$-analytic sets, but $X^*$ is analytically diffeomorphic to $X$.
\end{cor}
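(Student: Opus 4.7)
Since $X$ is non-coherent, Corollary~\ref{txnx} gives $T(X)\cup N(X)\neq\varnothing$. For every $C$-analytic subset $Y\subset X$ that meets $T(X)$ and contains no irreducible component of $X$, Theorem~\ref{main11} yields a meromorphic function $\xi=\xi_Y:\R^n\dashrightarrow\R$, say $\xi=a/b$ with $a,b\in\an(\R^n)$, whose restriction $\xi|_X$ is analytic on $X$ while its obstructing set ${\tt O}(\xi)=Y\cap T(X)$ is nonempty; in particular $\xi|_X$ admits no analytic extension to $\R^n$. For each such $\xi$ I take the graph
\[
X^*=X^*_Y:=\{(x,\xi|_X(x)):x\in X\}\subset\R^{n+1},
\]
and since $Y$ can be chosen in infinitely many ways (e.g.\ any finite subset of $T(X)\setminus N(X)$, which is nonempty by Theorem~\ref{xy}), the construction supplies the required multitude of candidates.

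The projection $\pi:X^*\to X$, $(x,t)\mapsto x$, is an analytic diffeomorphism: its inverse $\sigma:x\mapsto(x,\xi|_X(x))$ is analytic precisely because $\xi|_X$ is. Moreover $X^*$ is a real analytic subset of $\R^{n+1}$: it is closed in $\R^{n+1}$ as the graph of a continuous function on the closed set $X$, and at each point $p^*=(p,\xi|_X(p))\in X^*$ it is cut out locally by generators of $\Jj_{X,p}$ (pulled back to $\R^{n+1}$) together with $t-F_p(x)=0$, where $F_p\in\an(U_p)$ is a local analytic extension of $\xi|_X$ to an open neighborhood $U_p$ of $p$ in $\R^n$.

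The crux is to show that $X^*$ is \emph{not} a $C$-analytic subset of $\R^{n+1}$. Arguing by contradiction, suppose $X^*=\ZZ(g_1,\ldots,g_r)$ for some $g_j\in\an(\R^{n+1})$. The global function $a(x)-tb(x)$ lies in $\Ii(X^*)$ because analyticity of $\xi|_X$ at a point $p\in X$ with $b(p)=0$ forces $a(p)=0$, so $a(x)-tb(x)$ vanishes along the entire graph. Together with equations for $X$ regarded in $\R^{n+1}$, this cuts out the enlarged $C$-analytic set $X^\sharp=X^*\cup\bigl((X\cap\ZZ(b))\times\R\bigr)$, which strictly contains $X^*$ above ${\tt O}(\xi)\subset X\cap\ZZ(b)$. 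The extra equations $g_j$ must therefore excise, for each $p\in{\tt O}(\xi)$, the vertical fibre $\{p\}\times\R$ minus the single point $(p,\xi|_X(p))$. My plan is to pass to an invariant complexification $\widetilde{X^*}\subset\C^{n+1}$, exploit that the projection $\tilde\pi|_{\widetilde{X^*}}:\widetilde{X^*}\to\widetilde X$ is proper and generically one-to-one, and combine Cartan's Theorems~A and~B in a Stein neighborhood of $\widetilde X$ (composed with a normalization argument in the spirit of Lemma~\ref{dot}) to produce a holomorphic extension of the complexified $\xi|_X$ to $\widetilde X$, and hence to an open Stein neighborhood in $\C^n$; restricting to $\R^n$ then yields an analytic extension of $\xi|_X$ to $\R^n$, contradicting ${\tt O}(\xi)\neq\varnothing$. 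The hardest point, where I expect the main difficulty to concentrate, is the passage from the \emph{local} extensions of $\xi|_X$ produced by the implicit function theorem applied to the $g_j$ at points above ${\tt O}(\xi)$, to a \emph{globally consistent} extension on $\R^n$: this step requires carefully leveraging the coherence of $\Ii_{X^*}$ on $\R^{n+1}$ together with the tail analysis developed in Sections~\ref{s4}--\ref{s5}, so that the local data provided by each $g_j$ assembles into the membership $a-b\Psi\in\Ii(X)$ for a single $\Psi\in\an(\R^n)$.
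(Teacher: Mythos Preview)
Your setup agrees with the paper: take $X^*=\mathrm{graph}(\xi|_X)\subset\R^{n+1}$ with $\xi$ as in Theorem~\ref{main11}, observe that $X^*$ is a real analytic set analytically diffeomorphic to $X$, and argue by contradiction that $X^*$ cannot be $C$-analytic. The divergence is in how the contradiction is reached.

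Your proposed route—recover from the hypothetical global equations $g_j$ of $X^*$ a global analytic extension of $\xi|_X$ to $\R^n$ via a complexification/normalization/Cartan~B argument—remains only a sketch at the decisive step, and you yourself flag it as the hard point. The difficulty is real: knowing that $\pi:X^*\to X$ is an analytic diffeomorphism gives no direct control over $\widetilde\pi:\widetilde{X^*}\to\widetilde X$. You would need $\widetilde\pi$ to be a biholomorphism (or at least to admit a holomorphic section) in order to pull the coordinate $t$ back to a holomorphic function on $\widetilde X$; but the complexification of $X^*$ is governed by $\Ii(X^*)\otimes_\R\C$, which you do not know explicitly, and a proper generically one-to-one map between reduced complex spaces need not be an isomorphism without a normality hypothesis you do not have. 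The normalization idea ``in the spirit of Lemma~\ref{dot}'' is not developed and it is unclear how it would produce the required global section.

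The paper avoids this altogether by exploiting the \emph{explicit} form $\xi_{\lambda_0}=h_{\lambda_0}/P_{\lambda_0}$ and the auxiliary function $h$ constructed in the proof of Theorem~\ref{main11}. Working on $\widetilde X$ (not on $\widetilde{X^*}$), it performs a concrete dimension count: for each $y\in Y\cap T(X)$ and for all $\mu\in\R$ outside a countable set, one has
\[
\widetilde X_y\cap\ZZ(\mu P_{\lambda_0}-H_{\lambda_0})_y\not\subset\ZZ(HP_{\lambda_0})_y,
\]
which, via the curve selection lemma, produces analytic arcs in $\widetilde X\setminus\ZZ(P_{\lambda_0})$ along which $\Xi_{\lambda_0}\equiv\mu$ and which converge to $y$. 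By the identity principle these arcs lift to $\widetilde{X^*}$, so $(y,\mu)\in\widetilde{X^*}\cap\R^{n+1}=X^*$ for a dense set of $\mu$; closedness then forces the whole vertical line $\{y\}\times\R$ into $X^*$, contradicting that $X^*$ is a graph. This argument uses the specific data $(h,h_{\lambda_0},P_{\lambda_0})$ and bypasses entirely the question of the global structure of $\widetilde{X^*}$.
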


\begin{cor}\label{cons2}
Let $X\subset\R^n$ be a non-coherent Nash set. There exist many semialgebraic sets $X^*\subset\R^{n+1}$ that are locally Nash sets, but not Nash subsets of $\R^{n+1}$, such that $X^*$ is local Nash diffeomorphic to $X$.
\end{cor}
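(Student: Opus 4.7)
The plan is to realize the desired sets $X^*$ as graphs of suitable local Nash functions on $X$ that admit no Nash extension to $\R^n$, and to use the explicit meromorphic structure provided by Theorem \ref{main21} to certify that such graphs cannot be Nash subsets of $\R^{n+1}$.

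First, I would apply Theorem \ref{main21} to produce a meromorphic Nash function $\xi=h_{\lambda_0}/P_{\lambda_0}:\R^n\dashrightarrow\R$ whose restriction $f:=\xi|_X$ is a local Nash function on $X$ but admits no Nash extension to $\R^n$. The several independent parameters in Theorem \ref{main21}---the Nash subset $Y\subset X$ meeting $T(X)$, the Nash function $h$ on a neighborhood $U_0$ of $Y$ with $h_y\in\Jj_{X,y}^\bullet\setminus\Ii_{X,y}^\bullet$ for $y\in Y\cap T(X)$, and the admissible parameter $\lambda_0\in\Qq_m$---produce many essentially different such $f$, hence many candidate sets $X^*$.

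Next, set $X^*:=\Gamma_f=\{(x,f(x)):x\in X\}\subset\R^{n+1}$ and verify the three structural properties. Semialgebraicity of $X^*$ follows from Lemma \ref{ens}, which asserts that $f\in\Cc^{\scriptscriptstyle\Nn}(X)$ is a semialgebraic function. To see $X^*$ is a local Nash set, at each $(x_0,f(x_0))\in X^*$ pick an open semialgebraic neighborhood $V^{x_0}\subset\R^n$ of $x_0$ on which $f|_{X\cap V^{x_0}}$ extends to a Nash function $F:V^{x_0}\to\R$, together with Nash generators $g_1,\ldots,g_r\in\Nn(V^{x_0})$ of $\Ii^\bullet(X\cap V^{x_0})$ (Remark \ref{generalnash}); then
\[
X^*\cap(V^{x_0}\times\R)=\ZZ(g_1(\x),\ldots,g_r(\x),\t-F(\x))
\]
is a Nash subset of the open semialgebraic set $V^{x_0}\times\R$. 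Finally, the projection $\pi:\R^{n+1}\to\R^n,\,(\x,\t)\mapsto\x$, restricts on $X^*$ to a bijection whose local inverses $x\mapsto(x,F(x))$ are Nash, so $\pi|_{X^*}:X^*\to X$ is a local Nash diffeomorphism.

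The crux, and the main obstacle, is to show that $X^*$ is not a Nash subset of $\R^{n+1}$. I would introduce the ambient Nash set
\[
Z:=\ZZ(g_1,\ldots,g_r,\t P_{\lambda_0}(\x)-h_{\lambda_0}(\x))\subset\R^{n+1},
\]
where $g_1,\ldots,g_r\in\Nn(\R^n)$ are global Nash generators of $\Ii^\bullet(X)$. Using that $h_{\lambda_0}|_Y=0$ (which follows from $h_{\lambda_0}-h\in P_{\lambda_0}\Nn$ near $Y$ together with $h|_Y=0$) and that $f(x)P_{\lambda_0}(x)=h_{\lambda_0}(x)$ on $X\setminus Y$, one obtains $X^*\subset Z$. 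However the inclusion is strict: for each $y\in Y\cap T(X)$ both $P_{\lambda_0}(y)=h_{\lambda_0}(y)=0$, so the whole line $\{y\}\times\R\subset Z$, whereas $X^*\cap(\{y\}\times\R)=\{(y,f(y))\}$. Moreover $X^*\setminus\pi^{-1}(Y)=Z\setminus\pi^{-1}(Y)$, both being the Nash graph of $\xi$ over $X\setminus Y$. If $X^*$ were a Nash subset of $\R^{n+1}$, it would contain the Nash closure in $\R^{n+1}$ of $Z\setminus\pi^{-1}(Y)$, and the technical heart of the argument is to show that this Nash closure equals $Z$---equivalently, no irreducible Nash component of $Z$ is entirely contained in $Y\times\R$. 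This last step is the main difficulty: it requires a dimension and irreducibility analysis of $Z$ exploiting that outside $Y$ the hypersurface $\t P_{\lambda_0}-h_{\lambda_0}=0$ cuts out the graph of a single Nash function, combined with Theorem \ref{xy} to know that $T(X)\setminus N(X)$ is dense in $T(X)\cup N(X)$ so that generic points of $X\setminus Y$ approach every $y\in Y\cap T(X)$. The fully analogous strategy, replacing $\Nn$ by $\an$, local Nash diffeomorphisms by analytic diffeomorphisms and Theorem \ref{main21} by Theorem \ref{main11}, proves Corollary \ref{cons1}.
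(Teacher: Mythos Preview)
Your setup matches the paper's: define $X^*$ as the graph of $f=\xi_{\lambda_0}|_X$, check semialgebraicity via Lemma~\ref{ens}, local Nash-ness via local Nash extensions of $f$, and the local Nash diffeomorphism via the projection $\pi$. The paper also introduces the same ambient set $Z=(X\times\R)\cap\ZZ(P_{\lambda_0}\x_{n+1}-h_{\lambda_0})$ and observes $Z=X^*\cup(Y\times\R)$.

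The gap is in the crucial step. Your proposed argument---show that the Nash closure of $Z\setminus\pi^{-1}(Y)$ equals $Z$, hence $X^*$ Nash would force $Z\subset X^*$---is not carried out, and the sketch (``dimension and irreducibility analysis of $Z$\ldots combined with Theorem~\ref{xy}'') is too vague to be a proof. Whether every Nash component of $Z$ meets the complement of $Y\times\R$ is exactly the hard point, and nothing in Theorem~\ref{xy} addresses the component structure of $Z$ in $\R^{n+1}$. The paper takes a different route: by Lemma~\ref{ncas} it suffices to show $X^*$ is not $C$-analytic, and the paper argues by contradiction \emph{after complexifying}. Assuming $X^*$ is $C$-analytic with complexification $\widetilde{X^*}$, the identity principle gives $\widetilde{X^*}\setminus\ZZ(P_{\lambda_0}\circ\Pi)=\{(z,\Xi_{\lambda_0}(z)):z\in\widetilde{X}\setminus\ZZ(P_{\lambda_0})\}$. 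Then, for each $y\in Y\cap T(X)$ and generic $\mu\in\R$, a careful dimension count shows $\widetilde{X}_y\cap\ZZ(\mu P_{\lambda_0}-H_{\lambda_0})_y\not\subset\ZZ(HP_{\lambda_0})_y$; the curve selection lemma then yields an analytic arc $\alpha^y$ in $\widetilde{X}$ with $\alpha^y(0)=y$ along which $\Xi_{\lambda_0}\equiv\mu$, so $(y,\mu)\in\widetilde{X^*}\cap\R^{n+1}=X^*$. Density in $\mu$ forces $\{y\}\times\R\subset X^*$, contradicting that $X^*$ is a graph. The passage to the complexification---where the ``hidden'' branches of $\widetilde{X}$ at points of $T(X)$ become visible and can be probed by curves on level sets of $\Xi_{\lambda_0}$---is the key idea your real-variable Nash-closure sketch is missing.
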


\begin{remark}
As coherence of real analytic sets is a local property, it is preserved by (local) analytic diffeomorphims. Thus, a real analytic set that is analytically diffeomorphic to a coherent real analytic set, it is a $C$-analytic set. Analogously, as coherence of Nash sets is a local property, it is preserved by local Nash diffeomorphims. Consequently, a semialgebraic set that is locally a Nash set and local Nash diffeomorphic to a coherent Nash set, it is a Nash set.
\hfill$\sqbullet$ 
\end{remark}

We proceed with the proof of Corollaries \ref{cons1} and \ref{cons2} keeping the notation already introduced along the proof of Theorems \ref{main11} and \ref{main21}. 

\begin{proof}[Proof of Corollaries {\em\ref{cons1}} and {\em\ref{cons2}}]
We continue from the end of {\sc Step 4} and keep all the notations already introduced in the proof of Theorems \ref{main1} and \ref{main2}.

\noindent{\sc Step 5.} {\em Construction of an analytic (resp. semialgebraic local Nash) set $X^*\subset\R^{n+1}$ that is analytically diffeomorphic (Nash diffeomorphic) to $X\subset\R^n$.} 
Define $X^*:={\rm graph}(\xi_{\lambda_0}|_X)$, which is an analytic (resp. semialgebraic local Nash) set analytically diffeomorphic (resp. local Nash diffeomorphic) to $X$ via the mutually inverse analytic maps (resp. local Nash maps):
\begin{align*}
\Gamma_{\lambda_0}:&X\to X^*,\ x\mapsto(x,\xi_{\lambda_0}(x)),\\
\pi:&X^*\to X,\ (x,y)\mapsto x.
\end{align*}

Let us take in mind along the proof that ${\tt O}(\xi_{\lambda_0})=Y\cap T(X)$. By Lemma \ref{ncas} the proof of both Corollaries \ref{cons1} and \ref{cons2} reduces to check: {\em $X^*$ is not a $C$-analytic set}. 

Recall that $Y=\ZZ(P_{\lambda_0})$ and consider the $C$-analytic set (resp. Nash set)
$$
Z:=(X\times\R)\cap\ZZ(P_{\lambda_0}\x_{n+1}-h_{\lambda_0})
$$
of $\R^{n+1}$. If $L:=Y\times\R$, we claim: $Z=X^*\cup L$. 

The inclusion right to left is clear, because $X^*={\rm graph}(\xi_{\lambda_0}|_X)$ and $Y=\ZZ(P_{\lambda_0})\subset\ZZ(h_{\lambda_0})$. Pick a point $(x,t)\in Z$. If $x\not\in Y$, then $P_{\lambda_0}(x)\neq0$ and $(x,t)\in X^*$. If $x\in Y$, then $(x,t)\in L$.

\noindent{\sc Step 6.} {\em The analytic set $X^*$ is not a $C$-analytic subset of $\R^{n+1}$ and the obstruction concentrates at the points of $((Y\cap T(X))\times\R)\cap X^*$.} Assume that $X^*$ is a $C$-analytic set, let $\widetilde{X^*}$ be an invariant complexification of $X^*$ and let $\widetilde{X}$ be an invariant complexification of $X$. Let $\Omega\subset\C^{n+1}$ be an invariant Stein open neighborhood of $\R^{n+1}$ in $\C^{n+1}$ such that $\widetilde{X^*}$ is a complex analytic subset of $\Omega$ (maybe after shrinking $\widetilde{X^*}$). Let $\Theta\subset\C^n$ be the image of $\Omega$ under the projection $\Pi:\C^{n+1}\to\C^n,\ (z,z_{n+1})\mapsto z$, which is an open subset of $\C^n$. Shrinking $\Omega$ and $\Theta$ if necessary, we may assume $\widetilde{X}$ is a complex analytic subset of $\Theta$ (maybe after shrinking $\widetilde{X}$). We may assume $h_{\lambda_0},P_{\lambda_0}$ have invariant holomorphic extensions $H_{\lambda_0},P_{\lambda_0}$ to $\Theta$. Denote by $\Xi_{\lambda_0}:=\frac{H_{\lambda_0}}{P_{\lambda_0}}$ the meromorphic extension of $\xi_{\lambda_0}$ to $\Theta$. By the identity principle
$$
\widetilde{X^*}\setminus\ZZ(P_{\lambda_0}\circ\Pi)=\{(z,\Xi_{\lambda_0}(z))\in\Omega:\ z\in\widetilde{X}\setminus\ZZ(P_{\lambda_0})\}.
$$

As $h-h_{\lambda_0}=P_{\lambda_0}b$ on an open neighborhood of $Y$ in $\R^n$ (where $b$ is an analytic function on such open neighborhood of $Y$), we may assume that there exists an invariant holomorphic extension $B$ of $b$ to $V_0$ such that $H-H_{\lambda_0}=P_{\lambda_0}B$. This means that $H$ and $H_{\lambda_0}$ take the same values on $\ZZ(P_{\lambda_0})\cap V_0$. Recall that $D:=\{y_j\}_{j\geq1}$ is a dense countable subset of $Y\cap T(X)$. Write $\rho_j:=-B(y_j)$ and observe that if $\mu\neq\rho_j,0$ for each $j\geq1$, then
\begin{align}
\ZZ(\mu P_{\lambda_0}-H_{\lambda_0},P_{\lambda_0})_{y_j}&=\ZZ(P_{\lambda_0},H_{\lambda_0})_{y_j}=\ZZ(P_{\lambda_0},H-P_{\lambda_0}B)_{y_j}=\ZZ(P_{\lambda_0},H)_{y_j},\label{ai0}\\
\ZZ(\mu P_{\lambda_0}-H_{\lambda_0},H)_{y_j}&=\ZZ(\mu P_{\lambda_0}-H_{\lambda_0}+H,H)_{y_j}
=\ZZ(\mu P_{\lambda_0}-P_{\lambda_0}B,H)_{y_j}\nonumber\\
&=\ZZ((\mu+B)P_{\lambda_0},H)_{y_j}=\ZZ(P_{\lambda_0},H)_{y_j}.\label{ai1}
\end{align}

In addition, $y_j\in\ZZ(P_{\lambda_0},H_{\lambda_0})$, so $y_j\in\ZZ(\mu P_{\lambda_0}-H_{\lambda_0})$ for each $\mu\in\R$. As $
\widetilde{X}_{y_j}\setminus\ZZ(H_{y_j})\neq\varnothing$, there exists for each $j\geq1$ an irreducible component $T_{j,y_j}$ of $\widetilde{X}_{y_j}$ such that $T_{j,y_j}\not\subset\ZZ(H_{y_j})$, so by \cite[Cor.10.9]{ei} 
\begin{equation}\label{si0}
\dim(T_{j,y_j}\cap\ZZ(H_{y_j}))=\dim(T_{j,y_j})-1. 
\end{equation}
By Lemma \ref{trivial} and Baire's Theorem we may choose ${\lambda_0}\in\Qq_m$ (from the beginning) such that 
\begin{equation}\label{si}
\dim(T_{j,y_j}\cap\ZZ(H,P_{\lambda_0})_{y_j})<\dim(T_{j,y_j}\cap\ZZ(H)_{y_j})
\end{equation}
for each $j\geq1$. We claim: \em $\widetilde{X}_{y_j}\cap\ZZ(\mu P_{\lambda_0}-H_{\lambda_0})_{y_j}\not\subset\ZZ(HP_{\lambda_0})_{y_j}$ for each $j\geq1$ and each $\mu\in\R\setminus(\{\rho_j\}_{j\geq1}\cup\{0\})$\em.

Otherwise, there exist $j_0\geq1$ and $\mu_0\in\R\setminus(\{\rho_j\}_{j\geq1}\cup\{0\})$ such that $\widetilde{X}_{y_{j_0}}\cap\ZZ(\mu_0 P_{\lambda_0}-H_{\lambda_0})_{y_{j_0}}\subset\ZZ(HP_{\lambda_0})_{y_{j_0}}$. By \eqref{ai0} and \eqref{ai1}
\begin{equation*}
\begin{split}
\widetilde{X}_{y_{j_0}}\cap\ZZ(&\mu_0 P_{\lambda_0}-H_{\lambda_0})_{y_{j_0}}\\
&=\widetilde{X}_{y_{j_0}}\cap\ZZ(\mu_0 P_{\lambda_0}-H_{\lambda_0})_{y_{j_0}}\cap\ZZ(HP_{\lambda_0})_{y_{j_0}}=\widetilde{X}_{y_{j_0}}\cap\ZZ(\mu_0 P_{\lambda_0}-H_{\lambda_0},HP_{\lambda_0})_{y_{j_0}}\\
&=(\widetilde{X}_{y_{j_0}}\cap\ZZ(\mu_0 P_{\lambda_0}-H_{\lambda_0},P_{\lambda_0})_{y_{j_0}})\cup(\widetilde{X}_{y_{j_0}}\cap\ZZ(\mu_0 P_{\lambda_0}-H_{\lambda_0},H)_{y_{j_0}})\\
&=\widetilde{X}_{y_{j_0}}\cap\ZZ(P_{\lambda_0},H)_{y_{j_0}}.
\end{split}
\end{equation*}

On the other hand, for each irreducible component $T_{y_{j_0}}$ of $\widetilde{X}_{y_{j_0}}$, we have by \cite[Cor.10.9]{ei}
\begin{equation}\label{ai}
\dim(T_{y_{j_0}}\cap\ZZ(\mu_0P_{\lambda_0}-H_{\lambda_0})_{y_{j_0}})\geq\dim(T_{y_{j_0}})-1.
\end{equation}
In addition, 
\begin{multline*}
T_{y_{j_0}}\cap\ZZ(\mu_0P_{\lambda_0}-H_{\lambda_0})_{y_{j_0}}=T_{y_{j_0}}\cap\widetilde{X}_{y_{j_0}}\cap\ZZ(\mu_0P_{\lambda_0}-H_{\lambda_0})_{y_{j_0}}\\
=T_{y_{j_0}}\cap\widetilde{X}_{y_{j_0}}\cap\ZZ(P_{\lambda_0},H)_{y_{j_0}}=T_{y_{j_0}}\cap\ZZ(P_{\lambda_0},H)_{y_{j_0}}
\end{multline*}
for each irreducible component $T_{y_{j_0}}$ of $\widetilde{X}_{y_{j_0}}$. By \eqref{si0}, \eqref{si} and \eqref{ai} there exists an irreducible component $T_{j_0,y_{j_0}}$ of $\widetilde{X}_{y_{j_0}}$ such that
\begin{multline*}
\dim(T_{j_0,y_{j_0}}\cap\ZZ(H,P_{\lambda_0})_{y_{j_0}})<\dim(T_{j_0,y_{j_0}}\cap\ZZ(H)_{y_{j_0}})\\
=\dim(T_{j_0,y_{j_0}})-1\leq\dim(T_{j_0,y_{j_0}}\cap\ZZ(\mu_0P_{\lambda_0}-H_{\lambda_0})_{y_{j_0}}),
\end{multline*}
which is a contradiction. Thus, $\widetilde{X}_{y_j}\cap\ZZ(\mu P_{\lambda_0}-H_{\lambda_0})_{y_j}\not\subset\ZZ(HP_{\lambda_0})_{y_j}$ for each $j\geq1$ and each $\mu\in\R\setminus(\{\rho_j\}_{j\geq1}\cup\{0\})$, as claimed.

Pick $y\in Y\cap T(X)$ and let us check: $\widetilde{X}_y\cap\ZZ(\mu P_{\lambda_0}-H_{\lambda_0})_y\not\subset\ZZ(HP_{\lambda_0})_y$. 

Otherwise, $\widetilde{X}_y\cap\ZZ(\mu P_{\lambda_0}-H_{\lambda_0})_y\subset\ZZ(HP_{\lambda_0})_y$, so there exists an open neighborhood $W^y\subset\C^n$ such that $\widetilde{X}\cap\ZZ(\mu P_{\lambda_0}-H_{\lambda_0})\cap W^y\subset\ZZ(HP_{\lambda_0})\cap W^y$. As $D$ is dense in $Y\cap T(X)$, there exists $y_j\in W^y$, so $\widetilde{X}_{y_j}\cap\ZZ(\mu P_{\lambda_0}-H_{\lambda_0})_{y_j}\subset\ZZ(HP_{\lambda_0})_{y_j}$, which is a contradiction, because $\widetilde{X}_{y_j}\cap\ZZ(\mu P_{\lambda_0}-H_{\lambda_0})_{y_j}\not\subset\ZZ(HP_{\lambda_0})_{y_j}$ for each $j\geq1$. 

Fix $y\in Y\cap T(X)$. By the curve selection lemma \cite[Rem.VII.4.3(b)]{abr} (using the underlying real structures of all the involved complex analytic sets) there exists an analytic arc 
$$
\alpha^y:(-1,1)\to((\widetilde{X}\cap\ZZ(\mu P_{\lambda_0}-H_{\lambda_0}))\setminus\ZZ(HP_{\lambda_0}))\cup\{y\}
$$ 
such that $(\alpha^y)^{-1}(y)=\{0\}$. Thus, $\Xi_{\lambda_0}\circ\alpha_\mu=\frac{H_{\lambda_0}\circ\alpha_\mu}{P_{\lambda_0}\circ\alpha_\mu}=\mu$ for each $\mu\in\R\setminus\{0,\rho_j:\ j\geq1\}$, so $L\setminus\{(y,0),(y,\rho_j):\ j\geq1\}\subset\widetilde{X^*}\cap\R^n=X^*$. As $\R\setminus\{0,\rho_j:\ j\geq1\}$ is (by Baire's Theorem) dense in $\R$, we conclude $L\subset X^*$, so $Z=X^*\cup L=X^*$, which is a contradiction. We conclude $X^*$ is not a $C$-analytic subset of $\R^{n+1}$ and the obstruction to be a $C$-analytic set concentrates at the points of $((Y\cap T(X))\times\R)\cap X^*$, as required. 
\end{proof}

\section{Smooth semialgebraic functions versus Nash functions}\label{s7}

The intrinsic definition of semialgebraic differentiable function is a classical problem that goes back to Whitney's extension theorem \S\ref{wet}. Inspired by the latter and based on jets of semialgebraic functions, we recall a notion of differentiable semialgebraic function of intrinsic nature proposed in \cite{at}, which is motivated by the proofs of Whitney's extension theorem collected in \cite[\S1]{m} and \cite{kp2,th}. 

\subsection{Differentiable semialgebraic functions}
Recall that we denote $\N:=\{0,1,2,\ldots\}$ the set of natural numbers including $0$.

\begin{defns}\label{def:jets}
Let $S\subset\R^n$ be a semialgebraic set and denote $\x:=(\x_1,\ldots,\x_n)$. A \em semialgebraic jet on $S$ of order $p\geq0$ \em is a collection of semialgebraic functions $F:=(f_{\alpha})_{|\alpha|\leq p}$ on $S$. Here we denote $\alpha:=(\alpha_1,\ldots,\alpha_n)\in\N^n$ and $|\alpha|:=\alpha_1+\cdots+\alpha_n$. For each $a\in S$ write
$$
T_a^pF:=\sum_{|\alpha|\leq p}\frac{f_\alpha(a)}{\alpha!}(\x-a)^\alpha\quad\text{and}\quad R_a^pF:=f_0-T_a^pF.
$$
Denote the set of all semialgebraic jets on $S$ of order $p$ with ${\mathcal J}^p(S)$, which has a natural structure of $\R$-vector space. For every $\beta\in\N^n$ such that $|\beta|\leq p$ we denote 
$$
D^\beta:{\mathcal J}^p(S)\to{\mathcal J}^{p-|\beta|}(S),\ F:=((f_{\alpha})_{|\alpha|\leq p})\mapsto F_\beta:=(f_{\gamma+\beta})_{|\gamma|\leq p-|\beta|},
$$
that is a linear map.
\end{defns}

\begin{defn}[Def.1]\label{defsr}
A continuous semialgebraic function $f:S\to\R$ is a \em $\Cc^p$ semialgebraic function \em if there exists a semialgebraic jet $F:=(f_\alpha)_{|\alpha|\leq p}$ on $S$ of order $p$ such that: 

\paragraph{}\label{diff}\em $f_0=f$ and for each $\beta$ with $|\beta|\leq p$ and every point $a\in S$ it holds $|R_x^{p-|\beta|}F_\beta(y)|=o(\|x-y\|^{p-|\beta|})$ for $x,y\in S$ when $x,y\to a$,\em

\noindent that is, the function $\tau:(0,+\infty)\to\R$ defined by
$
\tau(t):=\sup_{\substack{x,y\in S,\ x\neq y\\ \|x-a\|\leq t,\ \|y-a\|\leq t}}\frac{|R_x^{p-|\beta|}F_\beta(y)|}{\|x-y\|^{p-|\beta|}}
$
is an increasing function that can be continuously extended to $t=0$ by $\tau(0)=0$.

We denote by $\Ss^p(S)$ the collection of all the $\Cc^p$ semialgebraic functions on $S$ (shortly, denoted as $\Ss^p$-functions).
\end{defn}

\begin{remark}\label{Rest}
Condition \ref{diff} is equivalent by \cite[I.Thm.2.2]{m} to the following one:

\paragraph{}\label{diff2}\em $f_0=f$ and for each point $a\in S$ there exists an increasing, continuous and concave function $\alpha_a:[0,+\infty)\to[0,+\infty)$ such that $\alpha_a(0)=0$ and for each $z\in\R^n$ it holds
$$
|T_x^pF(z)-T_y^pF(z)|\leq\alpha_a(\|x-y\|)\cdot(\|z-x\|^p+\|z-y\|^p)
$$ 
for $x,y\in S$ when $x,y\to a$.\em\hfill$\sqbullet$
\end{remark}

There are other alternatives to define $\Ss^p$-functions on a general semialgebraic set $S\subset\R^n$. We recall some of them:
\begin{itemize}
\item[(Def.2)] There exist an open semialgebraic neighborhood $U\subset\R^n$ of $S$ and a $\Ss^p$-function $F:U\to\R$ such that $F|_S=f$. 
\item[(Def.3)] For each point $x\in S$ there exist an open semialgebraic neighborhood $U^x\subset\R^n$ of $x$ and an $\Ss^p$-function $F_x:U^x\to\R$ such that $F_x|_{U^x\cap S}=f|_{U^x\cap S}$.
\item[(Def.4)] For each point $x\in S$ there exist an open neighborhood $U^x\subset\R^n$ of $x$ and an $\Cc^p$ differentiable function $F_x:U^x\to\R$ (non-necessarily semialgebraic) such that $F_x|_{U^x\cap S}=f|_{U^x\cap S}$.
\item[(Def.5)] There exist an open neighborhood $U\subset\R^n$ of $S$ and a $\Cc^p$ differentiable function $F:U\to\R$ such that $F|_S=f$.
\end{itemize}

Observe that (Def.2) $\Longrightarrow$ (Def.3) $\Longrightarrow$ (Def.4) $\iff$ (Def.5) and (Def.2) $\Longrightarrow$ Definition \ref{defsr}. The implication (Def.4) $\Longrightarrow$ (Def.5) follows considering a $\Cc^p$-partition of unity. Analogously, if $S\subset\R^n$ is a compact semialgebraic set, (Def.3) $\Longrightarrow$ (Def.2) using an $\Ss^p$-partition of unity. If $S\subset\R^n$ is a closed semialgebraic set, Definition \ref{defsr} implies (Def.2), see \cite{kp2,th}. If in addition $n=2$, the authors prove in \cite{fl} that (Def.5) implies (Def.2), whereas in \cite{at} the authors show that (Def.5) implies (Def.2) if $p=1$ and $n\geq1$. In \cite[Cor.1.7]{bcm} the authors present a weak version of `(Def.5) implies (Def.2)' via an intermediate function $t:\N\to\N$, which encodes {\em a certain loss of differentiability}. Namely, if there exist an open semialgebraic neighborhood $U\subset\R^n$ of a closed semialgebraic set $S\subset\R^n$ and a $\Cc^{t(p)}$ differentiable function $G:U\to\R$ such that $G|_S=f$, then there exists an $\Ss^p$-function $F:U\to\R$ such that $F|_S=f$. In the general case (that is, $S\subset\R^n$ is a general semialgebraic set), Definition \ref{defsr} implies the existence of a $\Ss^{p-1}(U)$ function $F$ on an open semialgebraic neighborhood $U\subset\R^n$ of $S$, see \cite[Lem.2.6]{bfg}.

\subsection{Smooth semialgebraic functions and Nash functions}\label{ssf}
A classical result in real analytic geometry \cite[Prop.8.1.8]{bcr} states that a function on a Nash manifold $S\subset\R^n$ is Nash (smooth and semialgebraic) if and only if it is an analytic algebraic function on $S$. One can wonder if a similar result holds for general semialgebraic subsets of $\R^n$. We begin by defining
\begin{equation}\label{sinfty}
\Ss^{(\infty)}(S):=\bigcap_{p\geq0}\Ss^p(S)
\end{equation}
the {\em ring of smooth semialgebraic functions on $S$}, where $\Ss^p(S)$ is the ring of $\Ss^p$ functions on $S$ following Definition \ref{defsr}. We analyze first the local case and for the sake of completeness we borrow the following result from \cite{bfg}.

\begin{lem}\label{local}
Let $S\subset\R^n$ be a semialgebraic set and let $f\in\Ss^{(\infty)}(S)$. For each $x\in S$ there exists a Nash function germ $F_x\in\Nn_{\R^n,x}$ such that $F_x|_{S_x}=f_x$.
\end{lem}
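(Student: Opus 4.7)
The plan is to extract the formal Taylor series of $f$ at $x$ from the smooth semialgebraic jet data, show that this series is algebraic, realize it by a Nash germ, and verify that the Nash germ agrees with $f$ on the germ $S_x$.

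First I would extract a formal power series $\hat f_x\in\R[[\x-x]]$ attached to $f$ at $x$. For each $p\geq 0$, Definition \ref{defsr} supplies a semialgebraic jet $(f_\alpha^{(p)})_{|\alpha|\leq p}$ with $f_0^{(p)}=f$ satisfying the Whitney-type estimate at every point of $S$. Applying the estimate at $x$ to jets of higher order shows that the value $f_\alpha(x)$ is independent of the choice of jet once $p\geq|\alpha|$, so we obtain a well-defined formal series
\begin{equation*}
\hat f_x:=\sum_{\alpha\in\N^n}\frac{f_\alpha(x)}{\alpha!}(\x-x)^\alpha\in\R[[\x-x]].
\end{equation*}
Since $f$ is semialgebraic on $S$, I also fix a non-zero polynomial $P\in\R[\x,\t]$, taken irreducible with $\partial_\t P\not\equiv 0$, such that $P(\x,f(\x))=0$ on $S$.

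Second, I would prove that $\hat f_x$ is algebraic by establishing $P(\x,\hat f_x)=0$ in $\R[[\x-x]]$. Using \cite[Lem.2.6]{bfg}, for each $p\geq 1$ there exist an open semialgebraic neighborhood $U_p\subset\R^n$ of $x$ and an $\Ss^{p-1}$-extension $F^{(p)}\colon U_p\to\R$ of $f$; the Whitney condition forces the Taylor polynomial of $F^{(p)}$ at $x$ of order $p-1$ to be the truncation of $\hat f_x$. The $\Cc^{p-1}$ function $G_p:=P(\x,F^{(p)}(\x))$ vanishes on $S\cap U_p$, and combining Taylor's theorem with a \L ojasiewicz-type lower bound on the semialgebraic set $S$ near $x$ forces the Taylor polynomial of $G_p$ at $x$ of order $p-1$ to vanish. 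Letting $p\to\infty$ gives $P(\x,\hat f_x)=0$ in $\R[[\x-x]]$, so $\hat f_x$ is algebraic over $\R[\x]$, and by \cite[Cor.8.1.6]{bcr} there exists a Nash germ $F_x\in\Nn_{\R^n,x}$ whose Taylor series at $x$ equals $\hat f_x$.

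Third, I would check $F_x|_{S_x}=f_x$. The germ $g:=F_x|_{S_x}-f_x$ is semialgebraic on $S_x$ and its infinite jet at $x$ (in the $\Ss^{(\infty)}$-sense) vanishes identically, because $F_x$ is analytic at $x$ with Taylor series $\hat f_x$ while the jet of $f$ at $x$ is also $\hat f_x$ by construction. A continuous semialgebraic function germ on $S_x$ that decays faster than any power of $\|y-x\|$ must vanish as a germ, by the \L ojasiewicz inequality for semialgebraic functions. Hence $g=0$, which gives $F_x|_{S_x}=f_x$.

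The principal obstacle is the second step: propagating the pointwise identity $P(\x,f(\x))=0$ on $S$ to the formal identity $P(\x,\hat f_x)=0$. When $x$ lies in the interior of an $n$-dimensional stratum of $S$ this is routine by iterated differentiation, but in general $S$ may be lower-dimensional or singular at $x$ and the extensions $F^{(p)}$ are far from canonical off $S$, so controlling the Taylor polynomial of $G_p$ at $x$ requires balancing the Whitney error estimate against a \L ojasiewicz exponent along $S$ to rule out low-order cancellations. A robust alternative, if the direct approach becomes unwieldy, is to uniformize $S$ near $x$ by a Nash map from a Nash manifold and reduce the problem to the open case, where smoothness plus semialgebraicity implies the Nash property by \cite[Prop.8.1.8]{bcr}.
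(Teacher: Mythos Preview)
Your proposal has a genuine gap in step 2, and the issue you flag as the ``principal obstacle'' is not merely technical: the approach as written cannot work. The formal series $\hat f_x$ need not satisfy $P(\x,\hat f_x)=0$ identically in $\R[[\x-x]]$. Take $S=\{x_2=0\}\subset\R^2$, $x=0$, $f(x_1,0)=x_1$, and $P(\x,\t)=\t-\x_1$. The jet data of $f$ in directions transverse to $S$ are not determined by $f$; choosing the jets to be those of $x_1+x_2$ restricted to $S$ (which satisfies the Whitney condition) gives $\hat f_x=\x_1+\x_2$ and $P(\x,\hat f_x)=\x_2\neq 0$. Your step~1 already hides this: the claim that $f_\alpha(x)$ is independent of the choice of jet is false whenever $S$ is lower-dimensional at $x$, and the paper's proof explicitly \emph{modifies} the jets to make them mutually consistent rather than claiming uniqueness. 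The \L ojasiewicz argument you sketch only shows that $G_p$ is flat \emph{along $S$}, which forces the Taylor polynomial to lie in the ideal $\Ii(Z)\R[[\x]]$ of the Nash closure $Z$ of $S$ at $x$, not to vanish.

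This last observation is exactly what the paper exploits. It takes a Nash stratification $\Delta_1,\ldots,\Delta_s$ of $S_0$, lets $Z_i$ be the Nash closure of $\Delta_i$ and $X_i$ the Nash closure of the graph of $f|_{\Delta_i}$, and proves (via curves in $\Delta_i$) that for a generator $g_i$ of $\Ii(X_i)$ one has $g_i(\x,h_0)\in\Ii(Z_i)\R[[\x]]$. Then Artin's approximation is applied to the system $g_i(\x,\y)\equiv 0\bmod \Ii(Z_i)$ for all $i$ simultaneously, producing a single Nash germ $F$ with $\Gamma(F|_{Z_i})\subset X_i$. A dimension and irreducibility comparison gives $\Gamma(F|_{Z_i})=X_i$, hence $F=f$ on each $\Delta_i$. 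Your ``robust alternative'' of uniformizing $S$ points in this direction, but the stratification and the application of Artin's theorem to the full system of congruences are the essential missing ingredients.
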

\begin{proof}
Let $f\in\Ss^{(\infty)}(S)$ and assume $x=0\in S$. For each $p\geq0$ let $F_p:=(f^p_\alpha)_{|\alpha|\leq p}$ be a semialgebraic jet of order $p$ associated to $f$. We may assume: \em $f^p_\alpha(0)=f_\alpha^k(0)$ for each pair of integers $p\leq k$ and each $\alpha\in \N^n$ with $|\alpha|\leq p$\em, so the definition $f_\alpha(0):=f_{\alpha}^p(0)$ if $|\alpha|=p$ is consistent.

To that end, recall first that $f_0^p=f$ for each $p\geq0$. Next, replace recursively the semialgebraic jet $F_{p+1}$ by $\widetilde{F}_{p+1}:=(\widetilde{f}^{p+1}_\alpha)$ where $\widetilde{f}_\alpha^{p+1}(x):=f_\alpha^{p+1}(x)-f^{p+1}_\alpha(0)+\tilde{f}^{p}_\alpha(0)$ whenever $|\alpha|\leq p$. Using property \ref{diff2} one shows that $\widetilde{F}_p$ is a semialgebraic jet associated to $f$ as a $\Ss^p$-function and $\tilde{f}_\alpha^k(0)=\tilde{f}_\alpha^p(0)$ if $k\geq p$ and $|\alpha|\leq p$. 

Consider the formal series
\begin{equation}\label{h0}
h_0:=\sum_{\alpha\in\N^n}\frac{1}{\alpha!}f_\alpha(0)\x^\alpha\in\R[[\x]]
\end{equation} 
and let $S_0$ be the germ at the origin of $S$. For simplicity we identify $\Nn_{\R^n,0}$ with $\R[[\x]]_{\rm alg}$. Let us prove: \em $f|_{S_0}$ is the germ of a Nash function on $S_0$\em.

Let $\Delta_1,\ldots,\Delta_s$ be a Nash stratification of $S_0$ (apply \cite[\S9.1]{bcr} to a suitable representative of $S_0$). Let $Z_i$ be the Nash closure of the germ $\Delta_i$, which is an irreducible Nash set germ (because $\Delta_i$ is a connected germ at point of the closure of a Nash stratum), and let $X_i$ be the Nash closure of the graph $\Gamma_i:=\Gamma(f|_{\Delta_i})$. It holds $\dim(X_i)=\dim(\Gamma_i)=\dim(\Delta_i)=\dim(Z_i)$ and $X_i$ is an irreducible Nash germ (because it is the Nash closure of the graph of a Nash function germ $f|_{\Delta_i}$ on a connected germ $\Delta_i$ at point of the closure of a Nash manifold).
 
To ease notation write for a while $\Delta:=\Delta_i$, $Z:=Z_i$, $\Gamma:=\Gamma_i$ and $X:=X_i$. Let $g(\x,\y)\in\R[[\x,\y]]_{\rm alg}$ be such that $X=\ZZ(g)$. We claim: $g(\x,h_0(\x))\in \Ii(Z)\R[[\x]]$.

Let ${\mathfrak G}$ be the collection of the germs $\gamma_0$ at the origin of (continuous) semialgebraic curves $\gamma:[0,1)\to\Delta\subset\R^n$ such that $\gamma(0)=0$. We identify each germ $\gamma_0$ with a Puiseux tuple $\R[[\t^*]]^n$. For each $\gamma\in{\mathfrak G}$ define the homomorphism $\gamma^*:\R[[\x]]\to\R[[\t^*]],\ \zeta\mapsto\zeta\circ\gamma$. By the curve selection lemma \cite[Thm.2.5.5]{bcr} we have $\Ii(Z)=\bigcap_{\gamma\in{\mathfrak G}}\ker(\gamma^*)\cap\R[[\x]]_{\rm alg}$. The completion of the local noetherian ring $\R[[\x]]_{\rm alg}$ is $\R[[\x]]$. We have
$$
\Ii(Z)\R[[\x]]=\Big(\bigcap_{\gamma\in{\mathfrak G}}\ker(\gamma^*)\cap\R[[\x]]_{\rm alg}\Big)\R[[\x]]=\bigcap_{\gamma\in{\mathfrak G}}(\ker(\gamma^*)\cap\R[[\x]]_{\rm alg})\R[[\x]]=\bigcap_{\gamma\in{\mathfrak G}}\ker(\gamma^*).
$$ 
Assume $\xi:=g(\x,h_0(\x))\not\in\Ii(Z)\R[[\x]]$. Then there exists $\gamma\in{\mathfrak G}\subset\R[[\t^*]]^n_{\rm alg}$ such that $\xi\notin\ker(\gamma^*)$. After reparameterizing the variable $\t$ we may assume $\gamma\in\R[[\t]]^n_{\rm alg}$. Let $k\geq1$ be the order of the series $\xi(\gamma)$. Write $h_0=a+b$ where $a:=\sum_{|\alpha|\leq k}\frac{1}{\alpha!}f_\alpha(0)\x^\alpha\in\R[\x]$ has degree $\leq k$ and $b$ has order $>k$. There exists a power series $s\in\R[[\x,\z_1,\z_2]]$ such that 
\begin{equation}\label{s}
g(\x,\z_1+\z_2)=g(\x,\z_1)+\z_2s(\x,\z_1,\z_2). 
\end{equation}
Consequently, $g(\x,h_0)=g(\x,a+b)=g(\x,a)+bs(\x,a,b)$ for some $s\in\R[[\x,\z_1,\z_2]]$. Thus, $\xi(\gamma)=g(\gamma,a(\gamma))+bs(\gamma,a(\gamma),b(\gamma))$. As $\omega(\xi(\gamma))=k$ and $\omega(bs(\x,a,b))\geq k+1$, we deduce $\omega(g(\x,a)(\gamma))=k$. Let us analyze next the Puiseux series $f(\gamma)\in\R[[\t^*]]$. We have
\begin{equation}\label{fpola}
|f(y)-a(y)|=\Big|f(y)-\sum_{|\alpha|\leq k}\frac{1}{\alpha!}f_{\alpha}(0)y^\alpha\Big|=o(\|y\|^k)
\end{equation}
for $y\in S$ when $y\to 0$. Thus,
$$
\lim_{t\to 0^+}\Big(\frac{f(\gamma(t))-a(\gamma(t))}{t^k}\Big)=0,
$$
so $f(\gamma)-a(\gamma)\in\R[[\t^*]]$ has order strictly greater than $k$. By \eqref{s} we deduce
$$
\omega(g(\gamma,f(\gamma))-g(\gamma,a(\gamma)))>k,
$$ 
so $\omega(g((\gamma),f(\gamma)))=k$, which is a contradiction, because $g(\gamma(t),f(\gamma(t)))=0$ for each $t\in [0,1)$ (recall that $\gamma\in{\mathfrak G}$ and $X=\ZZ(g)$ is the Zariski closure of $\Gamma(f|_{\Delta})$). Consequently, $g(\x,h_0(\x))\in \Ii(Z)\R[[\x]]$, as claimed.

Let $f_{i1},\ldots,f_{im}\in\R[[\x]]_{\rm alg}$ be a system of generators of $I(Z_i)$ and let $g_i\in\R[[\x,\y]]_{\rm alg}$ be such that $\ZZ(g_i)=X_i$ for $i=1,\ldots,s$. We know that $g_i(\x,h_0(\x))\in I(Z_i)\R[[x]]$. Thus, there exist $a_{i1},\ldots,a_{im}\in\R[[\x]]$ such that $g_i(\x,h_0)=a_{i1}f_{i1}+\cdots+a_{im}f_{im}$ for $i=1,\ldots,s$. By Artin's approximation theorem \cite[Thm.8.3.1]{bcr} there exist Nash functions germs $F,\widetilde{a}_{i1},\ldots,\widetilde{a}_{im}\in\R[[\x]]_{\rm alg}$ such that $g_i(\x,F)=\widetilde{a}_{i1}f_{i1}+\cdots+\widetilde{a}_{im}f_{im}$ for each $i=1,\ldots,s$. Consequently, $\Gamma(F|_{Z_i})\subset X_i$ for each $i=1,\ldots,s$. As $Z_i$ is an irreducible Nash set and $F$ is a Nash function germ, $\Gamma(F|_{Z_i})$ is an irreducible Nash set of dimension $\dim(Z_i)=\dim(X_i)$. As $X_i$ is irreducible, $X_i=\Gamma(F|_{Z_i})$ for $i=1,\ldots,s$. Thus, $F|_{\Delta_i}=X_i\cap(\Delta_i\times\R)=\Gamma(f|_{\Delta_i})$ for $i=1,\ldots,s$, because each $X_i$ is the graph of $F$ over $Z_i$ and simultaneously it is the Nash closure of $\Gamma(f|_{\Delta_i})$ for $i=1,\ldots,s$. We conclude $F|_{\Delta_i}=f|_{\Delta_i}$ for $i=1,\ldots,s$, so $F|_{S_0}=f_0$, as required. 
\end{proof}
\begin{remark}
{\em All the definitions above of $\Ss^p$-function on a semialgebraic point $S$ provide the same definition of $\Ss^{(\infty)}$ function germ at each point $x\in S$.} 

In the proof of Lemma \ref{local} we have used that $f\in\Ss^{(\infty)}(S)$ only to show that there exists a formal power series $h:=\sum_{\alpha\in\N^n}b_\alpha\x^\alpha\in\R[[\x]]$ such that $\Big|f(y)-\sum_{|\alpha|\leq k}b_\alpha y^\alpha\Big|=o(\|y\|^k)$ for each $k\geq1$
and each $y\in S$ when $y\to0$.

It is enough show: {\em Each alternative definition of $\Ss^p$ function provides a formal power series $h$ satisfying the previous condition.}

As (Def.2) $\Longrightarrow$ (Def.3) $\Longrightarrow$ (Def.4) $\iff$ (Def.5), it is enough to prove the existence of $h$ for (Def.4). Then, for each $p\geq0$ there exists an open neighborhood $U^x_p$ and a $\Cc^p$ differentiable function $F_{p,x}$ on $U^x_p$ such that $F_{p,x}|_{S\cap U^x_p}=f|_{S\cap U^x_p}$. We change recursively $F_{p+1,x}$ by 
$$
\widetilde{F}_{p+1,x}:=F_{p,x}-\sum_{|\alpha|\leq p-1}\frac{1}{\alpha!}\frac{\partial^{|\alpha|}\widetilde{F}_{p,x}}{\partial\x^\alpha}(x)(\x-x)^\alpha
$$
and $\widetilde{F}_{0,x}=F_{0,x}$. Then
$$
h:=\sum_{\alpha\in\N^n}\frac{1}{\alpha!}\frac{\widetilde{F}_{|\alpha|,x}}{\partial\x^\alpha}(0)\x^\alpha\in\R[[\x]]
$$
satisfies the required conditions.\hfill$\sqbullet$
\end{remark}

\begin{defn}\label{nashs}
We define the ring $\Nn(S)$ of {\em Nash functions on the semialgebraic set $S\subset\R^n$} as the restrictions to $S$ of the elements of the ring $H^0(S,(\Nn_{\R^n})|_S)$ of global cross-sections on $S$ of the coherent sheaf $\Nn_{\R^n}$. 
\end{defn}
\begin{remark}\label{nashsr}
By \cite[Thm.1.3]{fgr} we have $H^0(S,(\Nn_{\R^n})|_S)=\displaystyle\lim_{\longrightarrow}\Nn(V)$ where $V\subset\R^n$ covers the open semialgebraic neighborhoods of $S$.\hfill$\sqbullet$ 
\end{remark}

It is clear that $\Nn(S)\subset\Ss^{(\infty)}(S)$ and at this point we wonder: 

\begin{prob}
Under which conditions the inclusion of rings $\Nn(S)\subset\Ss^{(\infty)}(S)$ is an equality?
\end{prob}

\subsection{Nash sets}
We begin considering the case when $S=X\subset\R^n$ is a Nash set. Let us prove the equality between the rings $\Ss^{(\infty)}(X)$ with $\Cc^{\Nn}(X)$.

\begin{lem}\label{ens}
Let $X\subset\R^n$ be a Nash set. Then $\Ss^{(\infty)}(X)=\Cc^{\Nn}(X)$.
\end{lem}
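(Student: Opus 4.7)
The plan is to prove the two inclusions separately, with the hard direction being $\Cc^{\Nn}(X)\subset\Ss^{(\infty)}(X)$.

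For $\Ss^{(\infty)}(X)\subset\Cc^{\Nn}(X)$, I would apply Lemma \ref{local} directly. Given $f\in\Ss^{(\infty)}(X)$, at each $a\in X$ the lemma produces a Nash germ $F_a\in\Nn_{\R^n,a}$ with $F_a|_{X_a}=f_a$. Picking a Nash representative $g_a\in\Nn(V_a)$ on an open semialgebraic neighborhood $V_a\subset\R^n$ of $a$ and shrinking $V_a$ so that the germ equality upgrades to $g_a|_{X\cap V_a}=f|_{X\cap V_a}$, one sees that $f$ is local Nash on $X$; the classes $\{g_{a,x}+\Jj^\bullet_{X,x}\}_{x\in X}$ glue into a global section of $(\Nn_{\R^n}/\Jj^\bullet_X)|_X$, hence $f\in\Cc^{\Nn}(X)$.

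For $\Cc^{\Nn}(X)\subset\Ss^{(\infty)}(X)$, fix $f\in\Cc^{\Nn}(X)$. By definition, for each $a\in X$ there is an open semialgebraic neighborhood $V_a\subset\R^n$ of $a$ and a Nash function $g_a\in\Nn(V_a)$ with $g_a|_{X\cap V_a}=f|_{X\cap V_a}$. My approach is to patch these local Nash extensions into a single semialgebraic $\Cc^\infty$ function $G$ on an open semialgebraic neighborhood $V$ of $X$ satisfying $G|_X=f$; once this is done, $G\in\Ss^{(\infty)}(V)$ and the implication (Def.2) $\Longrightarrow$ Definition \ref{defsr} recalled in \S\ref{ssf} gives $f=G|_X\in\Ss^{(\infty)}(X)$. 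The patching proceeds as follows. First reduce to the case where $X$ is bounded via a semialgebraic Nash diffeomorphism of $\R^n$ onto a bounded open semialgebraic set, as in \S\ref{omegarn} and {\sc Step 2} of the proof of Theorem \ref{xy}. Then $X$ is compact, so the cover $\{V_a\}_{a\in X}$ admits a finite subcover $V_1,\ldots,V_k$ with Nash extensions $g_1,\ldots,g_k$. Choose a semialgebraic $\Cc^\infty$ partition of unity $\{\phi_i\}_{i=1}^k$ subordinate to $\{V_i\}$ with $\sum_i\phi_i\equiv 1$ on an open semialgebraic neighborhood $V\subset\bigcup_iV_i$ of $X$ (available in the semialgebraic category via Mostowski-type cutoffs of the form $h/(h+r)$), and set $G:=\sum_i\phi_ig_i$ on $V$; the identity $G|_X=f$ follows from $g_i|_{X\cap V_i}=f|_{X\cap V_i}$ and $\sum_i\phi_i\equiv 1$ on $X$.

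The main obstacle is this gluing step, because one must work simultaneously with semialgebraicity and $\Cc^\infty$-regularity outside the analytic or Nash framework: Nash functions admit no nontrivial compactly supported representatives, so the partition of unity has to be imported from the strictly larger class of semialgebraic $\Cc^\infty$ functions. Everything else is routine — local Nash gives local semialgebraic $\Cc^\infty$ extensions, finite-cover patching is standard once a semialgebraic $\Cc^\infty$ partition of unity is in hand, and the passage from a global $\Ss^{(\infty)}$ extension on an open semialgebraic neighborhood of $X$ to an $\Ss^{(\infty)}$ function on $X$ is exactly (Def.2) $\Longrightarrow$ Definition \ref{defsr}. Alternatively, this gluing can be bypassed by invoking Appendix \ref{B}, which establishes for arbitrary semialgebraic sets $S$ the equivalence of $\Ss^{(\infty)}(S)$ with the class of semialgebraic functions on $S$ that are local Nash at every point.
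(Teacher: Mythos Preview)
Your first inclusion via Lemma \ref{local} matches the paper exactly.

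Your main argument for $\Cc^{\Nn}(X)\subset\Ss^{(\infty)}(X)$ has a fatal gap. On an open semialgebraic set $V\subset\R^n$, a semialgebraic $\Cc^\infty$ function is automatically Nash by \cite[Prop.8.1.8]{bcr}; there is no ``strictly larger class'' to import bump functions from. Your Mostowski-type cutoffs $h/(h+r)$ are only continuous semialgebraic (Nash away from the zero sets of $h$ and $r$, but not $\Cc^\infty$ across them). In fact your construction, if it worked, would prove too much: the resulting $G=\sum_i\phi_ig_i$ would be Nash on the open set $V$, so $f=G|_X\in\Nn(X)$, i.e.\ you would have shown $\Cc^{\Nn}(X)\subset\Nn(X)$, which is precisely what fails for non-coherent $X$ (Theorem \ref{main2}).

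The paper's proof avoids this by decoupling semialgebraicity from smoothness. First it shows $f$ is semialgebraic by restricting to the coherent locus $X\setminus N(X)$ (where $f$ is genuinely Nash) and using density plus continuity. Then, since $f$ is local Nash, it extends to a $\Cc^\infty$ function on $\R^n$ via an ordinary (non-semialgebraic) partition of unity. Finally \cite[Cor.1.7]{bcm} is invoked: given that the semialgebraic function $f$ on the closed set $X$ admits a $\Cc^{t(p)}$ extension for every $p$, it admits a semialgebraic $\Cc^p$ extension for every $p$, hence $f\in\Ss^{(\infty)}(X)$. Your fallback via Appendix \ref{B} (or Theorem \ref{sis}, since $X$ is closed) is logically sound but is a forward reference to the more general statement, whose proof rests on exactly the same \cite[Cor.1.7]{bcm} mechanism.
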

\begin{proof}
The inclusion $\Ss^{(\infty)}(X)\subset\Cc^{\Nn}(X)$ follows from Lemma \ref{local}. To prove the converse inclusion pick $f\in\Cc^{\Nn}(X)=H^0(X,\Nn_{\R^n}/\Jj_{X}^\bullet)$ and let us check: {\em $f$ is a semialgebraic function}. 

Observe that $f|_{X\setminus N(X)}\in H^0(X\setminus N(X),\Nn_{\R^n}/\Jj_{X}^\bullet)=\Nn(X\setminus N(X))$ (see Remark \ref{xnx}). By Remark \ref{remnx}(ii) we know that $X\setminus N(X)$ is dense in $X$. As $f:X\to\R$ is a continuous function and $f|_{X\setminus N(X)}$ is semialgebraic, we conclude that $f$ itself is semialgebraic (because the graph of $f$ is the closure of the graph of $f|_{X\setminus N(X)}$, so it is a semialgebraic set). 

As $f\in\Cc^{\Nn}(X)$, we deduce that $f$ is the restriction to $X$ of a $\Cc^p$ differentiable function on $\R^n$ for each $p\geq1$. By \cite[Cor.1.7]{bcm} we conclude that $f$ is the restriction to $X$ of a $\Cc^p$ semialgebraic function on $\R^n$ for each $p\geq1$, so $f\in\Ss^{(\infty)}(X)$, as required. 
\end{proof}

We have already proven in Corollary \ref{iso} that $\Nn(X)=H^0(X,\Nn_{\R^n}/\Ii_{X}^\bullet)$ for each Nash set $X\subset\R^n$. By Theorem \ref{main2} we have: {\em $\Ss^{(\infty)}(X)=\Cc^{\Nn}(X)=\Nn(X)$ if and only if $X$ is coherent (that is, if $\Jj_{X,x}^\bullet=\Ii_{X,x}^\bullet$ for each $x\in X$)}.

\subsection{Semialgebraic sets}\label{ss}
Let $S\subset\R^n$ be a semialgebraic set and let $\Jj_S^\bullet$ be the sheaf of $\Nn_{\R^n}$-ideals given by 
\begin{equation}
\Jj_{S,x}^\bullet:=\{f_x\in\Nn_{\R^n,x}:\ S_x\subset\ZZ(f_x)\},
\end{equation}
the germs of Nash functions on $\R^n_x$ vanishing identically on $S_x$ for each $x\in U$. 

\subsubsection{Ring of smooth semialgebraic functions}
We prove next (taking again advantage of \cite[Cor.1.7]{bcm}) the following characterization of the ring $\Ss^{(\infty)}(S)$ when $S$ is a locally compact semialgebraic set: 

\begin{thm}\label{sis}
Let $S\subset\R^n$ be a locally compact semialgebraic set and let $f:S\to\R$ be a function. The following conditions are equivalent:
\begin{itemize}
\item[(i)] $f\in\Ss^{(\infty)}(S)$.
\item[(ii)] $f\in\Cc^{\Nn}(S):=H^0(S,\Nn_{\R^n}/\Jj_{S}^\bullet)$.
\item[(iii)] $f$ is semialgebraic and for each $x\in S$ there exists a Nash function germ $F_x\in\Nn_{\R^n,x}$ such that $F_x|_{S_x}=f_x$.
\end{itemize}
\end{thm}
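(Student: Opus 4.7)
The plan is to establish the cycle of implications (i) $\Rightarrow$ (iii) $\Rightarrow$ (ii) $\Rightarrow$ (i). The implication (i) $\Rightarrow$ (iii) is immediate: every $f \in \Ss^{(\infty)}(S)$ lies in $\Ss^0(S)$, so it is continuous and semialgebraic by Definition \ref{defsr}, and for each $x \in S$ Lemma \ref{local} supplies a Nash germ $F_x \in \Nn_{\R^n, x}$ with $F_x|_{S_x} = f_x$ (Artin's approximation applied to the Taylor series of $f$ assembled from its $\Ss^p$ jets). The implication (iii) $\Rightarrow$ (ii) is essentially tautological: given the local Nash germs $\{F_x\}_{x \in S}$, the family $\{F_x \bmod \Jj_{S, x}^\bullet\}_{x \in S}$ defines a section of $\Nn_{\R^n}/\Jj_S^\bullet$ over $S$, since any two representatives $F_x, F_y$ agreeing with $f$ on $S$ differ by a germ lying in $\Jj_{S, z}^\bullet$ at every common point $z \in S$, and evaluation of this section at $x$ recovers $f(x)$.

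The principal work lies in (ii) $\Rightarrow$ (i). A section $\sigma \in H^0(S, \Nn_{\R^n}/\Jj_S^\bullet)$ determines a function $f \colon S \to \R$ via $f(x) := F_x(x)$ for $F_x$ any local representative, well-defined because the elements of $\Jj_{S, x}^\bullet$ all vanish at $x \in S$; continuity is automatic since locally $f$ coincides with a Nash function. To upgrade this to $f \in \Ss^{(\infty)}(S)$ (and, in particular, to obtain global semialgebraicity of $f$), I would use local compactness of $S$: by the Mostowski-type construction of \S\ref{omegarn}, we may assume $S$ is closed semialgebraic in $\R^n$. Then I would choose a locally finite open semialgebraic cover $\{U_i\}_{i \in I}$ of $\R^n$ equipped with Nash extensions $F_i \in \Nn(U_i)$ of $f|_{S \cap U_i}$ (setting $F_i := 0$ whenever $U_i \cap S = \varnothing$), together with a $\Cc^\infty$ semialgebraic partition of unity $\{\phi_i\}_{i \in I}$ subordinate to $\{U_i\}$. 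Then $F := \sum_{i \in I} \phi_i F_i$ is a $\Cc^\infty$ semialgebraic function on $\R^n$ extending $f$, so $f$ is globally semialgebraic and belongs to $\Ss^{(\infty)}(S)$ via the implication (Def.2) $\Rightarrow$ Definition \ref{defsr}.

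The main obstacle is the existence of a $\Cc^\infty$ semialgebraic partition of unity $\{\phi_i\}$ subordinate to an arbitrary locally finite open semialgebraic cover of $\R^n$. Since Nash functions are analytic, the identity principle rules out Nash bump functions, so the partition must be built from smooth semialgebraic bump functions obtained by combining Mostowski's lemma \cite[Lem.6]{m2} (producing continuous semialgebraic functions vanishing to prescribed order on prescribed closed semialgebraic sets) with polynomial smoothing procedures. A safer fallback, should the direct construction prove too technical, is to first produce a $\Cc^\infty$ (not necessarily semialgebraic) extension of $f$ via a classical smooth partition of unity and then invoke \cite[Cor.1.7]{bcm} to deduce, for every $p \geq 1$, the existence of a $\Cc^p$ semialgebraic extension of $f$ to an open semialgebraic neighborhood of $S$ (with a controlled loss of differentiability). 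The restrictions of these extensions supply compatible semialgebraic jets of arbitrary order on $S$, verifying Definition \ref{defsr} for every $p$ and hence yielding $f \in \Ss^{(\infty)}(S)$; the global semialgebraicity of $f$ follows a posteriori from the semialgebraicity of any of these extensions.
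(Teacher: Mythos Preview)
Your cycle (i) $\Rightarrow$ (iii) $\Rightarrow$ (ii) is fine, but the final step (ii) $\Rightarrow$ (i) has a genuine gap in both variants you propose. The first approach cannot work: a $\Cc^\infty$ semialgebraic function on an open semialgebraic subset of $\R^n$ is Nash by \cite[Prop.8.1.8]{bcr}, hence analytic, so there are no nontrivial $\Cc^\infty$ semialgebraic bump functions and no $\Cc^\infty$ semialgebraic partitions of unity. This is not a technicality but an absolute obstruction.

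Your fallback is closer to what the paper does, but it is circular as written. The result \cite[Cor.1.7]{bcm} takes as \emph{input} a semialgebraic function on a closed semialgebraic set together with a $\Cc^{t(p)}$ extension, and returns a $\Cc^p$ semialgebraic extension. You propose to deduce semialgebraicity of $f$ ``a posteriori from the semialgebraicity of any of these extensions'', but you cannot invoke \cite{bcm} to produce those extensions until you already know $f$ is semialgebraic. The paper breaks this circularity by reversing your order: it first proves (ii) $\Rightarrow$ (iii), establishing semialgebraicity of $f$ by an induction on $\dim(S)$ (on the top-dimensional locus $f$ agrees with a Nash function on an open piece of the Nash closure of $S$, hence is semialgebraic there; the remainder is lower-dimensional). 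Only then, in (iii) $\Rightarrow$ (i), does the paper build a $\Cc^\infty$ (non-semialgebraic) extension via an ordinary smooth partition of unity, transport it to a closed Nash manifold $M\subset\R^{n+1}$ using Mostowski's lemma and a Nash tubular retraction, and apply \cite[Cor.1.7]{bcm} to the now-semialgebraic restriction on the closed set $\varphi(S)\subset\R^{n+1}$. Your argument becomes correct if you insert the paper's (ii) $\Rightarrow$ (iii) step to secure semialgebraicity before appealing to \cite{bcm}.
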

\begin{proof}
(i) $\Longrightarrow$ (ii) The inclusion $\Ss^{(\infty)}(S)\subset\Cc^{\Nn}(S)$ follows from Lemma \ref{local}. 

(ii) $\Longrightarrow$ (iii) Let us prove that if $f\in\Cc^{\Nn}(S)$, then $f$ is semialgebraic. The other condition follows from the definition of $\Cc^{\Nn}(S)$. Let $U$ be the open semialgebraic set and $X\subset U$ the Nash set described above. Let $T$ be the (semialgebraic) set of points of $S\setminus\Sing(X)$ of dimension $d:=\dim(S)$ and $M_d$ the interior of $T$ in $X\setminus\Sing(X)$. Thus, $M_d\subset X\setminus N(X)$ is a Nash manifold of dimension $d$ and it is dense in the set $S_{(d)}$ of points of $S$ of dimension $d$. Observe that $\Jj^\bullet_S|_{M_d}=\Jj^\bullet_{M_d}=\Ii^\bullet_{M_d}$. Thus, $f|_{M_d}$ is a Nash function on $M_d$. As the graph of $f|_{S_{(d)}}$ is the closure in $S_{(d)}\times\R$ of $f|_{M_d}$, we deduce that $f|_{S_{(d)}}$ is semialgebraic. Observe that $S':=S\setminus S_{(d)}$ is an open semialgebraic subset of $S$ of strictly smaller dimension. In addition, $\Jj_S|_{S'}=\Jj_{S'}$ and by induction hypothesis (on the dimension), we deduce $f|_{S'}$ is semialgebraic (in the case of dimension $0$ all the maps are semialgebraic). Consequently, $f$ is a semialgebraic map.

(iii) $\Longrightarrow$ (i) As $S$ is locally compact, $C:=\cl(S)\setminus S$ is a closed semialgebraic subset of $\R^n$. As for each $x\in S$ there exists a Nash function germ $F_x\in\Nn_{\R^n,x}$ such that $F_x|_{S_x}=f_x$, there exist an open neighborhood $U_0\subset\R^n$ of $S$ and a $\Cc^\infty$ function $F_0:U_0\to\R$ such that $F_0|_S=f$. Let $\sigma_0:\R^n\setminus C\to[0,1]$ be a $\Cc^\infty$ bump function such that $\sigma_0|_S=1$ and $\sigma_0|_{(\R^n\setminus C)\setminus U_0}=0$ and define $F:=\sigma_0\cdot F_0:\R^n\setminus C\to[0,1]$, which is a $\Cc^\infty$ function on $\R^n\setminus C$ such that $F|_S=f$.

By Mostowski's Lemma \cite[Lem.6]{m2} there exists a continuous semialgebraic function $h:\R^n\to\R$ such that $\ZZ(h)=C$ and $h|_{\R^n\setminus C}$ is a Nash function. Consider the Nash diffeomorphism 
$$
\varphi:\R^n\setminus C\to M:=\{(x,t)\in\R^{n+1}:\ h(x)t-1=0\}\subset\R^{n+1}. 
$$
By \cite[Cor.8.9.5]{bcr} there exist an open semialgebraic neighborhood $U$ of $M$ in $\R^{n+1}$ and a Nash retraction $\rho:U\to M$. Let $\sigma:\R^{n+1}\to[0,1]$ be a $\Cc^\infty$ bump function such that $\sigma|_{M}=1$ and $\sigma|_{\R^{n+1}\setminus U}=0$. Thus, $g:=\sigma\cdot(F\circ\varphi^{-1}\circ\rho):\R^{n+1}\to\R$ is a $\Cc^{\infty}$ differentiable function on $\R^{n+1}$ (if we extend it by $0$ outside $U$). If $T:=\varphi(S)$, we have that $g|_{T}=f\circ\varphi^{-1}|_T$ is semialgebraic. By \cite[Cor.1.7]{bcm} for each $p\geq1$ there exists an $\Ss^p$ function $G_p:\R^{n+1}\to\R$ such that $G_p|_T=f\circ\varphi^{-1}|_T$. Define $F_p:=G_p\circ\varphi:\R^n\setminus C\to\R$, which is a $\Ss^p$ function such that $F_p|_S=f$ for each $p\geq1$. Thus, $f\in\bigcap_{p\geq0}\Ss^p(S)=\Ss^{(\infty)}(S)$, as required.
\end{proof}
\begin{remark}
The previous result (and in particular the implication (iii) $\Longrightarrow$ (i)) implies that if $S$ is a locally compact semialgebraic set, then one can consider any of the possible definitions of $\Ss^p$ function on $S$ for each $p\geq1$ proposed above and we obtain the same ring $\Ss^{(\infty)}(S)$.\hfill$\sqbullet$
\end{remark}

Theorem \ref{sis} is still true (as a consequence of a more technical use of \cite[Cor.1.7]{bcm}) when $S$ is a general semialgebraic set. As the proof is quite technical, we postpone it until Appendix \ref{B}. Again all possible definitions of $\Ss^p$ function on a general semialgebraic set $S$ proposed above for each $p\geq1$, provide the same ring $\Ss^{(\infty)}(S)$ of smooth semialgebraic functions on $S$.

\subsubsection{Ring of Nash functions}\label{sss}
Let $S\subset\R^n$ be a semialgebraic set. By \cite[\S(4.10)]{fg3} there exist an open semialgebraic $U\subset\R^n$ and a Nash set $X\subset U$ such that if $V\subset U$ is an open semialgebraic neighborhood of $S$ and $Y\subset V$ is the smallest Nash subset of $V$ that contains $S$, there exists an open semialgebraic neighborhood $W\subset V$ of $S$ such that $Y\cap W=X\cap W$. 

As $\Nn(X\cap V)=\Nn(V)/\Ii^\bullet(X\cap V)$ for each open semialgebraic neighborhood $V$ of $S$, we deduce (using again \cite[Thm.1.3]{fgr}) that $\Nn(S)=\displaystyle\lim_{\longrightarrow}\Nn(X\cap V)$ where $V\subset\R^n$ covers the open semialgebraic neighborhoods of $S$. By Remark \ref{generalnash}(i) we have $\Ii^\bullet(X\cap V)=\Ii^\bullet(X)\Nn(V)$, so $\Nn(X\cap V)=\Nn(V)/\Ii^\bullet(X)\Nn(V)$ for each open semialgebraic neighborhood $V$ of $S$ and $\Nn(S)=\displaystyle\lim_{\longrightarrow}\Nn(V)/\Ii^\bullet(X)\Nn(V)$ where $V\subset\R^n$ covers the open semialgebraic neighborhoods of $S$. Thus, 
\begin{equation}\label{ssseq}
\Nn(S)=H^0(S,(\Nn_{\R^n}/\Ii_X^\bullet)|_S). 
\end{equation}

\subsubsection{Smooth semialgebraic functions versus Nash functions}
Let $U$ be the open semialgebraic set and $X\subset U$ the Nash set described at the beginning of \S\ref{sss}. We have $\Ii_{X,x}^\bullet\subset\Jj_{S,x}^\bullet$ for each $x\in S$. Define 
\begin{equation}\label{as}
A(S):=\{x\in S:\ \Ii_{X,x}^\bullet\neq\Jj_{S,x}^\bullet\}. 
\end{equation}
If $A(S)=\varnothing$, then $\Ii_{X,x}^\bullet=\Jj_{S,x}^\bullet$ for each $x\in S$ and 
$$
\Ss^{(\infty)}(S)=\Cc^{\Nn}(S)=H^0(S,\Nn_{\R^n}/\Jj_{S}^\bullet)=H^0(S,(\Nn_{\R^n}/\Ii_X^\bullet)|_{S})=\Nn(S).
$$
An enlightening example of the previous situation is the semialgebraic set $S:=\{x^2-zy^2=0,z\geq0\}$ (Whitney's umbrella with the handle erased), which has $A(S)=\varnothing$. It is natural to wonder what happens if $A(S)\neq\varnothing$. The following result solves this situation.

\begin{thm}
If $S\subset\R^n$ be a semialgebraic set, $\Nn(S)=\Cc^{\Nn}(S)$ if and only if 
$A(S)=\varnothing$.
\end{thm}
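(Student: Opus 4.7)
For the sufficient direction, the argument is straightforward: if $\Jj_{S,x}^\bullet=\Ii_{X,x}^\bullet$ for every $x\in S$, then the canonical surjection of sheaves $\Nn_{\R^n}/\Ii_X^\bullet\twoheadrightarrow\Nn_{\R^n}/\Jj_S^\bullet$ is an isomorphism on all stalks over the common support $S$, and taking global sections on $S$ yields $\Nn(S)=\Cc^{\Nn}(S)$. The bulk of the work lies in the converse direction, where the strategy is to adapt the construction used in the proof of Theorem \ref{main21} to the semialgebraic setting, producing, for each point $x_0\in A(S)$, a meromorphic Nash function $\xi$ on $\R^n$ with $\xi|_S\in\Cc^{\Nn}(S)\setminus\Nn(S)$.

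The key new input, replacing the appeal to $T(X)$ in Theorem \ref{main21}, will be a choice of $h\in\Jj_{S,x_0}^\bullet$ with a controlled vanishing order modulo $\Ii_{X,x_0}^\bullet$. This relies on Krull's intersection theorem applied to the Noetherian local ring $B:=\Nn_{\R^n,x_0}/\Ii_{X,x_0}^\bullet$: since the ideal $\bar\Jj:=\Jj_{S,x_0}^\bullet/\Ii_{X,x_0}^\bullet$ of $B$ is non-zero and $\bigcap_k\gtm^k=0$ for the maximal ideal $\gtm$ of $B$, there is a largest $k\geq 0$ with $\bar\Jj\subset\gtm^k$, and I choose $h\in\Jj_{S,x_0}^\bullet$ whose class lies in $\gtm^k\setminus\gtm^{k+1}$. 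Then setting $P_i:=(\x_i-(x_0)_i)^{k+1}$ for $i=1,\ldots,n$ and $P_\lambda:=\sum_{i=1}^n\lambda_iP_i^2$ for $\lambda\in\Qq_n$ gives $\ZZ(P_\lambda)=\{x_0\}$ together with $P_{\lambda,x_0}\in\gtm_{x_0}^{2(k+1)}$, so the ideal $P_{\lambda,x_0}\Nn_{\R^n,x_0}+\Ii_{X,x_0}^\bullet$ maps into $\gtm^{k+1}\cdot B$ and therefore does not contain $h$.

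With these data the remainder of the construction parallels {\sc Step 3} of the proof of Theorem \ref{main21}: Theorem \ref{factfinitesheaf}(B) applied to the finite sheaf $P_\lambda\Nn_{\R^n}$ (supported on $\{x_0\}$) provides $h_\lambda\in\Nn(\R^n)$ with a decomposition $h_\lambda=h+P_\lambda q$ on a neighborhood of $x_0$, and one sets $\xi:=h_\lambda/P_\lambda\in\Mm^\bullet(\R^n)$. Local Nashness of $\xi|_S$ away from $x_0$ is immediate; near $x_0$ the equality $h|_S=0$ reduces $\xi|_S$ locally to $q|_S$, whence $\xi|_S\in\Cc^{\Nn}(S)$. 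Were $\xi|_S$ to lie in $\Nn(S)$, Corollary \ref{iso} would yield $F\in\Nn(\R^n)$ with $F|_S=\xi|_S$ and hence $FP_\lambda-h_\lambda\in\Ii^\bullet(S)=\Ii^\bullet(X)$; passing to germs at $x_0$ and substituting $h_\lambda=h+P_\lambda q$ would then force $h\in P_{\lambda,x_0}\Nn_{\R^n,x_0}+\Ii_{X,x_0}^\bullet$, contradicting the choice of $h$.

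The step I anticipate as the main obstacle is the identity $\Ii^\bullet(S)=\Ii^\bullet(X)$ used in the final contradiction: while $\Ii^\bullet(X)\subset\Ii^\bullet(S)$ is trivial, the reverse inclusion requires unwinding the defining property of $X$ recalled in \S\ref{sss} and invoking the identity principle on each Nash-irreducible component of $X$. This in turn needs (or will need arranging in advance, by removing any superfluous Nash-irreducible component of $X$ disjoint from $S$) that every such component meets $S$, alongside the coherence hypothesis on $X$ tacit in the use of Corollary \ref{iso} to globalize a representative of $\xi|_S$. Once this linking of $S$ with $X$ is secured, the only conceptually new ingredient over the proof of Theorem \ref{main21} is the Krull-theoretic choice of $h$ and the accompanying high-order $P_\lambda$.
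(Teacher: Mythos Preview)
Your unified Krull-theoretic approach is appealing, but there is a genuine gap at precisely the step you flag, and your proposed resolution does not close it. From $(FP_\lambda-h_\lambda)|_S=0$ with $F\in\Nn(V)$ for some open semialgebraic neighborhood $V\supset S$, you can only conclude that $g:=FP_\lambda-h_\lambda$ lies in $\Ii^\bullet(Y_V)$, where $Y_V$ is the Nash closure of $S$ in $V$; passing to germs gives $g_{x_0}\in\Jj_{X,x_0}^\bullet$ (since $Y_V$ and $X$ agree germ-wise at points of $S$), but \emph{not} $g_{x_0}\in\Ii_{X,x_0}^\bullet$. The obstacle is not that $X$ might have irreducible components disjoint from $S$---your identity-principle fix addresses that---but that $\Ii^\bullet(X\cap V)\Nn_{\R^n,x_0}$ can strictly contain $\Ii_{X,x_0}^\bullet=\Ii^\bullet(X)\Nn_{\R^n,x_0}$ whenever $X$ fails to be coherent at $x_0$ (cf.\ Remark~\ref{generalnash}, which requires coherence). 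Since your Krull argument is calibrated modulo $\Ii_{X,x_0}^\bullet$, the relation $h_{x_0}=(F-q)_{x_0}P_{\lambda,x_0}-g_{x_0}$ with $g_{x_0}\in\Jj_{X,x_0}^\bullet$ yields no contradiction: both $\bar h$ and $\bar g$ may lie in $\gtm^k\setminus\gtm^{k+1}$ in $B=\Nn_{\R^n,x_0}/\Ii_{X,x_0}^\bullet$. Your invocation of Corollary~\ref{iso} does not rescue this, because that result extends Nash functions from $X$ to $U$, not from $S$ to $U$; you would need surjectivity of $\Nn(U)\to\Nn(S)$, which is not available without further argument.

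This is exactly why the paper splits into two cases. In Case~1 ($\Jj_{S,y}^\bullet\supsetneq\Jj_{X,y}^\bullet$) the paper works with an order comparison \emph{modulo $\Jj_{X,y}^\bullet$} via a Nash arc $\alpha$ into $X\setminus Y$: since $g_y\in\Jj_{X,y}^\bullet$ forces $g\circ\alpha\equiv 0$, one gets the contradiction $\omega(h_k\circ\alpha)=k<2k\leq\omega(P_k\circ\alpha)$. Your Krull idea would work here too, provided you replace $B$ by $B':=\Nn_{\R^n,x_0}/\Jj_{X,x_0}^\bullet$. In Case~2 ($\Jj_S^\bullet=\Jj_X^\bullet|_S$) the point $x_0$ lies in $T(X)$, and the obstruction must be produced \emph{modulo $\Ii_{X,x_0}^\bullet$}; here the paper invokes Theorem~\ref{main21}, whose complexification/curve-selection machinery is precisely what guarantees $h_{\lambda,x_0}\notin P_{\lambda,x_0}\Nn_{\R^n,x_0}+\Ii_{X,x_0}^\bullet$. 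A pure order argument in $B$ cannot replace this, because the gap $\Jj_{X,x_0}^\bullet/\Ii_{X,x_0}^\bullet$ swallows the information coming from $g$.
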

\begin{proof}
It only remains to prove: {\em If $A(S)\neq\varnothing$, then $\Nn(S)\subsetneq\Cc^{\Nn}(S)$}. We distinguish two cases:

\noindent{\sc Case 1.} We prove first: \em if $\Jj_{X,y}^\bullet\neq\Jj_{S,y}^\bullet$ for some $y\in S$, then $\Nn(S)\subsetneq\Cc^{\Nn}(S)$\em. 

Pick a point $y:=(y_1,\ldots,y_n)\in S$ such that $\Jj_{X,y}^\bullet\neq\Jj_{S,y}^\bullet$. Then $Y_y=\ZZ(\Jj_{S,y}^\bullet)\subsetneq\ZZ(\Jj_{X,y}^\bullet)=X_y$. Let $U^y\subset\R^n$ be an open semialgebraic neighborhood of $y$ and let $Y$ be a Nash subset of $U^y$ that is a representative of $Y_y$. Let $h\in\an(U^y)$ be a Nash equation of $Y$ in $U^y$. By the Nash curve selection lemma \cite[Prop.8.1.13]{bcr} there exists a Nash arc $\alpha:(-1,1)\to X$ such that $\alpha(0)=y$ and $\alpha((-1,1)\setminus\{0\})\subset (X\setminus Y)\cap U^y$. Let $k:=\omega(h\circ\alpha)$ be the order of $h\circ\alpha$ at $0$ and define $P_k:=(\x_1-y_1)^{2k}+\cdots+(\x_n-y_n)^{2k}$. Observe that the order $\omega(P_k\circ\alpha)$ of $P_k\circ\alpha$ at $0$ is greater than or equal to $2k$. Observe that $P_k$ has no multiple factors, because the complex analytic set $\ZZ(\frac{\partial P_k}{\partial\x_1},\cdots,\frac{\partial P_k}{\partial\x_n})=\{(0,\ldots,0)\}$. Consider the finite sheaf of $\Nn_{\R^n}$-ideals $\Ff:=P_k\Nn_{\R^n}$ on $\R^n$. As the support of $\Ff$ is $\{y\}$, the Nash function $h$ defines an element of $H^0(\R^n,\Nn_{\R^n}/\Ff)$. By Theorem \ref{factfinitesheaf} (B) there exists a Nash function $h_k\in\Nn(\R^n)$ that represents $h$ as an element of $H^0(\R^n,\Nn_{\R^n}/\Ff)$. This means that there exists a Nash function $a$ on $U^y$ such that $h_k-h=aP_k$ on $U^y$. Consider the meromorphic Nash function $\xi_k:=\frac{h_k}{P_k}$. 

Observe that $\xi_k$ is Nash on $\R^n\setminus\{y\}$. As $h$ is an Nash equation of $Y$ on $U^y$ and $h_k-h=aP_k$ on $U^y$, we deduce $\xi_k=a$ on $Y$, so $\xi_k$ is Nash on $Y$. This means that $\xi_k$ is local Nash on $S$, that is, an element of $\Cc^{\Nn}(S)$. However, {\em $\xi_k$ is not an element of $\Nn(S)=H^0(S,\Nn_{\R^n}/\Ii_X^\bullet)$}.

The restriction $\xi_k|_X$ is not a Nash function at $y$, because $\omega(P_k\circ\alpha)\geq 2k$, whereas $\omega(h_k\circ\alpha)=\omega(h\circ\alpha-(a\circ\alpha)(P_k\circ\alpha))=\omega(h\circ\alpha)=k$.

\noindent{\sc Case 2.} We assume in the following $\Jj_S^\bullet=\Jj_X^\bullet|_S$. Let $y\in S$ be such that $(\Jj_X^\bullet|_S)_y=\Jj_{S,y}^\bullet\neq\Ii^\bullet_{X,y}$ and pick $h_y\in\Jj_{X,y}^\bullet\setminus\Ii^\bullet_{X,y}$. Let $U$ be an open semialgebraic neighborhood $U$ of $S$ in $\R^n$ such that $X\cap U$ is the Nash closure of $S$ in $U$. By Theorem \ref{main21} taking $Y=\{y\}$ there exist an open semialgebraic neighborhood $U$ of $S$ in $\R^n$ an a meromorphic Nash function $\xi:U\dashrightarrow\R$ on $U$ such that $\xi|_X$ is a local Nash and the obstruction of $\xi$ to have a Nash extension to $U$ concentrates at $y\in S$, Consequently, $\xi|_S\in\Cc^{\Nn}(S)\setminus\Nn(S)$, as required.
\end{proof}

\appendix
\section{Semialgebraicity of complexification\\ and normalization in the Nash setting}\label{A}

We prove next for the sake of completeness \S\ref{scns0} and \S\ref{innsss0}.

\begin{proof}[Semialgebraicity of the complexification {\em(\S\ref{scns0})}]
Let $f_1,\ldots,f_p$ be a system of generators of $\Ii^{\bullet}(X)$ and consider the Nash map $f:=(f_1,\ldots,f_p):\R^n\to\R^p$. Denote $\x:=(\x_1,\ldots,\x_n)$ and $\y:=(\y_1,\ldots,\y_p)$. By Artin-Mazur description of Nash manifolds and Nash maps \cite[Thm.8.4.4]{bcr} there exists a non-singular algebraic set $V\subset\R^{n+q}$ of dimension $n$, a connected component $M$ of $V$, a Nash diffeomorphism $\psi:\R^n\to M$ and a polynomial map $g:=(g_1,\ldots,g_p):V\to\R^p$ such that $f=g\circ\psi$. In addition, if $\pi:\R^{n+q}=\R^n\times\R^q\to\R^n$ is the projection onto the first factor, then $\pi|_{V}=\psi^{-1}$.

Let $h_1,\ldots,h_k$ be a system of generators of the zero ideal of $V$ in $\R[\x,\y]$ and let $\widetilde{V}$ be the zero set in $\C^{n+q}$ of the invariant polynomial extensions $H_1,\ldots,H_k$ of $h_1,\ldots,h_k$ to $\C^{n+q}$. Observe that $\widetilde{V}$ is the Zariski closure of $V$ in $\C^{n+q}$. We consider the real underlying algebraic structures of $\C^n\equiv\R^{2n}$ and $\C^{n+q}\equiv\R^{2(n+q)}$ when referring to its semialgebraic subsets. The Nash diffeomorphism $\pi|_V:V\to\R^n$ extends to an invariant complex polynomial map $\Pi:\widetilde{V}\to\C^n$, which is a local diffeomorphism at the points of $V$. By \cite[Lem.9.2]{bfr} there exist an invariant open semialgebraic neighborhood $W_1\subset\C^{n+q}$ of $V$ and an invariant open semialgebraic neighborhood $W_2\subset\C^n$ of $\R^n$ such that $\Pi|_{\widetilde{V}\cap W_1}:\widetilde{V}\cap W_1\to W_2$ is a Nash diffeomorphism. Denote $\Psi:=(\Pi|_{\widetilde{V}\cap W_1})^{-1}:W_2\to\widetilde{V}\cap W_1$, which is a Nash extension of $\psi:\R^n\to M$ to $W_2$.

Consider the invariant polynomial functions $G_1,\ldots,G_p$ that extend $g_1,\ldots,g_p$ to $\C^{n+q}$. Consider the Nash functions on $W_2$ given by $F_i:=G_i\circ\Psi$ and observe that $F_i|_{\R^n}=f_i$ for $i=1,\ldots,p$. Denote $\widetilde{X}:=\ZZ(F_1,\ldots,F_p)\subset W_2$, which satisfies $\widetilde{X}\cap\R^n=X$. We have 
$$
\Ii_{X}=\Ii(X)\an_{\R^n}=\Ii^{\bullet}(X)\an_{\R^n}=(f_1,\ldots,f_p)\an_{\R^n}.
$$
In addition, $\Ii_{X}\otimes_\R\C=(F_1,\ldots,F_p)\an_{\C^n}$, so $\widetilde{X}$ is the support of $\Ii_{X}\otimes_\R\C$ around $X$. We conclude that $\widetilde{X}$ is an invariant complexification of $X$ and it is a semialgebraic subset of $\C^n$.
\end{proof}

We will use Lemma \ref{genpol} to guarantee the semialgebraicity of the normalization of a suitable semialgebraic complexification of a Nash set already constructed above (\S\ref{innsss0}). Let $V\subset\R^n$ be an algebraic set and let $\Ii^*(V)$ be the ideal of all polynomials of $\R[\x]:=\R[\x_1,\ldots,\x_n]$ that vanish identically on $V$. For each $x\in V$ denote $\gtm_x$ the maximal ideal of $\R[\x]$ constituted by all polynomials that vanish at $x$. We say that $V$ is {\em non-singular at $x\in V$} if the local ring $(\R[\x]/\Ii^*(V))_{\gtm_x}$ is regular (compare with \S\ref{rspas} and \S\ref{rspns}) and $V$ is {\em non-singular} if $V$ is non-singular at all its points. In the vein of \cite[Prop.2.8(i)]{fgr} we prove the following result concerning Nash zero ideals generated by polynomials in the non-singular algebraic case.

\begin{lem}\label{genpol}
Let $V\subset\R^n$ be a non-singular algebraic set and let $M$ be an open semialgebraic subset of $V$. Consider the open semialgebraic set $\Omega:=\R^n\setminus(V\setminus M)$ and let $W\subset\Omega$ be an open semialgebraic neighborhood of $M$. Then $\Ii^\bullet(M)=\Ii^*(V)\Nn(W)$ in $\Nn(W)$ and $\Ii(M)=\Ii^*(V)\an(W)$ in $\an(W)$. In addition, $\Ii^\bullet_M=\Ii^*(V)\Nn_W$ and $\Ii^\bullet_M=\Ii^*(V)\an_W$.
\end{lem}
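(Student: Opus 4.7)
My plan is to identify the two sheaves $\Ii^*(V)\Nn_{\R^n}$ and $\Jj^\bullet_V$ (resp.\ $\Ii^*(V)\an_{\R^n}$ and $\Jj_V$) first \emph{stalkwise}, and then promote those local identifications to the global ideal/sheaf statements of the lemma using the coherent and finite sheaf tools already assembled in Section \ref{s2}. The initial set-theoretic reduction is immediate: since $W\subset\Omega=\R^n\setminus(V\setminus M)$ and $M\subset V\cap W$, one has $V\cap W=M$, so $M$ is a closed Nash subset of $W$ and the relevant sheaves on $W$ are the restrictions $\Jj^\bullet_V|_W$ and $\Jj_V|_W$.

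For the stalkwise identification at a point $x\in V$, the non-singularity of $V$ combined with the Jacobian criterion produces $r:=n-\dim V$ polynomials in $\Ii^*(V)$ whose differentials at $x$ are linearly independent. These form a regular sequence in $\Nn_{\R^n,x}$, so the ideal they generate is prime of height $r$; being contained in the equally tall prime $\Jj^\bullet_{V,x}$, the two coincide, whence $\Ii^*(V)\Nn_{\R^n,x}=\Jj^\bullet_{V,x}$. For $x\notin V$, the equality $V=\ZZ(\Ii^*(V))$ supplies a polynomial in $\Ii^*(V)$ not vanishing at $x$, so $\Ii^*(V)\Nn_{\R^n,x}=\Nn_{\R^n,x}$. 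The faithful flatness of $\Nn_{\R^n,x}\hookrightarrow\an_{\R^n,x}$ (Corollary \ref{ffna}) then transfers these equalities to the analytic stalks. In particular $\Jj^\bullet_V=\Ii^*(V)\Nn_{\R^n}$ is a finite Nash sheaf and $\Jj_V=\Ii^*(V)\an_{\R^n}$ is a coherent analytic sheaf, so $V$ is simultaneously a coherent Nash set and a coherent $C$-analytic set.

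With coherence of $V$ in hand, Remark \ref{generalnash} applied to $V$ gives $\Ii^\bullet(V\cap W)=\Ii^\bullet(V)\Nn(W)$, and Cartan's Theorem B applied to the coherent sheaf $\Jj_V$ yields the analytic counterpart $\Ii(V\cap W)=\Ii(V)\an(W)$. It therefore suffices to upgrade the stalk identifications to the global identities $\Ii^\bullet(V)=\Ii^*(V)\Nn(\R^n)$ and $\Ii(V)=\Ii^*(V)\an(\R^n)$. Fixing a finite generating set $p_1,\ldots,p_k$ of $\Ii^*(V)$ over $\R[\x]$, I would consider the exact sequence of finite (resp.\ coherent) sheaves
\[
0\to\Kk\to\Nn_{\R^n}^k\to\Ii^*(V)\Nn_{\R^n}\to 0,\qquad (a_1,\ldots,a_k)\mapsto\sum_i a_i p_i,
\]
whose surjectivity is exactly the stalkwise identification proved above. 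The kernel $\Kk$ is a coherent submodule of $\Nn_{\R^n}^k$ and, by noetherianity of $\Nn(\R^n)$, is globally finitely generated, hence finite. Invoking Theorem \ref{factfinitesheaf} together with the Coste--Ruiz--Shiota vanishing of $H^1$ for finite Nash sheaves (\cite{crs2,crs3}), the map $\Nn(\R^n)^k\to H^0(\R^n,\Ii^*(V)\Nn_{\R^n})=\Ii^\bullet(V)$ is surjective, giving the desired Nash identity. The analytic analog is easier: the kernel of the corresponding map $\an_{\R^n}^k\to\Ii^*(V)\an_{\R^n}$ is coherent, so $H^1$ vanishes by Cartan's Theorem B directly.

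The main obstacle I anticipate is precisely this last cohomological step in the Nash setting: moving from the stalkwise expressions $f_x=\sum a_{i,x}p_i$ to a single global representation with $a_i\in\Nn(W)$ cannot be handled by a naive partition-of-unity argument---none exist in the Nash category, and Hubbard's $H^1(\R,\Nn_\R)\neq 0$ warns that the cohomology of $\Nn_{\R^n}$ itself is badly behaved---so one is forced into the full finite/strongly coherent sheaf formalism of \cite{crs2,crs3}. Once the global identities $\Ii^\bullet(M)=\Ii^*(V)\Nn(W)$ and $\Ii(M)=\Ii^*(V)\an(W)$ are in place, the sheaf statements $\Ii^\bullet_M=\Ii^*(V)\Nn_W$ and $\Ii_M=\Ii^*(V)\an_W$ follow immediately by multiplying through by $\Nn_W$ and $\an_W$ respectively.
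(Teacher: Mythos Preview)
Your stalkwise argument is correct and tidier than the paper's. Where the paper builds an explicit finite semialgebraic cover, straightens $M$ locally to $\R^d\times\{0\}$ via Nash diffeomorphisms, and uses Cramer's rule to express the straightening coordinates in terms of a Jacobian-selected subset of the polynomial generators, you go straight to the point: $r$ polynomials in $\Ii^*(V)$ with independent differentials at $x$ generate a prime of height $r$ contained in (hence equal to) the height-$r$ prime $\Jj^\bullet_{V,x}$. (A minor caution: ``regular sequence'' alone does not give primality; it is the independence of the differentials, via the Nash implicit function theorem, that makes the quotient a regular local ring.) This shows $\Jj^\bullet_V=\Ii^*(V)\Nn_{\R^n}$ is globally generated by any fixed polynomial generating set of $\Ii^*(V)$, hence a finite sheaf, and the sheaf statements follow.

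The gap is in your passage from the sheaf identity to the global ideal identity. Two claims there are unjustified. First, ``by noetherianity of $\Nn(\R^n)$, $\Kk$ is globally finitely generated, hence finite'' conflates finite generation of the $\Nn(\R^n)$-module $H^0(\R^n,\Kk)$ with the sheaf condition that finitely many sections generate every stalk; noetherianity gives only the former. Second, there is no general ``vanishing of $H^1$ for finite Nash sheaves'': you yourself invoke Hubbard's $H^1(\R,\Nn_\R)\neq 0$, and $\Nn_\R$ is as finite a sheaf as one could ask. Theorem~\ref{factfinitesheaf}(B) provides only the specific lifting $H^0(\Nn_{\R^n})\twoheadrightarrow H^0(\Nn_{\R^n}/\Ff)$ for $\Ff$ a finite sheaf of \emph{ideals}; your syzygy sheaf $\Kk\subset\Nn_{\R^n}^k$ is not of that shape.

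The fix is to bypass $\Kk$ entirely and cite \cite[I.6.5]{s}, which asserts directly that $H^0(W,I\Nn_W)=I$ for any finitely generated ideal $I\subset\Nn(W)$; this is precisely the reference used in Remark~\ref{generalnash}. With it, $\Ii^\bullet(M)=H^0(W,\Jj^\bullet_M)=H^0(W,\Ii^*(V)\Nn_W)=\Ii^*(V)\Nn(W)$ in one line. The paper's own route from sheaf to ideal is a transporter argument: for $f\in\Ii^\bullet(M)$ one observes that $(\Ii^*(V)\Nn_W:f\Nn_W)=\Nn_W$ stalkwise and extracts $f\in\Ii^*(V)\Nn(W)$ from this via Theorem~\ref{factfinitesheaf}(A).
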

\begin{proof}
Observe that $M=\Omega\cap\cl(M)$ is a Nash submanifold of $\Omega$ and a closed subset of $\Omega$, so it is a closed subset of $W$. We claim: {\em There exists a finite open semialgebraic covering $\{W_k\}_k$ of $W$ such that $\Ii^\bullet_{M}|_{W_k}=\Ii^*(V)\Nn_W|_{W_k}$ for each $k$}. Thus, $\Ii^*(V)$ generates $\Ii^\bullet_{M}$ as a finite sheaf of $\Nn_W$-ideals.

Let $P_1,\ldots,P_m$ be a system of generators of $\Ii^*(V)$ in $\R[\x]$. Observe that $P_1^2+\cdots+P_m^2\in\Ii^*(V)$ generates $\Ii^\bullet_{M,x}$ at the points $x\in W\setminus M$, so $\Ii^\bullet_{M,x}=\Ii^*(V)\Nn_{W}$ for each $x\in W\setminus M$. Thus, we only have to cover each connected component of $M$ by finitely many open semialgebraic subsets of $W$. We assume, to lighten notations, that $M$ is connected to prove the claim. 

Denote $d:=\dim(M)$ and for each set $I\subset\{1\leq i\leq m\}$ of cardinality $n-d$ define $W_I:=\{x\in W:\ {\rm rk}(\frac{\partial P_i}{\partial\x_j}(x))_{i\in I, 1\leq j\leq n}=n-d\}$, which is an open semialgebraic subset of $W$, and $M_I=W_I\cap M$, which is an open semialgebraic subset of $M$ (and consequently a Nash manifold of dimension $d$ if it is non-empty). Observe that $M_I$ is a closed subset of $W_I$. For each $I$ such that $M_I\neq\varnothing$ there exists by \cite[Thm.1.4]{bfr} a finite open semialgebraic covering $\{W_{Ik}\}_{k=1}^{p_{I}}$ of $W_I$ equipped with Nash diffeomorphisms $\phi_{Ik}:W_{Ik}\to\R^n$ that maps $M_I\cap W_{Ik}$ onto the linear variety $\R^d\times\{0\}$ for each $k=1,\ldots,p_I$. By the real Nullstellensatz for Nash functions \cite[Thm.8.6.5]{bcr} the ideal $\Ii^\bullet(\R^d\times\{0\})=\{x_{d+1},\ldots,x_n\}\Nn(\R^n)$, because the ideal in the righthand side is a real prime ideal whose zero set is $\R^d\times\{0\}$. Fix $I,k$ and to lighten rotations write $N:=M_{Ik}$, $\psi:=(\psi_1,\ldots,\psi_n)=\phi_{Ik}$ and $U:=W_{Ik}$. The ideal $\Ii^\bullet(N)$ of $\Nn(U)$ is generated by $\{\psi_{d+1},\ldots,\psi_n\}$. As $N$ is a $d$-dimensional Nash manifold, the matrix $(\frac{\partial\psi_\ell}{\partial\x_j})_{d+1\leq\ell\leq n,1\leq j\leq n}$ has rank $n-d$ at each point of $N$. As $P_i|_N=0$ for each $i\in I$, there exist $b_{i\ell}\in\Nn(U)$ such that $P_i=\sum_{\ell=d+1}^nb_{i\ell}\psi_\ell$. Write $P:=(P_i)_{i\in I}$, $B:=(b_{i\ell})_{i\in I,d+1\leq\ell\leq n}$, $\psi':=(\psi_{d+1},\ldots,\psi_n)$ and observe that $P^t=B\psi'^t$. We compute the Jacobian matrix of $P$ using the latter formula and we obtain 
$$
\Big(\frac{\partial P_i}{\partial\x_j}(x)\Big)_{i\in I, 1\leq j\leq n}=B(x)\Big(\frac{\partial\psi_\ell}{\partial\x_j}(x)\Big)_{d+1\leq\ell\leq n,1\leq j\leq n}
$$
for each $x\in N$. As both matrices $(\frac{\partial P_i}{\partial\x_j}(x))_{i\in I, 1\leq j\leq n}$ and $(\frac{\partial\psi_\ell}{\partial\x_j}(x))_{d+1\leq\ell\leq n,1\leq j\leq n}$ have rank $n-d$ for each $x\in N$, we deduce $\det(B(x))\neq0$ for each $x\in N$. Define $U':=\{x\in U:\ \det(B(x))\neq0\}$, which is an open semialgebraic neighborhood of $N$ in $U$. Using Cramer solution to the system of linear equations $P_i=\sum_{\ell=d+1}^nb_{i\ell}\psi_\ell$ where $i\in I$ we find Nash functions $c_{\ell i}\in\Nn(U')$ such that $\psi_\ell=\sum_{i\in I}c_{\ell i}P_i$ for $\ell=d+1,\ldots,n$. Thus, the ideal $\Ii^\bullet(N)$ of $\Nn(U')$ is generated by $\Ii^*(V)$ and we substitute $U$ by $U'$. As $M$ is a coherent Nash subset of $W$, we have $\Ii^\bullet(N)\Nn(U')$ generates by Remark \ref{generalnash} $\Ii^\bullet_M$ on $U'$ as a finite sheaf of $\Nn_W$-ideals, so $\Ii^*(V)$ generates $\Ii^\bullet_M$ on $U'$ as a finite sheaf of $\Nn_W$-ideals.

Let us check $\Ii^\bullet(M)=\Ii^*(V)\Nn(W)$. Pick $f\in\Ii^\bullet(M)$ and consider the transporter sheaf of $\Nn_W$-ideals $(\Ii^*(Z)\Nn_W:f\Nn_W)$, which is the sheaf of $\Nn_W$-ideals associated to the presheaf of $\Nn_W$-ideals $(\Ii^*(Z)\Nn_W(A):f\Nn_W(A))=\{g\in\Nn(A):\ gf\in\Ii^*(Z)\Nn_W(A)\}$ for each open set $A\subset W$. As $\Ii^\bullet_{M}=\Ii^*(V)\Nn_W$ as finite sheafs of $\Nn_W$-ideals, we deduce that $(\Ii^*(Z)\Nn_W:f\Nn_W)=\Nn_W$ as finite sheaves of $\Nn_W$-ideals. By Theorem \ref{factfinitesheaf}(A) $(\Ii^*(Z)\Nn_W:f\Nn_W)=\Nn_W$ is generated by its global sections $H^0(W,(\Ii^*(Z)\Nn_W:f\Nn_W))=H^0(W,\Nn_W)$, so $f\in\Ii^*(Z)$. Thus, $\Ii^\bullet(M)=\Ii^*(V)\Nn(W)$. The remaining equalities in the statement follow straightforwardly from the previous one. 
\end{proof}

\begin{proof}[Semialgebraicity of the normalization {\em(\S\ref{innsss0})}]
We keep the notations introduced above in the proof of \S\ref{scns0}. Define $Z$ as the zero set in $V$ of $g_1,\ldots,g_p$, that is, $Z=\ZZ(g_1,\ldots,g_p,h_1,\ldots,h_k)$, and $X':=\psi(X)=M\cap Z$. As $\Sing(V)=\varnothing$, the complex algebraic set $\Sing(\widetilde{V})$ does not meet $\R^{n+q}$ and it is invariant under conjugation. Define $U:=\R^{n+q}\setminus (V\setminus M)$, which is an open semialgebraic subset of $\R^{n+q}$, because $M$ is open in $V$ and $V$ is closed in $\R^{n+q}$. Observe that $X'\subset M\subset U$ and both are closed in $U$. We claim: {\em $g_1,\ldots,g_p,h_1,\ldots,h_k$ is a system of generators of $\Ii^\bullet(X')$ in $\Nn(U)$}. 

Let $b\in\Ii^\bullet(X')$ and consider the restriction $b|_M$. As $f_1,\ldots,f_p$ is a system of generators of $\Ii^{\bullet}(X)$ in $\Nn(\R^n)$, we deduce that $g_1,\ldots,g_p$ is a system of generators of $\Ii^{\bullet}(X')$ in $\Nn(M)$, so there exists Nash functions $a_1,\ldots,a_p\in\Nn(M)$ such that $b|_M=g_1a_1+\cdots+g_pa_p$. As $M$ is a Nash manifold closed in $U$, there exists Nash functions $A_1,\ldots,A_p\in\Nn(U)$ such that $A_i|_U=a_i$ for $i=1,\ldots,r$. Thus, $b-A_1g_1-\cdots-A_pg_p=0$ on $M$. As $V$ is non-singular, there exist by Lemma \ref{genpol} Nash functions $B_1,\ldots,B_k\in\Nn(U)$ such that $b=A_1g_1+\cdots+A_pg_p+B_1h_1+\cdots+B_kh_k$ on $U$, so $\Ii^\bullet(X')=\{g_1,\ldots,g_p,h_1,\ldots,h_k\}\Nn(U)$, as claimed.

Consequently, $\Ii^\bullet(X')=\Ii^*(Z)\Nn(\R^{n+q})$, where $\Ii^*(Z)$ is the zero ideal of $Z$ in $\R[\x,\y]$. The sheaf of $\an_{\C^n}$-ideals we use to construct the analytic complexification of $X'$ is $\Ii_{X'}\otimes_R\C$, where $\Ii_{X'}|_{U}=\Ii(X')\an_{\R^{n+q}}|_{U}=\Ii^{\bullet}(X')\an_{\R^{n+q}}|_{U}=\Ii^*(Z)\an_{\R^{n+q}}|_{U}$. 

Define $\widetilde{Z}:=\ZZ(G_1,\ldots,G_p,H_1,\ldots,H_k)$, which is the Zariski closure of $Z$ in $\C^{n+q}$. As $\widetilde{Z}$ is the zero set of $\Ii^*(Z)\C[\x,\y]$, its zero ideal in $\C[\x,\y]$ is by Hilbert's Nullstellensatz and \cite[Ch.V,\! \S15,\! Prop.5]{b} $\Ii^*(\widetilde{Z})=\Ii^*(Z)\C[\x,\y]$. Thus, $(\Ii_{X'}\otimes_\R\C)|_{U}=\Ii^*(Z)\an_{\C^{n+q}}|_{U}=\Ii^*(\widetilde{Z})\an_{\C^{n+q}}|_{U}$.

The algebraic normalization $(Y_1,\eta)$ of $\widetilde{Z}$ is a pair constituted by an invariant complex algebraic subset $Y_1$ of $\C^{n+q+m}$ for some $m\geq1$ and the restriction $\eta$ to $Y_1$ of the projection $H:\C^{n+q+m}=\C^{n+q}\times\C^m\to\C^{n+q}$ onto the first factor \cite[Prop.3.11]{fg3}, which satisfies $\sigma_{n+q}\circ\eta=\eta\circ\sigma_{n+q+m}$. By Zariski Main Theorem \cite[\S V.6]{mo} $(Y_1,\eta)$ is the analytic normalization of $\widetilde{Z}$ (see also \cite[Thm.1.1 \& \S3.4]{abf4}).

We consider the real underlying algebraic structures of $\C^{n+q+m}\equiv\R^{2(n+q+m)}$, $\C^{n+q}\equiv\R^{2(n+q)}$, $Y_1$, $\widetilde{Z}$ and $\eta$. As $\eta^{-1}(X')$ and $\eta^{-1}(V\setminus M)$ are disjoint invariant closed semialgebraic subsets of $\C^{n+q+m}$, we deduce that $\Theta:=\C^{n+q+m}\setminus\eta^{-1}(V\setminus M)$ is an invariant open semialgebraic neighborhood of $\eta^{-1}(X')$ in $\C^{n+q+m}$. In addition, $\Omega:=\C^{n+q}\setminus(V\setminus M)$ is an invariant open semialgebraic neighborhood of $X'$ in $\C^{n+q}$. Let $\widetilde{Z}':=\widetilde{Z}\cap\Omega$, which is an open semialgebraic neighborhood of $X'$ in $\widetilde{Z}$ and satisfies $\widetilde{Z}'\cap\R^{n+q}=X'$. 

Let $\widetilde{X}'\subset\widetilde{Z}'$ be an invariant analytic complexification of $X'$. For each $x\in X'$ we have 
$$
(\Ii_{X'}\otimes_\R\C)_x=\Ii^*(\widetilde{Z})\an_{\C^{n+q},x}\subset\Ii(\widetilde{Z}')\an_{\C^{n+q},x}\subset\Ii(\widetilde{X}')\an_{\C^{n+q},x}=(\Ii_{X'}\otimes_\R\C)_x.
$$
Thus, $(\Ii_{X'}\otimes_\R\C)_x=\Ii(\widetilde{Z}')\an_{\C^{n+q},x}$ for each $x\in X'$. Consequently, $\widetilde{Z}'$ is a semialgebraic invariant analytic complexification of $X'$ and $(Y_1':=\eta^{-1}(\widetilde{Z}'),\eta':=\eta|_{Y_1'})$ is a semialgebraic invariant (analytic) normalization of $\widetilde{Z}'$. Observe that $Y_1'=Y_1\cap\Theta$.

Consider the composition $P:=\Pi\circ H:\C^{n+q+m}=\C^n\times\C^{q+m}\to\C^n$, which is the projection onto the first factor. Let $\widetilde{X}:=\ZZ(F_1,\ldots,F_p)\subset W_2$ and recall that $\Psi|_{\widetilde{X}}:\widetilde{X}\to\widetilde{Z}'\cap W_1$ is a Nash diffeomorphism. Define $Y:=\eta^{-1}(\widetilde{Z}'\cap W_1)$ and $\rho:=P|_{Y}$. It holds that $(Y,\rho)$ is an invariant normalization of $\widetilde{X}$. In addition, $\widetilde{X}$ is a semialgebraic subset of $\C^n$ and $Y$ is a semialgebraic subset of $\C^{n+q+m}$.
\end{proof} 

\section{Proof of Theorem \ref{sis} for general semialgebraic sets}\label{B}

We prove next Theorem \ref{sis} for general semialgebraic sets as a byproduct of the results \cite{bcm}. We explain in certain points how one should interpret and adapt the results in \cite{bcm} in order to prove implication (iii) $\Longrightarrow$ (i) of Theorem \ref{sis}. The proofs of the implications (i) $\Longrightarrow$ (ii) and (ii) $\Longrightarrow$ (iii) of Theorem \ref{sis} proposed above do not use that $S$ is locally compact. We denote $\Bb(x,\veps)$ the open ball of center $x\in\R^n$ and radius $\veps>0$. 

\begin{proof}[Proof of implication {\em (iii) $\Longrightarrow$ (i)} of Theorem {\em\ref{sis}} for general semialgebraic sets]\ 
The proof is conducted in several steps:

\noindent{\sc Step 1.} {\em For each $p\geq1$ there exists a closed semialgebraic set $C_p\subset\cl(S)\setminus S$ such that $f$ admits a $\Cc^p$ differentiable extension $H_p$ to $\R^n\setminus C_p$, the restriction $H_p|_{\cl(S)\setminus C_p}$ is semialgebraic and $C_p\subset C_{p+1}$.}

As $f$ is local Nash at each point $x\in S$, we deduce that $f$ is locally Lipschitz at each point $x\in S$. Let $S_0\subset\cl(S)$ be the set of points $x\in\cl(S)$ such that $f$ is locally Lipschitz in the intersection with $S$ of a neighborhood of $x$, that is, there exist $\veps>0$ and $K_x>0$ (depending on $x$) satisfying $|f(y_1)-f(y_2)|<K_x\|y_1-y_2\|$ for each $y_1,y_2\in M\cap\Bb(x,\veps)$. Thus, $S_0$ is an open semialgebraic subset of $\cl(S)$ that contains $S$. For each point $x\in S_0$ there exists an open semialgebraic neighborhood $V^x\subset S_0\subset\cl(S)$ of $x$ such that $f|_{V^x\cap S}$ is Lipschitz, so in particular $f|_{V^x\cap S}$ is uniformly continuous. This means that $f|_{V^x\cap S}$ admits a unique continuous extension to $\cl(V^x\cap S)$ for each $x\in S_0$. Consequently, there exists a (unique) continuous extension $F$ of $f$ to $S_0$. The function $F$ is semialgebraic, because its graph $\Gamma(F)=\cl(\Gamma(f))\cap(S_0\times\R)$, which is a semialgebraic set. 

As $S_0$ is locally compact, we may assume from the beginning that $S$ is closed in $\R^n$ (using Mostowski's Lemma \cite[Lem.6]{m2} as we have done before) and $f$ extends continuously to $\cl(S)$ (as a semialgebraic function). We denote $F:\cl(S)\to\R$ such extension. Fix a point $x\in S$. As $f$ is local Nash at $x$, there exists an open semialgebraic neighborhood $W^x\subset\R^n$ of $x$ and a Nash function $F_x$ on $W^x$ such that $F_x|_{S\cap W^x}=f|_{S\cap W^x}$. As $f$ extend to $\cl(M)$, we deduce $F_x|_{\cl(S)\cap W^x}=F|_{\cl(S)\cap W^x}$.

For each $p\geq1$ define $S_p\subset\cl(S)$ as the set of points $x\in\cl(S)$ such that there exist $\veps>0$ and $K_x>0$ depending on $x$ and (finitely many) continuous semialgebraic functions $F_\alpha:\cl(S)\cap\Bb(x,\veps)\to\R$ for each $\alpha\in\N^n$ with $|\alpha|\leq p$ such that
$$
\Big|F(y_1)-\sum_{|\alpha|\leq p}\frac{F_{\alpha}(y_2)}{\alpha!}(y_1-y_2)^\alpha\Big|<K_x\|y_1-y_2\|^{p+1}
$$
for each $y_1,y_2\in\cl(S)\cap\Bb(x,\veps)$. As $f$ is local Nash at each point of $S$, we deduce that $S\subset S_p$. We claim: {\em $S_p$ is an open semialgebraic neighborhood of $S$ in $\cl(S)$}. 

To prove the semialgebraicity of $S_p$ we only have to show that the existence of a continuous semialgebraic function $F_\alpha$ on $\cl(S)\cap\Bb(x,\veps)$ is a semialgebraic condition: 
\begin{itemize}
\item {\em Semialgebraicity.} There exists polynomials $P_i,Q_{ij}\in\R[\x]$ such that for each $y\in\cl(S)\cap\Bb(x,\veps)$ there exists $z\in\R$ satisfying $(y,z)\in\bigcup_{i=1}^\ell\{P_i=0,Q_{i1}>0,\ldots,Q_{iq}>0\}$ and if $z'\in\R$ satisfies $(y,z')\in\bigcup_{i=1}^\ell\{P_i=0,Q_{i1}>0,\ldots,Q_{iq}>0\}$, then $z=z'$. We denote $F_\alpha(y):=z$.
\item {\em Continuity.} For each $y\in\cl(S)\cap\Bb(x,\veps)$ and each $\lambda>0$ there exists $\delta>0$ such that $y'\in\cl(S)\cap\Bb(x,\veps)$ and $\|y'-y\|<\delta$ implies $|F_\alpha(y)-F_\alpha(y')|<\lambda$.
\end{itemize}
 
By \cite{kp2,th} the function $F|_{\cl(M)\cap\Bb(x,\veps)}$ extends to $\Bb(x,\veps)$ as a $\Cc^p$ semialgebraic function for each $x\in S_p$. Consequently, using a $\Cc^\infty$ partition of unity, we deduce that $f$ extends as a $\Cc^p$ differentiable function $h_p$ to an open neighborhood $V_p\subset\R^n\setminus C_p$ of $S_p$. Denote $C_p:=\cl(S_p)\setminus S_p=\cl(S)\setminus S_p$, which is a closed semialgebraic subset of $\R^n$, because $S_p$ is locally compact. Let $\sigma_p:\R^n\setminus C_p\to[0,1]$ be a $\Cc^\infty$ bump function such that $\sigma_p|_{S_p}=1$ and $\sigma_p|_{(\R^n\setminus C_p)\setminus V_p}=0$. Define $H_p:=\sigma_ph_p$ and extend it by zero to $(\R^n\setminus C_p)\setminus V_p$. Then $H_p$ is a $\Cc^p$ differentiable extension of $f$ to $\R^n\setminus C_p$ and the restriction $H_p|_{\cl(S)\setminus C_p}=H_p|_{S^p}=F|_{S_p}$ is a semialgebraic function.

By definition $S_{p+1}\subset S_p$, so $C_p=\cl(S)\setminus S_p\subset\cl(S)\setminus S_{p+1}= C_{p+1}$ for each $p\geq1$.

\noindent{\sc Step 2.} {\em Construction of the function $t$ that control the loss of differentiability.} By \cite[Thm.5.1 \& Rem.5.2]{bm2} (adapted to the semialgebraic case) there exists a Nash manifold $N$ of the same dimension as $\cl(S)$ and a proper Nash map $\psi:N\to\cl(S)$ such that $\psi(N)=\cl(S)$. By \cite[Cor.1.7]{bcm} and its proof applied to the semialgebraic function $F:\cl(S)\to\R$ (and $\psi:N\to\cl(S)$) there exists a function $t:\N\to\N$ such that if $H:\R^n\to\R$ is a $\Cc^{t(p)}$ differentiable extension of $F$ to $\R^n$, there exists a $\Ss^p$ function $G:\R^n\to\R$ such that $G|_{\cl(S)}=F$. The function $t$ depends by \cite[Rem.5.5 \& \S5, p.19]{bcm} on:
\begin{itemize}
\item[(1)] The Chevalley function $R:\cl(S)\times\N\to\N$ that itself depends on $\cl(S)$, the Nash function $\varphi$ and $F:\cl(S)\to\R$ (see \cite[Rem.5.5]{bcm}). For each $p\geq1$, the function $C(\cdot,p)$ is uniformly bounded on $N$ for each $p\geq1$, because we are in the semialgebraic setting \cite[\S6, p.22]{bcm}. Define $r(p):=\max\{R(b,p):\ b\in\cl(M)\}$ for each $p\geq1$.
\item[(2)] The exponent $\rho$ of regularity of $\cl(S)$, which exists by \cite[Lem.3.5]{bcm}.
\item[(3)] The maximum $\sigma$ of the \L ojasiewicz exponents of formulas that involves certain map $M^s_\varphi$, which depends on $\varphi$ and $R$, and the distances to the boundaries of certain strata $\Gamma$ of a suitable (finite) stratification of $N$. The quoted formulas are of the type $|\det(M^s_\varphi)(\cdot)|\geq C_\Gamma{\rm dist}(\cdot,\partial\Gamma)^\sigma$ for some constants $C_\Gamma>0$ (see \cite[Eq.(10), p.18]{bcm}). 
\end{itemize}
A possible choice for $t$ is $t(p)=c(\rho p)+\sigma$, see \cite[p.19]{bcm}. 

\noindent{\sc Step 3.} {\em Restriction of $t$ to open semialgebraic neighborhoods of $S$ in $\cl(S)$.}
If $S'\subset\cl(S)$ is an open semialgebraic neighborhood of $S$, it holds that the corresponding function $t'\leq t$, because: 
\begin{itemize}
\item The Chevalley function $R_{S'}$ of $S'\times\N$ is the restriction of $R$ to $S'\times\N$.
\item The exponent $\rho_{S'}$ of regularity of $S'$ is smaller than or equal to $\rho$.
\item The \L ojasiewicz exponents in this case are $\leq\sigma$, because they appear in formulas that involve $M^{s}_{\varphi}|_{\varphi^{-1}(S')}$, which depends on $\varphi|_{\varphi^{-1}(S')}:\varphi^{-1}(S')\to S'$ and $R_{S'}=R|_{S'\times\N}$, and the distances to the boundaries of certain strata $\Gamma'$ of a suitable stratification of $\varphi^{-1}(S')$. Each stratum $\Gamma'$ can be chosen contained in some stratum $\Gamma$ of the stratification of $N$ chosen in (3) above. The quoted formulas are of the type $|\det(M^{s}_{\varphi|_{\varphi^{-1}(S')}})(\cdot)|\geq C_{\Gamma'}{\rm dist}(\cdot,\partial\Gamma')^\sigma$ for some constants $C_{\Gamma'}>0$ (see \cite[Eq.(10), p.18]{bcm}).
\end{itemize}

Consequently, we can take $t'$ equal to $t$ in this case. 

\noindent{\sc Step 4.} {\em Smoothness of $f$.}
Let $C':=\cl(S)\setminus S'$ and let $h':\R^n\to\R$ be a continuous semialgebraic function such that $\ZZ(h')=C'$ and the restriction $h'|_{\R^n\setminus C'}$ is a Nash function (Mostowski's Lemma \cite[Lem.6]{m2}). Consider the Nash diffeomorphism 
$$
\varphi':\R^n\setminus C'\to M':=\{(x,t)\in\R^{n+1}:\ h'(x)t-1=0\}\subset\R^{n+1}. 
$$
By \cite[Cor.8.9.5]{bcr} there exist an open semialgebraic neighborhood $U'$ of $M'$ in $\R^{n+1}$ and a Nash retraction $\rho':U'\to M'$. Let $\sigma':\R^{n+1}\to[0,1]$ be a $\Cc^\infty$ bump function such that $\sigma'|_{M'}=1$ and $\sigma'|_{\R^{n+1}\setminus U'}=0$. Suppose that $g|_{S'}$ extends to a $\Cc^{t(p)}$ differentiable function $H'$ on $\R^n\setminus C'$. Thus, $G':=\sigma'\cdot(H'\circ\varphi'^{-1}\circ\rho'):\R^{n+1}\to\R$ is a $\Cc^{t(p)}$ differentiable function on $\R^{n+1}$, after extending it by $0$ outside $U'$. If $T':=\varphi(S')$, we have $G'|_{T'}=g\circ\varphi'^{-1}|_{T'}$. By \cite[Cor.1.7]{bcm} and the discussing above there exists an $\Ss^p$ function $G_p:\R^{n+1}\to\R$ such that $G_p|_{T'}=f\circ\varphi'^{-1}|_{T'}$. Define $F_p:=G_p\circ\varphi':\R^n\setminus C'\to\R$, which is a $\Ss^p$ function such that $F_p|_S=f$. 

If we apply recursively the previous strategy to $S':=S_{t(p)}=\cl(S)\setminus C_p$ and $H':=H_{t(p)}$ for each $p\geq 1$, we conclude $f\in\bigcap_{p\geq0}\Ss^p(S)=\Ss^{(\infty)}(S)$, as required.
\end{proof}

\subsection*{Acknowledgements}

The first author is supported by Spanish STRANO PID2021-122752NB-I00. The second author is supported by GNSAGA of INDAM, and partially supported by Spanish STRANO PID2021-122752NB-I00. This work was also supported by the ``National Group for Algebraic and Geometric Structures, and their Applications'' (GNSAGA - INDAM).

In addition, this article has been developed during several one month research stays of the first author in the Dipartimento di Matematica of the Universit\`a di Trento. The first author would like to express his gratitude to the department for the invitations and the very pleasant working conditions. 

The authors thank Prof. Kollar for the interest shown in our work and for very valuable exchanges of ideas that have notably improved the presentation of our article, making the final version of this article clearer and more precise. We have also understood that it is valuable to guide the reader to perform a simplified and shortened reading of our manuscript if he/she is only interested in having a converse of Cartan’s Theorem B.

\end{document}